\documentclass[11pt,twoside]{article}

\usepackage{amssymb}
\usepackage{amsmath}
\usepackage{amsthm}
\usepackage{color}
\usepackage{listings}
\usepackage{tikz}
\usepackage{mathrsfs}
\usepackage{indentfirst}
\usepackage{anysize}
\usepackage{txfonts}
\usepackage{booktabs}
\usepackage{tabularx}

\allowdisplaybreaks

\newtheorem{theorem}{Theorem}[section]
\newtheorem{lemma}[theorem]{Lemma}
\newtheorem{corollary}[theorem]{Corollary}
\newtheorem{proposition}[theorem]{Proposition}

\theoremstyle{definition}
\newtheorem{remark}[theorem]{Remark}
\newtheorem*{remark*}{Remark}
\newtheorem{definition}[theorem]{Definition}
\newtheorem*{definition*}{Definition}

\newtheorem{openproblem}[theorem]{Open Problem}
\renewcommand{\appendix}{\par
\setcounter{section}{0}%
\setcounter{subsection}{0}%
\setcounter{subsubsection}{0}%
\gdef\thesection{\Alph{section}}%
}

\lstdefinestyle{certificate}{
language=Python,
basicstyle=\ttfamily\scriptsize,
numbers=left,
stepnumber=5,
firstnumber=1,
numberfirstline=true,
numberstyle=\tiny,
numbersep=6pt,
xleftmargin=2em,
breaklines=true,
breakatwhitespace=false,
columns=fullflexible,
keepspaces=true,
showstringspaces=false,
tabsize=4
}

\numberwithin{equation}{section}

\begin{document}

\arraycolsep=1pt

\title{\bf\Large From Sharp Self-Improvement
to Exact Perimeter Formulas in Bourgain--Brezis--Mironescu Spaces
\footnotetext{\hspace{-0.35cm} 2020 {\it
Mathematics Subject Classification}.
Primary 42B35; Secondary 46E30, 26B30, 28A75.
\endgraf {\it Key words and phrases.}
Bourgain--Brezis--Mironescu space, sharp self-improvement,
Hardy-type space, perimeter, relative isoperimetric inequality.
\endgraf This project is partially supported by
the National Natural Science Foundation of China
(Nos. 12301112 and 12501120), the Natural
Science Foundation of Henan Province (No. 262300421227),
and China Postdoctoral Science Foundation (No. 2025M783094).}}
\author{Hongchao Jia and Xianjie Yan\footnote{Corresponding author, E-mail:
\texttt{xianjieyan@henu.edu.cn}/{\color{red}
\today}/Final version.}}
\date{}
\maketitle

\vspace{-0.7cm}

\begin{center}
\begin{minipage}{15cm}
{\bf Abstract}\quad
Let $Q_0:=(0,1)^n$.
We define $\mathcal{B}_q(Q_0)$ by replacing
the local $L^1$ mean oscillation
in the Bourgain--Brezis--Mironescu space
$\mathcal{B}(Q_0)$ with its $L^q$ counterpart,
which coincides with $\mathrm{BMO}(Q_0)$ when $n=1$.
For $n\geq2$, our results reveal different
endpoint behaviors for general functions,
characteristic functions and the exact perimeter formula.
(i) For general functions, $
\mathcal{B}_q(Q_0)=\mathcal{B}(Q_0)$ precisely when
$q\in(0,\frac{n}{n-1})$.
We also establish the corresponding sharp
self-improvement and duality results
for Hardy-type preduals.
(ii) For characteristic functions,
the endpoint is attained: the equivalence
$\|\mathbf{1}_E\|_{\mathcal{B}_q(Q_0)}
\sim\|\mathbf{1}_E\|_{\mathcal{B}(Q_0)}
\sim\min\{1,P(E,Q_0)\}$
holds uniformly over measurable $E\subset Q_0$ precisely when
$q\in(0,\frac{n}{n-1}]$, where
$P(E,Q_0)$ denotes relative perimeter.
(iii) The isotropic functional
$I_{\varepsilon,q}^{*}(E)$,
defined using arbitrarily oriented cubes,
converges to $\frac{1}{2}\min\{1,P(E,Q_0)\}$
as $\varepsilon\to0^+$ throughout $q\in(0,\frac{n}{n-1}]$.
In dimension two, the formula extends to $q\in(0,q_*]$
for some universal $q_*>\frac{19}{9}$,
whereas it fails in every dimension for every $q>q_0$,
where $q_0>2$ is defined variationally and numerical computation
suggests that $q_0\approx2.15391$.
The proofs of the three sharp
self-improvements use sparse median domination,
atomic block decompositions, and relative isoperimetry,
respectively.
For the perimeter formula, a uniform De Giorgi blow-up reduces
the problem to a geometric cube-section inequality;
geometric averaging and an explicit packing yield the planar extension
and the obstruction, respectively.
\end{minipage}
\end{center}

\section{Introduction\label{sec-new-introduction}}

Throughout this article, $n\in\mathbb{N}$ and
$Q_0:=(0,1)^n$. Recall that the classical space
$\mathrm{BMO\,}(Q_0)$ of functions with bounded mean oscillation,
introduced by John and Nirenberg \cite{JohnNirenberg1961},
is the set of all measurable functions $f$ on $Q_0$ such that
$$
\|f\|_{\mathrm{BMO}(Q_0)}
:=\sup_{\text{cube }P\subset Q_0}\fint_{P}|f(x)-f_P|\,dx<\infty,
$$
where, here and throughout, for every cube $P\subset\mathbb{R}^n$,
$
f_P:=\fint_Pf(x)\,dx:=\frac{1}{|P|}\int_Pf(x)\,dx.
$
It is well known that $(H^1(Q_0))^*=\mathrm{BMO\,}(Q_0)$;
see Fefferman and Stein \cite{FS72}. John and Nirenberg
\cite{JohnNirenberg1961} also
introduced the John--Nirenberg space,
{a BMO-type space whose norm involves
sums of local mean oscillations over disjoint cube families;}
see \cite{HuTaoYang2023,jtyyz1,JTYYZ-aamp,
TaoYangYuan2019,tyy2022,TaoYangYuan2023,
ZengChangTaoYang2023} for more studies.

{In 2015, Bourgain et al. \cite{BBM2015}
introduced another BMO-type space $\mathcal{B}(Q_0)$,
which consists of all integrable functions $f$
on $Q_0$ such that}
$$
\|f\|_{\mathcal{B}(Q_0)}:=\sup_{\epsilon\in(0,1)}
\sup_{\{Q_j\}_{j\in J}}
\epsilon^{n-1}\sum_{j\in J}
\fint_{Q_j}|f(x)-f_{Q_j}|\,dx<\infty.
$$
{Here the second supremum is taken over finite families
$\{Q_j\}_{j\in J}$ of open
axis-parallel cubes with pairwise disjoint interiors in $Q_0$,
with the same edge length $\epsilon$, and
$\#J\leq\frac{1}{\epsilon^{n-1}}$.
The essential difference between the John--Nirenberg
and $\mathcal{B}(Q_0)$ norms is that the latter
requires a common scale and bounds the number of cubes
in terms of that scale.}
When $n=1$, $\mathcal{B}(Q_0)=\mathrm{BMO\,}(Q_0)$.
Bourgain et al. \cite{BBM2015}
proved that when $n\geq2$, for every $p\in(1,\infty)$,
$$
\mathrm{VMO\,}(Q_0)+\mathrm{BV\,}(Q_0)
+W^{\frac{1}{p},p}(Q_0)\subset \mathcal{B}(Q_0)
$$
and
$\mathcal{B}(Q_0)\subset L^{\frac{n}{n-1},\infty}(Q_0)$
and the corresponding inclusion
into $L^{\frac{n}{n-1}}(Q_0)$ fails.
Subsequent work related this weak endpoint behavior
to Garsia--Rodemich spaces and
rearrangement-invariant embeddings \cite{Milman2019}.
Moreover, Bourgain et al. \cite{BBM2015}
proved that every integer-valued function belonging to the
VMO-type subspace $\mathcal{B}_0$ of $\mathcal{B}$ is necessarily constant.
D'Onofrio et al. \cite{dgp2020} obtained
an atomic predual and a two-star theorem, while
Liu et al. \cite{LiuYangYuan2021} developed nonlinear functional
calculus on $\mathcal{B}(Q_0)$ and
some other related spaces.
Very recently, B\'enyi et al.
\cite{BenyiHuZhou2026} studied the related spaces
$\mathcal{B}_{\mathrm C}^p$ on $Q_0\times(0,1)$
using upper Carleson tents.
{We refer to \cite{BessasGuarinoSchiattarella2026,
DiFrattaFiorenza2020,
FarroniGuarinoSchiattarella2020,
FuscoMoscarielloSbordone2016,
FuscoMoscarielloSbordone2018,
GuarinoLoBiancoSchiattarella2023,
jyy2025} for more studies
on this type of spaces.}

The bound $\#J\leq\varepsilon^{1-n}$ reflects codimension-one geometry.
Indeed, a hypersurface whose $(n-1)$-dimensional measure is of order one
typically meets about $\varepsilon^{1-n}$ cubes of edge length
$\varepsilon$. This makes the functional well suited to detecting
oscillation near jump sets and boundaries.
Ambrosio et al. \cite{abbf2016} studied the
corresponding isotropic small-scale functional
related to $\mathcal{B}(Q_0)$, keeping the same cardinality
bound but allowing arbitrary cube orientations. They proved that, for every
measurable $E\subset Q_0$, its limit is
$\frac{1}{2}\min\{1,P(E,Q_0)\}$.
This result confirms the codimension-one normalization of the cardinality
bound, but it does not by itself extend to other exponents. Peri's
inequalities for hyperplane sections of convex bodies provide closely
related geometric tools \cite{Peri1999,Peri2008}. In particular,
Corollary 2 and Theorem 3 in \cite{Peri2008} provide a central-section
estimate and a linear relative isoperimetric estimate.
A precise comparison, including the recovery of the $q=1$
cell inequality from earlier results and the distinction from the stronger
nonlinear estimate used here, is given in Remark
\ref{rem-new-cube-section-earlier}.
{The present problem also belongs
to the broader program of representing variation
by nonlocal and mean-oscillation functionals; see
\cite{AmbrosioComi2018,ArroyoBonicattoDelNin2024,
ArroyoBonicattoDelNin2026,BBM2001,BrezisNguyen2018,
Davila2002,FarroniFuscoGuarinoSchiattarella2020,
PonceSpector2017}.}

A natural question is whether the local $L^1$ mean oscillation in the
definition of $\mathcal{B}(Q_0)$ can be replaced by its local $L^q$
counterpart without changing the space.
The main difficulty is that the cubes in the definition of
$\mathcal{B}(Q_0)$ must have the same edge length and share a single
cardinality budget. Thus, unlike the well-known identity
$\mathrm{BMO}_q(Q_0)=\mathrm{BMO\,}(Q_0)$,
a cube-by-cube estimate is insufficient; one must control how the
oscillations are distributed across the entire family of cubes.
Answering this question is also important because it connects exponent
self-improvement with atomic duality and with perimeter formulas for
characteristic functions.

In this article, we answer the above question.
For $q\in(0,\infty)$, we introduce the space
$\mathcal{B}_q(Q_0)$ by
replacing the local $L^1$ mean oscillation
in the Bourgain--Brezis--Mironescu space $\mathcal{B}(Q_0)$
with its local $L^q$
counterpart, which coincides with $\mathrm{BMO}(Q_0)$ when $n=1$.
{For $n\geq2$,
our results reveal different
endpoint behaviors for general
functions, characteristic functions
and the exact perimeter formula.
The main results are as follows.}

\begin{enumerate}
\item[{\rm(i)}]
We first establish
the sharp self-improvement
\begin{align*}
\mathcal{B}(Q_0)=\mathcal{B}_q(Q_0)
\Longleftrightarrow
q\in\left(0,\frac{n}{n-1}\right);
\end{align*}
see Theorem \ref{the-new-self}.
We then establish the finite-exponent duality
$[\mathcal{H}_{q'}(Q_0)]^*=\mathcal{B}_q(Q_0)$
for $q\in(1,\infty)$ and the sharp self-improvement
\begin{align*}
\mathcal{H}_{s}(Q_0)=\mathcal{H}_\infty(Q_0)\Longleftrightarrow
s\in(n,\infty];
\end{align*}
see Theorems \ref{the-new-Hs-duality}
and \ref{the-new-Hs-self}.
As a consequence, we recover the endpoint predual
identification previously obtained in \cite[Theorem 1]{dgp2020}.

\item[{\rm(ii)}]
Moreover, we show that, for characteristic functions, the
endpoint $\frac{n}{n-1}$ is precisely recovered:
for any measurable $E\subset Q_0$,
\begin{align*}
\|\mathbf{1}_E\|_{\mathcal{B}(Q_0)}
\sim_{(n,q)}\|\mathbf{1}_E\|_{\mathcal{B}_q(Q_0)}
\sim_{(n,q)} \min\{1,P(E,Q_0)\}
\Longleftrightarrow
q\in\left(0,\frac{n}{n-1}\right],
\end{align*}
where $P(E,Q_0)$ denotes the
perimeter relative to $Q_0$;
see Theorem \ref{the-new-fixed-perimeter}.

\item[{\rm(iii)}]
Furthermore, to obtain an exact perimeter formula,
we introduce the isotropic scale functional
$I_{\varepsilon,q}^{*}(E)$ defined using arbitrarily oriented cubes
and show that, for any measurable
$E\subset Q_0$ and any $q\in(0,\frac{n}{n-1}]$,
\begin{align*}
\lim_{\varepsilon\to0^+}I_{\varepsilon,q}^{*}(E)
=\frac{1}{2}\min\{1,P(E,Q_0)\}.
\end{align*}
For $q=1$, this formula is
just \cite[Theorem 4.1]{abbf2016} but
we need new methods for $q\ne1$.
When $n=2$, the validity range extends to
$q\in(0,q_*]$ for some
$q_*>\frac{19}{9}$. The formula fails in every
dimension for every $q>q_0$, where $q_0>2$ is defined variationally
and numerical computation suggests that $q_0\approx2.15391$;
see Theorem \ref{the-new-exact}
and Proposition \ref{prop-new-packing-obstruction}.
\end{enumerate}

The proof of the self-improvement results in {\rm(i)} uses
sparse median domination for the $\mathcal{B}_q(Q_0)$ spaces and an
atomic block decomposition for the Hardy-type spaces, without relying
on a John--Nirenberg-type inequality. The characteristic-function result
in {\rm(ii)} is based on relative isoperimetry. For the exact perimeter
formula in {\rm(iii)}, cube families that nearly attain the $q=1$ limit
give the missing lower bound for $q<1$, whereas a uniform De Giorgi
blow-up and a geometric cube-section inequality give the missing upper
bound for $q>1$. Moreover, a geometric averaging argument yields the
extension in dimension two, while an explicit two-row packing of rotated
cubes gives a dimension-free upper bound for the optimal validity
exponent.

The remainder of this article is organized as follows.
Unless otherwise indicated, the results on sharp ranges, endpoint duality,
perimeter, and obstructions summarized here concern $n\geq2$.

In Section \ref{sec-new-self}, we first introduce the
Bourgain--Brezis--Mironescu space $\mathcal{B}_q(Q_0)$.
We then determine the sharp
range $q\in(0,\frac{n}{n-1})$
in which $\mathcal{B}_q(Q_0)$ agrees with $\mathcal{B}(Q_0)$;
see Theorem \ref{the-new-self}. To this end, we combine local
median sparse domination with a new sparse estimate that compares different
exponents under the common cardinality constraint; see Proposition
\ref{prop-new-sparse-cross-exponent}.
It is worth noting that this argument does not use a
John--Nirenberg-type inequality and therefore differs from the proof of
$\mathrm{BMO}_q(Q_0)=\mathrm{BMO\,}(Q_0)$.
Moreover, sharpness follows from the example constructed in
\cite[Proposition 3]{BBM2015}.
Finally, we compare
$\mathcal{B}_q(Q_0)$ with Lebesgue spaces. We also prove
strict inclusions beyond the critical exponent; see Proposition
\ref{prop-new-Lq-position}.

Section \ref{sec-new-predual} is divided into three subsections
to study the sharp self-improvement of
preduals of $\mathcal{B}_q(Q_0)$.
In Subsection 3.1, we introduce
$\mathcal{H}_s(Q_0)$ and prove the duality
$[\mathcal{H}_{q'}(Q_0)]^*=\mathcal{B}_q(Q_0)$ for $q\in(1,\infty)$;
see Theorem \ref{the-new-Hs-duality}.
Subsection 3.2 determines
the sharp range $\mathcal{H}_s(Q_0)=\mathcal{H}_\infty(Q_0)$ if and only if
$s\in(n,\infty]$; see Theorem \ref{the-new-Hs-self}. Since standard
atomic decompositions do not preserve the common-scale block structure,
we use a quantitative block decomposition adapted to it; see Lemma
\ref{lem-new-block-decomposition}.
Combining the two self-improvement results with finite-exponent duality,
we obtain the endpoint duality
$[\mathcal{H}_\infty(Q_0)]^*=\mathcal{B}(Q_0)$; see Corollary
\ref{cor-new-Hinfty-endpoint-duality}. Finally,
Subsection 3.3 uses the same block decomposition
to give a new proof
of $\mathcal{B}_q(Q_0)=\mathcal{B}(Q_0)$ for
$q\in(1,\frac{n}{n-1})$ without sparse domination, a
John--Nirenberg-type inequality, or duality; see Proposition
\ref{prop-new-block-alternative}.

In Section \ref{sec-new-fixed-perimeter},
we show that the endpoint $\frac{n}{n-1}$ excluded
for general functions for $\mathcal{B}_q(Q_0)=\mathcal{B}(Q_0)$
is recovered for characteristic functions: for
any measurable $E\subset Q_0$,
$\|\mathbf{1}_E\|_{\mathcal{B}_q(Q_0)}$ and
$\|\mathbf{1}_E\|_{\mathcal{B}(Q_0)}$
are uniformly comparable to the truncated
relative perimeter $\min\{1,P(E,Q_0)\}$ if and only if
$q\in(0,\frac{n}{n-1}]$; see Theorem
\ref{the-new-fixed-perimeter}. This endpoint is reached because, at
$q=\frac{n}{n-1}$, the power of the smaller volume fraction in the local
oscillation matches the exponent in the relative isoperimetric inequality,
yielding linear perimeter control.

The comparison in Section \ref{sec-new-fixed-perimeter} shows that
the quantities
$\|\mathbf{1}_E\|_{\mathcal{B}_q(Q_0)}$, including the case $q=1$, where
$\mathcal{B}_1(Q_0)=\mathcal{B}(Q_0)$, detect the truncated perimeter only
up to constants. In Section \ref{sec-new-exact}, we ask whether the
corresponding small-scale isotropic functional recovers it with the exact
constant and prove that it does for arbitrarily oriented cubes; see Theorem
\ref{the-new-exact}. For the lower bound, H\"older's inequality transfers
the known $q=1$ formula to $q\geq1$; for $q<1$, cube families nearly
attaining this limit have the same limit for their $q$-oscillation sums;
see Proposition \ref{prop-new-universal-lower}. For the upper bound, we use
a uniform De Giorgi blow-up to reduce the problem to a sharp half-space
inequality, retaining the exact factor $\frac{1}{2}$; see Proposition
\ref{prop-new-localization}. In dimension two, averaging over common shifts
of the entire cube family yields the missing upper bound
up to some universal exponent $q_*>\frac{19}{9}$; see Lemma
\ref{lem-new-aggregate-planar-localization}.

In Section \ref{sec-new-open-problems}, we
build a two-row packing of
rotated cubes along a flat interface
and then take products to extend the planar
construction to higher dimensions. This fixed set violates the
exact perimeter formula whenever $q>q_0>2$; see Proposition
\ref{prop-new-packing-obstruction}. Together with the positive
results, this obstruction provides an upper bound for the unknown
optimal exponent. Finally, we state the remaining range and endpoint
questions; see Open Problem \ref{prob-new-optimal-exact-range}.

In Appendix \ref{app-new-planar-cell-certificate}, we provide the computer
program used in the proof of Lemma \ref{lem-new-averaged-planar-cell}.

Finally, we use the following conventions on notation.
Let $\mathbb{N}:=\{1,2,\ldots\}$ and
$\mathbb{Z}_+:=\mathbb{N}\cup\{0\}$. For every $t\in\mathbb{R}$,
$\lfloor t\rfloor$ denotes the greatest integer not greater than $t$.
For every $A\subset\mathbb{R}^n$, we write $A^\circ$ for its interior.
For every cube $Q$, we use $\ell(Q)$
to denote its edge length. Unless starred notation
is used or otherwise stated, its edges are coordinate-parallel. For every
open $U$, the notation $Q\subset U$ means $Q^\circ\subset U$. Unless
otherwise stated, disjointness of cubes refers to their interiors.
We write $\#J$ for the cardinality of a set $J$, and
$\mathbf{1}_E$ for the characteristic function of a set $E$. We write
$\mathbb{K}\in\{\mathbb{R},\mathbb{C}\}$ for the underlying scalar
field and identify $\mathbb{K}$ with the subspace of constant functions
on $Q_0$.
When $\mathbb K=\mathbb C$, Banach-space duals consist of
complex-linear functionals, and all duality pairings used below are
bilinear, with no complex conjugation inserted. Thus, for example,
$\langle f,g\rangle=\int_{Q_0}f(x)g(x)\,dx$ is linear in $g$ when $f$ is
fixed.
We also use $C_{(\alpha,\beta,\ldots)}$ to denote a
positive constant depending only on the indicated parameters;
its value may vary from line to line.
The symbol $f\lesssim g$ means that $f\le Cg$.
If $f\lesssim g$ and $g\lesssim f$,
we then write $f\sim g$. If $f\le Cg$ and $g=h$
or $g\le h$, we then write $f\lesssim g=h$ or $f\lesssim g\le h$.
For every $p\in[1,\infty]$,
we denote by $p'$ its \emph{conjugate index},
that is, $\frac{1}{p}+\frac{1}{p'}=1$.
Finally, when we prove a theorem
(and the like), in its proof we always use the same
symbols as in the statement itself of that theorem
(and the like).

\section{The Sharp Self-Improvement Range
for $\mathcal{B}_q(Q_0)$\label{sec-new-self}}

In this section, we first introduce
the space $\mathcal{B}_q(Q_0)$. Then we prove the
sharp self-improvement range through a cross-exponent sparse estimate.
For every $q\in(0,\infty)$ and cube $Q\subset\mathbb{R}^n$, we define the
\emph{local $L^q$ mean oscillation} of $f\in L^1(Q)$ on $Q$ by
\begin{align*}
\operatorname{osc}_q(f;Q)
:=\left[
\fint_Q|f(x)-f_Q|^q\,dx
\right]^{\frac{1}{q}},
\end{align*}
allowing the value $+\infty$. In particular,
$\operatorname{osc}_q(f;Q)<\infty$ whenever $q\in(0,1]$.
We next define the admissible family of cubes.

\begin{definition}\label{def-F}
Let $\varepsilon\in(0,1]$. A collection
$\{Q_j\}_{j\in J}$ of axis-parallel cubes with pairwise disjoint
interiors contained in $Q_0$ is called an
\emph{admissible family at scale $\varepsilon$} if
$\ell(Q_j)=\varepsilon$
for every $j\in J$
and $\#J\leq\varepsilon^{1-n}.$
$\mathscr{F}_{\varepsilon}(Q_0)$ is
defined to be the collection of
all such families, including the empty family.
\end{definition}

Then we present the concept of
the Bourgain--Brezis--Mironescu space $\mathcal{B}_q(Q_0)$.

\begin{definition}\label{def-Bq}
Let $q\in(0,\infty)$. The \emph{Bourgain--Brezis--Mironescu space}
$\mathcal{B}_q(Q_0)$ is the quotient, modulo addition of constant
functions, of the set of all $f\in L^1(Q_0)$ for which
\begin{align}\label{eq-new-Bq}
\|f\|_{\mathcal{B}_q(Q_0)}
:=\sup_{\varepsilon\in(0,1]}
\sup_{\{Q_j\}_{j\in J}\in\mathscr{F}_{\varepsilon}(Q_0)}
\varepsilon^{n-1}\sum_{j\in J}
\operatorname{osc}_q(f;Q_j)
\end{align}
is finite.
\end{definition}

\begin{remark}\label{rem-Bq}
\begin{itemize}
\item[{\rm(i)}]
The displayed quantity is invariant under addition of constants.
It induces a norm on $\mathcal{B}_q(Q_0)$ when $q\geq1$ and a
quasi-norm when $0<q<1$. Throughout the article, the notation
$\|\cdot\|_{\mathcal{B}_q(Q_0)}$ is used for both cases.

\item[{\rm(ii)}]
It is easy to show that
the original Bourgain--Brezis--Mironescu space
$\mathcal{B}(Q_0)=\mathcal{B}_1(Q_0)$.

\item[{\rm(iii)}]
The outer supremum in \eqref{eq-new-Bq} may equivalently be taken over
$\varepsilon\in(0,1)$. Indeed, approximating $Q_0$ from within by
concentric cubes and applying Fatou's lemma shows that the scale-one test
is controlled by the smaller-scale tests.
\end{itemize}
\end{remark}

When $n=1$, $\mathcal{B}(Q_0)=\mathrm{BMO}(Q_0)$, so the corresponding
self-improvement follows from the John--Nirenberg theorem. Henceforth, unless
otherwise stated, we assume $n\geq2$ whenever a result depends on the
dimension; dimension-free statements remain valid also for $n=1$.

We now state the sharp self-improvement theorem.

\begin{theorem}\label{the-new-self}
Let $n\in\mathbb{N}\cap[2,\infty)$ and $q\in(0,\infty)$. Then
$$
\mathcal{B}(Q_0)=\mathcal{B}_q(Q_0)
\ \Longleftrightarrow\
q\in\left(0,\frac{n}{n-1}\right).
$$
Moreover, for every $q\in(0,\frac{n}{n-1})$
and $f\in L^1(Q_0)$,
\begin{align}\label{eq-new-self-norms}
\|f\|_{\mathcal{B}(Q_0)}
\sim\|f\|_{\mathcal{B}_q(Q_0)},
\end{align}
where the implicit positive constant depends
only on $n$ and $q$.
\end{theorem}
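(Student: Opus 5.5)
The plan has three parts: the easy direction $q\le 1$, the hard direction $1<q<\frac{n}{n-1}$, and sharpness at and beyond $\frac{n}{n-1}$.

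\emph{The case $q\in(0,1]$.} By Jensen's inequality, $\operatorname{osc}_q(f;Q)\le\operatorname{osc}_1(f;Q)$ for each cube $Q$. Hence $\|f\|_{\mathcal{B}_q(Q_0)}\le\|f\|_{\mathcal{B}(Q_0)}$ whenever $q\le1$, and $\|f\|_{\mathcal{B}(Q_0)}\le\|f\|_{\mathcal{B}_q(Q_0)}$ whenever $q\ge1$.

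It remains to prove the reverse inequalities. For $q<1$, I would use the standard median/$L^q$ equivalence on a single cube. By local median sparse domination, for every cube $Q$ we have
$$
\fint_Q|f-m_f(Q)|\lesssim\sum_{P\in\mathcal{S}(Q)}\frac{|P|}{|Q|}\,\omega_\lambda(f;P),
$$
where $\mathcal{S}(Q)$ is a sparse family of dyadic subcubes of $Q$ and $\omega_\lambda(f;P)\lesssim\operatorname{osc}_q(f;P)$ for every $q>0$. Summing over an admissible family $\{Q_j\}\in\mathscr{F}_\varepsilon(Q_0)$ and grouping the sparse cubes by generation $k$, so that they have edge length $2^{-k}\varepsilon$, I need a cross-exponent counting bound. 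In generation $k$ the cubes number at most $\#J\cdot2^{kn}$, and their total measure is at most a fixed fraction of $\sum_j|Q_j|$.

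To exploit this, I would split each generation into chunks of at most $(2^{-k}\varepsilon)^{1-n}$ cubes. Each chunk is an admissible family at scale $2^{-k}\varepsilon$ and therefore contributes at most $(2^{-k}\varepsilon)^{1-n}\|f\|_{\mathcal{B}_q(Q_0)}$ to the sum of oscillations. What matters is how many chunks are really needed. Sparseness lets one choose the cubes carrying the largest oscillations, and this is where Proposition \ref{prop-new-sparse-cross-exponent} enters: it should deliver a geometrically decaying factor in $k$. For $q<1$ this argument gives $\|f\|_{\mathcal{B}(Q_0)}\lesssim\|f\|_{\mathcal{B}_q(Q_0)}$.

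\emph{The case $1<q<\frac{n}{n-1}$.} Here I need $\operatorname{osc}_q(f;Q)$ to be controlled by $L^1$-type data. I would apply the same sparse domination in $L^q$ form,
$$
\operatorname{osc}_q(f;Q)\lesssim\Bigl\|\sum_{P\in\mathcal S(Q)}\omega_\lambda(f;P)\mathbf 1_P\Bigr\|_{L^q(Q,dx/|Q|)},
$$
and bound the $L^q$ norm of the sparse sum generation by generation. In generation $k$, the contribution of the cubes lying inside $Q_j$ is at most $\bigl(\sum_P(|P|/|Q_j|)\,\omega(P)^q\bigr)^{1/q}$. Writing $a_P=\omega(P)$, the admissibility constraint bounds sums $\sum a_P$ over any $(2^{-k}\varepsilon)^{1-n}$ such cubes by $(2^{-k}\varepsilon)^{1-n}\|f\|_{\mathcal{B}(Q_0)}$. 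Passing to the decreasing rearrangement, this forces $a^*_m\lesssim (2^{-k}\varepsilon)^{1-n}\|f\|_{\mathcal{B}(Q_0)}/m$ once $m$ exceeds the budget, and $a^*_m\lesssim\|f\|_{\mathcal{B}(Q_0)}$ in general, which is a weak-$\ell^1$-type bound. The $\ell^q$-sum weighted by $2^{-kn}$ then produces a factor of size $2^{-kn/q}\cdot2^{k(n-1)}$, which is summable in $k$ exactly when $q<\frac{n}{n-1}$. This computation is the main obstacle, namely distributing the one cardinality budget across all the $Q_j$ and all generations simultaneously. I would organize it by a single global rearrangement of $\{a_P\}$ over $P$ in all the $Q_j$, and then use Minkowski's inequality for $\ell^{1}(\ell^q)$ to pass the sum in $k$ outside.

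\emph{Sharpness.} For $q\ge\frac{n}{n-1}$, I would take the example of \cite[Proposition 3]{BBM2015}, $f(x)=|x-x_0|^{1-n}$ suitably truncated or a lacunary sum of such spikes. It lies in $\mathcal{B}(Q_0)$ but not in $L^{n/(n-1)}$ near a point. A single cube $Q$ of side $\varepsilon$ around the singularity then has $\operatorname{osc}_q(f;Q)=\infty$ for $q\ge\frac{n}{n-1}$, and already $\varepsilon^{n-1}\operatorname{osc}_q(f;Q)=\infty$. Hence $f\notin\mathcal{B}_q(Q_0)$, and the equality of spaces fails.
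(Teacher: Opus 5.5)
Your outline has the same architecture as the paper's proof: Jensen/H\"older for the trivial inequalities, local median sparse domination with $\omega_\lambda(f;P)\lesssim\operatorname{osc}_p(f;P)$, level-by-level use of the common-scale budget, and \cite[Proposition 3]{BBM2015} for sharpness. However, the cross-exponent counting, where all the difficulty lies, does not close as you describe it.

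Write $\rho_k:=2^{-k}\varepsilon$, $K_k:=\lfloor\rho_k^{1-n}\rfloor$, and $M:=\|f\|_{\mathcal{B}(Q_0)}$. Your rearrangement bounds are reversed. Any $K_k$ cubes of level $k$ form an admissible family at scale $\rho_k$, so one gets $a_m^*\le\rho_k^{1-n}M/m$ only for $m\le K_k$; in particular $a_1^*$ may be of order $\rho_k^{1-n}M$, not $M$. Past the budget one only gets $a_m^*\le a_{K_k}^*\le2M$. A weak-$\ell^1$ bound past the budget is false, because the budget only limits sums over at most $K_k$ cubes. Level $k$ may contain up to $\#J\,2^{kn}\approx2^k\rho_k^{1-n}$ sparse cubes, all with oscillation comparable to $M$.

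For these cubes the level-$k$ contribution has no decay in $k$, and once you pull $\sum_k$ outside by Minkowski the estimate is lost. Suppose that inside each $Q_j$ such cubes occupy a measure fraction $x_k$ at level $k$, disjointly across levels, with $\sum_kx_k\le1$; this is compatible with everything your argument uses. Then your bound for $\varepsilon^{n-1}\sum_j\operatorname{osc}_q(f;Q_j)$ is of order $M\sum_kx_k^{1/q}$, which diverges for $x_k\sim(k\log^2k)^{-1}$. Yet the corresponding sparse sums have $L^q(Q_j)$ norm at most $M|Q_j|^{1/q}$. Your $q<1$ paragraph has the same defect: chunking level $k$ into admissible families needs about $2^k$ chunks and gives $O(\|f\|_{\mathcal{B}_q(Q_0)})$ per level, again without decay. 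Choosing the cubes with the largest oscillations does not by itself produce a decaying factor.

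The missing idea is the threshold split in Proposition \ref{prop-new-sparse-cross-exponent}. Let $p$ and $r$ be the smaller and the larger of $q$ and $1$, and now put $M:=\|f\|_{\mathcal{B}_p(Q_0)}$. Call a sparse cube high if its $p$-oscillation exceeds $2M$.

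\emph{High cubes.} At each level there are fewer than $K_k$ of them. Otherwise, $K_k$ of them would form an admissible family with oscillation sum $>2MK_k\ge\rho_k^{1-n}M$, which is impossible. Hence their oscillations sum to at most $\rho_k^{1-n}M$ per level. Using $\ell^r\le\ell^1$ and the triangle inequality over $k$, the high part of $\sum_j\|f-f_{Q_j}\|_{L^r(Q_j)}$ is at most $\sum_k\rho_k^{n/r}\rho_k^{1-n}M\lesssim M\varepsilon^{1-n+n/r}$. This geometric series converges exactly when $r<\frac{n}{n-1}$.

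\emph{Low cubes.} These must not be separated by level. Keep them inside one $L^r$ norm via Lemma \ref{lem-new-positive-sparse}, and use only the packing $\sum_{R\in\mathcal{S}_j}|R|\le2|Q_j|$. This gives a bound $\lesssim M(\#J)\varepsilon^{n/r}\le M\varepsilon^{1-n+n/r}$.

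If you are willing to cite Proposition \ref{prop-new-sparse-cross-exponent}, as your $q<1$ paragraph does, then both reverse inequalities follow at once with $(p,r)=(q,1)$ and $(1,q)$, exactly as in the paper.

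Your sharpness argument is right in substance, but it is simpler to test with the scale-one cube $Q_0$, which is admissible since $\varepsilon=1$ is allowed. Then $\operatorname{osc}_q(f;Q_0)<\infty$ forces $f\in L^q(Q_0)\subset L^{\frac{n}{n-1}}(Q_0)$, so no localization at a point is needed. Note also that the single spike $|x-x_0|^{1-n}$ is not in $\mathcal{B}(Q_0)$: grid cubes of side $\varepsilon$ filling $B(x_0,c\varepsilon^{1/n})$ give $\varepsilon^{n-1}\sum\operatorname{osc}_1\sim\log\frac1\varepsilon$. So the construction of \cite[Proposition 3]{BBM2015} has to be used as given.
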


To show Theorem \ref{the-new-self},
we first establish the local sparse domination used throughout the
positive range. For every cube
$Q\subset\mathbb{R}^n$, let $\mathcal{D}(Q)$ denote the family of
dyadic subcubes generated from $Q$ by repeatedly bisecting every side.
Whenever unions or characteristic functions are used,
we take the usual
compatible half-open representatives; this does not affect any
Lebesgue integral or average. For every
$R\in\mathcal{D}(Q)\setminus\{Q\}$, let $\widehat R$ denote its unique
dyadic parent. If $\mathcal{A}\subset\mathcal{D}(Q)$, a cube
$R\in\mathcal{A}$ is called \emph{maximal in $\mathcal{A}$} if
no $S\in\mathcal{A}$ satisfies $R\subsetneq S$.
The notion of sparseness and the recursive stopping-time mechanism
behind it are standard; see, for example,
\cite[Section 2.1]{Duarte2024} for an expository account. The precise
external result used in Lemma \ref{lem-new-subunit-sparse} is
Hyt\"onen's streamlined form
\cite[Theorem 2.3 and Section 3]{Hytonen2014} of Lerner's local
oscillation formula; the original local-oscillation result is
\cite{Lerner2010}. The short proof below records explicitly the only
additional step needed here: converting Hyt\"onen's
rearrangement-based coefficients into the arithmetic-mean
$L^p$ oscillations used in the common-scale argument.
We call $\mathcal{S}\subset\mathcal{D}(Q)$ a
\emph{$\frac{1}{2}$-sparse} if
for every $R\in\mathcal{S},$
there exists a measurable set
$G_R\subset R$ such that
$|G_R|\geq\frac{1}{2}|R|$ and the sets
$\{G_R\}_{R\in\mathcal{S}}$ are pairwise disjoint.
A \emph{median} of a real-valued measurable function
$u$ on a cube $R$ is a
number $m_u(R)\in\mathbb{R}$ such that
\begin{align*}
|\{x\in R:\ u(x)>m_u(R)\}|
\leq\frac{1}{2}|R|
\quad\hbox{and}\quad
|\{x\in R:\ u(x)<m_u(R)\}|
\leq\frac{1}{2}|R|.
\end{align*}

We now state the required sparse domination lemma.

\begin{lemma}\label{lem-new-subunit-sparse}
Let $p\in(0,1]$, $Q\subset\mathbb{R}^n$ be a cube, and
$u\in L^1(Q)$ be real-valued. Then there exist a median $m_u(Q)$ of $u$
on $Q$ and a $\frac{1}{2}$-sparse collection
$\mathcal{S}\subset\mathcal{D}(Q)$ with $Q\in\mathcal{S}$ such that
\begin{align}\label{eq-new-subunit-sparse-volume}
\sum_{R\in\mathcal{S}}|R|\leq2|Q|
\end{align}
and
\begin{align}\label{eq-new-subunit-sparse-oscillation}
|u-m_u(Q)|\mathbf{1}_Q
\leq C_{(n,p)}
\sum_{R\in\mathcal{S}}\operatorname{osc}_p(u;R)\mathbf{1}_R
\end{align}
almost everywhere on $Q$.
\end{lemma}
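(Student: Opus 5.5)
We plan to invoke Hyt\"onen's form of Lerner's local oscillation formula \cite[Theorem 2.3 and Section 3]{Hytonen2014}. In that form, for a real-valued measurable $u$ on $Q$ there exist a median $m_u(Q)$ and a $\frac12$-sparse family $\mathcal{S}\subset\mathcal{D}(Q)$ containing $Q$ such that, almost everywhere on $Q$,
\begin{align*}
|u-m_u(Q)|\mathbf{1}_Q\leq 2\sum_{R\in\mathcal{S}}\omega_{\lambda}(u;R)\mathbf{1}_R .
\end{align*}
Here $\omega_\lambda(u;R):=\inf_{c}((u-c)\mathbf{1}_R)^*(\lambda|R|)$ is the rearrangement-based oscillation at a fixed level $\lambda=\lambda_n\in(0,1)$ ($\lambda=2^{-n-2}$ in Hyt\"onen's normalization). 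In some versions the sum runs over $\widehat R$ rather than $R$, or the family has different sparsity constants. If so, we first reindex: $\omega_\lambda(u;\widehat R)$ is controlled by $\omega_{2^{-n}\lambda}$-type quantities and the $p$-oscillation on $\widehat R$. Alternatively, we adjust $\mathcal{S}$ by the standard stopping construction, selecting at each stage the maximal dyadic cubes where $|u-m_u(R)|$ exceeds a large multiple of the local oscillation, and ensure that the total selected measure is at most $\frac12|R|$. That construction directly gives a $\frac12$-sparse family with the sets $G_R:=R\setminus\bigcup\{\text{children selected in }R\}$.

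The volume bound \eqref{eq-new-subunit-sparse-volume} follows immediately from sparseness. The sets $G_R$ are pairwise disjoint subsets of $Q$ with $|G_R|\ge\frac12|R|$, so
\begin{align*}
\sum_{R\in\mathcal{S}}|R|\le 2\sum_{R\in\mathcal{S}}|G_R|\le 2|Q|.
\end{align*}

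The only genuinely new step is the conversion $\omega_\lambda(u;R)\le C_{(n,p)}\operatorname{osc}_p(u;R)$. Choose $c=u_R$, which is finite since $u\in L^1(Q)$. By definition of the decreasing rearrangement, $t:=((u-u_R)\mathbf{1}_R)^*(\lambda|R|)$ satisfies $|\{x\in R: |u(x)-u_R|\ge t\}|\ge\lambda|R|$ (up to the usual strict/non-strict inequality, handled by passing to $t-\delta$ and letting $\delta\to0$). Chebyshev's inequality in $L^p$ then gives
\begin{align*}
\lambda|R|\,t^p\le\int_R|u-u_R|^p\,dx,
\end{align*}
hence $\omega_\lambda(u;R)\le t\le\lambda^{-1/p}\operatorname{osc}_p(u;R)$. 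This holds for every $p\in(0,\infty)$, in particular for $p\in(0,1]$. Inserting it into the sparse bound yields \eqref{eq-new-subunit-sparse-oscillation} with $C_{(n,p)}=2\lambda_n^{-1/p}$. Note that the median $m_u(Q)$ produced by the formula is left untouched; only the coefficients are changed, and the arithmetic mean $u_R$ is used merely as a test constant in the infimum.

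The main obstacle we expect is bookkeeping rather than analysis. We have to make sure that the cited version of the formula delivers exactly the normalization required here: a $\frac12$-sparse family (not merely $\eta$-sparse), containing $Q$, and with coefficients on $R$ rather than $\widehat R$. If the reference delivers an $\eta$-sparse family with $\eta<\frac12$, or parent cubes, we will rerun the stopping-time argument ourselves. At each cube $R\in\mathcal{S}$ we select the maximal $R'\in\mathcal{D}(R)$ with $|u-m_u(R)|$ averaging beyond $K\lambda^{-1/p}\operatorname{osc}_p(u;R)$, or use medians of children. We then use the weak-type bound above, with $K$ large depending on $n$, to force the selected measure below $\frac12|R|$. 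The parent-to-child jump of medians costs at most $C\operatorname{osc}_p(u;\widehat{R'})$, which is absorbed by the term for $R$ since $\widehat{R'}$ is not selected.
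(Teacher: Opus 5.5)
Your proposal is correct and follows the paper's proof essentially step for step. The paper also cites Hyt\"onen's form of Lerner's local oscillation formula with $\lambda=2^{-n-2}$, bounds $\omega_\lambda(u;R)\le\lambda^{-1/p}\operatorname{osc}_p(u;R)$ by Chebyshev's inequality with the test constant $u_R$, and obtains the volume bound from the pairwise disjoint witness sets $G_R$.

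Your contingency stopping-time argument is not needed. The cited theorem already supplies a $\frac12$-sparse family containing $Q$, with the coefficients attached to $R$ itself rather than to its parent.
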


\begin{proof}
Let $\lambda:=2^{-n-2}$. For a measurable function $g$
supported on a set of finite measure, let $g^*$ denote its
\emph{non-increasing rearrangement},
which is defined by setting, for every $t\in(0,\infty)$,
\begin{align*}
g^*(t):=\inf\{\alpha\in[0,\infty):
|\{x:\ |g(x)|>\alpha\}|\leq t\}.
\end{align*}
For a cube $R$, let
\begin{align*}
\omega_\lambda(u;R)
:=\inf_{c\in\mathbb{R}}
\left[(u-c)\mathbf{1}_R\right]^*(\lambda|R|).
\end{align*}
The local median oscillation formula
\cite[Theorem 2.3]{Hytonen2014} provides
a median $m_u(Q)$ and a
$\frac12$-sparse collection
$\mathcal{S}\subset\mathcal{D}(Q)$ such that
\begin{align}\label{eq-new-subunit-hytonen}
|u-m_u(Q)|\mathbf{1}_Q
\leq2\sum_{R\in\mathcal{S}}
\omega_\lambda(u;R)\mathbf{1}_R
\end{align}
almost everywhere on $Q$.
The construction in
\cite[Section 3]{Hytonen2014} starts with $Q$ as its
zeroth-generation cube, so $\mathcal{S}$ may be chosen with
$Q\in\mathcal{S}$.

Fix $R\in\mathcal{S}$. $p\in(0,1]$ and $u\in L^1(Q)$ imply
$\operatorname{osc}_p(u;R)<\infty$. Using this and Chebyshev's inequality,
we find that
\begin{align}\label{2.5x}
\left|\{x\in R:\ |u(x)-u_R|>
\lambda^{-\frac1p}\operatorname{osc}_p(u;R)\}\right|
\leq\lambda|R|.
\end{align}
Therefore, by the definitions of
$\omega_{\lambda}(u;R)$ and the non-increasing rearrangement,
together with \eqref{2.5x}, we conclude that
\begin{align}\label{eq-new-subunit-rearrangement-comparison}
\omega_\lambda(u;R)
\leq\left[(u-u_R)\mathbf{1}_R\right]^*(\lambda|R|)
\leq\lambda^{-\frac1p}\operatorname{osc}_p(u;R).
\end{align}
Combining \eqref{eq-new-subunit-hytonen} and
\eqref{eq-new-subunit-rearrangement-comparison}, we obtain
\eqref{eq-new-subunit-sparse-oscillation} with
$
C_{(n,p)}:=2\lambda^{-\frac1p}
=2^{1+\frac{n+2}{p}}.
$
Finally, the $\frac12$-sparseness established in
\cite[Theorem 2.3 and (3.4)]{Hytonen2014} provides pairwise disjoint
witness sets $\{G_R\}_{R\in\mathcal{S}}$, and hence
\begin{align*}
\sum_{R\in\mathcal{S}}|R|
\leq2\sum_{R\in\mathcal{S}}|G_R|
\leq2|Q|,
\end{align*}
which proves \eqref{eq-new-subunit-sparse-volume}.
This completes the proof of Lemma
\ref{lem-new-subunit-sparse}.
\end{proof}

The following standard sparse-sequence estimate is an adjoint form of the
dyadic Carleson embedding theorem; see, for instance,
\cite[Lemma 2.11]{HanninenLoristSinko2025}. We include the short proof
for completeness.

\begin{lemma}\label{lem-new-positive-sparse}
Let $r\in[1,\infty)$, $Q\subset\mathbb{R}^n$ be a cube,
$\mathcal{S}\subset\mathcal{D}(Q)$ be $\frac12$-sparse, and
$\{c_R\}_{R\in\mathcal{S}}\subset[0,\infty)$. Then
there exists a positive constant $C_{(r)}$,
depending only on $r$, such that
\begin{align}\label{eq-new-positive-sparse}
\left\|
\sum_{R\in\mathcal{S}}c_R\mathbf{1}_R
\right\|_{L^r(Q)}
\leq C_{(r)}
\left[
\sum_{R\in\mathcal{S}}|R|c_R^r
\right]^{\frac1r}.
\end{align}
\end{lemma}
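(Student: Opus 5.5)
We will prove \eqref{eq-new-positive-sparse} by duality, which reduces it to a Carleson-type bound for sparse families. The case $r=1$ is immediate: $\|\sum_R c_R\mathbf{1}_R\|_{L^1(Q)}=\sum_R|R|c_R$, so $C_{(1)}=1$ works. For $r\in(1,\infty)$, we may assume the sum on the right is finite; by monotone convergence we may also assume $\mathcal{S}$ is finite. Then
$$
\left\|\sum_{R\in\mathcal{S}}c_R\mathbf{1}_R\right\|_{L^r(Q)}
=\sup_{g}\sum_{R\in\mathcal{S}}c_R\int_R g(x)\,dx ,
$$
where the supremum is over $g\geq0$ with $\|g\|_{L^{r'}(Q)}\leq1$.

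Fix such a $g$ and write $\int_Rg=|R|\langle g\rangle_R$, where $\langle g\rangle_R:=\fint_R g$. Hölder's inequality in the sequence variable $R$, with weights $|R|$, gives
$$
\sum_{R}c_R|R|\langle g\rangle_R\leq\left[\sum_R|R|c_R^r\right]^{\frac1r}\left[\sum_R|R|\langle g\rangle_R^{r'}\right]^{\frac1{r'}} .
$$
So it suffices to prove the sparse Carleson embedding $\sum_{R\in\mathcal{S}}|R|\langle g\rangle_R^{r'}\lesssim\|g\|_{L^{r'}(Q)}^{r'}$.

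For this embedding we use the witness sets. By $\frac12$-sparseness, $|R|\leq2|G_R|$. On $G_R$ we have $\langle g\rangle_R\leq M^{\mathcal{D}}g$ pointwise, where $M^{\mathcal{D}}$ is the dyadic maximal operator on $\mathcal{D}(Q)$. Hence
$$
\sum_R|R|\langle g\rangle_R^{r'}\leq2\sum_R\int_{G_R}(M^{\mathcal{D}}g)^{r'}\leq2\int_Q(M^{\mathcal{D}}g)^{r'},
$$
using that the $G_R$ are pairwise disjoint. Doob's inequality gives $\|M^{\mathcal{D}}g\|_{L^{r'}}\leq r\|g\|_{L^{r'}}$, since $(r')'=r$. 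Combining the estimates yields $C_{(r)}=2^{1/r'}r$.

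There is no serious obstacle in this argument. The only point needing care is that $M^{\mathcal{D}}$ is bounded on $L^{r'}$, which requires $r'<\infty$, that is, $r>1$. This is exactly why the case $r=1$ is handled separately. Measurability and finiteness issues are removed by the reduction to finite $\mathcal{S}$ and monotone convergence.
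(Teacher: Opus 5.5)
Your proposal is correct and follows essentially the paper's route: nonnegative $L^r$--$L^{r'}$ duality, the witness-set bound $|R|\fint_R g\le 2\int_{G_R}M_Q^{\mathcal{D}}g$ with disjoint $G_R$, and the (dimension-free) $L^{r'}$-boundedness of the dyadic maximal operator, with $r=1$ handled by Tonelli. The only difference is ordering: you apply H\"older in the sequence variable first and then prove the sparse Carleson embedding $\sum_R|R|(\fint_R g)^{r'}\lesssim\|g\|_{L^{r'}(Q)}^{r'}$, whereas the paper inserts the pointwise maximal bound first and applies H\"older afterwards over the disjoint witness sets. This leads to the same kind of constant.
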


\begin{proof}
When $r=1$, the assertion follows directly from Tonelli's theorem.
Assume that $r\in(1,\infty)$, and let
$\{G_R\}_{R\in\mathcal{S}}$ be sparse witness sets. Denote by
$M_Q^{\mathcal{D}}$ the dyadic maximal operator on $Q$. If
$\sum_{R\in\mathcal{S}}|R|c_R^r=+\infty$,
the assertion is immediate, so assume that this sum
is finite. Let $g\in L^{r'}(Q)$ be
nonnegative. For every $R\in\mathcal{S}$, sparseness implies
$G_R\subset R$ and $|R|\leq2|G_R|$.
Moreover, by the definition of $M_Q^{\mathcal{D}}g$, for every
$x\in G_R$,
$\fint_R g(y)\,dy
\leq(M_Q^{\mathcal{D}}g)(x).$
From these and the non-negativity of $g$, we infer that
\begin{align*}
|R|\fint_R g(y)\,dy
\leq2|G_R|\fint_R g(y)\,dy
\leq2\int_{G_R}\left(M_Q^{\mathcal{D}}g\right)(x)\,dx.
\end{align*}
Since $\mathcal{S}\subset\mathcal{D}(Q)$ is countable and every
summand below is nonnegative, Tonelli's theorem applies. Using the
preceding estimate, the pairwise disjointness of the sets $G_R$, and
H\"older's inequality, we obtain
\begin{align*}
\int_Q\left[
\sum_{R\in\mathcal{S}}c_R\mathbf{1}_R(x)
\right]g(x)\,dx
&=\sum_{R\in\mathcal{S}}c_R|R|\fint_R g(y)\,dy
\leq2\sum_{R\in\mathcal{S}}
c_R\int_{G_R}\left(M_Q^{\mathcal{D}}g\right)(x)\,dx\\
&\leq2
\left[
\sum_{R\in\mathcal{S}}|G_R|c_R^r
\right]^{\frac1r}
\left\|M_Q^{\mathcal{D}}g\right\|_{L^{r'}(Q)}
\leq2
\left[
\sum_{R\in\mathcal{S}}|R|c_R^r
\right]^{\frac1r}
\left\|M_Q^{\mathcal{D}}g\right\|_{L^{r'}(Q)}\\
&\leq C_{(r)}
\left[
\sum_{R\in\mathcal{S}}|R|c_R^r
\right]^{\frac1r}
\|g\|_{L^{r'}(Q)}.
\end{align*}
The last inequality follows from the $L^{r'}(Q)$ boundedness of the
dyadic maximal operator. Hence, the nonnegative form of $L^r(Q)$ duality
implies
\begin{align*}
\left\|
\sum_{R\in\mathcal{S}}c_R\mathbf{1}_R
\right\|_{L^r(Q)}
&=\sup_{g\geq0,\
\|g\|_{L^{r'}(Q)}\leq1}
\int_Q\left[
\sum_{R\in\mathcal{S}}c_R\mathbf{1}_R(x)
\right]g(x)\,dx
\leq C_{(r)}
\left[
\sum_{R\in\mathcal{S}}|R|c_R^r
\right]^{\frac1r},
\end{align*}
which is \eqref{eq-new-positive-sparse}.
This completes the proof of Lemma
\ref{lem-new-positive-sparse}.
\end{proof}

The following cross-exponent estimate is the main sparse
self-improvement principle of this section.

\begin{proposition}\label{prop-new-sparse-cross-exponent}
Let $n\in\mathbb{N}\cap[2,\infty)$,
$p\in(0,1]$, and
$r\in[1,\frac{n}{n-1})$. Then there exists a positive constant
$C_{(n,p,r)}$ such that, for every
$f\in L^1(Q_0)$,
\begin{align}\label{eq-new-sparse-cross-exponent}
\|f\|_{\mathcal{B}_r(Q_0)}
\leq C_{(n,p,r)}\|f\|_{\mathcal{B}_p(Q_0)}.
\end{align}
\end{proposition}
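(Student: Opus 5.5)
The plan is to fix $\varepsilon\in(0,1)$ (admissible by Remark \ref{rem-Bq}(iii)) and $\{Q_j\}_{j\in J}\in\mathscr{F}_\varepsilon(Q_0)$, and to bound $\varepsilon^{n-1}\sum_j\operatorname{osc}_r(f;Q_j)$ by $C\|f\|_{\mathcal{B}_p(Q_0)}$. Complex $f$ reduces to its real and imaginary parts, so assume $f$ is real.

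\emph{Local reduction.} For each $j$, apply Lemma \ref{lem-new-subunit-sparse} to $u=f$ on $Q_j$. This gives a median $m_j$ and a $\frac12$-sparse $\mathcal{S}_j\subset\mathcal{D}(Q_j)$. Since $r\ge1$, we have $\operatorname{osc}_r(f;Q_j)\le 2\inf_c(\fint_{Q_j}|f-c|^r)^{1/r}$. Lemma \ref{lem-new-positive-sparse} with $c_R=\operatorname{osc}_p(f;R)$ then yields
\[
\operatorname{osc}_r(f;Q_j)\lesssim \Bigl[\sum_{k\ge0}2^{-kn}a_{j,k}\Bigr]^{1/r},
\qquad a_{j,k}:=\sum_{R\in\mathcal{S}_j,\ \ell(R)=2^{-k}\varepsilon}\operatorname{osc}_p(f;R)^r .
\]
By \eqref{eq-new-subunit-sparse-volume}, each $\mathcal{S}_j$ has at most $2\cdot2^{kn}$ cubes of generation $k$. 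Since $t\mapsto t^{1/r}$ is subadditive, the right-hand side is at most $\sum_k2^{-kn/r}a_{j,k}^{1/r}$. It therefore suffices to show that, with $\delta:=2^{-k}\varepsilon$,
\[
\varepsilon^{n-1}\sum_j a_{j,k}^{1/r}\lesssim 2^{k(n-1)}\|f\|_{\mathcal{B}_p(Q_0)}
\]
up to a factor that is at most polynomial in $k$. The geometric series $\sum_k2^{k(n-1-n/r)}$ then converges exactly because $r<\frac n{n-1}$. This is where the hypothesis enters.

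\emph{Recombination into admissible families.} The generation-$k$ cubes from all $j$ are pairwise disjoint, because the $Q_j$ are disjoint. They all have side $\delta$, and their budget is $\delta^{1-n}=2^{k(n-1)}\varepsilon^{1-n}\ge 2^{k(n-1)}\#J$. So choosing any $m=2^{k(n-1)}$ cubes from each $Q_j$ gives an admissible family at scale $\delta$. Hence the sum over $j$ of the top-$m$ partial sums $S_{j}$ of $\{\operatorname{osc}_p(f;R)\}$ in generation $k$ is at most $\delta^{1-n}\|f\|_{\mathcal{B}_p}$. More generally, choosing rank blocks $[2^l,2^{l+1})$ from every $j$ and splitting into $\max\{1,2^{l-k(n-1)}\}$ admissible subfamilies controls every rank layer. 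I will bound $a_{j,k}^{1/r}$, which is an $\ell^r$ norm of a sequence of length $\le2\cdot2^{kn}$, by these rank-layer sums. The entries of rank $\ge 2^l$ are bounded by the average of the previous layer, and the $\ell^r$ norm of layer $l$ is at most $2^{l/r}$ times that bound.

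\emph{Main obstacle.} The crude version of this layering loses. The tail of ranks between $2^{k(n-1)}$ and $2^{kn}$ contributes a $k$-independent term of size $\|f\|$ per generation, and then the sum over $k$ diverges. To fix this, I would first try to exploit that $\ell^r$ with $r>1$ is strictly stronger than $\ell^1$ on the layers. Layer $l$ contributes at most $2^{l(1/r-1)}$ times its $\ell^1$ mass. Its $\ell^1$ mass, summed over $j$, is at most $2^{l-k(n-1)}\delta^{1-n}\|f\|$ for $l>k(n-1)$. The factor $2^{l/r}$ then has to be paid against the prefactor $2^{-kn/r}$ from the first step. Since $l\le kn+1$, this gives a per-layer bound $\lesssim 2^{k(n-1)}\cdot 2^{(l-kn)/r}\cdot(\text{const})$. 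Summing over $l$ is then geometric, dominated by $l\approx kn$, and it leaves exactly $2^{k(n-1-n/r)}\cdot 2^{k(n-1)}\cdot 2^{-k(n-1)}$-type bounds. Making this bookkeeping close, in particular choosing between Hölder over $j$ and layering within each $j$ so that the power of $\varepsilon$ cancels exactly, is the step I expect to require the most care. The case $r=1$ with $p<1$ is handled by the same layering with $r=1$ inside each layer.
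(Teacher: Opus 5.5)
There is a genuine gap, and it lies in your reduction rather than in bookkeeping that more care could fix. Once you bound $[\sum_k 2^{-kn}a_{j,k}]^{1/r}$ by $\sum_k 2^{-kn/r}a_{j,k}^{1/r}$, you need per-generation bounds that are summable in $k$. The information your argument uses cannot supply them. That information is the volume bound \eqref{eq-new-subunit-sparse-volume} plus the admissible-family constraints on the coefficients $\operatorname{osc}_p(f;R)$.

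Nothing in that information prevents generation $k$ of $\mathcal{S}_j$ from consisting of all $2^{kn}$ dyadic subcubes of $Q_j$, each with $\operatorname{osc}_p(f;R)\sim M:=\|f\|_{\mathcal{B}_p(Q_0)}$. For instance, any $f$ with $|f|\leq M$ oscillating below scale $2^{-k}\varepsilon$ has $\|f\|_{\mathcal{B}_p(Q_0)}\leq 2M$. For a family with $\#J\sim\varepsilon^{1-n}$ this gives $\varepsilon^{n-1}\sum_j a_{j,k}^{1/r}\sim 2^{kn/r}M$. That is exponentially larger than your target $2^{k(n-1)}\operatorname{poly}(k)M$, since $\frac nr>n-1$. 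After the prefactor $2^{-kn/r}$, this generation still contributes $\sim M$, so no valid per-generation bound can be summable.

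If the sparse volume is instead spread evenly over $K$ generations, your bound after subadditivity is $\sim K^{1-1/r}M$, while the true quantity $[\sum_k2^{-kn}a_{j,k}]^{1/r}$ stays $\sim M$. The subadditivity step is what loses.

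Your rank-layer computation, done correctly, confirms the obstacle. For $l>k(n-1)$ you get $\varepsilon^{n-1}\sum_j\|\text{layer }l\|_{\ell^r}\lesssim 2^{l/r}M$, hence $2^{(l-kn)/r}M$ after the prefactor. Summing up to $l\approx kn$ gives $\sim M$ per generation, not a factor $2^{k(n-1-n/r)}$. The same objection applies to your $r=1$ case. There subadditivity is harmless, but the per-generation worst case is still $\sim M$.

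The missing idea is to split by the size of the coefficient \emph{before} decoupling generations, which is what the paper does.

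\textbf{Cubes with $\operatorname{osc}_p(f;R)\leq 2M$.} Treat all generations at once inside a single $\ell^r$ sum:
$[\sum_{R}|R|\operatorname{osc}_p(f;R)^r]^{1/r}\leq 2M(\sum_{R\in\mathcal{S}_j}|R|)^{1/r}\leq 2^{1+1/r}M|Q_j|^{1/r}$.
After normalizing and summing over $\#J\leq\varepsilon^{1-n}$ cubes, this contributes $\lesssim M$.

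\textbf{Cubes with $\operatorname{osc}_p(f;R)>2M$.} At level $\rho_k=2^{-k}\varepsilon$ there are fewer than $\lfloor\rho_k^{1-n}\rfloor$ of them across all $j$. Otherwise, choosing that many would give a sum exceeding $2M\lfloor\rho_k^{1-n}\rfloor\geq\rho_k^{1-n}M$, contradicting the $\mathcal{B}_p$ bound. So they form one admissible family, and their total $\ell^1$ mass is at most $\rho_k^{1-n}M$. For these cubes your per-generation target holds exactly: $\varepsilon^{n-1}\sum_j (a^{\mathrm{high}}_{j,k})^{1/r}\leq 2^{k(n-1)}M$. Minkowski across $k$ and $\ell^r\leq\ell^1$ are then harmless, giving $M\sum_k 2^{-k(1-n+n/r)}$. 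This series converges precisely because $r<\frac{n}{n-1}$, and the rank-layer regrouping becomes unnecessary.
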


\begin{proof}
Assume first that $f$ is real-valued and let
$
M:=\|f\|_{\mathcal{B}_p(Q_0)}.
$
We may assume that $M<\infty$. Fix $\varepsilon\in(0,1]$ and
$\{Q_j\}_{j\in J}\in\mathscr{F}_{\varepsilon}(Q_0)$. Apply Lemma
\ref{lem-new-subunit-sparse} to each $Q_j$, and denote the resulting
collection by $\mathcal{S}_j\subset\mathcal{D}(Q_j)$ and the
corresponding median by $m_{Q_j}$.
Using H\"older's inequality and
Lemmas \ref{lem-new-subunit-sparse} and
\ref{lem-new-positive-sparse}, we have
\begin{align}\label{eq-new-local-cross-exponent}
\|f-f_{Q_j}\|_{L^r(Q_j)}
&\leq2\|f-m_{Q_j}\|_{L^r(Q_j)}
\leq2C_{(n,p)}
\left\|
\sum_{R\in\mathcal{S}_j}
\operatorname{osc}_p(f;R)\mathbf{1}_R
\right\|_{L^r(Q_j)}\nonumber\\
&\leq C_{(n,p,r)}
\left[
\sum_{R\in\mathcal{S}_j}|R|
\operatorname{osc}_p(f;R)^r
\right]^{\frac1r}.
\end{align}
For every $k\in\mathbb{Z}_+$, let
\begin{align*}
\rho_k:=2^{-k}\varepsilon,
\quad
\mathcal{T}_k
:=\left\{
R\in\bigcup_{j\in J}\mathcal{S}_j:\ \ell(R)=\rho_k
\right\},
\quad\text{and }
K_k:=\left\lfloor\rho_k^{1-n}\right\rfloor.
\end{align*}
The cubes in $\mathcal{T}_k$ have pairwise disjoint interiors. Hence,
for every $\mathcal{A}\subset\mathcal{T}_k$ satisfying
$\#\mathcal{A}\leq K_k$, we have
$\mathcal{A}\in\mathscr{F}_{\rho_k}(Q_0)$ and therefore
\begin{align}\label{eq-new-level-coefficient-budget}
\sum_{R\in\mathcal{A}}\operatorname{osc}_p(f;R)
\leq\rho_k^{1-n}M.
\end{align}
For every $k\in\mathbb{Z}_+$, let
$$
\mathcal{L}_k
:=\{R\in\mathcal{T}_k:\ \operatorname{osc}_p(f;R)\leq2M\}
$$
and
$$
\mathcal{U}_k
:=\{R\in\mathcal{T}_k:\ \operatorname{osc}_p(f;R)>2M\}.
$$
Also let
$\mathcal{L}:=\bigcup_{k=0}^{\infty}\mathcal{L}_k$ and
$\mathcal{U}:=\bigcup_{k=0}^{\infty}\mathcal{U}_k$.
Since $\rho_k^{1-n}\geq1$, we have
$
K_k\geq\frac12\rho_k^{1-n}.
$
If $\#\mathcal{U}_k\geq K_k$, choosing $K_k$ cubes from
$\mathcal{U}_k$ would contradict
\eqref{eq-new-level-coefficient-budget}. Consequently,
\begin{align}\label{eq-new-high-coefficient-budget}
\#\mathcal{U}_k<K_k
\quad\hbox{and}\quad
\sum_{R\in\mathcal{U}_k}\operatorname{osc}_p(f;R)
\leq\rho_k^{1-n}M.
\end{align}
By the sparse packing estimate
\eqref{eq-new-subunit-sparse-volume} and the cardinality bound on $J$,
we find that
\begin{align}\label{eq-new-low-sparse-energy}
\sum_{j\in J}
\left[
\sum_{R\in\mathcal{S}_j\cap\mathcal{L}}
|R|\operatorname{osc}_p(f;R)^r
\right]^{\frac1r}
&\leq
2M\sum_{j\in J}
\left[
\sum_{R\in\mathcal{S}_j}|R|
\right]^{\frac1r}\nonumber\\
&\leq
C_{(r)}M(\#J)\varepsilon^{\frac nr}
\leq C_{(r)}M\varepsilon^{1-n+\frac nr}.
\end{align}
Since $r<\frac{n}{n-1}$, we have
$1-n+\frac nr>0$. Using Minkowski's inequality, the
bound of the $\ell^r$-norm by the $\ell^1$ norm,
\eqref{eq-new-high-coefficient-budget}, and the geometric-series
identity, we find that
\begin{align}\label{eq-new-high-sparse-energy}
&\sum_{j\in J}
\left[
\sum_{R\in\mathcal{S}_j\cap\mathcal{U}}
|R|\operatorname{osc}_p(f;R)^r
\right]^{\frac1r}\nonumber\\
&\quad\leq
\sum_{k=0}^{\infty}\rho_k^{\frac nr}
\sum_{j\in J}
\left[
\sum_{R\in\mathcal{S}_j\cap\mathcal{U}_k}
\operatorname{osc}_p(f;R)^r
\right]^{\frac1r}\nonumber\\
&\quad\leq
\sum_{k=0}^{\infty}\rho_k^{\frac nr}
\sum_{R\in\mathcal{U}_k}\operatorname{osc}_p(f;R)
\leq
M\sum_{k=0}^{\infty}\rho_k^{1-n+\frac nr}\nonumber\\
&\quad=
M\varepsilon^{1-n+\frac nr}
\sum_{k=0}^{\infty}2^{-k(1-n+\frac nr)}
=\frac{M\varepsilon^{1-n+\frac nr}}
{1-2^{-(1-n+\frac nr)}}.
\end{align}
Thus the restriction
$r<\frac{n}{n-1}$ enters this proof precisely through the positivity of
$1-n+\frac nr$, which makes the dyadic high-oscillation geometric series
converge.
Combining \eqref{eq-new-local-cross-exponent},
\eqref{eq-new-low-sparse-energy}, and
\eqref{eq-new-high-sparse-energy}, we obtain
\begin{align*}
\sum_{j\in J}\|f-f_{Q_j}\|_{L^r(Q_j)}
\leq C_{(n,p,r)}M\varepsilon^{1-n+\frac nr}.
\end{align*}
Consequently,
\begin{align*}
\varepsilon^{n-1}\sum_{j\in J}\operatorname{osc}_r(f;Q_j)
=\varepsilon^{n-1-\frac nr}
\sum_{j\in J}\|f-f_{Q_j}\|_{L^r(Q_j)}
\leq C_{(n,p,r)}M.
\end{align*}
Taking both suprema in \eqref{eq-new-Bq}
proves \eqref{eq-new-sparse-cross-exponent} for
real-valued $f$.

For a general complex-valued function $f$, write $f=u+iv$.
Applying the real-valued estimate just proved to $u$ and $v$, we
conclude that
\begin{align*}
\|f\|_{\mathcal{B}_r(Q_0)}
&\leq\|u\|_{\mathcal{B}_r(Q_0)}
+\|v\|_{\mathcal{B}_r(Q_0)}\\
&\leq C_{(n,p,r)}
\left[
\|u\|_{\mathcal{B}_p(Q_0)}
+\|v\|_{\mathcal{B}_p(Q_0)}
\right]\\
&\leq2C_{(n,p,r)}\|f\|_{\mathcal{B}_p(Q_0)}.
\end{align*}
This completes the proof of
Proposition \ref{prop-new-sparse-cross-exponent}.
\end{proof}

We can now prove the main theorem in this section.

\begin{proof}[Proof of Theorem \ref{the-new-self}]
When $q\in(0,1)$, by Jensen's inequality and Proposition
\ref{prop-new-sparse-cross-exponent} with $(p,r)=(q,1)$,
we find that
\begin{align*}
\|f\|_{\mathcal{B}_q(Q_0)}
\leq\|f\|_{\mathcal{B}(Q_0)}
\leq C_{(n,q)}\|f\|_{\mathcal{B}_q(Q_0)}.
\end{align*}
When $q=1$, the assertion is immediate. When
$q\in(1,\frac{n}{n-1})$, using H\"older's inequality and Proposition
\ref{prop-new-sparse-cross-exponent} with $(p,r)=(1,q)$,
we conclude that
\begin{align*}
\|f\|_{\mathcal{B}(Q_0)}
\leq\|f\|_{\mathcal{B}_q(Q_0)}
\leq C_{(n,q)}\|f\|_{\mathcal{B}(Q_0)}.
\end{align*}
These prove \eqref{eq-new-self-norms}.

It remains to prove that the exponent range is sharp.
Applying \cite[Proposition 3]{BBM2015} to $Q_0$,
we can choose a nonnegative representative $f$ of an element of
$\mathcal{B}(Q_0)$ such that
$
f\notin L^{\frac{n}{n-1}}(Q_0).
$
Assume, to the contrary, that $f\in\mathcal{B}_q(Q_0)$ for some
$q\in[\frac{n}{n-1},\infty)$. Testing
the definition in \eqref{eq-new-Bq}
at scale $1$, we obtain
\begin{align*}
\|f-f_{Q_0}\|_{L^q(Q_0)}
=\operatorname{osc}_q(f;Q_0)
\leq\|f\|_{\mathcal{B}_q(Q_0)}.
\end{align*}
Since $f\in L^1(Q_0)$, the average $f_{Q_0}$ is finite, and therefore
$f\in L^q(Q_0)$. Since
$q\geq\frac{n}{n-1}$ and $|Q_0|=1$, H\"older's inequality then implies
\begin{align*}
\|f\|_{L^{\frac{n}{n-1}}(Q_0)}
\leq\|f\|_{L^q(Q_0)}<\infty,
\end{align*}
which is a contradiction.
This completes the proof of Theorem
\ref{the-new-self}.
\end{proof}

\begin{remark}\label{rem-new-self-perspective}
The local median sparse domination and the sparse-sequence estimate
used above are standard ingredients; see
\cite{Lerner2010,Hytonen2014,HanninenLoristSinko2025}. The contribution
specific to the present setting is the cross-exponent estimate in
Proposition \ref{prop-new-sparse-cross-exponent}. Thus one sparse
median-oscillation argument treats both sides of $q=1$, including the
subunit range, without using a John--Nirenberg-type inequality or
duality. It is also worth noting that Theorem
\ref{the-new-self} is
not a formal consequence of the known weak endpoint embedding
$f-f_{Q_0}\in L^{\frac{n}{n-1},\infty}(Q_0)$ for
$f\in\mathcal{B}(Q_0)$
obtained in \cite[Theorem 2]{BBM2015}.
\end{remark}

At the end of this section,
we compare $\mathcal{B}_q(Q_0)$ with Lebesgue spaces modulo
constants. We then determine how
$\mathcal{B}_q(Q_0)$ changes as $q$ varies.

\begin{proposition}\label{prop-new-Lq-position}
\begin{itemize}
\item[{\rm(i)}]
For every $q\in(0,\infty)$, the following inclusion is continuous and
proper:
\begin{align}\label{eq-new-Bq-proper-Lq}
\mathcal{B}_q(Q_0)\subsetneq L^q(Q_0)/\mathbb{K}.
\end{align}

\item[{\rm(ii)}]
Assume that $n\in\mathbb{N}\cap[2,\infty)$.
For every $q\in(0,\frac{n}{n-1})$, the spaces
$\mathcal{B}_q(Q_0)$ and
$L^{\frac{n}{n-1}}(Q_0)/\mathbb{K}$ are incomparable,
namely,
\begin{align*}
\mathcal{B}_q(Q_0)
\nsubseteq L^{\frac{n}{n-1}}(Q_0)/\mathbb{K}
\quad\text{and}\quad
L^{\frac{n}{n-1}}(Q_0)/\mathbb{K}
\nsubseteq\mathcal{B}_q(Q_0).
\end{align*}
In contrast, for every $q\in[\frac{n}{n-1},\infty)$, the following
inclusion is continuous and proper:
\begin{align}\label{eq-new-Bq-proper-critical-Lq}
\mathcal{B}_q(Q_0)
\subsetneq L^{\frac{n}{n-1}}(Q_0)/\mathbb{K}.
\end{align}

\item[{\rm(iii)}]
Assume that $n\in\mathbb{N}\cap[2,\infty)$.
For every $q_1,q_2,q_3\in(0,\infty)$ satisfying
$q_1<\frac{n}{n-1}\leq q_2<q_3$,
\begin{align}\label{eq-new-Bq-strict-scale}
\mathcal{B}_{q_3}(Q_0)
\subsetneq\mathcal{B}_{q_2}(Q_0)
\subsetneq\mathcal{B}_{q_1}(Q_0)
=\mathcal{B}(Q_0).
\end{align}
\end{itemize}
\end{proposition}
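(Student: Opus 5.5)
For (i), the continuous inclusion comes from testing the definition \eqref{eq-new-Bq} at scale $\varepsilon=1$ with the single cube $Q_0$, which is admissible because $\#J=1\le1$. This gives $\|f-f_{Q_0}\|_{L^q(Q_0)}=\operatorname{osc}_q(f;Q_0)\le\|f\|_{\mathcal{B}_q(Q_0)}$, and the left side is the quotient (quasi-)norm up to a constant $C_{(q)}$ when $q<1$. For properness I will exhibit a bounded function, hence in every $L^q$, that is not in $\mathcal{B}_q(Q_0)$. The natural choice is an oscillating characteristic function $f=\mathbf{1}_E$ with $E$ a union of $N$ parallel slabs of width $\frac1{2N}$. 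At scale $\varepsilon=\frac1{N}$ I can choose $\lfloor\varepsilon^{1-n}\rfloor$ disjoint cubes, each centered on a slab boundary. Each such cube has $\operatorname{osc}_q\gtrsim_{q}1$, so the sum is $\gtrsim\varepsilon^{n-1}\varepsilon^{1-n}\sim1$. This is only bounded, so I instead take a sum $f=\sum_k a_k\mathbf{1}_{E_k}$ of such layered sets, with frequencies $N_k$ rapidly increasing and $\sum_k a_k<\infty$ but unbounded contributions. The simplest route is to let the number of interfaces crossed grow: use $N$ slabs but test at scale $\varepsilon=\frac1{N}$ with cubes each of which sees full oscillation. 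Only one cube per slab per column is needed, and the admissible budget $\varepsilon^{1-n}=N^{n-1}$ already fills one hyperplane layer. So I will instead use a function with many interfaces, namely $f(x)=g(x_1)$ with $g$ a bounded function whose jumps accumulate, for example $g=\mathbf{1}_{\bigcup_k I_k}$ with slabs of total perimeter $P(E,Q_0)=\infty$. Then Theorem \ref{the-new-fixed-perimeter} (or a direct count) gives $\|\mathbf{1}_E\|_{\mathcal{B}_q(Q_0)}\gtrsim\min\{1,P\}$, which is only bounded. So unboundedness must come from amplitude. I will therefore take $f=\sum_k \mathbf{1}_{E_k}$ where $E_k$ consists of $2^{k}$ slabs near the hyperplane $x_1=2^{-k}$. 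Actually the cleanest option is $f(x)=\log|x_1|$-type or $f(x)=\sum_k \mathbf{1}_{\{x_1\in A_k\}}$ with disjoint $A_k$ located in $(2^{-k-1},2^{-k})$, each containing $M_k$ slabs. At scale $\varepsilon_k\approx 2^{-k}/M_k$, the $\varepsilon_k^{1-n}$ admissible cubes can all sit on distinct interfaces of $E_k$ as long as the number of interfaces times the cubes per interface, $M_k\cdot(2^{-k}/\varepsilon_k)^{0}\cdot\varepsilon_k^{1-n}$, fits geometrically. This gives a contribution of order $\min\{1,M_k\}$ at each level. To get divergence I will instead use the characteristic function in a genuinely bounded but non-$\mathcal{B}$ way. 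The known fact (Bourgain et al.) that $\mathcal{B}\subsetneq L^{n/(n-1),\infty}$ does not contain all of $L^\infty$ is what I will use: $\mathbf{1}_E$ with $E$ a set of infinite perimeter having many interfaces at every scale, for instance $E=\{x: \sin(2\pi 2^{k}x_1)>0\}$ in the dyadic shell $x_1\in(2^{-k-1},2^{-k})$ with $2^{k}\cdot 4^{k}$ oscillations. Its $\mathcal{B}$ norm is comparable to $\min\{1,P\}$ and thus finite. So bounded functions are insufficient, and the right example is unbounded but in $L^q$. I will take $f=\sum_k a_k\mathbf{1}_{F_k}$ with $F_k$ a slab family at frequency $N_k$, with $\sum_k a_k^q|F_k|<\infty$ but $a_k\to\infty$. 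Testing at the single scale $1/N_k$ gives a lower bound $\gtrsim a_k$, which is unbounded.

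For (ii), the first noninclusion is exactly the Bourgain--Brezis--Mironescu example \cite[Proposition 3]{BBM2015} combined with Theorem \ref{the-new-self}. The second noninclusion follows from the (i)-type example built with $a_k\to\infty$, $\sum_k a_k^{n/(n-1)}|F_k|<\infty$ and disjoint supports. The inclusion \eqref{eq-new-Bq-proper-critical-Lq} comes from the scale-one test and H\"older's inequality, and its properness uses the same example.

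For (iii), the equality is Theorem \ref{the-new-self}, and the monotone inclusions follow from H\"older's inequality applied cubewise. For strictness, I would take a radial power function $|x-x_0|^{-\alpha}$ centered at an interior point, with $\frac{n}{q_3}\le\alpha<\frac n{q_2}$ and $\alpha<n-1$. Scaling shows $\operatorname{osc}_{q}$ on a cube of side $\varepsilon$ near $x_0$ is about $\varepsilon^{-\alpha}$ for $q<n/\alpha$. Since only about one cube per dyadic annulus scale is large, the sum is about $\varepsilon^{n-1-\alpha}$ summed geometrically, which stays bounded. The last strictness follows from the BBM example, which lies in $\mathcal{B}$ but not in $L^{n/(n-1)}$. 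The main obstacle I expect is the second strictness in (iii) together with the delicate counting in the power example.
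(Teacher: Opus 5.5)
Your overall plan matches the paper's: the scale-one test gives both continuous inclusions. An amplitude-weighted thin-slab function gives properness in (i) and the second noninclusion in (ii). The Bourgain--Brezis--Mironescu example gives the first noninclusion in (ii) and $\mathcal{B}_{q_2}(Q_0)\subsetneq\mathcal{B}(Q_0)$. H\"older's inequality gives the monotone inclusions.

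Part (i) needs repair. Discard the detour through bounded functions: every $f\in L^\infty(Q_0)$ lies in $\mathcal{B}_q(Q_0)$ for every $q$, because $\operatorname{osc}_q(f;Q)\le2\|f\|_{L^\infty(Q_0)}$ and $\varepsilon^{n-1}\#J\le1$. In particular, your assertion that $\mathcal{B}(Q_0)$ fails to contain $L^\infty(Q_0)$ is false. Your final example also has to be pinned down. A slab family spread across $Q_0$ has $|F_k|\approx\frac12$, which is incompatible with $a_k\to\infty$ and $\sum_k a_k^q|F_k|<\infty$. Instead take $F_k$ to be a single slab of thickness $\frac12\varepsilon_k$, with the enclosing slabs of thickness $\varepsilon_k$ pairwise disjoint. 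Then each of the $\sim\varepsilon_k^{1-n}$ cubes of side $\varepsilon_k$ tiling the enclosing slab takes only the values $a_k$ and $0$, each on half its volume. Hence $\operatorname{osc}_t=\frac{a_k}{2}$ for every $t$, and the sum at scale $\varepsilon_k$ is $\gtrsim a_k$. This is exactly the paper's $g_s=\sum_k2^k\mathbf{1}_{I_k^+\times(\frac14,\frac34)^{n-1}}$ with $|I_k|=2^{-Mk}$ and $M>\max\{s,2\}$.

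In (iii) you use the paper's test function $u=\eta|x-x_0|^{-\alpha}$, but you take a different route to $u\in\mathcal{B}_{q_2}(Q_0)$, and your count is wrong as stated. The annulus $\{2^i\varepsilon\le|x-x_0|<2^{i+1}\varepsilon\}$ can contain $\sim2^{in}$ disjoint cubes of side $\varepsilon$, not about one. For $i$ beyond a dimensional constant, each has $\operatorname{osc}_{q_2}(u;Q)\lesssim\varepsilon(2^i\varepsilon)^{-\alpha-1}$ by the mean value theorem. So the annulus contributes $\lesssim(2^i\varepsilon)^{n-1-\alpha}$, which increases with $i$. The sum is bounded only because $2^i\varepsilon\lesssim1$, equivalently because $\nabla u\in L^1$ (as $\alpha+1<n$). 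The $O(1)$ cubes within distance $C\varepsilon$ of $x_0$ contribute $\lesssim\varepsilon^{n-1-\alpha}\le1$; this is where $\alpha q_2<n$ and $\alpha<n-1$ enter. With this correction your argument is valid, and it does not even use the cardinality bound.

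The paper instead proves the general embedding $W^{1,p_2}(Q_0)\subset\mathcal{B}_{q_2}(Q_0)$ with $p_2=\frac{nq_2}{n+q_2}$. It applies the Poincar\'e--Sobolev inequality on each cube, then H\"older's inequality over $J$ with $\#J\le\varepsilon^{1-n}$, and checks that $(\alpha+1)p_2<n$. That route treats all cubes uniformly with no near/far splitting and yields a reusable embedding; yours is more elementary but tied to this particular $u$.

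In both routes, $u\notin\mathcal{B}_{q_3}(Q_0)$ because $\alpha q_3\ge n$ forces $u\notin L^{q_3}(Q_0)$, and then (i) applies. You should state this step explicitly.
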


\begin{proof}
We first prove {\rm(i)}. Let $q\in(0,\infty)$ and
$f\in\mathcal{B}_q(Q_0)$.
Without loss of generality, we may assume that $f_{Q_0}=0$.
By the definition in \eqref{eq-new-Bq}, we find that
\begin{align*}
\|f+\mathbb{K}\|_{L^q(Q_0)/\mathbb{K}}
&=\inf_{c\in\mathbb{K}}\|f-c\|_{L^q(Q_0)}
\leq\|f\|_{L^q(Q_0)}\\
&=\operatorname{osc}_q(f;Q_0)
\leq\|f\|_{\mathcal{B}_q(Q_0)}.
\end{align*}
This proves the continuous inclusion in
\eqref{eq-new-Bq-proper-Lq}. To prove its properness and the failure
of the reverse inclusions in {\rm(ii)}, we use a disjoint-slab
construction. For every $s\in[1,\infty)$, choose an integer
$M>\max\{s,2\}$ and let
$\varepsilon_k:=2^{-Mk}$ for every $k\in\mathbb{N}$.
Then $
\sum_{k=1}^{\infty}\varepsilon_k
=\frac{1}{2^M-1}<\frac{1}{4}.
$
Thus, we can choose pairwise disjoint intervals
$I_k\subset(\frac{1}{4},\frac{1}{2})$ of length $\varepsilon_k$.
Let $I_k^+$ be the left half of $I_k$ and let
\begin{align*}
g_s:=\sum_{k=1}^{\infty}
2^k\mathbf{1}_{I_k^+\times
(\frac{1}{4},\frac{3}{4})^{n-1}}.
\end{align*}
Using the pairwise disjointness of the supporting rectangles, we
conclude that
\begin{align*}
\|g_s\|_{L^s(Q_0)}^s
=2^{-n}\sum_{k=1}^{\infty}\varepsilon_k2^{ks}
=2^{-n}\sum_{k=1}^{\infty}2^{-k(M-s)}<\infty.
\end{align*}
Since $s\geq1$ and $|Q_0|=1$, this also implies
$g_s\in L^1(Q_0)$.
For every $k\in\mathbb{N}$, the rectangle
$I_k\times(\frac{1}{4},\frac{3}{4})^{n-1}$ can be tiled by $N_k$
axis-parallel cubes $\{Q_{k,j}\}_{j=1}^{N_k}$ of edge length
$\varepsilon_k$, where
\begin{align*}
N_k:=(2\varepsilon_k)^{1-n}
\leq\varepsilon_k^{1-n}.
\end{align*}
Thus,
$\{Q_{k,j}\}_{j=1}^{N_k}\in
\mathscr F_{\varepsilon_k}(Q_0)$.
Since $g_s=2^k$ on one half of $Q_{k,j}$ and $g_s=0$ on the other
half, we have, for every $t\in(0,\infty)$,
$
(g_s)_{Q_{k,j}}
=2^{k-1}$ and
\begin{align*}
\operatorname{osc}_t(g_s;Q_{k,j})
=\left[
\frac{1}{2}\left|2^k-2^{k-1}\right|^t
+\frac{1}{2}\left|2^{k-1}\right|^t
\right]^{\frac{1}{t}}
=2^{k-1}.
\end{align*}
Therefore, for every $k\in\mathbb{N}$,
the definition in \eqref{eq-new-Bq} implies that
\begin{align*}
\|g_s\|_{\mathcal{B}_t(Q_0)}
\geq\varepsilon_k^{n-1}
\sum_{j=1}^{N_k}\operatorname{osc}_t(g_s;Q_{k,j})
=2^{k-n}.
\end{align*}
Letting $k\to\infty$, we conclude that
$
\|g_s\|_{\mathcal{B}_t(Q_0)}=\infty.
$
Combining the preceding two estimates, we deduce that, for every
$s\in[1,\infty)$ and $t\in(0,\infty)$,
\begin{align}\label{eq-new-slab-counterexample}
g_s+\mathbb{K}\in L^s(Q_0)/\mathbb{K}
\quad\text{and}\quad
g_s+\mathbb{K}\notin\mathcal{B}_t(Q_0).
\end{align}
Since
$[g_s-(g_s)_{Q_0}]+\mathbb{K}=g_s+\mathbb{K}$,
the same conclusion holds for the mean-zero representative of $g_s$.
Taking $s:=\max\{1,q\}$ and $t:=q$ in
\eqref{eq-new-slab-counterexample}, and using $|Q_0|=1$, we obtain
\begin{align*}
g_s+\mathbb{K}
\in L^s(Q_0)/\mathbb{K}
\subset L^q(Q_0)/\mathbb{K}
\quad\text{and}\quad
g_s+\mathbb{K}
\notin\mathcal{B}_q(Q_0).
\end{align*}
Thus,
$L^q(Q_0)/\mathbb{K}\nsubseteq\mathcal{B}_q(Q_0)$,
which proves the properness in {\rm(i)}.

Next, we prove {\rm(ii)}. For every $q\in(0,\frac{n}{n-1})$, using Theorem
\ref{the-new-self} and \cite[Proposition 3]{BBM2015}, we conclude that
\begin{align*}
\mathcal{B}_q(Q_0)=\mathcal{B}(Q_0)\nsubseteq
L^{\frac{n}{n-1}}(Q_0)/\mathbb{K}.
\end{align*}
On the other hand, taking
$(s,t)=(\frac{n}{n-1},q)$ in
\eqref{eq-new-slab-counterexample}, we find that
\begin{align*}
L^{\frac{n}{n-1}}(Q_0)/\mathbb{K}
\nsubseteq\mathcal{B}_q(Q_0).
\end{align*}
These two noninclusions prove the first assertion in {\rm(ii)}. Let
$q\in[\frac{n}{n-1},\infty)$ and
$f\in\mathcal{B}_q(Q_0)$ satisfy $f_{Q_0}=0$. By the scale-one estimate
established in {\rm(i)} and H\"older's inequality, we obtain
\begin{align*}
\|f+\mathbb{K}\|_{L^{\frac{n}{n-1}}(Q_0)/\mathbb{K}}
\leq\|f\|_{L^{\frac{n}{n-1}}(Q_0)}
\leq\|f\|_{L^q(Q_0)}
\leq\|f\|_{\mathcal{B}_q(Q_0)}.
\end{align*}
This proves the continuous inclusion in
\eqref{eq-new-Bq-proper-critical-Lq}. Taking
$(s,t)=(\frac{n}{n-1},q)$ in
\eqref{eq-new-slab-counterexample}, we further obtain
\begin{align*}
g_{\frac{n}{n-1}}+\mathbb{K}
\in\left[L^{\frac{n}{n-1}}(Q_0)/\mathbb{K}\right]
\setminus\mathcal{B}_q(Q_0),
\end{align*}
which proves the properness.

Finally, we prove {\rm(iii)}. By H\"older's inequality,
we easily find that
\begin{align*}
\mathcal{B}_{q_3}(Q_0)
\subset\mathcal{B}_{q_2}(Q_0)
\subset\mathcal{B}_{q_1}(Q_0).
\end{align*}
Since $q_1<\frac{n}{n-1}\leq q_2$, Theorem \ref{the-new-self} implies
\begin{align*}
\mathcal{B}_{q_2}(Q_0)
\subsetneq\mathcal{B}_{q_1}(Q_0)
=\mathcal{B}(Q_0).
\end{align*}
It remains to prove that the first inclusion
$\mathcal{B}_{q_3}(Q_0)
\subset\mathcal{B}_{q_2}(Q_0)$ is strict. Let
$
p_2:=\frac{nq_2}{n+q_2}\in[1,n)$. Then
$
\frac{np_2}{n-p_2}=q_2.
$
Let $\varepsilon\in(0,1]$ and
$\{Q_j\}_{j\in J}\in\mathscr F_{\varepsilon}(Q_0)$. Using
Poincar\'e's inequality, H\"older's inequality, the pairwise
disjointness of the cubes, $\#J\leq\varepsilon^{1-n}$,
$p_2\geq1$, and $\varepsilon\leq1$, we find that, for every
$u\in W^{1,p_2}(Q_0)$,
\begin{align*}
\varepsilon^{n-1}\sum_{j\in J}
\operatorname{osc}_{q_2}(u;Q_j)
&\leq C_{(n,q_2)}\varepsilon^{n-\frac{n}{p_2}}
\sum_{j\in J}
\left[\int_{Q_j}|\nabla u(x)|^{p_2}\,dx\right]^{\frac{1}{p_2}}\\
&\leq C_{(n,q_2)}\varepsilon^{n-\frac{n}{p_2}}
(\#J)^{1-\frac{1}{p_2}}
\left[\sum_{j\in J}\int_{Q_j}
|\nabla u(x)|^{p_2}\,dx\right]^{\frac{1}{p_2}}\\
&\leq C_{(n,q_2)}\varepsilon^{1-\frac{1}{p_2}}
\|\nabla u\|_{L^{p_2}(Q_0)}\\
&\leq C_{(n,q_2)}\|\nabla u\|_{L^{p_2}(Q_0)}.
\end{align*}
Taking the supremum over all such $\varepsilon$ and
$\{Q_j\}_{j\in J}$, we conclude that
$
\|u\|_{\mathcal{B}_{q_2}(Q_0)}
\leq C_{(n,q_2)}\|\nabla u\|_{L^{p_2}(Q_0)},
$
and hence
$
W^{1,p_2}(Q_0)\subset\mathcal{B}_{q_2}(Q_0).
$
Choose $x_0\in Q_0$ and $\delta\in(0,\infty)$ such that
$B(x_0,2\delta)\subset Q_0$. Choose
$\eta\in C_{\mathrm{c}}^\infty(Q_0)$ such that
$\eta=1$ on $B(x_0,\delta)$, and then choose $\alpha$ satisfying
$
\frac{n}{q_3}\leq\alpha<\frac{n}{q_2}.
$
Let
\begin{align*}
u(x):=\eta(x)|x-x_0|^{-\alpha}.
\end{align*}
By the choices of $p_2$ and $\alpha$, we have
$(\alpha+1)p_2<n$
and
$\alpha q_3\geq n$.
Using these inequalities and polar coordinates, we find that
\begin{align*}
\int_{Q_0}\left[|u(x)|^{p_2}
+|\nabla u(x)|^{p_2}\right]\,dx
&\leq C_{(n,p_2,\eta)}
\left[1+\int_0^\delta
r^{n-1-(\alpha+1)p_2}\,dr\right]
<\infty
\end{align*}
and
\begin{align*}
\int_{Q_0}|u(x)|^{q_3}\,dx
&\geq C_{(n)}\int_0^\delta
r^{n-1-\alpha q_3}\,dr
=\infty.
\end{align*}
Combining these estimates with the Sobolev inclusion above and the
inclusion proved in {\rm(i)}, we conclude that
\begin{align*}
u+\mathbb{K}\in\mathcal{B}_{q_2}(Q_0),
\quad
u+\mathbb{K}\notin L^{q_3}(Q_0)/\mathbb{K}
\Longrightarrow
u+\mathbb{K}\notin\mathcal{B}_{q_3}(Q_0).
\end{align*}
This proves the strict inclusion in
\eqref{eq-new-Bq-strict-scale} and completes the proof of Proposition
\ref{prop-new-Lq-position}.
\end{proof}

\section{Atomic Preduals:
Duality and Sharp Self-Improvement
\label{sec-new-predual}}

In this section, we first introduce
the Hardy-type space $\mathcal{H}_{s}(Q_0)$
and prove the duality theorem
$[\mathcal{H}_{q'}(Q_0)]^*=\mathcal{B}_{q}(Q_0)$
for every $q\in(1,\infty)$.
We then prove
the sharp self-improvement
$\mathcal{H}_s(Q_0)=\mathcal{H}_\infty(Q_0)$
if and only if $s\in(n,\infty]$ by
using a quantitative decomposition of $s$-blocks into
$\infty$-blocks, and derive the endpoint duality as a consequence.
Finally, we use the same decomposition to
provide an independent
proof of
$\mathcal{B}_q(Q_0)=\mathcal{B}(Q_0)$ for
$q\in(1,\frac{n}{n-1})$. Although the block argument is not needed
for the logical proof of Theorem \ref{the-new-self}, it makes the
conjugate threshold $q'\in(n,\infty)$ explicit
and offers a novel approach to similar problems.

\subsection{Atomic Construction and Duality}

In this subsection, we introduce a unified family of
Bourgain--Brezis--Mironescu $s$-blocks and the associated Hardy-type
atomic spaces, establish their canonical block pairings, and identify
their finite-exponent duals. We begin with the blocks.

\begin{definition}\label{def-new-blocks}
Let $s\in(1,\infty]$ and $\varepsilon\in(0,1]$. A
function $b\in L^s(Q_0)$ is called a
\emph{Bourgain--Brezis--Mironescu $s$-block at scale $\varepsilon$} if
there exist $\{Q_j\}_{j\in J}\in\mathscr{F}_{\varepsilon}(Q_0)$ and
$\{b_j\}_{j\in J}\subset L^1(Q_0)$ such that
\begin{align*}
b=\sum_{j\in J}b_j,
\quad
\operatorname{supp}b_j\subset Q_j,
\quad
\int_{Q_j}b_j(x)\,dx=0,
\quad\text{and }
\|b_j\|_{L^s(Q_j)}\leq\varepsilon^{\frac{n}{s}-1}.
\end{align*}
Here and thereafter, $\frac{n}{\infty}:=0$.
Moreover, let $\mathfrak{A}_s$ denote the collection of all
Bourgain--Brezis--Mironescu $s$-blocks.
\end{definition}

We first record the $L^1$ and testing estimates for a single
Bourgain--Brezis--Mironescu block, which shows that
every Bourgain--Brezis--Mironescu $s$-block $b$
induces an element of $[\mathcal{B}_{s'}(Q_0)]^*$.

\begin{proposition}\label{prop-new-block-functional}
Let $s\in(1,\infty]$.
Every Bourgain--Brezis--Mironescu $s$-block $b$ satisfies
\begin{align}\label{eq-new-block-L1-bound}
\|b\|_{L^1(Q_0)}\leq1.
\end{align}
Moreover, for every $f\in\mathcal{B}_{s'}(Q_0)$, the integral
$\int_{Q_0}f(x)b(x)\,dx$
is well defined and satisfies
\begin{align*}
\left|\int_{Q_0}f(x)b(x)\,dx\right|
\leq\|f\|_{\mathcal{B}_{s'}(Q_0)}.
\end{align*}
Consequently,
$
\|b\|_{[\mathcal{B}_{s'}(Q_0)]^*}\leq1.
$
\end{proposition}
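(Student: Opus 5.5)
The plan is to work directly from the block structure in Definition \ref{def-new-blocks}. We write $b=\sum_{j\in J}b_j$ with $\{Q_j\}_{j\in J}\in\mathscr{F}_{\varepsilon}(Q_0)$, $\operatorname{supp}b_j\subset Q_j$, $\int_{Q_j}b_j=0$ and $\|b_j\|_{L^s(Q_j)}\leq\varepsilon^{\frac ns-1}$.

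For \eqref{eq-new-block-L1-bound} we apply H\"older's inequality on each cube: $\|b_j\|_{L^1(Q_j)}\leq|Q_j|^{1-\frac1s}\|b_j\|_{L^s(Q_j)}\leq\varepsilon^{n-\frac ns}\varepsilon^{\frac ns-1}=\varepsilon^{n-1}$, which also covers $s=\infty$ with the convention $\frac n\infty=0$. The cubes have disjoint interiors and the $b_j$ are supported in them, so summing and using $\#J\leq\varepsilon^{1-n}$ gives $\|b\|_{L^1(Q_0)}\leq\#J\,\varepsilon^{n-1}\leq1$.

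For the testing estimate, fix $f\in\mathcal{B}_{s'}(Q_0)$ with $f\in L^1(Q_0)$, so $f_{Q_j}$ is finite. Each cube with $\operatorname{osc}_{s'}(f;Q_j)<\infty$ has $f\in L^{s'}(Q_j)$. For any fixed representative, every term of the finite admissible family has finite $s'$-oscillation, because $\varepsilon^{n-1}\operatorname{osc}_{s'}(f;Q_j)\leq\|f\|_{\mathcal{B}_{s'}(Q_0)}<\infty$. When $s=\infty$ we have $s'=1$ and $fb_j$ is integrable since $b_j\in L^\infty$ and $f\in L^1$. Hence $fb=\sum_j fb_j$ is integrable: it is a finite sum, and each term satisfies $f b_j\in L^1$ by H\"older with $f\in L^{s'}(Q_j)$. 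Therefore the integral is well defined. Using the vanishing mean of $b_j$, we subtract $f_{Q_j}$ and apply H\"older:
\begin{align*}
\left|\int_{Q_j}f b_j\,dx\right|=\left|\int_{Q_j}(f-f_{Q_j})b_j\,dx\right|
\leq\|f-f_{Q_j}\|_{L^{s'}(Q_j)}\|b_j\|_{L^s(Q_j)}
\leq \varepsilon^{\frac n{s'}}\operatorname{osc}_{s'}(f;Q_j)\,\varepsilon^{\frac ns-1}
=\varepsilon^{n-1}\operatorname{osc}_{s'}(f;Q_j).
\end{align*}
Summing over $j$ bounds $|\int_{Q_0} fb|$ by $\varepsilon^{n-1}\sum_j\operatorname{osc}_{s'}(f;Q_j)\leq\|f\|_{\mathcal{B}_{s'}(Q_0)}$.

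Since $\int_{Q_0} b=0$, the pairing is independent of the constant added to $f$. It is therefore well defined on the quotient space and linear in $f$, and the bound gives $\|b\|_{[\mathcal{B}_{s'}(Q_0)]^*}\leq1$. No step is a real obstacle; the only care needed is the integrability bookkeeping, namely that finiteness of the $\mathcal{B}_{s'}$ quantity forces $f\in L^{s'}(Q_j)$ on each relevant cube.
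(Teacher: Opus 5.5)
Your proof is correct and follows the paper's argument. The $L^1$ bound comes from H\"older on each cube together with $\#J\leq\varepsilon^{1-n}$, and the testing estimate comes from the cancellation of each $b_j$ plus H\"older. The only cosmetic difference is where integrability comes from: you get $f\in L^{s'}(Q_j)$ from the test on the given family itself, whereas the paper uses the scale-one test (Proposition \ref{prop-new-Lq-position}(i)) to place $f$ in $L^{s'}(Q_0)$ globally.
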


\begin{proof}
Let $b=\sum_{j\in J}b_j$ be an $s$-block at scale
$\varepsilon\in(0,1]$, associated with
$\{Q_j\}_{j\in J}\in\mathscr{F}_{\varepsilon}(Q_0)$.
By Definition \ref{def-new-blocks}, H\"older's inequality, and the
admissibility condition,
\begin{align*}
\|b\|_{L^1(Q_0)}
\leq\sum_{j\in J}|Q_j|^{1-\frac1s}
\|b_j\|_{L^s(Q_j)}
\leq(\#J)\varepsilon^{n(1-\frac1s)}
\varepsilon^{\frac ns-1}
\leq1.
\end{align*}
Choose a representative of $f\in\mathcal{B}_{s'}(Q_0)$. By Proposition
\ref{prop-new-Lq-position}(i), it belongs to $L^{s'}(Q_0)$. Using this,
$\int_{Q_j}b_j(x)\,dx=0$, and H\"older's inequality, we conclude that
\begin{align*}
\left|\int_{Q_0}f(x)b(x)\,dx\right|
&=\left|\sum_{j\in J}\int_{Q_j}[f(x)-f_{Q_j}]b_j(x)\,dx\right|\\
&\leq\sum_{j\in J}
\|f-f_{Q_j}\|_{L^{s'}(Q_j)}
\|b_j\|_{L^{s}(Q_j)}\\
&\leq
\varepsilon^{n-1}\sum_{j\in J}
\operatorname{osc}_{s'}(f;Q_j)
\leq\|f\|_{\mathcal{B}_{s'}(Q_0)}.
\end{align*}
This completes the proof of Proposition \ref{prop-new-block-functional}.
\end{proof}

Based on Proposition \ref{prop-new-block-functional},
we can now use the blocks to define the Hardy-type space
$\mathcal{H}_s(Q_0)$, including
the endpoint case.

\begin{definition}\label{def-new-Hs}
Let $s\in(1,\infty]$.
The \emph{Hardy-type space}
$\mathcal{H}_s(Q_0)$ is defined by setting
\begin{align*}
\mathcal{H}_s(Q_0)
:=\left\{
\sum_{m=1}^{\infty}\lambda_mb_m\text{ in }L^1(Q_0):
\{\lambda_m\}_{m\in\mathbb{N}}\subset\mathbb{K},\
\{b_m\}_{m\in\mathbb{N}}\subset \mathfrak{A}_s,\text{ and }
\sum_{m=1}^{\infty}|\lambda_m|<\infty\right\}.
\end{align*}
Every such series is called an \emph{$s$-block representation} of its
sum.
Moreover, for every $h\in \mathcal{H}_s(Q_0)$,
\begin{align}\label{eq-new-Hs-atomic-norm}
\|h\|_{\mathcal{H}_s(Q_0)}
:=\inf\left\{
\sum_{m=1}^{\infty}|\lambda_m|:
h=\sum_{m=1}^{\infty}\lambda_mb_m
\ \hbox{is an $s$-block representation}
\right\}.
\end{align}
The blocks $b_m$ need not have a common scale.
\end{definition}

\begin{remark}\label{rem-new-Hs-L1-realization}
For every $r\in[1,\infty]$, define
\begin{align*}
L_0^r(Q_0)
:=
\left\{
g\in L^r(Q_0):
\int_{Q_0}g(x)\,dx=0
\right\}.
\end{align*}
By \eqref{eq-new-block-L1-bound}, every series occurring in
Definition \ref{def-new-Hs} satisfies
\begin{align*}
\sum_{m=1}^{\infty}
\|\lambda_mb_m\|_{L^1(Q_0)}
\leq\sum_{m=1}^{\infty}|\lambda_m|<\infty.
\end{align*}
Consequently,
\begin{align*}
\sum_{m=1}^{N}\lambda_mb_m
\to\sum_{m=1}^{\infty}\lambda_mb_m
\quad\text{in }L^1(Q_0)
\end{align*}
as $N\to\infty$.
Since every block has integral
zero, continuity of the integral under $L^1(Q_0)$ convergence implies
$
\mathcal{H}_s(Q_0)\subset L_0^1(Q_0)
$
and, for every $h\in\mathcal{H}_s(Q_0)$,
$
\|h\|_{L^1(Q_0)}
\leq\|h\|_{\mathcal{H}_s(Q_0)}.
$
More precisely, if $\ell^1(\mathfrak{A}_s)$ denotes the space of
absolutely summable scalar families indexed by $\mathfrak{A}_s$, then
the \emph{synthesis map}
\begin{align*}
T_s:
\begin{cases}
\displaystyle
\ell^1(\mathfrak{A}_s)
\longrightarrow L_0^1(Q_0)\\
\displaystyle
(\lambda_b)_{b\in\mathfrak{A}_s}\longmapsto
\sum_{b\in\mathfrak{A}_s}\lambda_bb
\end{cases}
\end{align*}
is bounded. For every
$(\lambda_b)_{b\in\mathfrak{A}_s}\in\ell^1(\mathfrak{A}_s)$, at most
countably many coefficients $\lambda_b$ are nonzero, so the displayed
sum is unambiguous. Definition \ref{def-new-Hs} identifies
$\mathcal{H}_s(Q_0)$ isometrically with
$\ell^1(\mathfrak{A}_s)/\ker T_s$. Since $\ker T_s$ is closed,
we find that $\mathcal{H}_s(Q_0)$ is a Banach space with norm
$\|\cdot\|_{\mathcal{H}_s(Q_0)}$.
\end{remark}

We now present the main result of this subsection,
which proves
$[\mathcal{H}_{q'}(Q_0)]^*=\mathcal{B}_q(Q_0)$
for every $q\in(1,\infty)$.

\begin{theorem}\label{the-new-Hs-duality}
Let $q\in(1,\infty)$. Then
$
[\mathcal{H}_{q'}(Q_0)]^*
=\mathcal{B}_q(Q_0)
$
with equivalent norms in the following sense:
\begin{itemize}
\item[{\rm(i)}]
For every $f\in\mathcal{B}_q(Q_0)$,
the linear functional $\mathcal{L}_f$ defined
by setting, for every
$g\in L_0^{q'}(Q_0)$,
\begin{align*}
\mathcal{L}_f(g):=\int_{Q_0}f(x)g(x)\,dx
\end{align*}
extends uniquely to a bounded linear functional on
$\mathcal{H}_{q'}(Q_0)$. Moreover,
\begin{align}\label{eq-new-Hs-duality-norms}
\frac{1}{2}\|f\|_{\mathcal{B}_q(Q_0)}
\leq
\|\mathcal{L}_f\|_{[\mathcal{H}_{q'}(Q_0)]^*}
\leq
\|f\|_{\mathcal{B}_q(Q_0)}.
\end{align}
\item[{\rm(ii)}]
For every
$\mathcal{L}\in[\mathcal{H}_{q'}(Q_0)]^*$, there exists a unique
$f\in\mathcal{B}_q(Q_0)$ (that is, unique modulo constants)
such that, for every $g\in L_0^{q'}(Q_0)$,
\begin{align}\label{eq-new-Hs-representation-functional}
\mathcal{L}(g)=\int_{Q_0}f(x)g(x)\,dx.
\end{align}
Moreover, \eqref{eq-new-Hs-duality-norms} holds
with $\mathcal{L}_f=\mathcal{L}$.
\end{itemize}
\end{theorem}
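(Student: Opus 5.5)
For (i), I would start from Proposition \ref{prop-new-block-functional} with $s=q'$, so $s'=q$: every $q'$-block $b$ satisfies $|\int fb|\le\|f\|_{\mathcal{B}_q(Q_0)}$. Blocks lie in $L^{q'}_0(Q_0)$ (they are finite sums of mean-zero $L^{q'}$ pieces), and by Proposition \ref{prop-new-Lq-position}(i) the function $f$ lies in $L^q(Q_0)$, so $\mathcal{L}_f$ is defined on $L^{q'}_0(Q_0)$.

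Next I need every $g\in L^{q'}_0(Q_0)$ to be a multiple of a single scale-one block. The trivial family $\{Q_0\}$ is admissible at $\varepsilon=1$, and $g/\|g\|_{L^{q'}}$ is a $q'$-block. Hence $L^{q'}_0\subset\mathcal{H}_{q'}$ with $\|g\|_{\mathcal{H}_{q'}}\le\|g\|_{L^{q'}}$.

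On a representation $h=\sum\lambda_mb_m$ I would define $\mathcal{L}_f(h):=\sum\lambda_m\int fb_m$. This is well defined because the series converges absolutely. It is independent of the representation by the following argument. Truncate $f$ as $f_N:=\max\{-N,\min\{f,N\}\}$, with real and imaginary parts handled separately. Then $\int f_N h=\sum\lambda_m\int f_Nb_m$ holds, since $f_N$ is bounded and the series converges in $L^1$. Truncation is $1$-Lipschitz, so $\operatorname{osc}_q(f_N;Q)\le 2\operatorname{osc}_q(f;Q)$, which gives $|\int f_Nb_m|\le 2\|f\|_{\mathcal{B}_q}$. Dominated convergence termwise (each $b_m\in L^{q'}$, $f\in L^q$) then lets me pass to the limit. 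The limit $\lim_N\int f_Nh$ is intrinsic to $h$, so the value does not depend on the representation. Taking the infimum over representations gives the upper bound, and uniqueness of the extension follows from density of finite block sums.

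For the lower bound $\frac12\|f\|_{\mathcal{B}_q}\le\|\mathcal{L}_f\|$, fix an admissible family $\{Q_j\}$. On each cube I would choose the extremal $g_j$ for the $L^q$ norm of $f-f_{Q_j}$, namely $g_j=|f-f_{Q_j}|^{q-1}\overline{\operatorname{sgn}}(f-f_{Q_j})$, and subtract its mean on $Q_j$. Subtracting the mean costs at most a factor $2$ in $L^{q'}$ and does not change $\int (f-f_{Q_j})g_j$. After normalising so that $\|b_j\|_{L^{q'}}\le\varepsilon^{n/q'-1}$, the sum $b=\sum b_j$ is a block and
\[
\int fb=\sum_j\varepsilon^{n/q'-1}\|f-f_{Q_j}\|_{L^q(Q_j)}/2=\tfrac12\varepsilon^{n-1}\sum_j\operatorname{osc}_q(f;Q_j).
\]

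For (ii), given $\mathcal{L}$, its restriction to $L^{q'}_0$ is bounded by the embedding above. By Hahn--Banach together with the duality $(L^{q'})^*=L^q$ (bilinear pairing), I obtain $f\in L^q$ with $\mathcal{L}(g)=\int fg$ on $L^{q'}_0$. This $f$ is unique modulo constants because the annihilator of $L^{q'}_0$ is the constants. To show $f\in\mathcal{B}_q$, I test $\mathcal{L}$ on the blocks constructed in the lower-bound step; these have $\mathcal{H}_{q'}$-norm at most $1$, which gives $\|f\|_{\mathcal{B}_q}\le2\|\mathcal{L}\|$. Finally, $\mathcal{L}$ and $\mathcal{L}_f$ are both bounded and agree on finite block sums, which are dense. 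Hence $\mathcal{L}=\mathcal{L}_f$, and the norm bounds follow from (i).

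I expect the main obstacle to be showing that $\mathcal{L}_f$ is well defined, i.e.\ independent of the representation. This is needed because $f\in L^q$ while $h$ is only in $L^1$, so $\int fh$ need not make sense directly; the truncation argument above is my first attempt to get around it.
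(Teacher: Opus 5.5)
Your proposal is correct and follows the paper's proof essentially step for step. The shared steps are: the scale-one block embedding with density of $L_0^{q'}(Q_0)$; representation-independence of the block pairing via truncation and the bound $\operatorname{osc}_q(f_N;Q)\le2\operatorname{osc}_q(f;Q)$; the norming block whose factor $\frac12$ comes from subtracting the mean; and, for (ii), Hahn--Banach plus $L^{q'}$--$L^q$ duality together with the annihilator argument. The only differences are cosmetic: you truncate real and imaginary parts separately (still $1$-Lipschitz, with $|f_N|\le|f|$) instead of using the radial projection $P_N$, and you pass to the limit by dominated convergence for the series instead of the paper's split at a finite index $M$.
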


To show Theorem \ref{the-new-Hs-duality},
we first establish two technical lemmas.
We begin with the density
of $L_0^s(Q_0)$ in $\mathcal{H}_s(Q_0)$
and that every $f\in\mathcal{B}_{s'}(Q_0)$
induces an element of $[\mathcal{H}_s(Q_0)]^*$.

\begin{lemma}
\label{lem-new-atomic-core-properties}
The following statements hold.
\begin{itemize}
\item[{\rm(i)}]
Let $s\in(1,\infty]$. Then $L_0^s(Q_0)$ is dense in
$\mathcal{H}_s(Q_0)$ and, for every $g\in L_0^s(Q_0)$,
\begin{align}\label{eq-new-Hs-core-bound}
\|g\|_{\mathcal{H}_s(Q_0)}
\leq
\|g\|_{L^s(Q_0)}.
\end{align}

\item[{\rm(ii)}]
Let $s\in(1,\infty]$. If
$f\in\mathcal{B}_{s'}(Q_0)$, $h\in\mathcal{H}_s(Q_0)$, and
$h=\sum_{m=1}^{\infty}\lambda_mb_m$ is any $s$-block representation,
then
\begin{align}\label{eq-new-canonical-block-pairing}
\langle f,h\rangle_s
:=\sum_{m=1}^{\infty}\lambda_m
\int_{Q_0}f(x)b_m(x)\,dx
\end{align}
is absolutely convergent and independent of the $s$-block
representation. Moreover,
\begin{align}\label{eq-new-canonical-block-pairing-bound}
|\langle f,h\rangle_s|
\leq
\|f\|_{\mathcal{B}_{s'}(Q_0)}
\|h\|_{\mathcal{H}_s(Q_0)}.
\end{align}
\end{itemize}
\end{lemma}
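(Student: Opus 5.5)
For (i), the plan is to show that every $g\in L_0^s(Q_0)$ is a multiple of a single $s$-block at scale $\varepsilon=1$. Take the admissible family consisting of $Q_0$ alone; this is allowed because $\#J=1\leq 1^{1-n}$. If $g\neq0$, set $b:=g/\|g\|_{L^s(Q_0)}$. Then $b$ is supported in $Q_0$, has integral zero, and satisfies
\[
\|b\|_{L^s(Q_0)}=1=\varepsilon^{\frac ns-1}\quad\text{for }\varepsilon=1.
\]
So $b\in\mathfrak A_s$, and $g=\|g\|_{L^s(Q_0)}\,b$ is a one-term $s$-block representation. This gives \eqref{eq-new-Hs-core-bound}. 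For density, fix $h=\sum_m\lambda_mb_m$. Each partial sum $\sum_{m\le N}\lambda_mb_m$ lies in $L^s(Q_0)$, since every block is in $L^s$ by definition, and it has integral zero, so it lies in $L_0^s(Q_0)$. The tail $\sum_{m>N}\lambda_mb_m$ is itself an $s$-block representation of $h-\sum_{m\le N}\lambda_mb_m$. Hence its $\mathcal H_s$-norm is at most $\sum_{m>N}|\lambda_m|\to0$.

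For (ii), absolute convergence follows from Proposition \ref{prop-new-block-functional}: $|\int f b_m|\leq\|f\|_{\mathcal{B}_{s'}(Q_0)}$, so
\[
\sum_m|\lambda_m|\,\Bigl|\int_{Q_0} f b_m\Bigr|\le\|f\|_{\mathcal B_{s'}(Q_0)}\sum_m|\lambda_m|.
\]
Once independence of the representation is known, taking the infimum over representations gives \eqref{eq-new-canonical-block-pairing-bound}.

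The main obstacle is the independence itself. We only know $f\in L^{s'}(Q_0)$, by Proposition \ref{prop-new-Lq-position}(i), while the series converges merely in $L^1$. So we cannot simply pass the limit through $\int f\cdot$. By linearity, it suffices to show that $\sum_m\lambda_mb_m=0$ in $L^1$ together with $\sum_m|\lambda_m|<\infty$ forces $\sum_m\lambda_m\int fb_m=0$. I would truncate $f$. For real $f$, let $f_k:=\max\{-k,\min\{f,k\}\}$; for complex $f$, truncate the real and imaginary parts separately. Truncation is a $1$-Lipschitz map, so $|f_k(x)-f_k(y)|\le|f(x)-f(y)|$. Combined with the standard estimate $\operatorname{osc}_{t}(u;Q)\le[\fint_Q\fint_Q|u(x)-u(y)|^{t}\,dx\,dy]^{1/t}\le2\operatorname{osc}_t(u;Q)$ for $t=s'\geq1$, this gives the uniform bound $\|f_k\|_{\mathcal B_{s'}(Q_0)}\le2\|f\|_{\mathcal B_{s'}(Q_0)}$, with an extra factor $2$ in the complex case.

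Since $f_k\in L^\infty$ and $\sum_m\lambda_mb_m\to0$ in $L^1$, we get $\sum_m\lambda_m\int f_kb_m=\int f_k\cdot0=0$ for each $k$. Next, for each fixed $m$, $\int f_kb_m\to\int fb_m$ as $k\to\infty$. This holds by dominated convergence, because $|f_kb_m|\le|fb_m|$ and $fb_m\in L^1$ by Hölder, as $f\in L^{s'}$ and $b_m\in L^s$. Finally, the terms are dominated uniformly in $k$ by $|\lambda_m|\cdot2\|f\|_{\mathcal B_{s'}(Q_0)}$, which is summable in $m$. So dominated convergence for series lets $k\to\infty$ inside the sum, yielding $\sum_m\lambda_m\int fb_m=0$, as required.
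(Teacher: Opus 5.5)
Your proposal is correct and follows essentially the paper's proof. Part (i) uses the same one-block representation at scale $1$. For (ii), both arguments take a $1$-Lipschitz truncation $f_k$ with $\|f_k\|_{\mathcal B_{s'}(Q_0)}\le 2\|f\|_{\mathcal B_{s'}(Q_0)}$, note that $\sum_m\lambda_m\int_{Q_0}f_kb_m=0$ because $f_k$ is bounded, and then let $k\to\infty$. The paper handles that limit with an explicit head/tail split based on $\|f-f_N\|_{\mathcal B_{s'}(Q_0)}\le3\|f\|_{\mathcal B_{s'}(Q_0)}$ rather than dominated convergence for series, and it uses the radial projection for complex values.

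Incidentally, componentwise truncation is the metric projection onto the square $[-k,k]^2$ and is therefore already $1$-Lipschitz on $\mathbb C$. So the extra factor $2$ you allow in the complex case is unnecessary, though harmless.
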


\begin{proof}
We first prove (i). Let $s\in(1,\infty]$
and $g\in L_0^s(Q_0)$. Without loss of generality,
we may assume that $\|g\|_{L^s(Q_0)}\neq 0$. Then
$\frac{g}{\|g\|_{L^s(Q_0)}}$ is an $s$-block at scale $1$
and hence
$
\|g\|_{\mathcal{H}_s(Q_0)}
\leq\|g\|_{L^s(Q_0)}.
$
This proves \eqref{eq-new-Hs-core-bound}. Let
$h\in\mathcal{H}_s(Q_0)$, fix an $s$-block representation
\begin{align*}
h=\sum_{m=1}^{\infty}\lambda_mb_m
\quad\hbox{in }L^1(Q_0),
\end{align*}
and, for every $N\in\mathbb{N}$, let
$
h_N:=\sum_{m=1}^{N}\lambda_mb_m
$
Then, for every $N\in\mathbb{N}$,
$h_N\in L_0^s(Q_0)$ and
\begin{align*}
\|h-h_N\|_{\mathcal{H}_s(Q_0)}
\leq\sum_{m=N+1}^{\infty}|\lambda_m|
\to0
\end{align*}
as $N\to\infty$.
Thus the family of
finite $s$-block sums, and hence $L_0^s(Q_0)$, is dense in
$\mathcal{H}_s(Q_0)$. This proves (i).

We now prove (ii). From Proposition
\ref{prop-new-block-functional},
we infer that the right-hand side of
\eqref{eq-new-canonical-block-pairing}
converges absolutely. To prove
the independence of the
representation, it suffices to suppose that
\begin{align}\label{eq-new-zero-L1-block-sum}
\sum_{m=1}^{\infty}\lambda_mb_m=0
\quad\hbox{in }L^1(Q_0)
\end{align}
and prove that the series in
\eqref{eq-new-canonical-block-pairing} has sum zero.

Choose a representative of $f\in\mathcal{B}_{s'}(Q_0)$.
By Proposition \ref{prop-new-Lq-position}(i), it belongs to
$L^{s'}(Q_0)$.
For every $N\in\mathbb{N}$, define
$P_N:\mathbb K\to\mathbb K$ by setting, for every $z\in\mathbb K$,
\begin{align*}
P_N(z):=
\begin{cases}
z,& |z|\leq N,\\[1mm]
\displaystyle N\frac{z}{|z|},& |z|>N,
\end{cases}
\end{align*}
and let $f_N:=P_N\circ f$. When $\mathbb K=\mathbb R$, $P_N$ is the
usual truncation
$\max\{-N,\min\{t,N\}\}$. Viewing $\mathbb K$ as a real
Hilbert space, $P_N$ is the metric projection onto a closed convex ball
and is therefore $1$-Lipschitz. In particular, $f_N$ is bounded.
By the definition of $f_N$,
H\"older's inequality, the $1$-Lipschitz property of $P_N$, and
Minkowski's inequality, we conclude that
\begin{align*}
\operatorname{osc}_{s'}(f_N;Q)
&=\left[\fint_Q\left|
\fint_Q[f_N(x)-f_N(y)]\,dy\right|^{s'}\,dx\right]^{\frac1{s'}}\\
&\leq
\left[\fint_Q\fint_Q
|f_N(x)-f_N(y)|^{s'}\,dy\,dx\right]^{\frac1{s'}}\\
&\leq
\left[\fint_Q\fint_Q
|f(x)-f(y)|^{s'}\,dy\,dx\right]^{\frac1{s'}}\\
&\leq
\left[\fint_Q\fint_Q
|f(x)-f_Q|^{s'}\,dy\,dx\right]^{\frac1{s'}}
+\left[\fint_Q\fint_Q
|f(y)-f_Q|^{s'}\,dy\,dx\right]^{\frac1{s'}}\\
&=2\operatorname{osc}_{s'}(f;Q).
\end{align*}
This further implies that
\begin{align}\label{eq-new-pairing-truncation-bounds}
\|f_N\|_{\mathcal{B}_{s'}(Q_0)}
\leq2\|f\|_{\mathcal{B}_{s'}(Q_0)}
\quad\text{and}\quad
\|f-f_N\|_{\mathcal{B}_{s'}(Q_0)}
\leq3\|f\|_{\mathcal{B}_{s'}(Q_0)}.
\end{align}
Moreover, since $|f_N|\leq|f|$ and $f_N\to f$ almost everywhere
as $N\to\infty$, the dominated convergence theorem implies
$f_N\to f$ in $L^{s'}(Q_0)$ as $N\to\infty$. Since $f_N$ is bounded, the
$L^1(Q_0)$ convergence in
\eqref{eq-new-zero-L1-block-sum} implies
\begin{align*}
\sum_{m=1}^{\infty}\lambda_m
\int_{Q_0}f_N(x)b_m(x)\,dx
=0.
\end{align*}
From this, Proposition
\ref{prop-new-block-functional}, and
\eqref{eq-new-pairing-truncation-bounds},
we infer that, for every
$M\in\mathbb{N}$,
\begin{align*}
\left|
\sum_{m=1}^{\infty}\lambda_m
\int_{Q_0}f(x)b_m(x)\,dx
\right|
&\leq
\sum_{m=1}^{M}|\lambda_m|
\left|\int_{Q_0}[f(x)-f_N(x)]b_m(x)\,dx\right|\\
&\quad+
3\|f\|_{\mathcal{B}_{s'}(Q_0)}
\sum_{m=M+1}^{\infty}|\lambda_m|.
\end{align*}
Letting first $N\to\infty$ and then
$M\to\infty$ on both sides,
we obtain the required representation independence.

Finally, let
$h=\sum_{m=1}^{\infty}\lambda_mb_m$
be any $s$-block representation. Then, by Proposition
\ref{prop-new-block-functional}, we have
\begin{align*}
|\langle f,h\rangle_s|
\leq
\sum_{m=1}^{\infty}|\lambda_m|
\left|\int_{Q_0}f(x)b_m(x)\,dx\right|
\leq
\|f\|_{\mathcal{B}_{s'}(Q_0)}
\sum_{m=1}^{\infty}|\lambda_m|.
\end{align*}
Since the left-hand side is independent of the chosen representation,
taking the infimum over all $s$-block representations of $h$ and using
\eqref{eq-new-Hs-atomic-norm}, we obtain
\begin{align*}
|\langle f,h\rangle_s|
\leq
\|f\|_{\mathcal{B}_{s'}(Q_0)}
\|h\|_{\mathcal{H}_s(Q_0)},
\end{align*}
which is \eqref{eq-new-canonical-block-pairing-bound}.
This completes the proof of Lemma \ref{lem-new-atomic-core-properties}.
\end{proof}

In what follows, for every $f\in\mathcal{B}_{s'}(Q_0)$ and
$h\in\mathcal{H}_s(Q_0)$, the notation
$\langle f,h\rangle_s$ always denotes the
\emph{canonical block pairing} defined by
\eqref{eq-new-canonical-block-pairing}. By Lemma
\ref{lem-new-atomic-core-properties}(ii), its value is independent of
the chosen $s$-block representation of $h$. This notation does not
presuppose that $fh\in L^1(Q_0)$.
We next establish a norming-block identity for the
canonical pairing.

\begin{lemma}\label{lem-new-block-pairing}
Let $q\in(1,\infty)$. If $f\in L^q(Q_0)$,
$\varepsilon\in(0,1]$, and
$\{Q_j\}_{j\in J}\in\mathscr{F}_{\varepsilon}(Q_0)$, then there
exists a Bourgain--Brezis--Mironescu $q'$-block $b$ associated with the
same family such that
\begin{align}\label{eq-new-norming-block-pairing}
\int_{Q_0}f(x)b(x)\,dx
=\frac12\varepsilon^{n-1}\sum_{j\in J}
\operatorname{osc}_q(f;Q_j).
\end{align}
\end{lemma}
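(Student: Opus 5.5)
We construct $b$ cube by cube, taking $b_j$ to be the standard $L^q$--$L^{q'}$ norming function of $f-f_{Q_j}$ corrected to have mean zero. The factor $\frac12$ is exactly the loss caused by this mean-zero correction.

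Fix $j\in J$ and set $g_j:=f-f_{Q_j}$ on $Q_j$; note that $g_j\in L^q(Q_j)$ because $f\in L^q(Q_0)$. If $\operatorname{osc}_q(f;Q_j)=0$, put $b_j:=0$. Otherwise define the norming function
\begin{align*}
\phi_j:=\frac{|g_j|^{q-1}\,\overline{\operatorname{sgn} g_j}}{\|g_j\|_{L^q(Q_j)}^{q-1}}\mathbf{1}_{Q_j}.
\end{align*}
The conjugate is needed because the pairing is bilinear when $\mathbb K=\mathbb C$. This function satisfies $\|\phi_j\|_{L^{q'}(Q_j)}=1$ and $\int_{Q_j}g_j\phi_j=\|g_j\|_{L^q(Q_j)}$.

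We then subtract the mean and rescale:
\begin{align*}
b_j:=\frac12\varepsilon^{\frac{n}{q'}-1}\bigl[\phi_j-(\phi_j)_{Q_j}\bigr]\mathbf{1}_{Q_j}.
\end{align*}
By construction $\operatorname{supp}b_j\subset Q_j$ and $\int_{Q_j} b_j=0$. By Minkowski and Hölder, $\|\phi_j-(\phi_j)_{Q_j}\|_{L^{q'}(Q_j)}\le 2\|\phi_j\|_{L^{q'}(Q_j)}=2$, so $\|b_j\|_{L^{q'}(Q_j)}\le\varepsilon^{\frac{n}{q'}-1}$. Hence $b:=\sum_{j\in J} b_j$ is a $q'$-block at scale $\varepsilon$ associated with $\{Q_j\}_{j\in J}$, and $b\in L^{q'}(Q_0)$ since $J$ is finite.

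For the pairing, the mean-zero property of $b_j$ lets us subtract constants, and $\int_{Q_j} g_j=0$ kills the constant part of $b_j$. Therefore
\begin{align*}
\int_{Q_j} f\,b_j=\int_{Q_j} g_j\,b_j=\frac12\varepsilon^{\frac{n}{q'}-1}\int_{Q_j} g_j\phi_j=\frac12\varepsilon^{\frac{n}{q'}-1}\|g_j\|_{L^q(Q_j)}.
\end{align*}
Since $\|g_j\|_{L^q(Q_j)}=\varepsilon^{n/q}\operatorname{osc}_q(f;Q_j)$ and $\frac n{q'}+\frac nq=n$, the right-hand side equals $\frac12\varepsilon^{n-1}\operatorname{osc}_q(f;Q_j)$. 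The degenerate cubes, where $b_j=0$, contribute zero on both sides. Summing over $j\in J$ gives \eqref{eq-new-norming-block-pairing}.

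The only delicate point is verifying that the mean correction costs at most a factor $2$ in the $L^{q'}$ norm, which is precisely where the $\frac12$ comes from; the remaining steps are exponent bookkeeping. Handling complex $\mathbb K$ requires only the conjugated sign in $\phi_j$.
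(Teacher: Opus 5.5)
Your proposal is correct and essentially identical to the paper's proof. The paper uses the same conjugated norming function $\varphi_j=|\omega_j|^{q-2}\overline{\omega_j}/\|\omega_j\|_{L^q(Q_j)}^{q-1}$ and the same mean-zero correction, with normalization $\frac12\varepsilon^{n-1}|Q_j|^{-\frac1q}=\frac12\varepsilon^{\frac{n}{q'}-1}$, and it uses the same factor-$2$ H\"older bound to verify the block size condition before computing the pairing exactly as you do.
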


\begin{proof}
Fix $q\in(1,\infty)$, $f\in L^q(Q_0)$,
$\varepsilon\in(0,1]$, and
$\{Q_j\}_{j\in J}\in\mathscr{F}_{\varepsilon}(Q_0)$.
Let $\omega_j:=f-f_{Q_j}$ and define
\begin{align*}
\varphi_j:=
\begin{cases}\displaystyle
\frac{|\omega_j|^{q-2}\overline{\omega_j}}
{\|\omega_j\|_{L^q(Q_j)}^{q-1}},
&\|\omega_j\|_{L^q(Q_j)}\neq0,\\[3mm]
0,&\|\omega_j\|_{L^q(Q_j)}=0.
\end{cases}
\end{align*}
By $(q-1)q'=q$, we easily find that
\begin{align*}
\|\varphi_j\|_{L^{q'}(Q_j)}\leq1
\quad\text{and }
\int_{Q_j}\omega_j(x)\varphi_j(x)\,dx
=\|\omega_j\|_{L^q(Q_j)}.
\end{align*}
For every $j\in J$, let
\begin{align*}
b_j:=\frac12\varepsilon^{n-1}|Q_j|^{-\frac1q}
[\varphi_j-(\varphi_j)_{Q_j}]\mathbf{1}_{Q_j}
\quad\text{and }
b:=\sum_{j\in J}b_j.
\end{align*}
Then $\int_{Q_j}b_j(x)\,dx=0$ for every $j\in J$.
In addition, using H\"older's inequality, we conclude that
\begin{align*}
\|b_j\|_{L^{q'}(Q_j)}
&\leq\varepsilon^{n-1}|Q_j|^{-\frac1q}
\|\varphi_j\|_{L^{q'}(Q_j)}
\leq\varepsilon^{\frac{n}{q'}-1}.
\end{align*}
Thus $b$ is a $q'$-block. Since
$\int_{Q_j}b_j(x)\,dx=\int_{Q_j}\omega_j(x)\,dx=0$, we have
\begin{align*}
\int_{Q_0}f(x)b(x)\,dx
&=\sum_{j\in J}\int_{Q_j}\omega_j(x)b_j(x)\,dx\\
&=\frac12\varepsilon^{n-1}
\sum_{j\in J}|Q_j|^{-\frac1q}
\int_{Q_j}\omega_j(x)
\big[\varphi_j(x)-(\varphi_j)_{Q_j}\big]\,dx\\
&=\frac12\varepsilon^{n-1}
\sum_{j\in J}|Q_j|^{-\frac1q}
\int_{Q_j}\omega_j(x)\varphi_j(x)\,dx\\
&=\frac12\varepsilon^{n-1}
\sum_{j\in J}|Q_j|^{-\frac1q}
\|\omega_j\|_{L^q(Q_j)}\\
&=\frac12\varepsilon^{n-1}
\sum_{j\in J}\operatorname{osc}_q(f;Q_j),
\end{align*}
which is \eqref{eq-new-norming-block-pairing}.
This completes the proof of Lemma \ref{lem-new-block-pairing}.
\end{proof}

We now prove Theorem \ref{the-new-Hs-duality}.

\begin{proof}[Proof of Theorem \ref{the-new-Hs-duality}]
We divide the proof into two steps.

\textbf{Step 1.} \noindent\textit{In this step, we prove
{\rm(i)}.}
Let $f\in\mathcal{B}_q(Q_0)$. For $h\in\mathcal{H}_{q'}(Q_0)$, let
\begin{align*}
\widetilde{\mathcal{L}}_f(h):=\langle f,h\rangle_{q'}.
\end{align*}
Every nonzero element of $L_0^{q'}(Q_0)$, after normalization by its
$L^{q'}(Q_0)$ norm, is a $q'$-block at scale $1$. Hence, using the
one-block representation
$
g=\|g\|_{L^{q'}(Q_0)}
\frac{g}{\|g\|_{L^{q'}(Q_0)}}
$
in \eqref{eq-new-canonical-block-pairing}, we obtain, for every
nonzero $g\in L_0^{q'}(Q_0)$,
\begin{align*}
\widetilde{\mathcal{L}}_f(g)
&=\|g\|_{L^{q'}(Q_0)}
\int_{Q_0}f(x)
\frac{g(x)}{\|g\|_{L^{q'}(Q_0)}}\,dx\\
&=\int_{Q_0}f(x)g(x)\,dx
=\mathcal{L}_f(g).
\end{align*}
The same identity is immediate if $\|g\|_{L^{q'}(Q_0)}=0$. Moreover,
$\widetilde{\mathcal{L}}_f$ is independent of the representative of
$f$. By Lemma \ref{lem-new-atomic-core-properties}(ii),
we have
\begin{align*}
|\widetilde{\mathcal{L}}_f(h)|
&\leq
\|h\|_{\mathcal{H}_{q'}(Q_0)}
\|f\|_{\mathcal{B}_q(Q_0)}.
\end{align*}
Thus $\widetilde{\mathcal{L}}_f$ is a bounded extension of
$\mathcal{L}_f$. Since $L_0^{q'}(Q_0)$ is dense in
$\mathcal{H}_{q'}(Q_0)$ by Lemma
\ref{lem-new-atomic-core-properties}(i), this bounded extension is
unique. We henceforth denote it
by $\mathcal{L}_f$. Consequently,
\begin{align}\label{eq-L-B-1}
\|\mathcal{L}_f\|_{[\mathcal{H}_{q'}(Q_0)]^*}
\leq
\|f\|_{\mathcal{B}_q(Q_0)}.
\end{align}
For the reverse inequality, fix $\varepsilon\in(0,1]$ and
$\{Q_j\}_{j\in J}\in\mathscr{F}_{\varepsilon}(Q_0)$.
Choose a representative of $f$. By Proposition
\ref{prop-new-Lq-position}(i), it belongs to $L^q(Q_0)$. Applying Lemma
\ref{lem-new-block-pairing} to this
representative
and the given family,
and choosing a $q'$-block $b_f$ satisfying
\eqref{eq-new-norming-block-pairing}, we have
\begin{align*}
\frac12\varepsilon^{n-1}\sum_{j\in J}
\operatorname{osc}_q(f;Q_j)
=|\mathcal{L}_f(b_f)|
\leq\|\mathcal{L}_f\|_{[\mathcal{H}_{q'}(Q_0)]^*},
\end{align*}
where the last step follows from
$\|b_f\|_{\mathcal{H}_{q'}(Q_0)}\leq1$.
Taking the two suprema establishes the lower bound in
\eqref{eq-new-Hs-duality-norms}.
Combining this and \eqref{eq-L-B-1}, we obtain
\eqref{eq-new-Hs-duality-norms} and hence complete {\rm(i)}.

\textbf{Step 2.} \noindent\textit{In this step, we prove
{\rm(ii)}.}
Let $\mathcal{L}\in[\mathcal{H}_{q'}(Q_0)]^*$. By
Lemma \ref{lem-new-atomic-core-properties}(i), we conclude that,
for every
$g\in L_0^{q'}(Q_0)$,
\begin{align*}
|\mathcal{L}(g)|
\leq\|\mathcal{L}\|_{[\mathcal{H}_{q'}(Q_0)]^*}
\|g\|_{\mathcal{H}_{q'}(Q_0)}
\leq\|\mathcal{L}\|_{[\mathcal{H}_{q'}(Q_0)]^*}
\|g\|_{L^{q'}(Q_0)}.
\end{align*}
By the Hahn--Banach theorem and the
$L^{q'}(Q_0)$--$L^q(Q_0)$ duality, since $q\in(1,\infty)$,
we conclude that there exists a unique element
$f\in L^q(Q_0)/\mathbb{K}$ such that,
for every $g\in L_0^{q'}(Q_0)$,
\begin{align*}
\mathcal{L}(g)=\int_{Q_0}f(x)g(x)\,dx.
\end{align*}
Here uniqueness follows from
$(L_0^{q'}(Q_0))^\perp=\mathbb{K}$, where the annihilator
is taken in $L^q(Q_0)$.
Thus we obtain \eqref{eq-new-Hs-representation-functional}.
We choose the unique representative satisfying
$f_{Q_0}=0$.

It remains to prove that
$f\in\mathcal{B}_q(Q_0)$.
Fix $\varepsilon\in(0,1]$ and
$\{Q_j\}_{j\in J}\in\mathscr{F}_{\varepsilon}(Q_0)$.
Applying Lemma \ref{lem-new-block-pairing} to the
representing function $f\in L^q(Q_0)$,
we can choose a $q'$-block $b$ satisfying
\eqref{eq-new-norming-block-pairing}.
Using $b\in L_0^{q'}(Q_0)$,
\eqref{eq-new-Hs-representation-functional}, and
$\|b\|_{\mathcal{H}_{q'}(Q_0)}\leq1$, we obtain
\begin{align*}
\frac12\varepsilon^{n-1}\sum_{j\in J}
\operatorname{osc}_q(f;Q_j)
=\int_{Q_0}f(x)b(x)\,dx
=|\mathcal{L}(b)|
\leq\|\mathcal{L}\|_{[\mathcal{H}_{q'}(Q_0)]^*}.
\end{align*}
This further implies that
$f\in\mathcal{B}_q(Q_0)$. By {\rm(i)},
$\mathcal{L}_f$ is a bounded linear functional on
$\mathcal{H}_{q'}(Q_0)$. Since $\mathcal{L}$ and
$\mathcal{L}_f$ agree on the dense subspace
$L_0^{q'}(Q_0)$, they agree on $\mathcal{H}_{q'}(Q_0)$.
Consequently, \eqref{eq-new-Hs-duality-norms} applies with
$\mathcal{L}_f=\mathcal{L}$.
Since $\mathcal{B}_q(Q_0)$ is defined modulo constants, the uniqueness
of $f$ in $\mathcal{B}_q(Q_0)$ follows from the uniqueness of the
representing class in $L^q(Q_0)/\mathbb{K}$ established above. This
completes the proof of Theorem
\ref{the-new-Hs-duality}.
\end{proof}

\subsection{Sharp Self-Improvement
of $\mathcal{H}_s(Q_0)$}

In this subsection, we study the
sharp self-improvement of $\mathcal{H}_s(Q_0)$,
that is, $\mathcal{H}_s(Q_0)
=\mathcal{H}_{\infty}(Q_0)$ if and only if $s\in(n,\infty]$,
which is the
conjugate result of Theorem
\ref{the-new-self}.
Moreover, by Definition \ref{def-new-Hs} and Remark
\ref{rem-new-Hs-L1-realization}, we find that
every $\mathcal{H}_s(Q_0)$ is a
subspace of $L_0^1(Q_0)$. Thus the equality
$\mathcal{H}_s(Q_0)
=\mathcal{H}_{\infty}(Q_0)$ for every $s\in(n,\infty)$
studied in this subsection
means equality as subspaces of $L_0^1(Q_0)$
but with their own block norms.

Throughout this subsection, lowercase $b$ denotes a finite-exponent
block or one of its local components, whereas capital $B$ is reserved
for $\infty$-blocks and their components.
We now present the main result in this subsection.

\begin{theorem}\label{the-new-Hs-self}
Let $n\in\mathbb{N}\cap[2,\infty)$ and $s\in(1,\infty]$. Then
as subspaces of $L_0^1(Q_0)$,
\begin{align*}
\mathcal{H}_s(Q_0)=\mathcal{H}_\infty(Q_0)
\Longleftrightarrow
s\in(n,\infty].
\end{align*}
Moreover, if $s\in(n,\infty)$, then, for every
$g\in\mathcal{H}_s(Q_0)$,
\begin{align}\label{eq-new-Hs-self-norms}
\|g\|_{\mathcal{H}_s(Q_0)}
\leq
\|g\|_{\mathcal{H}_\infty(Q_0)}
\leq
C_{(n,s)}\|g\|_{\mathcal{H}_s(Q_0)}.
\end{align}
\end{theorem}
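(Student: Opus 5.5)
The first inequality in \eqref{eq-new-Hs-self-norms} is immediate. If $B=\sum_{j}B_j$ is an $\infty$-block at scale $\varepsilon$, then
$\|B_j\|_{L^s(Q_j)}\le|Q_j|^{1/s}\|B_j\|_{L^\infty}\le\varepsilon^{\frac ns-1}$.
So every $\infty$-block is an $s$-block, whence $\mathcal{H}_\infty(Q_0)\subset\mathcal{H}_s(Q_0)$ and $\|g\|_{\mathcal{H}_s(Q_0)}\le\|g\|_{\mathcal{H}_\infty(Q_0)}$ for every $s\in(1,\infty]$. The proof therefore splits into two parts. The first is a quantitative decomposition of a single $s$-block into $\infty$-blocks, valid for $s>n$; this gives the second inequality and the equality. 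The second is a duality argument showing that equality fails for $s\in(1,n]$.

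\emph{Positive direction.} I would reduce to one block. I aim to show that every $s$-block $b=\sum_{j\in J}b_j$ at scale $\varepsilon$ admits a representation $b=\sum_m\mu_mB_m$ with $B_m\in\mathfrak{A}_\infty$ and $\sum_m|\mu_m|\le C_{(n,s)}$. Summing over an $s$-block representation of $g$ then gives the claim.

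For each $j$ I would run a dyadic Calder\'on--Zygmund stopping time on $\mathcal{D}(Q_j)$ at heights $\lambda_k:=2^k\varepsilon^{-1}$, $k\in\mathbb{Z}_+$. The height $\lambda_0$ is the natural $L^\infty$ size of an $\infty$-block at scale $\varepsilon$. This writes $b_j$ as a bounded mean-zero piece of size $\lesssim\varepsilon^{-1}$ plus telescoping differences. Each difference is supported on a maximal stopping cube $R$ at level $k$, has mean zero on $R$, and is bounded by $\lesssim2^n\lambda_{k+1}$.

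I would then group all pieces, over all $j$, according to their side length $\rho=2^{-i}\varepsilon$. Within each side length, I would sort the cubes by their heights and cut them into batches of at most $\lfloor\rho^{1-n}\rfloor$ cubes. Each batch becomes one $\infty$-block at scale $\rho$, with coefficient $\rho$ times the largest height occurring in that batch.

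The coefficient sum is controlled by two facts. The first is the weak-type bound $|\{M^{\mathcal D}b_j>\lambda\}|\le\lambda^{-s}\|b_j\|_{L^s}^s$. The second is the elementary bound $\langle|b_j|\rangle_R\le|R|^{-1/s}\|b_j\|_{L^s}$. The latter shows that a level-$k$ cube of side $\rho$ exists only if $2^k\varepsilon^{-1}\lesssim\rho^{-n/s}\varepsilon^{\frac ns-1}$, that is, only if $2^k\lesssim2^{in/s}$. Hence the worst admissible single term has size $\rho\lambda_k\lesssim2^{-i(1-n/s)}$, and these terms are summable over $i$ precisely because $s>n$.

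\emph{Main obstacle.} The hard part is the cardinality bookkeeping, namely showing that the number of batches needed at each $(i,k)$ also decays geometrically. A naive count (all martingale differences, H\"older in $\ell^s$ over the cubes of side $\rho$) gives a bound of order $1$ per scale, which does not sum over $i$. I would therefore first try to exploit the disjointness of the stopping cubes at a fixed level $k$. Their total measure is at most $\lambda_k^{-s}\sum_j\|b_j\|_s^s\le2^{-ks}\varepsilon$, so the number of side-$\rho$ cubes at level $k$ is at most $2^{-ks}\varepsilon\rho^{-n}$. Compared with the budget $\rho^{1-n}$ per block, this gives $\lesssim2^{i-ks}+1$ batches, each with coefficient $\lesssim2^{k-i}$. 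Summed over $k\ge0$ and $i$, these contributions should be controlled by the constraint $2^k\lesssim2^{in/s}$ together with $s>n$. Whether this double sum actually closes is the step I am least sure of; if it does not, I would refine the stopping time so that each cube is charged only to its own level.

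\emph{Sharpness.} Suppose that $\mathcal{H}_s(Q_0)=\mathcal{H}_\infty(Q_0)$ for some $s\in(1,n]$. Both spaces are Banach by Remark~\ref{rem-new-Hs-L1-realization}, and the inclusion $\mathcal{H}_\infty(Q_0)\hookrightarrow\mathcal{H}_s(Q_0)$ is continuous. By the open mapping theorem the two norms are therefore equivalent.

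Now take the function $f\in\mathcal{B}(Q_0)\setminus L^{\frac n{n-1}}(Q_0)$ of \cite[Proposition 3]{BBM2015}. By Lemma~\ref{lem-new-atomic-core-properties}(ii) with $s=\infty$, the map $h\mapsto\langle f,h\rangle_\infty$ is bounded on $\mathcal{H}_\infty(Q_0)$, and hence on $\mathcal{H}_s(Q_0)$ by the norm equivalence. Theorem~\ref{the-new-Hs-duality}(ii) with $q=s'\in[\frac n{n-1},\infty)$ then yields $\tilde f\in\mathcal{B}_{s'}(Q_0)$ representing this functional on $L^{s}_0(Q_0)$.

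On $L_0^\infty(Q_0)$ both functionals are integration against a function, so $\int(f-\tilde f)g=0$ for every $g\in L^\infty_0(Q_0)$. This forces $f-\tilde f$ to be constant. Consequently $f\in\mathcal{B}_{s'}(Q_0)\subset L^{s'}(Q_0)/\mathbb{K}\subset L^{\frac n{n-1}}(Q_0)/\mathbb{K}$ by Proposition~\ref{prop-new-Lq-position}(ii), which is a contradiction.
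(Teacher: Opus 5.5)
Your route is the paper's. It has three parts: the contraction $\mathcal H_\infty(Q_0)\hookrightarrow\mathcal H_s(Q_0)$; a dyadic Calder\'on--Zygmund decomposition of each $b_j$ at heights $2^k\varepsilon^{-1}$, grouped by level and side length into admissible families of at most $\lfloor\rho^{1-n}\rfloor$ cubes with coefficient $\sim2^k\rho/\varepsilon$; and, for sharpness, the open mapping theorem combined with Theorem \ref{the-new-Hs-duality}(ii) and \cite[Proposition 3]{BBM2015}. The paper stops on averages of $|\varepsilon b_j|^s$ exceeding $2^{ks}$ rather than on averages of $|b_j|$, but this changes nothing essential. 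Your sharpness argument is correct as written. You reach the contradiction through $\mathcal B_{s'}(Q_0)\subset L^{\frac n{n-1}}(Q_0)/\mathbb K$, whereas the paper takes $f\in\mathcal B(Q_0)\setminus\mathcal B_{s'}(Q_0)$ from Theorem \ref{the-new-self}; that theorem rests on the same fact.

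The gap is the step you flagged, and as written it fails. You bound the number of level-$k$ cubes of side $\rho=2^{-i}\varepsilon$ scale by scale by $2^{-ks}\varepsilon\rho^{-n}$. This spends the entire level-$k$ measure budget at every scale. The full batches at $(i,k)$ then contribute $2^{i-ks}\cdot2^{k-i}=2^{-k(s-1)}$, which does not decay in $i$. Nothing bounds the number of occupied scales: stopping cubes can be arbitrarily small, and the constraint $2^k\lesssim2^{in/s}$ only bounds $i$ from below. So the double sum is not uniformly bounded.

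The fix is to sum over $\rho$ at fixed $k$ before estimating. Use $K_{k,\rho}\le1+2N_{k,\rho}\rho^{n-1}$ together with the aggregate bound $\sum_\rho N_{k,\rho}\rho^n=\sum_{R\in\mathcal R_k}|R|\le2^{-ks}\varepsilon$. This gives
\[
\frac{2^k}{\varepsilon}\sum_\rho K_{k,\rho}\,\rho
\le\frac{2^k}{\varepsilon}\sum_{\rho\le\varepsilon2^{-ks/n}}\rho
+\frac{2^{k+1}}{\varepsilon}\sum_\rho N_{k,\rho}\rho^n
\le2\cdot2^{-k(\frac sn-1)}+2\cdot2^{-k(s-1)},
\]
which is summable in $k$ exactly because $s>n$. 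This is the computation \eqref{k1}--\eqref{eq-new-level-final-detail} in the paper, and no refinement of the stopping time is needed. Your first idea, sorting by height within a fixed side length, would also close via $\sum_{\text{batches}}\max h\le h_{\max}+\lfloor\rho^{1-n}\rfloor^{-1}\sum h$. However, the per-$(i,k)$ count you actually wrote down does not.
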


\begin{remark}
Let $q\in[1,\infty)$. The
conclusion of Theorem \ref{the-new-Hs-self} can equivalently be written as
\begin{align*}
\mathcal{H}_{q'}(Q_0)=\mathcal{H}_\infty(Q_0)
\Longleftrightarrow
q\in\left[1,\frac{n}{n-1}\right).
\end{align*}
\end{remark}

To show Theorem \ref{the-new-Hs-self},
we need to establish two technical lemmas. First recall that
the underlying Calder\'on--Zygmund iteration is the standard argument
used to decompose atoms satisfying an $L^s$ size condition into
$L^\infty$-atoms; see
\cite[Theorem (2.8)]{TaiblesonWeiss1980} and
\cite{CoifmanWeiss1977}. However, the cited
results do not directly provide the
generation-by-generation bounds,
uniform over all cubes in an admissible family,
that are needed to regroup the resulting pieces
into Bourgain--Brezis--Mironescu blocks.
We therefore state and prove
the precise quantitative version needed below.
\begin{lemma}
\label{lem-new-dyadic-decomposition}
Let $s\in(1,\infty)$, $Q$ be a cube, and $F\in L^s(Q)$
satisfy $\int_QF(x)\,dx=0$
and
$\|F\|_{L^s(Q)}\leq|Q|^{\frac{1}{s}}$.
Then there exist $G\in L^\infty(Q)$ and, for each
$k\in\mathbb{Z}_+$, an at most countable index set $J_k$, a collection
$\{R_{k,j}\}_{j\in J_k}$ of dyadic subcubes of $Q$ with pairwise
disjoint interiors, and functions
$\{H_{k,j}\}_{j\in J_k}\subset L^\infty(Q)$ such that
\begin{align}\label{eq-new-dyadic-expansion}
F=G+\sum_{k=0}^{\infty}\sum_{j\in J_k}H_{k,j}
\quad\hbox{in }L^s(Q),
\end{align}
where each inner sum converges unconditionally in $L^s(Q)$,
while the outer series converges in $L^s(Q)$ when summed in increasing
order of $k$.
Moreover,
\begin{align}\label{eq-new-dyadic-good}
\int_QG(x)\,dx=0,
\quad
\|G\|_{L^\infty(Q)}\leq C_{(n,s)},
\end{align}
and, for every $k\in\mathbb{Z}_+$ and $j\in J_k$,
\begin{align}\label{eq-new-dyadic-bad}
&\operatorname{supp}H_{k,j}\subset R_{k,j},
\quad
\int_{R_{k,j}}H_{k,j}(x)\,dx=0,
\quad\text{and}\quad
\|H_{k,j}\|_{L^\infty(R_{k,j})}
\leq C_{(n,s)}2^k.
\end{align}
Finally, for every $k\in\mathbb{Z}_+$
and $j\in J_k$,
\begin{align}\label{eq-new-dyadic-volume}
\sum_{j\in J_k}|R_{k,j}|
\leq2^{-ks}\int_Q|F(x)|^s\,dx
\quad\text{and}\quad
\ell(R_{k,j})
&\leq\ell(Q)2^{-\frac{s k}{n}}.
\end{align}
\end{lemma}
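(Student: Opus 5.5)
I would run an iterated Calder\'on--Zygmund decomposition of $F$ on the dyadic grid $\mathcal{D}(Q)$ at the geometric heights $2^{k}$, $k\in\mathbb{Z}_+$. Set $\Omega_k:=\{M_Q^{\mathcal{D}}(|F|^s)>2^{ks}\}$ for $k\in\mathbb{N}$. Write $\Omega_k$ as the disjoint union of its maximal dyadic cubes. Because $\fint_Q|F|^s\le1$, the top cube $Q$ is never selected, so maximality gives $\fint_R|F|^s\le2^n2^{ks}$ on each maximal cube $R$ of $\Omega_k$. The sets are nested, $\Omega_{k+1}\subset\Omega_k$, and each maximal cube of $\Omega_{k+1}$ lies inside a maximal cube of $\Omega_k$.

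I would then use the martingale (conditional expectation) telescoping. Let $\mathbb{E}_k F$ equal $F$ off $\Omega_k$ and $F_R$ on each maximal cube $R$ of $\Omega_k$; set $\Omega_0:=Q$, so $\mathbb{E}_0F=F_Q=0$. I would take $G:=\mathbb{E}_1F$ and, for $k\ge1$, write $\mathbb{E}_{k+1}F-\mathbb{E}_kF$ as a sum of pieces $H_{k,j}$, one for each maximal cube $R_{k,j}$ of $\Omega_k$. Each piece equals $(\mathbb{E}_{k+1}F-F_{R_{k,j}})\mathbf{1}_{R_{k,j}}$, so it is supported in $R_{k,j}$ and has mean zero there. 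For the $L^\infty$ bounds, off $\Omega_k$ Lebesgue differentiation gives $|F|\le2^k$ a.e., and on a maximal cube Jensen gives $|F_R|\le(\fint_R|F|^s)^{1/s}\le2^{n/s}2^k$. Hence $\|\mathbb{E}_kF\|_\infty\le2^{n/s}2^k$, which yields $\|G\|_\infty\le C_{(n,s)}$, the mean-zero property $\int_QG=\int_QF=0$, and $\|H_{k,j}\|_\infty\le C_{(n,s)}2^{k}$. The indexing is shifted by one relative to the statement, so I would relabel $k\mapsto k-1$ (and the constant absorbs the resulting factor of $2$), or else start the selection with $k=0$ at height $1$.

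The volume bound comes from the weak-type $(1,1)$ estimate with constant one for the dyadic maximal function: $|\Omega_k|\le2^{-ks}\int_Q|F|^s$, which is exactly the first half of \eqref{eq-new-dyadic-volume}. The side-length bound comes from the fact that a selected cube satisfies $\fint_R|F|^s>2^{ks}$, so $|R|<2^{-ks}\int_Q|F|^s\le2^{-ks}|Q|$, i.e.\ $\ell(R)\le\ell(Q)2^{-sk/n}$.

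Convergence is the step I expect to need the most care. The partial sums equal $\mathbb{E}_{K}F$, and $\mathbb{E}_KF\to F$ in $L^s$ because $|\Omega_K|\to0$, $\mathbb{E}_KF=F$ off $\Omega_K$, and on $\Omega_K$ one has $\|\mathbb{E}_KF\|_{L^s(\Omega_K)}\le\|F\|_{L^s(\Omega_K)}\to0$ by Jensen and absolute continuity of the integral. The inner sums are sums of disjointly supported functions, each dominated in $L^s$ by $2|F|+|\mathbb{E}_{k+1}F|$, so they converge unconditionally. The index relabeling must keep all three bounds consistent.
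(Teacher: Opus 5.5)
Your proposal is correct and is essentially the paper's proof. The paper also selects the maximal dyadic cubes $R$ with $\fint_R|F|^s>2^{ks}$, sets $G_k$ equal to $F$ off the stopping cubes and to $F_R$ on each stopping cube $R$, takes $H_{k,j}=(G_{k+1}-G_k)\mathbf{1}_{R_{k,j}}$, and proves convergence from $|E_k|\to0$ together with Jensen on the stopping cubes. The only cosmetic difference is that the paper uses your second option: it starts the selection at $k=0$ with height $1$ (allowed because $\fint_Q|F|^s\le1$, so $Q$ is never selected) and takes $G:=G_0$, so no relabeling of indices is needed.
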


\begin{proof}
We divide the proof into three steps. Throughout the proof, a level
index $k$ ranges over $\mathbb{Z}_+$ and an index $j$ attached to level
$k$ ranges over $J_k$, unless another range is explicitly stated.

\textbf{Step 1.} \noindent\textit{In this step, we construct the stopping cubes and
record their nesting and size.}
For each $k\in\mathbb{Z}_+$, let
\begin{align*}
\mathcal{S}_k
:=\left\{R\in\mathcal{D}(Q):
\fint_R|F(x)|^s\,dx>2^{ks}\right\},
\end{align*}
and let $\{R_{k,j}\}_{j\in J_k}$ be the maximal cubes in
$\mathcal{S}_k$. We call these maximal cubes the
\emph{level-$k$ stopping cubes}; thus they are precisely the maximal
dyadic subcubes of $Q$ on which the average of $|F|^s$ exceeds the
threshold $2^{ks}$. Every cube in $\mathcal{S}_k$ is contained in a
maximal one, since it has only finitely many dyadic ancestors in $Q$.
The dyadic nesting property then shows that the cubes at each fixed
level have pairwise disjoint interiors. For every
$k\in\mathbb{Z}_+$,
\begin{align*}
\fint_Q|F(x)|^s\,dx\leq1\leq2^{ks},
\end{align*}
so $Q\notin\mathcal{S}_k$. Consequently, for every
$k\in\mathbb{Z}_+$ and $j\in J_k$, the cube $R_{k,j}$ has a dyadic parent $\widehat{R_{k,j}}$, which does not
belong to $\mathcal{S}_k$ by maximality. It follows that, for every
$k\in\mathbb{Z}_+$ and $j\in J_k$,
\begin{align}\label{eq-new-dyadic-stopping-averages}
2^{ks}
<\fint_{R_{k,j}}|F(x)|^s\,dx
\leq2^n2^{ks}.
\end{align}
For every $k\in\mathbb{Z}_+$, the inequality
$2^{(k+1)s}>2^{ks}$ implies
$\mathcal{S}_{k+1}\subset\mathcal{S}_k$. Fix
$k\in\mathbb{Z}_+$ and $i\in J_{k+1}$. Then
$R_{k+1,i}\in\mathcal{S}_k$, so the maximal-cube property established
above implies that there exists $j\in J_k$ such that
$R_{k+1,i}\subset R_{k,j}$. For each $k\in\mathbb{Z}_+$, let
$$
E_k:=\bigcup_{j\in J_k}R_{k,j}.
$$
The preceding containment shows that $E_{k+1}\subset E_k$ for every
$k\in\mathbb{Z}_+$.

Moreover, from the lower bound in
\eqref{eq-new-dyadic-stopping-averages},
the disjointness of cubes at level $k$,
and $\|F\|_{L^s(Q)}\leq|Q|^{\frac{1}{s}}$,
we infer that,
for every $k\in\mathbb{Z}_+$,
\begin{align*}
\sum_{j\in J_k}|R_{k,j}|
\leq2^{-ks}\sum_{j\in J_k}
\int_{R_{k,j}}|F(x)|^s\,dx
\leq2^{-ks}\int_Q|F(x)|^s\,dx
\end{align*}
and, for every $k\in\mathbb{Z}_+$ and $j\in J_k$,
\begin{align*}
2^{ks}|R_{k,j}|
<\int_{R_{k,j}}|F(x)|^s\,dx
\leq\int_Q|F(x)|^s\,dx
\leq|Q|.
\end{align*}
Consequently,
$
\ell(R_{k,j})\leq\ell(Q)2^{-\frac{s k}{n}}
$
for every $k\in\mathbb{Z}_+$ and $j\in J_k$. This proves
\eqref{eq-new-dyadic-volume}.

\textbf{Step 2.} \noindent\textit{In this step, we construct bounded approximants
that preserve the relevant integrals and satisfy
$G_k\to F$ in $L^s(Q)$ as $k\to\infty$.}
For each $k\in\mathbb{Z}_+$, define
\begin{align*}
G_k
:=F\mathbf{1}_{Q\setminus E_k}
+\sum_{j\in J_k}
F_{R_{k,j}}\mathbf{1}_{R_{k,j}}.
\end{align*}
Fix $k\in\mathbb{Z}_+$. If $x\in Q\setminus E_k$ is a dyadic
Lebesgue point of $|F|^s$, then no dyadic cube containing $x$ belongs
to $\mathcal{S}_k$. The dyadic differentiation theorem therefore implies
$|F(x)|\leq2^k$. Using \eqref{eq-new-dyadic-stopping-averages} and
H\"older's inequality, we find that, for every $j\in J_k$,
\begin{align*}
|F_{R_{k,j}}|
\leq\left[
\fint_{R_{k,j}}|F(x)|^s\,dx
\right]^{\frac{1}{s}}
\leq2^{\frac{n}{s}}2^k.
\end{align*}
Consequently, for every $k\in\mathbb{Z}_+$,
\begin{align}\label{eq-new-dyadic-Gk-bound}
\|G_k\|_{L^\infty(Q)}
\leq C_{(n,s)}2^k.
\end{align}
By the definition of $G_k$, the pairwise disjointness of the
level-$k$ stopping cubes,
and $\int_QF(x)\,dx=0$, we find that, for every $k\in\mathbb{Z}_+$,
\begin{align*}
\int_QG_k(x)\,dx
&=\int_{Q\setminus E_k}F(x)\,dx
+\sum_{j\in J_k}\int_{R_{k,j}}F_{R_{k,j}}(x)\,dx\\
&=\int_{Q\setminus E_k}F(x)\,dx
+\sum_{j\in J_k}\int_{R_{k,j}}F(x)\,dx\\
&=\int_QF(x)\,dx=0.
\end{align*}
Fix $k\in\mathbb{Z}_+$ and $j\in J_k$. For each
$i\in J_{k+1}$, the dyadic nesting property and the containment proved
in Step 1 imply that either $R_{k+1,i}\subset R_{k,j}$ or
$|R_{k+1,i}\cap R_{k,j}|=0$. Let
\begin{align*}
J_{k+1}(j)
:=\left\{i\in J_{k+1}:R_{k+1,i}\subset R_{k,j}\right\}.
\end{align*}
Up to a null set, the level-$(k+1)$ stopping cubes
form the pairwise
disjoint decomposition
\begin{align*}
R_{k,j}
=\left(R_{k,j}\setminus E_{k+1}\right)
\cup\bigcup_{i\in J_{k+1}(j)}R_{k+1,i}.
\end{align*}
By the definitions of $G_{k+1}$ and $G_k$, we obtain
\begin{align}\label{eq-new-dyadic-two-level-integrals}
\int_{R_{k,j}}G_{k+1}(x)\,dx
&=\int_{R_{k,j}\setminus E_{k+1}}F(x)\,dx
+\sum_{i\in J_{k+1}(j)}
|R_{k+1,i}|F_{R_{k+1,i}}\nonumber\\
&=\int_{R_{k,j}\setminus E_{k+1}}F(x)\,dx
+\sum_{i\in J_{k+1}(j)}
\int_{R_{k+1,i}}F(x)\,dx\nonumber\\
&=\int_{R_{k,j}}F(x)\,dx
=|R_{k,j}|F_{R_{k,j}}
=\int_{R_{k,j}}G_k(x)\,dx.
\end{align}
It remains to verify strong convergence. Since the level-$k$
stopping cubes have pairwise disjoint interiors, the definition of $E_k$
and the first estimate in \eqref{eq-new-dyadic-volume} give
\begin{align}\label{Ek-01}
|E_k|
=\sum_{j\in J_k}|R_{k,j}|
\leq 2^{-ks}\int_Q|F(x)|^s\,dx
\leq 2^{-ks}|Q|
\to 0
\end{align}
as $k\to\infty$.
The definition of $G_k$ also gives
$G_k=F$ almost everywhere on $Q\setminus E_k$. Hence, using
the definition of $G_k$, the pairwise
disjointness of the level-$k$ stopping cubes, H\"older's
inequality, and \eqref{Ek-01},
we obtain, for every $k\in\mathbb{Z}_+$,
\begin{align}
\|F-G_k\|_{L^s(Q)}^s
&=\int_{E_k}|F(x)-G_k(x)|^s\,dx\nonumber\\
&\leq2^{s-1}\left[
\int_{E_k}|F(x)|^s\,dx
+\int_{E_k}|G_k(x)|^s\,dx
\right]\nonumber\\
&=2^{s-1}\left[
\int_{E_k}|F(x)|^s\,dx
+\sum_{j\in J_k}|R_{k,j}|\,|F_{R_{k,j}}|^s
\right]\nonumber\\
&\leq2^{s-1}\left[
\int_{E_k}|F(x)|^s\,dx
+\sum_{j\in J_k}\int_{R_{k,j}}|F(x)|^s\,dx
\right]\nonumber\\
&=2^s\int_{E_k}|F(x)|^s\,dx
\to0
\label{eq-new-dyadic-Gk-convergence}
\end{align}
as $k\to\infty$, where the last convergence follows from the absolute
continuity of the integral.

\textbf{Step 3.} \noindent\textit{In this step, we decompose each
difference $G_{k+1}-G_k$ into mean-zero pieces supported on the level-$k$
stopping cubes and sum these pieces to recover $F-G$.}
Let $G:=G_0$ and, for every $k\in\mathbb{Z}_+$ and $j\in J_k$,
define
\begin{align*}
H_{k,j}:=(G_{k+1}-G_k)\mathbf{1}_{R_{k,j}}.
\end{align*}
For every $k\in\mathbb{Z}_+$, since $E_{k+1}\subset E_k$ and both
$G_k$ and $G_{k+1}$ equal $F$ outside $E_k$, the difference
$G_{k+1}-G_k$ is supported in $E_k$. Using the pairwise disjoint
half-open representatives of the level-$k$ cubes, we obtain, for every
$k\in\mathbb{Z}_+$, almost everywhere on $Q$,
\begin{align}\label{eq-new-dyadic-H-sum}
G_{k+1}-G_k
=(G_{k+1}-G_k)\mathbf{1}_{E_k}
=\sum_{j\in J_k}
(G_{k+1}-G_k)\mathbf{1}_{R_{k,j}}
=\sum_{j\in J_k}H_{k,j}.
\end{align}
The convergence in $L^s(Q)$ is immediate when $J_k$ is finite.
Otherwise, write $J_k=\{j_m:m\in\mathbb{N}\}$.
Using the pairwise
disjointness of the level-$k$ cubes
and the absolute continuity of the integral, we find that,
for every $k\in\mathbb{Z}_+$,
\begin{align*}
\left\|
G_{k+1}-G_k-\sum_{m=1}^{N}H_{k,j_m}
\right\|_{L^s(Q)}^s
=\int_{E_k\setminus\bigcup_{m=1}^{N}R_{k,j_m}}
|G_{k+1}(x)-G_k(x)|^s\,dx
\to0
\end{align*}
as $N\to\infty$, by the absolute continuity of the integral.
In particular, for every $k\in\mathbb{Z}_+$ and $j\in J_k$,
$\operatorname{supp}H_{k,j}\subset R_{k,j}$. For the same $k$ and $j$,
\eqref{eq-new-dyadic-two-level-integrals} implies that
\begin{align*}
\int_{R_{k,j}}H_{k,j}(x)\,dx=0,
\end{align*}
while \eqref{eq-new-dyadic-Gk-bound} implies
\begin{align*}
\|H_{k,j}\|_{L^\infty(R_{k,j})}
\leq\|G_{k+1}\|_{L^\infty(Q)}
+\|G_k\|_{L^\infty(Q)}
\leq C_{(n,s)}2^k.
\end{align*}
Thus \eqref{eq-new-dyadic-good} and
\eqref{eq-new-dyadic-bad} hold. Finally, for every
$N\in\mathbb{Z}_+$, summing \eqref{eq-new-dyadic-H-sum} from $k=0$ to
$k=N$ implies
\begin{align*}
G+\sum_{k=0}^{N}\sum_{j\in J_k}H_{k,j}
=G_0+\sum_{k=0}^{N}(G_{k+1}-G_k)
=G_{N+1}.
\end{align*}
Letting $N\to\infty$ and using
\eqref{eq-new-dyadic-Gk-convergence}, we obtain
\eqref{eq-new-dyadic-expansion}.
This completes the proof of Lemma \ref{lem-new-dyadic-decomposition}.
\end{proof}

The following lemma assembles the
above single-cube decompositions across
an admissible family and shows that every
$s$-block with $s\in(n,\infty)$ admits an $\infty$-block expansion that
converges absolutely in $L^1(Q_0)$ and whose partial sums through the first
$M$ terms converge in $L^s(Q_0)$ as $M\to\infty$. This
decomposition will
be used below to prove
$\mathcal{H}_s(Q_0)=\mathcal{H}_\infty(Q_0)$ and,
in the next subsection, to provide a new proof of
$\|f\|_{\mathcal{B}_q(Q_0)}
\leq C_{(n,q)}\|f\|_{\mathcal{B}(Q_0)}$ with
$q\in(1,\frac{n}{n-1})$.

\begin{lemma}\label{lem-new-block-decomposition}
Let $s\in(n,\infty)$. There exists a positive constant $C_{(n,s)}$ such
that every Bourgain--Brezis--Mironescu $s$-block
$b$ admits a representation
\begin{align}\label{eq-new-block-expansion}
\sum_{m=1}^{M}\lambda_mB_m\to b
\quad\hbox{in both }L^1(Q_0)\hbox{ and }L^s(Q_0)
\end{align}
as $M\to\infty$,
where every $B_m$ is a Bourgain--Brezis--Mironescu $\infty$-block
and $\{\lambda_m\}_{m=1}^{\infty}\subset \mathbb{K}$ satisfies
\begin{align}\label{eq-new-block-coefficients}
\sum_{m=1}^{\infty}|\lambda_m|\leq C_{(n,s)}.
\end{align}
Moreover, the series
$\sum_{m=1}^{\infty}\lambda_mB_m$ converges absolutely in $L^1(Q_0)$,
that is,
$\sum_{m=1}^{\infty}
\|\lambda_mB_m\|_{L^1(Q_0)}<\infty.$
When $b$ is real-valued, all the blocks and coefficients in the
representation may be chosen real-valued.
\end{lemma}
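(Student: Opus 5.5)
We decompose each local piece of the $s$-block with Lemma \ref{lem-new-dyadic-decomposition} and then regroup the resulting pieces, generation by generation, into $\infty$-blocks at the finer scales they live on. Let $b=\sum_{j\in J}b_j$ be an $s$-block at scale $\varepsilon$ with family $\{Q_j\}_{j\in J}\in\mathscr F_\varepsilon(Q_0)$. Put $F_j:=\varepsilon^{1-\frac ns}\varepsilon^{\frac ns}\,b_j\cdot\varepsilon^{-\frac ns}\varepsilon^{\frac ns}$, more precisely $F_j:=\varepsilon^{1-\frac ns}\cdot\varepsilon^{\frac ns}\varepsilon^{-\frac ns}\cdot b_j\cdot|Q_j|^{\frac1s}\varepsilon^{-\frac ns}$, which simply means $F_j:=\varepsilon\, b_j$ rescaled so that $\|F_j\|_{L^s(Q_j)}\le|Q_j|^{1/s}$ and $\int F_j=0$. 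Since $\|b_j\|_{L^s}\le\varepsilon^{\frac ns-1}$ and $|Q_j|^{1/s}=\varepsilon^{n/s}$, this holds with $F_j=\varepsilon b_j$. Lemma \ref{lem-new-dyadic-decomposition} then gives $F_j=G^{(j)}+\sum_k\sum_{i\in J_k^{(j)}}H^{(j)}_{k,i}$.

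\textbf{The good part.} Set $B_0:=\sum_j\varepsilon^{-1}G^{(j)}/C_{(n,s)}$. Each summand is supported in $Q_j$, has mean zero, and satisfies $\|\varepsilon^{-1}G^{(j)}\|_\infty\le C\varepsilon^{-1}=C\varepsilon^{\frac n\infty-1}$. So $B_0$ is an $\infty$-block at scale $\varepsilon$ with coefficient $C_{(n,s)}$.

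\textbf{The level-$k$ pieces.} The cubes $R^{(j)}_{k,i}$ are dyadic, with $\ell\le\varepsilon 2^{-sk/n}$, and they have varying sizes. We sort them by exact dyadic size $\rho=2^{-t}\varepsilon$ with $t\ge sk/n$. At fixed $(k,t)$, all cubes over all $j$ are disjoint with common edge $\rho$. By \eqref{eq-new-dyadic-volume}, summed over $j$ and using $\int|F_j|^s\le\varepsilon^n$ and $\#J\le\varepsilon^{1-n}$, their number $N_{k,t}$ satisfies
$$N_{k,t}\rho^n\le 2^{-ks}\varepsilon^{n}\varepsilon^{1-n}=2^{-ks}\varepsilon,$$
so $N_{k,t}\le 2^{-ks}\varepsilon^{1-n}2^{tn}$. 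A single $\infty$-block at scale $\rho$ may contain at most $\rho^{1-n}=\varepsilon^{1-n}2^{t(n-1)}$ cubes. We therefore split these cubes into
$$\lceil N_{k,t}\rho^{n-1}\rceil\le 1+2^{-ks}2^{t}$$
groups, admissible by disjointness. Each piece $\varepsilon^{-1}H$ has sup at most $C2^k\varepsilon^{-1}=C2^k2^{-t}\rho^{-1}$, so each group is $C2^{k-t}$ times an $\infty$-block at scale $\rho$.

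\textbf{Summing the coefficients.} The total coefficient is
$$C\sum_k\sum_{t\ge sk/n}2^{k-t}\bigl(1+2^{t-ks}\bigr).$$
The first term gives $\sum_k 2^{k(1-s/n)}<\infty$, and this is exactly where $s>n$ enters. The second term is $\sum_k\sum_t 2^{k(1-s)}$, which diverges in $t$. This divergence is the main obstacle. The fix is to note that the ``$+1$'' from the ceiling only matters when $N_{k,t}>0$. Instead of the crude count, we use the sharper counting $\sum_t N_{k,t}\rho_t^n\le 2^{-ks}\varepsilon$ jointly over $t$, so we merge groups across different $t$ only through a total budget. The bound then becomes
$$\sum_t 2^{k-t}\bigl(1+N_{k,t}\rho_t^{n-1}\bigr)\le 2^{k-\lceil sk/n\rceil+1}+2^{k}\varepsilon^{-1}\sum_tN_{k,t}\rho_t^n\le C2^{k(1-s/n)}+C2^{k(1-s)},$$
where the first term comes from the geometric series over $t\ge sk/n$ and the second uses $\rho_t^{n-1}2^{-t}=\rho_t^n/\varepsilon$. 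Both are summable in $k$ since $s>n>1$, which gives \eqref{eq-new-block-coefficients}.

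\textbf{Convergence and real-valuedness.} Enumerating all groups as $\lambda_mB_m$, we get absolute $L^1$ convergence from \eqref{eq-new-block-L1-bound} and $\sum|\lambda_m|<\infty$. The $L^s$ convergence of partial sums in the stated order follows from the $L^s$ convergence in \eqref{eq-new-dyadic-expansion}, summed over finitely many $j$. Here we order the enumeration by $k$ first, and within each level the inner sums converge unconditionally, so any ordering there is fine. The $L^1$ limit then coincides with $b$. If $b$ is real, Lemma \ref{lem-new-dyadic-decomposition} produces real $G$ and $H$, since $G_k$ is built from $F$ and its averages, so all blocks and coefficients are real.
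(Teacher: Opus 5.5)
Your block construction and coefficient count are essentially the paper's. You apply Lemma~\ref{lem-new-dyadic-decomposition} to $\varepsilon b_j$ and sort the level-$k$ pieces by exact edge length $\rho=2^{-t}\varepsilon$. Each group is then $C2^{k}\rho/\varepsilon$ times an $\infty$-block, and you sum $N_{k,t}\rho^n$ over $t$ using the volume bound. The ``obstacle'' you describe comes only from first replacing each $N_{k,t}$ by its individual crude bound, and nothing needs to be ``merged'' across different $t$. One small correction: a group at scale $\rho$ may contain at most $\lfloor\rho^{1-n}\rfloor$ cubes. The number of groups is therefore $\lceil N_{k,t}/\lfloor\rho^{1-n}\rfloor\rceil\le 1+2N_{k,t}\rho^{n-1}$, not $\lceil N_{k,t}\rho^{n-1}\rceil$; this only costs a factor $2$.

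The genuine gap is the $L^s(Q_0)$ convergence of the partial sums in \eqref{eq-new-block-expansion}. You propose to ``order the enumeration by $k$ first'', but a single level $k$ generally contains infinitely many groups. The level-$k$ stopping cubes have no lower bound on their edge length, so they can occur at infinitely many dyadic sizes; for example, take well-separated bumps on dyadic cubes of edge $2^{-i}$, $i\in\mathbb N$, each with $\fint|F|^s$ just above $2^{ks}$. Cubes of different sizes cannot share an $\infty$-block, so there is then no $\mathbb N$-indexed sequence listing all of level $0$ before level $1$. Any enumeration must interleave levels, and for an interleaved enumeration the $L^s$ convergence is not automatic. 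Within one level, disjoint supports give unconditional convergence. Across levels you only know that $\sum_k h_k$ (with $h_k$ the sum of all level-$k$ pieces $\varepsilon^{-1}H_R$) converges in $L^s$, not absolutely; the available bound is merely $\|h_k\|_{L^s}\le C\varepsilon^{\frac1s-1}$. So a partial sum that has picked up incomplete tails from many levels at once is not controlled. Your $L^1$ claim is fine, since absolute $L^1$ convergence permits any rearrangement and the iterated sum is $b$.

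The paper closes this gap as follows. It writes $t_{k,l}$ for the level-$k$ groups in a fixed enumeration, chooses $M_N\uparrow\infty$ with $\|h_k-\sum_{l\le M_N}t_{k,l}\|_{L^s}<2^{-N-k}$ for $0\le k\le N$, and lets $S_N$ be the shell-end partial sums. It enumerates the terms in the finite shells $\Gamma_N\setminus\Gamma_{N-1}$, and bounds every partial sum inside the $N$-th shell by $2^{2-N}+\|h_N\|_{L^s}+\|S_{N-1}-b\|_{L^s}$, using disjointness of supports within each row. You need this, or an equivalent diagonal argument, to obtain the $L^s$ part of \eqref{eq-new-block-expansion}.
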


\begin{proof}
We divide the proof into three steps.

\textbf{Step 1.} \noindent\textit{In this step, we apply the dyadic
decomposition inside each initial cube.}
Let $\varepsilon\in(0,1]$,
$\{Q_j\}_{j\in J}\in\mathscr{F}_{\varepsilon}(Q_0)$, and
$b=\sum_{j\in J}b_j$ be as in Definition \ref{def-new-blocks}. Using
the cancellation and the size condition of $b_j$, we find that, for every
$j\in J$,
\begin{align*}
\int_{Q_j}\varepsilon b_j(x)\,dx=0
\quad \text{and}\quad
\|\varepsilon b_j\|_{L^s(Q_j)}
\leq|Q_j|^{\frac{1}{s}}.
\end{align*}
Thus Lemma \ref{lem-new-dyadic-decomposition} applies to
$\varepsilon b_j$ for every $j\in J$. Denote the corresponding good
part by $G_j$, the level-$k$ stopping cubes by
$\{R_{k,\ell}^{(j)}\}_{\ell\in J_k^{(j)}}$, and the associated bad
functions by
$\{H_{k,\ell}^{(j)}\}_{\ell\in J_k^{(j)}}$. Let
\begin{align*}
G:=\sum_{j\in J}G_j
\quad\text{and}\quad
\mathcal{R}_k
:=\{R_{k,\ell}^{(j)}:
j\in J,\ \ell\in J_k^{(j)}\}.
\end{align*}
Since the initial cubes have pairwise disjoint interiors and, inside
each initial cube, the level-$k$ stopping cubes are disjoint in their
half-open representatives, every $R\in\mathcal{R}_k$ has a unique
representation $R=R_{k,\ell}^{(j)}$. Define
\begin{align*}
H_R:=H_{k,\ell}^{(j)}
\quad\text{whenever}\quad
R=R_{k,\ell}^{(j)}.
\end{align*}
In particular, all cubes in $\mathcal{R}_k$ have pairwise disjoint
interiors. The admissibility bound makes $J$ finite. Adding the finitely
many decompositions and then multiplying by $\varepsilon^{-1}$, we
obtain
\begin{align*}
b
&=\lim_{N\to\infty}\left[
\varepsilon^{-1}G
+\sum_{k=0}^{N}\sum_{R\in\mathcal{R}_k}
\varepsilon^{-1}H_R\right]
\quad\hbox{in }L^s(Q_0).
\end{align*}
In addition, by \eqref{eq-new-dyadic-volume}
and the admissibility condition
$\#J\leq\varepsilon^{1-n}$, we find that
\begin{align}
\sum_{R\in\mathcal{R}_k}|R|
&\leq2^{-ks}\sum_{j\in J}
\int_{Q_j}|\varepsilon b_j(x)|^s\,dx\notag\\
&\leq2^{-ks}(\#J)\varepsilon^n
\leq2^{-ks}\varepsilon^{1-n}\varepsilon^n
=\varepsilon2^{-ks}.\label{eq-new-block-volume-detail}
\end{align}
Moreover, again using \eqref{eq-new-dyadic-volume}, every
$R\in\mathcal{R}_k$ satisfies
\begin{align}\label{eq-new-block-length-detail}
\ell(R)\leq\varepsilon2^{-k\frac{s}{n}}.
\end{align}

\textbf{Step 2.} \noindent\textit{In this step, we enumerate the
$\infty$-block terms and prove convergence of their ordinary partial
sums in $L^s(Q_0)$.}
For every initial cube $Q_j$, the good part $G_j$ has integral
zero and, by \eqref{eq-new-dyadic-good}, satisfies
\begin{align*}
\left\|\varepsilon^{-1}G_j\right\|_{L^\infty(Q_j)}
\leq C_{(n,s)}\varepsilon^{-1}.
\end{align*}
Consequently, let
\begin{align}\label{l1}
B_0:=\frac{1}{C_{(n,s)}}\varepsilon^{-1}G
\quad\text{and}\quad
\lambda_0:=C_{(n,s)}.
\end{align}
Then $B_0$ is a Bourgain--Brezis--Mironescu $\infty$-block at scale
$\varepsilon$ and $\lambda_0B_0=\varepsilon^{-1}G$.

Next, fix $k\in\mathbb{Z}_+$. Since each cube in $\mathcal{R}_k$ is a
dyadic descendant of an initial cube of edge length $\varepsilon$, every
edge length occurring at this level has the form
$
r=2^{-j}\varepsilon
$
for some $j\in\mathbb{Z}_+$. For every occurring $r$, let
\begin{align*}
\mathcal{R}_{k,r}
:=\{R\in\mathcal{R}_k:\ell(R)=r\}
\quad\text{and}\quad
N_{k,r}:=\#\mathcal{R}_{k,r}.
\end{align*}
For each fixed $r=2^{-j}\varepsilon$, $N_{k,r}$ is finite.
Indeed, only
$2^{jn}$ dyadic descendants of edge length $r$
occur in each initial cube,
and the initial family is finite. The collection of occurring edge
lengths, and hence the collection of $\infty$-blocks at level $k$, is at
most countable.
Partition $\mathcal{R}_{k,r}$ into subcollections, each containing at
most $\lfloor r^{1-n}\rfloor$ cubes, and denote them by
$\{\mathcal{P}_{k,r,i}\}_{i=1}^{K_{k,r}}$. By
\eqref{eq-new-dyadic-bad}, every $H_R$ is supported in $R$, has integral
zero, and satisfies
$
\|\varepsilon^{-1}H_R\|_{L^\infty(R)}
\leq C_{(n,s)}2^k\varepsilon^{-1}.
$
For every such subcollection, let
\begin{align}\label{bl}
B_{k,r,i}
:=\frac{\varepsilon}{C_{(n,s)}2^kr}
\sum_{R\in\mathcal{P}_{k,r,i}}\varepsilon^{-1}H_R
\quad\text{and}\quad
\lambda_{k,r,i}
:=C_{(n,s)}\frac{2^kr}{\varepsilon}.
\end{align}
Thus every $B_{k,r,i}$ is a Bourgain--Brezis--Mironescu $\infty$-block
and
\begin{align*}
t_{k,r,i}
:=\lambda_{k,r,i}B_{k,r,i}
=\sum_{R\in\mathcal{P}_{k,r,i}}\varepsilon^{-1}H_R.
\end{align*}

For each $k$, enumerate the terms $t_{k,r,i}$ as
$\{t_{k,l}\}_{l\in\mathbb{N}}$, adjoining zero terms if necessary and
regarding each such term as $0\cdot B_0$.
Since the terms at each fixed level have pairwise disjoint supports,
we have
\begin{align*}
\sum_{l=1}^{\infty}t_{k,l}
=h_k:=\sum_{R\in\mathcal{R}_k}\varepsilon^{-1}H_R,
\end{align*}
here the convergence to $h_k$ is in $L^s(Q_0)$ and is
independent of the order in which the terms $t_{k,l}$ are summed. Indeed,
\begin{align*}
\|h_k\|_{L^s(Q_0)}^s
=\sum_{R\in\mathcal{R}_k}
\|\varepsilon^{-1}H_R\|_{L^s(R)}^s
\leq C_{(n,s)}2^{ks}\varepsilon^{-s}
\sum_{R\in\mathcal{R}_k}|R|
\leq C_{(n,s)}\varepsilon^{1-s}.
\end{align*}
Consequently,
\begin{align*}
b=\lambda_0B_0+\sum_{k=0}^{\infty}h_k
\quad\hbox{in }L^s(Q_0).
\end{align*}

We now flatten this iterated expansion into a single series whose
partial sums through the first $M$ terms converge in $L^s(Q_0)$ as
$M\to\infty$. Choose an increasing sequence of integers
$M_N\to\infty$ as $N\to\infty$, with
$M_N\geq N+1$, such that
$
\|h_k-\sum_{l=1}^{M_N}t_{k,l}\|_{L^s(Q_0)}
<2^{-N-k}
$ whenever $0\leq k\leq N$, and let
$
S_N:=\lambda_0B_0+
\sum_{k=0}^{N}\sum_{l=1}^{M_N}t_{k,l}.
$
Then
\begin{align*}
\left\|S_N-
\left(\lambda_0B_0+\sum_{k=0}^{N}h_k\right)
\right\|_{L^s(Q_0)}<2^{1-N},
\end{align*}
so $S_N\to b$ in $L^s(Q_0)$ as $N\to\infty$.
For $N\in\mathbb{Z}_+$, let
\begin{align*}
\Gamma_N:=\{(k,l):0\leq k\leq N,\ 1\leq l\leq M_N\}.
\end{align*}
Enumerate $\lambda_0B_0$ first, then the terms indexed by $\Gamma_0$,
and thereafter those indexed successively by the finite shells
$\Gamma_N\setminus\Gamma_{N-1}$, $N\geq1$, in any fixed order within
each shell. Since $M_N\to\infty$ as $N\to\infty$, the shells exhaust
$\mathbb{Z}_+\times\mathbb{N}$, so every indexed term occurs exactly
once. Relabel the resulting sequence as
$\{\lambda_mB_m\}_{m\in\mathbb{N}}$. The partial sum at the end of the
$N$-th shell is $S_N$. Hence these shell-end partial sums converge to
$b$ in $L^s(Q_0)$ as $N\to\infty$.

To prove the $L^s(Q_0)$ convergence in
\eqref{eq-new-block-expansion}, it remains only to control the partial
sums inside each shell, since the shell-end sums $S_N$ already converge
to $b$. Fix $N\in\mathbb{N}$. Write the terms in the $N$-th shell, in
their chosen order, as $u_{N,1},\ldots,u_{N,L_N}$, and,
for any integer $r\in[0, L_N]$, let
$
P_{N,r}:=S_{N-1}+\sum_{a=1}^{r}u_{N,a}.
$
Thus $P_{N,r}$ runs through all partial sums in the $N$-th shell.
Since $\sum_{k=0}^{\infty}h_k$ converges in $L^s(Q_0)$, we have
$\|h_N\|_{L^s(Q_0)}\to0$ as $N\to\infty$.
By this, the triangle inequality, the pairwise
disjointness of the terms within each row, and the choice of
$M_{N-1}$,
\begin{align*}
\sup_{0\leq r\leq L_N}
\|P_{N,r}-b\|_{L^s(Q_0)}
&\leq
\sup_{0\leq r\leq L_N}
\|P_{N,r}-S_{N-1}\|_{L^s(Q_0)}
+\|S_{N-1}-b\|_{L^s(Q_0)}\\
&\leq
\sum_{k=0}^{N-1}
\left\|h_k-\sum_{l=1}^{M_{N-1}}t_{k,l}\right\|_{L^s(Q_0)}
+\|h_N\|_{L^s(Q_0)}+\|S_{N-1}-b\|_{L^s(Q_0)}\\
&<2^{2-N}+\|h_N\|_{L^s(Q_0)}
+\|S_{N-1}-b\|_{L^s(Q_0)}\to0
\end{align*}
as $N\to\infty$.
Hence all partial sums of the relabeled series converge to $b$ in
$L^s(Q_0)$, proving the $L^s(Q_0)$ convergence in
\eqref{eq-new-block-expansion}.

\textbf{Step 3.} \noindent\textit{In this step, we
establish the summability of the coefficients and the absolute
convergence of the resulting series in $L^1(Q_0)$.}
For every occurring $r\in(0,1]$, we have $r^{1-n}\geq1$ and hence
$\lfloor r^{1-n}\rfloor\geq \frac{r^{1-n}}{2}$.
In what follows, for any $a\in\mathbb{R}$,
$\lceil a\rceil$ denotes the least integer not
less than $a$. Therefore
\begin{align}\label{eq-new-number-groups-detail}
K_{k,r}
&\leq\left\lceil\frac{N_{k,r}}
{\lfloor r^{1-n}\rfloor}\right\rceil
\leq1+\frac{N_{k,r}}{\lfloor r^{1-n}\rfloor}
\leq1+2N_{k,r}r^{n-1}.
\end{align}
Using \eqref{eq-new-number-groups-detail}, we estimate the sum of all
coefficients at level $k$ as follows:
\begin{align}\label{k1}
\sum_rK_{k,r}C_{(n,s)}\frac{2^kr}{\varepsilon}
\leq C_{(n,s)}\frac{2^k}{\varepsilon}
\sum_r r
+C_{(n,s)}\frac{2^k}{\varepsilon}
\sum_rN_{k,r}r^n.
\end{align}
By \eqref{eq-new-block-length-detail}, all occurring dyadic values of
$r$ are no larger than $\varepsilon2^{-k\frac{s}{n}}$. Using the
geometric-series formula, we obtain
\begin{align}\label{k2}
\sum_r r
\leq\sum_{\{j:\,2^{-j}\varepsilon\leq\varepsilon2^{-k\frac{s}{n}}\}}
2^{-j}\varepsilon
\leq2\varepsilon2^{-k\frac{s}{n}}.
\end{align}
Moreover, from \eqref{eq-new-block-volume-detail}, we obtain
\begin{align}\label{k3}
\sum_rN_{k,r}r^n
&=\sum_{R\in\mathcal{R}_k}|R|
\leq\varepsilon2^{-ks}.
\end{align}
Combining \eqref{k1}, \eqref{k2}, and \eqref{k3}, we infer that
\begin{align}\label{eq-new-level-final-detail}
\sum_rK_{k,r}C_{(n,s)}\frac{2^kr}{\varepsilon}
&\leq C_{(n,s)}\frac{2^k}{\varepsilon}
\varepsilon2^{-k\frac{s}{n}}
+C_{(n,s)}\frac{2^k}{\varepsilon}
\varepsilon2^{-ks}\nonumber\\
&=C_{(n,s)}
\left[2^{-k(\frac{s}{n}-1)}+2^{-k(s-1)}\right].
\end{align}
The enumeration in Step 2 exhausts $\lambda_0B_0$ and all terms
$\lambda_{k,r,i}B_{k,r,i}$; the adjoined zero terms contribute nothing.
From \eqref{l1}, \eqref{bl},
\eqref{eq-new-level-final-detail}, the preceding geometric-series
estimates, and $s\in(n,\infty)$, we infer that
\begin{align*}
\sum_{m=1}^{\infty}|\lambda_m|
&=|\lambda_0|+
\sum_{k=0}^{\infty}\sum_r\sum_{i=1}^{K_{k,r}}
|\lambda_{k,r,i}|\\
&=C_{(n,s)}+
\sum_{k=0}^{\infty}\sum_r
K_{k,r}C_{(n,s)}\frac{2^kr}{\varepsilon}\\
&\leq C_{(n,s)}+
C_{(n,s)}\sum_{k=0}^{\infty}
\left[2^{-k(\frac{s}{n}-1)}+2^{-k(s-1)}\right]\\
&\leq C_{(n,s)},
\end{align*}
which proves \eqref{eq-new-block-coefficients}. By
\eqref{eq-new-block-L1-bound},
\begin{align*}
\sum_{m=1}^{\infty}
\|\lambda_mB_m\|_{L^1(Q_0)}
\leq\sum_{m=1}^{\infty}|\lambda_m|<\infty.
\end{align*}

Thus the series converges absolutely in $L^1(Q_0)$. Moreover, by the
$L^s(Q_0)$ convergence of its partial sums to $b$ proved in Step 2
and H\"older's inequality,

we have
\begin{align*}
\left\|\sum_{m=1}^{N}\lambda_mB_m-b\right\|_{L^1(Q_0)}
\leq |Q_0|^{1-\frac{1}{s}}
\left\|\sum_{m=1}^{N}\lambda_mB_m-b\right\|_{L^s(Q_0)}
\to0
\end{align*}
as $N\to\infty$.
Hence the series converges to $b$ in $L^1(Q_0)$, and
this convergence is absolute.
Finally, if $b$ is real-valued, the construction in Lemma
\ref{lem-new-dyadic-decomposition} preserves real values, while all
normalizing coefficients in \eqref{l1} and \eqref{bl} are real.
Thus all $B_m$ and $\lambda_m$ may be chosen real-valued. This
completes the proof of Lemma \ref{lem-new-block-decomposition}.
\end{proof}

We now prove Theorem \ref{the-new-Hs-self}.

\begin{proof}[Proof of Theorem \ref{the-new-Hs-self}]
We first prove the straightforward inclusion
$\mathcal{H}_\infty(Q_0)\subset \mathcal{H}_s(Q_0)$,
which holds for every
$s\in(1,\infty)$. If
$B=\sum_{j\in J}B_j$ is an $\infty$-block at scale $\varepsilon$,
then, for every $j\in J$,
\begin{align*}
\|B_j\|_{L^s(Q_j)}
\leq |Q_j|^{\frac{1}{s}}\|B_j\|_{L^\infty(Q_j)}
\leq\varepsilon^{\frac{n}{s}-1}.
\end{align*}
Thus every $\infty$-block is an $s$-block. Hence every
$\infty$-block representation is also an $s$-block representation
of the same $L^1(Q_0)$ function,
$\mathcal{H}_\infty(Q_0)\subset\mathcal{H}_s(Q_0)$,
and, for every $g\in\mathcal{H}_\infty(Q_0)$,
\begin{align}\label{eq-new-Hinfty-into-Hs}
\|g\|_{\mathcal{H}_s(Q_0)}
\leq\|g\|_{\mathcal{H}_\infty(Q_0)}.
\end{align}

We now prove the reverse inclusion when $s\in(n,\infty)$. Let
$h\in\mathcal{H}_s(Q_0)$ and choose an $s$-block representation
\begin{align*}
h=\sum_{k=1}^{\infty}\mu_kb_k
\quad\text{with }
\sum_{k=1}^{\infty}|\mu_k|<\infty.
\end{align*}
By Definition \ref{def-new-Hs},
\begin{align*}
\sum_{k=1}^{K}\mu_kb_k\to h
\quad\text{in }L^1(Q_0)
\end{align*}
as $K\to\infty$.
For every $k$, Lemma
\ref{lem-new-block-decomposition} provides
$\infty$-blocks
$\{B_{k,m}\}_{m\in\mathbb{N}}$ and coefficients
$\{\lambda_{k,m}\}_{m\in\mathbb{N}}$ such that
\begin{align*}
b_k=\sum_{m=1}^{\infty}\lambda_{k,m}B_{k,m}
\quad\hbox{in }L^1(Q_0)
\quad\text{and }
\sum_{m=1}^{\infty}|\lambda_{k,m}|\leq C_{(n,s)}.
\end{align*}
Consequently,
\begin{align*}
\sum_{k=1}^{\infty}\sum_{m=1}^{\infty}
|\mu_k\lambda_{k,m}|
\leq C_{(n,s)}\sum_{k=1}^{\infty}|\mu_k|<\infty.
\end{align*}
The one-block representation implies
$\|B_{k,m}\|_{\mathcal{H}_\infty(Q_0)}\leq1$, while
\eqref{eq-new-block-L1-bound} implies
$\|B_{k,m}\|_{L^1(Q_0)}\leq1$. Hence the double series
$
\sum_{k=1}^{\infty}\sum_{m=1}^{\infty}
\mu_k\lambda_{k,m}B_{k,m}
$
converges absolutely, and hence unconditionally under any enumeration,
both in $\mathcal{H}_\infty(Q_0)$ and in $L^1(Q_0)$. Let $g$ denote
its sum. For every
$K\in\mathbb{N}$, the $L^1(Q_0)$ identities supplied
by Lemma \ref{lem-new-block-decomposition} imply
\begin{align*}
\sum_{k=1}^{K}\sum_{m=1}^{\infty}
\mu_k\lambda_{k,m}B_{k,m}
&=\sum_{k=1}^{K}\mu_kb_k
\quad\hbox{in }L^1(Q_0).
\end{align*}
As $K\to\infty$, the left-hand side converges to $g$ by the absolute
convergence of the double series, whereas the right-hand side converges
to $h$. Thus $g=h$ in $L^1(Q_0)$.
Consequently, $h$ belongs to $\mathcal{H}_\infty(Q_0)$, and
\begin{align*}
\|h\|_{\mathcal{H}_\infty(Q_0)}
\leq C_{(n,s)}\sum_{k=1}^{\infty}|\mu_k|.
\end{align*}
Taking the infimum over the $s$-block representations of $h$ proves the
second inequality in \eqref{eq-new-Hs-self-norms}.
This, together with
\eqref{eq-new-Hinfty-into-Hs},
implies the forward implication. The
case $s=\infty$ is immediate.

It remains to prove sharpness. Suppose, to the contrary, that the two
subspaces of $L_0^1(Q_0)$ coincide for some $s\in(1,n]$, and let
$q:=s'$.
Then
$
q\in[\frac{n}{n-1},\infty).
$
By \eqref{eq-new-Hinfty-into-Hs},
we find that the natural inclusion
\begin{align*}
J_s:
\begin{cases}
\displaystyle
\mathcal{H}_\infty(Q_0)\longrightarrow\mathcal{H}_s(Q_0)\\
\displaystyle
h\longmapsto h\in L_0^1(Q_0)
\end{cases}
\end{align*}
is then a bounded bijection between Banach spaces. The open mapping
theorem therefore implies that $J_s^{-1}$ is bounded as well.
By Theorem \ref{the-new-self}, choose
$f\in\mathcal{B}(Q_0)\setminus\mathcal{B}_q(Q_0)$. By Lemma
\ref{lem-new-atomic-core-properties}(ii), the canonical endpoint action
\begin{align*}
\mathcal{L}_f^\infty(h):=\langle f,h\rangle_\infty,
\quad h\in\mathcal{H}_\infty(Q_0)
\end{align*}
is continuous. Define
\begin{align*}
\mathcal{L}
:=\mathcal{L}_f^\infty\circ J_s^{-1}
\in[\mathcal{H}_s(Q_0)]^*.
\end{align*}
Theorem \ref{the-new-Hs-duality} would therefore
produce
$\widetilde f\in\mathcal{B}_q(Q_0)$ representing $\mathcal{L}$.
Choose representatives of $f$ and $\widetilde f$. For every
$g\in L_0^\infty(Q_0)$, one has $J_s^{-1}g=g$, and hence
\begin{align}\label{q0fg}
\int_{Q_0}f(x)g(x)\,dx
=\mathcal{L}_f^\infty(g)
=\mathcal{L}(g)
=\int_{Q_0}\widetilde f(x)g(x)\,dx.
\end{align}

Thus, for every measurable $E\subset Q_0$, let
$g=\mathbf{1}_E-\frac{|E|}{|Q_0|}$ in
\eqref{q0fg}. Then we have
\begin{align*}
\int_E\left\{\left[f(x)-\widetilde f(x)\right]
-(f-\widetilde f)_{Q_0}\right\}\,dx=0.
\end{align*}
Hence $f-\widetilde f$ is constant almost everywhere,

so $f$ and $\widetilde f$ are the same element of
$\mathcal{B}(Q_0)$. Since
$\mathcal{B}_q(Q_0)\subset\mathcal{B}(Q_0)$ and membership in
$\mathcal{B}_q(Q_0)$ is unchanged by adding a constant, this contradicts
$f\notin\mathcal{B}_q(Q_0)$. Hence the two subspaces of
$L_0^1(Q_0)$ cannot coincide for any $s\in(1,n]$.
This completes the proof of Theorem
\ref{the-new-Hs-self}.
\end{proof}

Theorem \ref{the-new-Hs-self} determines exactly when the natural
inclusion $J_s:\mathcal{H}_\infty(Q_0)\to\mathcal{H}_s(Q_0)$ is onto.
We next make two further properties of this same inclusion explicit.
For every $s\in(1,\infty)$, the map $J_s$ is contractive.
Moreover, under the canonical block pairings, it is compatible with the
natural inclusion
$\iota_s:\mathcal{B}_{s'}(Q_0)\to\mathcal{B}(Q_0)$, in the precise
sense stated below.

\begin{proposition}
\label{prop-new-Js-contraction}
For every $s\in(1,\infty)$, the natural inclusion
\begin{align*}
J_s:
\begin{cases}
\displaystyle
\mathcal{H}_\infty(Q_0)\longrightarrow\mathcal{H}_s(Q_0)\\
\displaystyle
h\longmapsto h\in L_0^1(Q_0)
\end{cases}
\end{align*}
is a contraction. Moreover, the canonical inclusion
\begin{align*}
\iota_s:\mathcal{B}_{s'}(Q_0)\longrightarrow\mathcal{B}(Q_0)
\end{align*}
is compatible with $J_s$ under the canonical block pairings from
Lemma \ref{lem-new-atomic-core-properties}(ii):
\begin{align}\label{eq-new-Js-ambient-compatibility}
\langle f,J_sh\rangle_s
=\langle\iota_sf,h\rangle_\infty
\end{align}
for every $f\in\mathcal{B}_{s'}(Q_0)$ and
$h\in\mathcal{H}_\infty(Q_0)$.
In particular, $J_s$ is the identity on $L_0^\infty(Q_0)$.
If $n\in\mathbb{N}\cap[2,\infty)$, then $J_s$ is onto if and only if
$s\in(n,\infty)$. We let $J_\infty$
denote the identity on $\mathcal{H}_\infty(Q_0)$.
\end{proposition}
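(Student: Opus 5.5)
The contraction property is the inequality \eqref{eq-new-Hinfty-into-Hs} already established in the proof of Theorem \ref{the-new-Hs-self}. The argument there is that every $\infty$-block is an $s$-block, since $\|B_j\|_{L^s(Q_j)}\leq|Q_j|^{1/s}\|B_j\|_{L^\infty(Q_j)}\leq\varepsilon^{\frac ns-1}$. Consequently every $\infty$-block representation of $h$ is also an $s$-block representation of the same $L^1(Q_0)$ function, and taking infima gives $\|J_sh\|_{\mathcal{H}_s(Q_0)}\leq\|h\|_{\mathcal{H}_\infty(Q_0)}$. I will simply record this.

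For the compatibility identity \eqref{eq-new-Js-ambient-compatibility}, fix $f\in\mathcal{B}_{s'}(Q_0)$ and $h\in\mathcal{H}_\infty(Q_0)$. The continuous inclusion $\iota_s$ comes from H\"older's inequality, $\operatorname{osc}_1\leq\operatorname{osc}_{s'}$, so $\|\iota_sf\|_{\mathcal{B}(Q_0)}\leq\|f\|_{\mathcal{B}_{s'}(Q_0)}$. Choose one $\infty$-block representation $h=\sum_m\lambda_mB_m$. By the observation above it is simultaneously an $s$-block representation of $J_sh$. By Lemma \ref{lem-new-atomic-core-properties}(ii), both canonical pairings are independent of the representation, so each may be computed with this common series:
\begin{align*}
\langle f,J_sh\rangle_s=\sum_m\lambda_m\int_{Q_0}f(x)B_m(x)\,dx=\langle\iota_sf,h\rangle_\infty.
\end{align*}
The two sums are literally the same because $\iota_sf$ and $f$ have the same representatives, and each integral $\int_{Q_0} fB_m\,dx$ is well defined by Proposition \ref{prop-new-block-functional}. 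That $J_s$ is the identity on $L_0^\infty(Q_0)$ is immediate: a nonzero $g\in L_0^\infty(Q_0)$ is a multiple of a scale-one $\infty$-block, hence lies in $\mathcal{H}_\infty(Q_0)$, and $J_s$ does not change the underlying $L^1$ function.

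For the surjectivity statement, assume $n\geq2$. The map $J_s$ is injective by definition, since it is the identity at the level of $L^1_0(Q_0)$. Hence $J_s$ is onto exactly when $\mathcal{H}_s(Q_0)=\mathcal{H}_\infty(Q_0)$ as subspaces of $L_0^1(Q_0)$. By Theorem \ref{the-new-Hs-self}, restricted to $s\in(1,\infty)$, this holds if and only if $s\in(n,\infty)$. The convention for $J_\infty$ is a definition. There is no real obstacle here. The one point to state carefully is that representation-independence of the canonical pairings (Lemma \ref{lem-new-atomic-core-properties}(ii)) is exactly what justifies evaluating both sides on a single shared series, since $fh$ need not be integrable.
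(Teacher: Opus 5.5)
Your proof is correct and follows the paper's argument essentially step by step. The contraction comes from every $\infty$-block being an $s$-block, and the compatibility identity from evaluating both canonical pairings on one shared $\infty$-block representation, justified by the representation-independence in Lemma~\ref{lem-new-atomic-core-properties}(ii); the surjectivity criterion follows from Theorem~\ref{the-new-Hs-self} restricted to $s\in(1,\infty)$.
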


\begin{proof}
Every $\infty$-block is an $s$-block by the estimate used to prove
\eqref{eq-new-Hinfty-into-Hs}. Hence the same $\infty$-block
representation is an $s$-block representation of the same
$L^1(Q_0)$ function. Taking the infimum over all such representations
shows that $J_s$ is a contraction.

The inequality
$
\|f\|_{\mathcal{B}(Q_0)}
\leq\|f\|_{\mathcal{B}_{s'}(Q_0)}
$ for every $f\in \mathcal{B}_{s'}(Q_0)$
shows that $\iota_s$ is a contraction. Let
$h\in\mathcal{H}_\infty(Q_0)$ and fix an $\infty$-block
representation
\begin{align*}
h=\sum_{m=1}^{\infty}\lambda_mB_m
\quad\hbox{in }L^1(Q_0),
\quad\text{and}\quad
\sum_{m=1}^{\infty}|\lambda_m|<\infty.
\end{align*}
For every $f\in\mathcal{B}_{s'}(Q_0)$,
the canonical block pairings satisfy
\begin{align*}
\langle\iota_sf,h\rangle_\infty
=\sum_{m=1}^{\infty}\lambda_m
\int_{Q_0}f(x)B_m(x)\,dx
=\langle f,J_sh\rangle_s.
\end{align*}
This proves \eqref{eq-new-Js-ambient-compatibility}. The remaining
assertions follow from Theorem \ref{the-new-Hs-self} and the
scale-one inclusion of $L_0^\infty(Q_0)$.
This completes the proof of Proposition
\ref{prop-new-Js-contraction}.
\end{proof}

Combining the two sharp
self-improvement results and the finite-exponent duality
in Theorems \ref{the-new-self},
\ref{the-new-Hs-duality}, and \ref{the-new-Hs-self},
we obtain the following endpoint duality result.

\begin{corollary}
\label{cor-new-Hinfty-endpoint-duality}
Let $n\in\mathbb{N}\cap[2,\infty)$.
With respect to the canonical block pairing
$\langle f,h\rangle_\infty$ from Lemma
\ref{lem-new-atomic-core-properties}(ii), one has
\begin{align}\label{eq-new-Hinfty-known-predual}
\left[\mathcal{H}_\infty(Q_0)\right]^*
=\mathcal{B}(Q_0)
\end{align}
with equivalent norms. Consequently,
$\mathcal{H}_\infty(Q_0)$ is an atomic predual of
$\mathcal{B}(Q_0)$.
\end{corollary}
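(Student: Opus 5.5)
We identify $\mathcal{B}(Q_0)$ with $[\mathcal{H}_\infty(Q_0)]^*$ by transporting the finite-exponent duality of Theorem \ref{the-new-Hs-duality} through the self-improvements. Fix $q\in(1,\frac{n}{n-1})$ and set $s:=q'\in(n,\infty)$.

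For the embedding, let $f\in\mathcal{B}(Q_0)$. By Theorem \ref{the-new-self}, $f\in\mathcal{B}_q(Q_0)$ with $\|f\|_{\mathcal{B}_q(Q_0)}\le C_{(n,q)}\|f\|_{\mathcal{B}(Q_0)}$. Lemma \ref{lem-new-atomic-core-properties}(ii) with $s=\infty$ then shows that $h\mapsto\langle f,h\rangle_\infty$ is bounded on $\mathcal{H}_\infty(Q_0)$, with norm at most $\|f\|_{\mathcal{B}(Q_0)}$; here $s'=1$, so no self-improvement is needed. For the lower bound, fix $\varepsilon$ and $\{Q_j\}_{j\in J}\in\mathscr{F}_\varepsilon(Q_0)$. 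We test against the $\infty$-block $b=\sum_j \frac12\varepsilon^{n-1}|Q_j|^{-1}[\varphi_j-(\varphi_j)_{Q_j}]\mathbf{1}_{Q_j}$, where $\varphi_j=\overline{\operatorname{sgn}}(f-f_{Q_j})$. This is the $q=1$ analogue of Lemma \ref{lem-new-block-pairing}. We have $\|b_j\|_{L^\infty}\le\varepsilon^{-1}$, and the pairing equals $\frac12\varepsilon^{n-1}\sum_j\operatorname{osc}_1(f;Q_j)$. Since $f\in L^1$ and $b$ is bounded, the canonical pairing equals $\int fb$. Hence $\frac12\|f\|_{\mathcal{B}(Q_0)}\le\|\mathcal{L}_f\|$. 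Injectivity modulo constants follows by testing against $\mathbf{1}_E-|E|$, as at the end of the proof of Theorem \ref{the-new-Hs-self}.

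For surjectivity, let $\mathcal{L}\in[\mathcal{H}_\infty(Q_0)]^*$. By Theorem \ref{the-new-Hs-self}, $J_s$ is a bounded bijection, and $J_s^{-1}$ is bounded with norm at most $C_{(n,s)}$. Therefore $\mathcal{L}\circ J_s^{-1}\in[\mathcal{H}_s(Q_0)]^*$. Theorem \ref{the-new-Hs-duality}(ii) gives $f\in\mathcal{B}_q(Q_0)$ with
\[
\|f\|_{\mathcal{B}_q(Q_0)}\le 2C_{(n,s)}\|\mathcal{L}\|,
\qquad
\mathcal{L}\circ J_s^{-1}(h)=\langle f,h\rangle_s .
\]
Applying the compatibility identity \eqref{eq-new-Js-ambient-compatibility} to $h\in\mathcal{H}_\infty(Q_0)$ yields
\[
\mathcal{L}(h)=\mathcal{L}\circ J_s^{-1}(J_sh)=\langle f,J_sh\rangle_s=\langle\iota_sf,h\rangle_\infty .
\]
So $\mathcal{L}$ is represented by $\iota_sf\in\mathcal{B}(Q_0)$, and $\|f\|_{\mathcal{B}(Q_0)}\le\|f\|_{\mathcal{B}_q(Q_0)}\lesssim\|\mathcal{L}\|$. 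Combined with the first part, this gives an isomorphism with equivalent norms.

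The main delicate point is making sure the pairings agree: the functional produced on $\mathcal{H}_s(Q_0)$ must be the canonical $\infty$-pairing on all of $\mathcal{H}_\infty(Q_0)$, not merely on $L_0^\infty(Q_0)$. This is exactly what \eqref{eq-new-Js-ambient-compatibility} supplies, together with the uniqueness from density in Lemma \ref{lem-new-atomic-core-properties}(i). The second point to check is that the endpoint norming block is admissible. This only requires $|\varphi_j|\le1$, so that $|\varphi_j-(\varphi_j)_{Q_j}|\le2$, which the factor $\frac12$ absorbs.
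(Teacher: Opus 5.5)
Your argument is correct, and its skeleton matches the paper's. The paper's proof is the one-line chain $[\mathcal{H}_\infty(Q_0)]^*=[\mathcal{H}_{s_0}(Q_0)]^*=\mathcal{B}_{s_0'}(Q_0)=\mathcal{B}(Q_0)$, using Theorems~\ref{the-new-Hs-self}, \ref{the-new-Hs-duality}, and \ref{the-new-self}, with the pairing identified as canonical by density of $L_0^\infty(Q_0)$. Your surjectivity step is exactly the first two links of that chain. You identify the pairing through \eqref{eq-new-Js-ambient-compatibility} rather than by density; this is an equally valid alternative.

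The genuine difference lies in the embedding $\mathcal{B}(Q_0)\hookrightarrow[\mathcal{H}_\infty(Q_0)]^*$. The paper's chain reaches all of $\mathcal{B}(Q_0)$ only through the last link $\mathcal{B}_{s_0'}(Q_0)=\mathcal{B}(Q_0)$, that is, through the sparse self-improvement of Theorem~\ref{the-new-self}. Its lower norm bound also inherits the constant $C_{(n,s_0)}$ of $J_{s_0}^{-1}$. You instead take the upper bound from Lemma~\ref{lem-new-atomic-core-properties}(ii) at $s=\infty$. You take the lower bound from an explicit $L^\infty$ norming block, the $q=1$ analogue of Lemma~\ref{lem-new-block-pairing}. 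This gives $\frac12\|f\|_{\mathcal{B}(Q_0)}\le\|\mathcal{L}_f\|\le\|f\|_{\mathcal{B}(Q_0)}$ with dimension-free constants.

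Consequently, your proof never actually uses Theorem~\ref{the-new-self}, and your opening appeal to it can be deleted. You need only the forward half of Theorem~\ref{the-new-Hs-self}, which rests on Lemma~\ref{lem-new-block-decomposition}, together with the finite-exponent duality. The inclusion $\mathcal{B}(Q_0)\subset\mathcal{B}_q(Q_0)$ then follows as a byproduct. Indeed, $\mathcal{L}_f$ is also represented by some $\widetilde f\in\mathcal{B}_q(Q_0)$ with $\|\widetilde f\|_{\mathcal{B}_q(Q_0)}\le 2C_{(n,s)}\|f\|_{\mathcal{B}(Q_0)}$, and injectivity forces $f-\widetilde f$ to be constant.

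The paper's version is shorter once all three theorems are in hand. Yours needs fewer prerequisites and gives sharper constants at the endpoint.
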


\begin{proof}
Fix any $s_0\in(n,\infty)$. By Theorems
\ref{the-new-Hs-self},
\ref{the-new-Hs-duality}, and \ref{the-new-self}, we have
\begin{align*}
\left[\mathcal{H}_\infty(Q_0)\right]^*
=
\left[\mathcal{H}_{s_0}(Q_0)\right]^*
=\mathcal{B}_{s_0'}(Q_0)
=\mathcal{B}(Q_0).
\end{align*}
The resulting action is the canonical one: on the dense subspace
$L_0^\infty(Q_0)$ it is the integral pairing, and hence it agrees
with the canonical block pairing on all of
$\mathcal{H}_\infty(Q_0)$. This completes the proof of
\eqref{eq-new-Hinfty-known-predual}
and hence Corollary \ref{cor-new-Hinfty-endpoint-duality}.
\end{proof}

\begin{remark}\label{rem-new-endpoint-predual-known}
Corollary \ref{cor-new-Hinfty-endpoint-duality} recovers, in the present
notation, the atomic predual identification in
\cite[Theorem 1]{dgp2020}.
The distinction is in the route to the result:
\cite[Theorem 1]{dgp2020} obtains
the predual of $\mathcal{B}(Q_0)$
from a general atomic predual construction, whereas the
endpoint duality here follows by combining the finite-exponent duality
with the sharp self-improvements of both the $\mathcal{B}_q(Q_0)$ and
Hardy-type scales.
\end{remark}

\subsection{An Alternative Block Proof of
$\mathcal{B}_q(Q_0)=\mathcal{B}(Q_0)$\label{3.3}}

We finish this section by
using the block decomposition in
Lemma \ref{lem-new-block-decomposition} to recover the result
$\mathcal{B}_q(Q_0)
=\mathcal{B}(Q_0)$
for $q\in(1,\frac{n}{n-1})$ without using the sparse argument,
a John--Nirenberg-type inequality, or duality.
Indeed, combining the endpoint estimate
in Proposition \ref{prop-new-block-functional}
with Lemmas \ref{lem-new-block-pairing} and
\ref{lem-new-block-decomposition},
we obtain the alternative proof.

\begin{proposition}\label{prop-new-block-alternative}
Let $n\in\mathbb{N}\cap[2,\infty)$ and
$q\in(1,\frac{n}{n-1})$. Then
$\mathcal{B}_q(Q_0)=\mathcal{B}(Q_0)$
with equivalent norms. More precisely, there exists a positive constant
$C_{(n,q)}$ such that, for every
$f\in L^1(Q_0)$,
\begin{align}\label{eq-new-block-alternative-norms}
\|f\|_{\mathcal{B}(Q_0)}
\leq\|f\|_{\mathcal{B}_q(Q_0)}
\leq C_{(n,q)}\|f\|_{\mathcal{B}(Q_0)}.
\end{align}
\end{proposition}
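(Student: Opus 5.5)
The first inequality in \eqref{eq-new-block-alternative-norms} is H\"older's inequality applied cube by cube: $\operatorname{osc}_1(f;Q_j)\leq\operatorname{osc}_q(f;Q_j)$, and then we take suprema. For the second inequality we argue by duality at the level of a single test family. Let $s:=q'$. Since $q\in(1,\frac{n}{n-1})$, we have $s\in(n,\infty)$, so Lemma \ref{lem-new-block-decomposition} is available. Fix $f$ with $\|f\|_{\mathcal{B}(Q_0)}<\infty$, a scale $\varepsilon\in(0,1]$, and a family $\{Q_j\}_{j\in J}\in\mathscr{F}_\varepsilon(Q_0)$.

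We first need a representative in $L^q(Q_0)$. We obtain it by truncation. Let $f_N:=P_N\circ f$, as in the proof of Lemma \ref{lem-new-atomic-core-properties}(ii). Then $f_N\in L^\infty\subset L^q$, and the same two-variable argument with exponent $1$ gives $\|f_N\|_{\mathcal{B}(Q_0)}\leq2\|f\|_{\mathcal{B}(Q_0)}$. Apply Lemma \ref{lem-new-block-pairing} to $f_N$ and the fixed family; this yields a $q'$-block $b$ with
\[
\tfrac12\varepsilon^{n-1}\sum_{j\in J}\operatorname{osc}_q(f_N;Q_j)=\int_{Q_0}f_N b\,dx .
\]
Lemma \ref{lem-new-block-decomposition} then writes $b=\sum_m\lambda_mB_m$, with convergence in $L^s(Q_0)$, where the $B_m$ are $\infty$-blocks and $\sum_m|\lambda_m|\leq C_{(n,s)}$. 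Because $f_N\in L^{q}=L^{s'}$, the pairing passes to the limit:
\[
\int f_N b=\sum_m\lambda_m\int f_N B_m .
\]
Each term is bounded using Proposition \ref{prop-new-block-functional} with $s=\infty$, which gives $|\int f_NB_m|\leq\|f_N\|_{\mathcal{B}_1(Q_0)}$. Altogether,
\[
\varepsilon^{n-1}\sum_j\operatorname{osc}_q(f_N;Q_j)\leq 4C_{(n,s)}\|f\|_{\mathcal{B}(Q_0)} .
\]

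Finally we let $N\to\infty$. For each of the finitely many $Q_j$, we have $f_N\to f$ almost everywhere and $(f_N)_{Q_j}\to f_{Q_j}$, the latter by dominated convergence in $L^1$. Fatou's lemma then gives $\operatorname{osc}_q(f;Q_j)\leq\liminf_N\operatorname{osc}_q(f_N;Q_j)$. Summing over $j$ and taking both suprema yields the bound with $C_{(n,q)}=4C_{(n,q')}$.

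The main obstacle is avoiding any a priori integrability assumption $f\in L^q$. That assumption would make the argument circular, since we are trying to prove it. The truncation together with Fatou handles this, and the $L^s$ convergence in Lemma \ref{lem-new-block-decomposition} is exactly what is needed to interchange the sum with the pairing against a bounded $f_N$. For complex $f$, either $P_N$ works directly, as above, or one splits into real and imaginary parts as in Proposition \ref{prop-new-sparse-cross-exponent}.
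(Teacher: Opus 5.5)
Your proposal is correct and follows essentially the same route as the paper. Both use the norming $q'$-block from Lemma \ref{lem-new-block-pairing}, its decomposition into $\infty$-blocks via Lemma \ref{lem-new-block-decomposition} (with $L^{q'}$ convergence to pass the pairing to the limit), Proposition \ref{prop-new-block-functional} with $s=\infty$, and finally truncation $f_N=P_N\circ f$ with Fatou's lemma; the only cosmetic difference is that the paper first proves the bound for bounded $f$ and then applies it to $f_N$, whereas you run the argument directly on $f_N$.
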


\begin{proof}
By H\"older's inequality and
the definition in \eqref{eq-new-Bq},
for every
$f\in L^1(Q_0)$, we have
$
\|f\|_{\mathcal{B}(Q_0)}
\leq\|f\|_{\mathcal{B}_q(Q_0)}.
$
Thus $\mathcal{B}_q(Q_0)\subset\mathcal{B}(Q_0)$ continuously. It remains
to prove the reverse inclusion. Since
$q\in(1,\frac{n}{n-1})$, its conjugate exponent satisfies
$
q'=\frac{q}{q-1}\in(n,\infty).
$
We divide the proof into two steps.

\textbf{Step 1.} \noindent\textit{In this step, we prove the second
inequality in \eqref{eq-new-block-alternative-norms} for bounded
functions.}
Let $f\in L^\infty(Q_0)$ be a representative of an element of
$\mathcal{B}(Q_0)$. Fix $\varepsilon\in(0,1]$ and
$\{Q_j\}_{j\in J}\in\mathscr{F}_{\varepsilon}(Q_0)$.
Since $f\in L^\infty(Q_0)\subset L^q(Q_0)$, Lemma
\ref{lem-new-block-pairing} implies that there exists a
$q'$-block $b$ satisfying
\eqref{eq-new-norming-block-pairing}. Applying Lemma
\ref{lem-new-block-decomposition} with $s=q'$, we find that there exist
$\infty$-blocks
$\{B_m\}_{m\in\mathbb{N}}$ and
$\{\lambda_m\}_{m\in\mathbb{N}}\subset \mathbb{K}$
with $\sum_{m=1}^{\infty}|\lambda_m|
\leq C_{(n,q)}$ such that
\begin{align*}
b=\sum_{m=1}^{\infty}\lambda_mB_m
\quad\hbox{in both }L^1(Q_0)\hbox{ and }L^{q'}(Q_0).
\end{align*}
By the $L^{q'}(Q_0)$ convergence above and H\"older's inequality, we
find that
\begin{align*}
\left|\int_{Q_0}f(x)
\left[\sum_{m=1}^{N}\lambda_mB_m(x)-b(x)\right]\,dx\right|
\leq\|f\|_{L^q(Q_0)}
\left\|\sum_{m=1}^{N}\lambda_mB_m-b\right\|_{L^{q'}(Q_0)}
\to0
\end{align*}
as $N\to\infty$. Using this limit,
\eqref{eq-new-norming-block-pairing}, Proposition
\ref{prop-new-block-functional} with $s=\infty$, and
the coefficient estimate above, we conclude that
\begin{align*}
\frac{1}{2}\varepsilon^{n-1}\sum_{j\in J}
\operatorname{osc}_q(f;Q_j)
&=\left|\int_{Q_0}f(x)b(x)\,dx\right|\\
&=\lim_{N\to\infty}
\left|\sum_{m=1}^{N}\lambda_m
\int_{Q_0}f(x)B_m(x)\,dx\right|\\
&\leq\sum_{m=1}^{\infty}|\lambda_m|
\left|\int_{Q_0}f(x)B_m(x)\,dx\right|\\
&\leq\|f\|_{\mathcal{B}(Q_0)}
\sum_{m=1}^{\infty}|\lambda_m|
\leq C_{(n,q)}\|f\|_{\mathcal{B}(Q_0)}.
\end{align*}
This further implies that
\begin{align}\label{eq-new-bounded-self-detail}
\|f\|_{\mathcal{B}_q(Q_0)}
\leq C_{(n,q)}\|f\|_{\mathcal{B}(Q_0)}.
\end{align}

\textbf{Step 2.} \noindent\textit{In this step, we show that
the second
inequality in \eqref{eq-new-block-alternative-norms} holds for
any $f\in L^1(Q_0)$.}
Let $f\in L^1(Q_0)$ be a $\mathbb{K}$-valued representative of an
arbitrary element of $\mathcal{B}(Q_0)$. For every
$N\in\mathbb{N}$, define the \emph{radial projection}
onto the closed ball of
radius $N$ by
\begin{align*}
P_N(z)
:=
\begin{cases}
z,& |z|\leq N,\\[1mm]
\displaystyle N\frac{z}{|z|},& |z|>N,
\end{cases}
\qquad z\in\mathbb{K},
\qquad
f_N:=P_N\circ f.
\end{align*}
Then $f_N$ is bounded. By the first inequality in
\eqref{eq-new-pairing-truncation-bounds} with $s=\infty$,
we have $f_N\in\mathcal{B}(Q_0)$. Applying
\eqref{eq-new-bounded-self-detail} to $f_N$ and then using the same
truncation inequality, we obtain
\begin{align}\label{eq-new-truncation-q-detail}
\|f_N\|_{\mathcal{B}_q(Q_0)}
&\leq C_{(n,q)}\|f_N\|_{\mathcal{B}(Q_0)}
\leq2C_{(n,q)}\|f\|_{\mathcal{B}(Q_0)}.
\end{align}
Since $f_N\to f$ almost everywhere on $Q_0$ as $N\to\infty$ and
$|f_N-f|\leq2|f|\in L^1(Q_0)$, the dominated convergence theorem implies
that
\begin{align*}
\lim_{N\to\infty}\|f_N-f\|_{L^1(Q_0)}=0.
\end{align*}
From this, for every fixed cube $Q\subset Q_0$, we deduce that
\begin{align*}
|(f_N)_Q-f_Q|
&\leq\fint_Q|f_N(x)-f(x)|\,dx\to0
\end{align*}
as $N\to\infty$. Thus
$f_N-(f_N)_Q\to f-f_Q$ almost everywhere on $Q$ as $N\to\infty$.
Using this convergence,
Fatou's lemma, and \eqref{eq-new-truncation-q-detail}, for every
$\varepsilon\in(0,1]$ and every
$\{Q_j\}_{j\in J}\in\mathscr{F}_{\varepsilon}(Q_0)$,
we conclude that
\begin{align*}
&\varepsilon^{n-1}\sum_{j\in J}
\left[\fint_{Q_j}|f(x)-f_{Q_j}|^q\,dx\right]^{\frac{1}{q}}\\
&\quad\leq\liminf_{N\to\infty}
\varepsilon^{n-1}\sum_{j\in J}
\left[\fint_{Q_j}|f_N(x)-(f_N)_{Q_j}|^q\,dx\right]^{\frac{1}{q}}\\
&\quad\leq2C_{(n,q)}\|f\|_{\mathcal{B}(Q_0)},
\end{align*}
which further implies that
\begin{align*}
\|f\|_{\mathcal{B}_q(Q_0)}
\leq C_{(n,q)}\|f\|_{\mathcal{B}(Q_0)}.
\end{align*}
This proves
$\mathcal{B}(Q_0)\subset\mathcal{B}_q(Q_0)$
and hence completes the proof of
Proposition \ref{prop-new-block-alternative}.
\end{proof}

\begin{remark}
Proposition \ref{prop-new-block-alternative}
provides an independent proof
of Theorem \ref{the-new-self} for
$q\in(1,\frac{n}{n-1})$. The two proofs use different methods. The sparse
proof in Section \ref{sec-new-self} works directly with local
oscillations and covers the full range
$q\in(0,\frac{n}{n-1})$, including $q\in(0,1]$. In that proof, the
condition $q<\frac{n}{n-1}$ ensures the convergence of the geometric
series obtained by summing the contributions over dyadic scales.
In contrast, the proof of Proposition
\ref{prop-new-block-alternative} first represents the $q$-oscillation
by pairing with a $q'$-block and then decomposes this block into
$\infty$-blocks. This argument applies when $q>1$, and the required
condition $q'>n$ is equivalent to
$q<\frac{n}{n-1}$. Thus the sparse proof
establishes the result in the wider
range, while the block proof explains the same critical exponent
through the explicit relation between $q'$-blocks and
$\infty$-blocks.
\end{remark}

\section{Endpoint Self-Improvement
for Characteristic Functions\label{sec-new-fixed-perimeter}}

We now specialize to characteristic functions.
Indeed, for a characteristic function $f=\mathbf{1}_E$, the oscillation sum
in the definition of $\mathcal{B}(Q_0)$ measures how cubes at one common
scale meet the boundary of $E$.
Moreover, the
defining functionals $\|\cdot\|_{\mathcal{B}(Q_0)}$ and
$\|\cdot\|_{\mathcal{B}_q(Q_0)}$ are uniformly equivalent
precisely for $q\in(0,\frac{n}{n-1}]$. Theorem
\ref{the-new-fixed-perimeter} also determines their sharp dependence on
a perimeter bound and the failure above the critical exponent.

Before stating the main result,
we note that
the $\mathcal{B}_q(Q_0)$ functional is always finite in this setting.
Indeed, since $\mathbf{1}_E(x)\in\{0,1\}$ and
$(\mathbf{1}_E)_{Q_j}\in[0,1]$, we have
$|\mathbf{1}_E(x)-(\mathbf{1}_E)_{Q_j}|\leq1$, and every
$\{Q_j\}_{j\in J}\in\mathscr{F}_{\varepsilon}(Q_0)$ satisfies
\begin{align*}
\varepsilon^{n-1}\sum_{j\in J}
\operatorname{osc}_q(\mathbf{1}_E;Q_j)
\leq\varepsilon^{n-1}\sum_{j\in J}1
=\varepsilon^{n-1}\#J
\leq1.
\end{align*}
Taking the two suprema in
the definition in \eqref{eq-new-Bq}, we obtain
\begin{align}\label{eq-new-characteristic-universal-bound}
\|\mathbf{1}_E\|_{\mathcal{B}_q(Q_0)}\leq1
\end{align}
for every measurable $E\subset Q_0$ and
every $q\in(0,\infty)$.
Thus membership is automatic, and the issue is quantitative: for every
$q\in(0,\infty)$, we determine the sharp dependence on $\Lambda$ among
sets satisfying $P(E,Q_0)\leq\Lambda$.

We first recall the perimeter notation used below; see,
for instance, \cite[Definition 3.35]{AFP2000}.

\begin{definition}\label{def-new-perimeter}
Let $U\subset\mathbb{R}^n$ be open and
$E\subset\mathbb{R}^n$ be measurable. The \emph{perimeter of $E$
relative to $U$} is defined by
\begin{align*}
P(E,U):=|D\mathbf{1}_E|(U)
=\sup\left\{
\int_E\operatorname{div}\varphi(x)\,dx:
\varphi\in C_{\mathrm{c}}^1(U;\mathbb{R}^n),\
\|\varphi\|_{L^\infty(U)}\leq1
\right\}.
\end{align*}
$P(E,U)=\infty$ is allowed. For a cube $Q$, as stipulated above,
$P(E,Q)$ means $P(E,Q^\circ)$.
\end{definition}

We now state the main theorem of this section.

\begin{theorem}\label{the-new-fixed-perimeter}
Let $n\in\mathbb{N}\cap[2,\infty)$ and $q\in(0,\infty)$.
\begin{itemize}
\item[{\rm(i)}]
For every $\Lambda\in(0,1]$,
\begin{align}\label{eq-new-fixed-main}
\sup_{\{E\subset Q_0:\ P(E,Q_0)\leq \Lambda\}}
\|\mathbf{1}_E\|_{\mathcal{B}_q(Q_0)}\sim_{(n,q)}
\begin{cases}
\Lambda,
& q\in\left(0,\frac{n}{n-1}\right],\\[2mm]
\Lambda^{\frac{n}{(n-1)q}},
& q\in\left(\frac{n}{n-1},\infty\right).
\end{cases}
\end{align}
Both powers of $\Lambda$ in \eqref{eq-new-fixed-main} are optimal.

\item[{\rm(ii)}]
The following equivalence holds uniformly over all measurable sets
$E\subset Q_0$:
\begin{align}\label{eq-new-characteristic-self-range}
\|\mathbf{1}_E\|_{\mathcal{B}(Q_0)}
\sim_{(n,q)}\|\mathbf{1}_E\|_{\mathcal{B}_q(Q_0)}
\Longleftrightarrow
q\in\left(0,\frac{n}{n-1}\right].
\end{align}
More precisely, whenever $q$ lies in this range, uniformly over all measurable
sets $E\subset Q_0$,
\begin{align}\label{eq-new-characteristic-self-equivalence}
\|\mathbf{1}_E\|_{\mathcal{B}_q(Q_0)}
\sim_{(n,q)}
\|\mathbf{1}_E\|_{\mathcal{B}(Q_0)}
\sim_{(n,q)}
\min\{1,P(E,Q_0)\}.
\end{align}
\end{itemize}
\end{theorem}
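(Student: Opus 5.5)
The plan is to reduce everything to a single-cube estimate for characteristic functions and then sum over an admissible family. For a cube $Q$ with $t:=|E\cap Q|/|Q|$, a direct computation gives
\begin{align*}
\operatorname{osc}_q(\mathbf{1}_E;Q)^q
=t(1-t)^q+(1-t)t^q,
\end{align*}
so $\operatorname{osc}_q(\mathbf{1}_E;Q)\sim_{(q)}\min\{t,1-t\}^{1/q}$. The relative isoperimetric inequality in a cube gives
\begin{align*}
\min\{t,1-t\}^{\frac{n-1}{n}}\lesssim_{(n)}\varepsilon^{1-n}P(E,Q^\circ),
\end{align*}
and trivially $\min\{t,1-t\}\le1$.

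\textbf{Upper bounds in (i).} Fix an admissible family at scale $\varepsilon$ and set $\alpha:=\frac{n}{(n-1)q}$ and $p_j:=\varepsilon^{1-n}P(E,Q_j^\circ)$. The local bound then reads $\operatorname{osc}_q(\mathbf{1}_E;Q_j)\lesssim\min\{1,p_j\}^{\alpha}$.

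If $q\le\frac{n}{n-1}$, then $\alpha\ge1$, so $\min\{1,p_j\}^\alpha\le p_j$. Summing and using the disjointness of the $Q_j$ gives
\begin{align*}
\varepsilon^{n-1}\sum_j\operatorname{osc}_q(\mathbf{1}_E;Q_j)\lesssim\sum_j P(E,Q_j^\circ)\le P(E,Q_0)\le\Lambda.
\end{align*}

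If $q>\frac{n}{n-1}$, then $\alpha<1$. By concavity (H\"older) with $\#J\le\varepsilon^{1-n}$,
\begin{align*}
\varepsilon^{n-1}\sum_j p_j^\alpha\le\varepsilon^{n-1}(\#J)^{1-\alpha}\Big(\sum_j p_j\Big)^\alpha\le\varepsilon^{(n-1)\alpha}\big(\varepsilon^{1-n}\Lambda\big)^\alpha=\Lambda^\alpha.
\end{align*}

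\textbf{Lower bounds and optimality in (i).} For $q\le\frac{n}{n-1}$, take $E=(0,\Lambda/2)\times(0,1)^{n-1}$ with a slab interface, or better, a slab $\{x_1<1/2\}$ intersected with a box whose face area is about $\Lambda$. Test at a small scale $\varepsilon$ with cubes bisected by the interface. There are about $\Lambda\varepsilon^{1-n}$ of them, each with oscillation about $1$, so the sum is about $\Lambda$.

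For $q>\frac{n}{n-1}$, take $E$ to be a single small cube of side $\delta=\Lambda^{1/(n-1)}$, whose perimeter is about $\Lambda$. Use one cube at scale $\varepsilon=1$ (or at a fixed scale) containing it, so that $t=\delta^n$. This gives oscillation $\sim t^{1/q}=\Lambda^{\frac{n}{(n-1)q}}$, which matches the bound. Optimality of the powers is then exactly these matching examples. The same two examples show that the wrong power fails in each regime: the cube example beats $\Lambda$ when $\alpha<1$ and $\Lambda\to0$.

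\textbf{Part (ii).} For $q\le\frac{n}{n-1}$, the upper bound of (i) with $\Lambda=P(E,Q_0)$, together with \eqref{eq-new-characteristic-universal-bound}, gives $\|\mathbf{1}_E\|_{\mathcal{B}_q(Q_0)}\lesssim\min\{1,P(E,Q_0)\}$.

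For the reverse inequality $\|\mathbf{1}_E\|_{\mathcal{B}(Q_0)}\gtrsim\min\{1,P(E,Q_0)\}$, I would first handle $q<1$ by Jensen/H\"older monotonicity between $q$ and $1$ (for $q<1$ use $\operatorname{osc}_q\sim\min\{t,1-t\}^{1/q}\ge\min\{t,1-t\}$). The lower bound is the main obstacle. Plan:
\begin{itemize}
\item[(a)] Tile $Q_0$ by the dyadic grid of side $\varepsilon$. Using the lower semicontinuity of perimeter and approximation of $\mathbf{1}_E$ by its grid averages, obtain
\begin{align*}
\liminf_{\varepsilon\to0}\varepsilon^{n-1}\sum_{\text{grid }Q}\min\{t_Q,1-t_Q\}\gtrsim P(E,Q_0).
\end{align*}
This is the known BBM/D\'avila-type lower estimate: the piecewise-constant interpolation has total variation controlled by $\varepsilon^{n-2}\sum$ of jumps between neighbors, and each neighbor jump is bounded by oscillations on the doubled cube.
\item[(b)] Keep only the at most $\varepsilon^{1-n}$ cubes with the largest oscillation. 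If the total is bounded by $\min\{1,P\}$, the selection loses at most a constant. This requires care when $P$ is large: choose $\varepsilon$ so that the number of cubes carrying oscillation is comparable to $\varepsilon^{1-n}$.
\end{itemize}

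Finally, the failure for $q>\frac{n}{n-1}$ in \eqref{eq-new-characteristic-self-range} follows from the small-cube example. There $\|\mathbf{1}_E\|_{\mathcal{B}}\lesssim\Lambda$ while $\|\mathbf{1}_E\|_{\mathcal{B}_q}\gtrsim\Lambda^{\alpha}$, and $\Lambda^\alpha/\Lambda\to\infty$ as $\Lambda\to0$.
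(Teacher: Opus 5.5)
Modulo two slips noted at the end, your upper bounds in (i), your extremal examples, and your failure argument for $q>\frac{n}{n-1}$ in (ii) follow the paper's proof. The gap is the reverse inequality $\min\{1,P(E,Q_0)\}\lesssim\|\mathbf{1}_E\|_{\mathcal{B}(Q_0)}$. The paper does not prove this but imports it from \cite[Corollary 4.2]{abbf2016}, and your steps (a)--(b) do not establish it.

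First, the display in (a) is false over the grid cubes themselves. For $E=(0,\frac12)^n$, every dyadic cube of side $\varepsilon\le\frac12$ has $t_Q\in\{0,1\}$, so the sum vanishes. Your own justification only bounds each jump $|t_Q-t_{Q'}|$ by $2^{n+1}\operatorname{osc}_1(\mathbf{1}_E;\widetilde Q)$ on cubes $\widetilde Q$ of side $2\varepsilon$; these split into $2^n$ disjoint families, and the face area is $\varepsilon^{n-1}$, not $\varepsilon^{n-2}$.

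More seriously, (b) has no mechanism. Step (a) says nothing about how the sum is distributed over the up to $\sim\varepsilon^{-n}$ cubes. If it were carried by $\sim\varepsilon^{-n}$ cubes with oscillation $\sim\varepsilon$, the best $\varepsilon^{1-n}$ of them would give only $O(\varepsilon)$. So ``keep the largest ones, lose at most a constant'' is not a valid principle. ``Choose $\varepsilon$ so that the number of cubes is comparable to $\varepsilon^{1-n}$'' is not a procedure either, since changing $\varepsilon$ changes the whole configuration. Finally, when $P(E,Q_0)=\infty$, (a) gives no budget-compatible information at all.

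What is missing is an argument that small volume fractions cannot carry the sum. For $P(E,Q_0)<\infty$ you can get it from Lemma \ref{lem-new-relative-isoperimetric}.
\begin{itemize}
\item Cubes with $m_{\widetilde Q}<\tau$ contribute at most $C_{(n)}\tau^{1/n}P(E,Q_0)$, so for small $\tau$ the cubes with $m_{\widetilde Q}\ge\tau$ carry $\gtrsim P(E,Q_0)$.
\item Either one of the $2^n$ disjoint families contains $\lfloor(2\varepsilon)^{1-n}\rfloor$ such cubes, which gives $\gtrsim\tau$.
\item Or they form $2^n$ admissible families, one of which carries $\gtrsim P(E,Q_0)$.
\end{itemize}
This still leaves $P(E,Q_0)=\infty$.

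A route covering all cases is by contraposition. Suppose $\beta:=\|\mathbf{1}_E\|_{\mathcal{B}(Q_0)}<\eta_{(n)}$.
\begin{itemize}
\item The budget forces any disjoint family of side-$\varepsilon$ cubes with $m_Q\ge\tau$ to have at most $\beta\tau^{-1}\varepsilon^{1-n}$ members. Indeed, $\lfloor\varepsilon^{1-n}\rfloor$ of them would give $\beta\ge\tau/2$.
\item Round the grid averages $u_\varepsilon$ to $v_\varepsilon\in\{0,1\}$. Then $\|v_\varepsilon-\mathbf{1}_E\|_{L^1}\le2\|u_\varepsilon-\mathbf{1}_E\|_{L^1}\to0$.
\item Faces touching a cube with $m_Q\ge\tau$ number at most $2n$ per such cube.
\item A jump of $v_\varepsilon$ across a face between two cubes with $m<\tau$ forces the containing cube of side $2\varepsilon$ to have $m\ge2^{-n}(1-\tau)$.
\end{itemize}
Counting both kinds of faces with the budget at scales $\varepsilon$ and $2\varepsilon$ gives $|Dv_\varepsilon|(Q_0)\le C_{(n)}\beta$. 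Lower semicontinuity then yields $P(E,Q_0)\le C_{(n)}\beta$, and the case $\beta\ge\eta_{(n)}$ is trivial. Alternatively, cite the known result as the paper does.

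Two smaller slips. First, for $q\in(0,1)$ one has $\operatorname{osc}_q(\mathbf{1}_E;Q)\sim_q m_Q$, not $m_Q^{1/q}$ (Lemma \ref{lem-new-characteristic-formula}). So your local bound with exponent $\alpha=\frac{n}{(n-1)q}$ is false for $q<1$, and the inequality $m^{1/q}\ge m$ used in (ii) is reversed. The conclusions survive with the correct two-regime formula, or with $\operatorname{osc}_q\le\operatorname{osc}_1$ for the upper bound.

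Second, in Case 1, $(0,\frac{\Lambda}{2})\times(0,1)^{n-1}$ has relative perimeter $1$. The ``box'' must be a cube of side $\sim\Lambda^{1/(n-1)}$, because its lateral faces also count in $P(E,Q_0)$; this is the paper's half-cube example.
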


\begin{remark}\label{rem-new-fixed-relation}
\begin{itemize}
\item[{\rm(i)}] For $q\in(0,\frac{n}{n-1})$, the linear upper bound
agrees with Theorem \ref{the-new-self}; at $q=1$, it also agrees with the
perimeter estimate in \cite{abbf2016}. The argument below reaches the
endpoint and identifies the exact supercritical power of $\Lambda$.

\item[{\rm(ii)}]\label{rem-new-fixed-components} The critical exponent
$\frac{n}{n-1}$ is the point at which the small-volume-fraction exponent
$\frac{1}{q}$, identified by Lemma
\ref{lem-new-characteristic-formula} when $q\geq1$, agrees with the
isoperimetric exponent $\frac{n-1}{n}$ in Lemma
\ref{lem-new-relative-isoperimetric}. At and below the endpoint, this
matching establishes linear perimeter control. Above it,
a ball with perimeter of order $\Lambda$ has volume
of order $\Lambda^{\frac{n}{n-1}}$, and
the scale-one test shows oscillation of order
$\Lambda^{\frac{n}{(n-1)q}}$. Thus a single small component already
produces the sharp supercritical power.
\end{itemize}
\end{remark}

To show Theorem \ref{the-new-fixed-perimeter},
we need the following two lemmas,
which are also used in Section \ref{sec-new-exact}. The
first lemma is a generalization of
\cite[(3.1)]{abbf2016}.

\begin{lemma}\label{lem-new-characteristic-formula}
Let $q\in(0,\infty)$, $E\subset\mathbb{R}^n$ be measurable,
$Q$ be a cube with arbitrary orientation,
\begin{align*}
\theta:=\frac{|E\cap Q|}{|Q|}=(\mathbf{1}_E)_Q,
\quad\text{and}\quad
m_Q:=\min\{\theta,1-\theta\}.
\end{align*}
Then
\begin{align}\label{eq-new-characteristic-formula}
\operatorname{osc}_q(\mathbf{1}_E;Q)
=\left[\theta(1-\theta)^q+(1-\theta)\theta^q\right]^{\frac{1}{q}}.
\end{align}
If $\theta=\frac{1}{2}$, then
$\operatorname{osc}_q(\mathbf{1}_E;Q)=\frac{1}{2}$.
Moreover,
\begin{align}\label{eq-new-volume-fraction-ranges}
\operatorname{osc}_q(\mathbf{1}_E;Q)
\sim_q
\begin{cases}
m_Q, & q\in(0,1],\\[1mm]
m_Q^{\frac{1}{q}}, & q\in[1,\infty).
\end{cases}
\end{align}
\end{lemma}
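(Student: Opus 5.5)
The formula \eqref{eq-new-characteristic-formula} is a direct computation. Since $\mathbf{1}_E$ takes only the values $0$ and $1$ on $Q$, and $(\mathbf{1}_E)_Q=\theta$, the function $|\mathbf{1}_E-\theta|$ equals $1-\theta$ on $E\cap Q$ (a set of relative measure $\theta$) and $\theta$ on $Q\setminus E$ (relative measure $1-\theta$). Averaging the $q$-th power over $Q$ gives $\theta(1-\theta)^q+(1-\theta)\theta^q$, and taking the $\frac1q$-th power yields \eqref{eq-new-characteristic-formula}. Orientation plays no role, since only Lebesgue measure and the average are involved. Substituting $\theta=\frac12$ gives $\left[2\cdot\frac12\cdot 2^{-q}\right]^{\frac1q}=\frac12$.

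For \eqref{eq-new-volume-fraction-ranges}, we first note that the expression is symmetric under $\theta\leftrightarrow1-\theta$. We may therefore assume $\theta=m_Q\in[0,\frac12]$, so that $1-\theta\in[\frac12,1]$. Write
$$
A:=\theta(1-\theta)^q+(1-\theta)\theta^q .
$$
Since $(1-\theta)^q\in[2^{-q},1]$, the first term is comparable to $\theta$ with constants depending only on $q$. Likewise, the second term is comparable to $\theta^q$, with constants between $\frac12$ and $1$. Hence $A\sim_q \theta+\theta^q$.

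\textbf{Case $q\geq1$.} Since $\theta\le1$, we have $\theta^q\le\theta$, so $A\sim_q\theta$ and $\operatorname{osc}_q\sim_q \theta^{\frac1q}=m_Q^{\frac1q}$.

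\textbf{Case $q\in(0,1]$.} Now $\theta\le\theta^q$, so $A\sim_q\theta^q$ and $\operatorname{osc}_q\sim_q\theta=m_Q$.

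At $q=1$ both regimes coincide, consistent with the overlapping case split. The degenerate case $\theta=0$ gives zero on both sides and needs no separate treatment.

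The only point requiring care is keeping the comparability constants dependent on $q$ alone, uniformly in $\theta\in[0,\frac12]$. The explicit bounds $2^{-q}\le(1-\theta)^q\le1$ and $\frac12\le1-\theta\le1$ provide this. Taking the $\frac1q$-th root changes the constants only by powers depending on $q$, so no real obstacle arises.
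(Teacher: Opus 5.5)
Your proof is correct and follows essentially the same route as the paper. You compute the two-valued oscillation directly, reduce to $\theta\in[0,\frac12]$ by symmetry, and bound $\theta(1-\theta)^q+(1-\theta)\theta^q$ above and below by $\theta$ when $q\geq1$ and by $\theta^q$ when $q\leq1$; the only cosmetic difference is that you first record $A\sim_q\theta+\theta^q$, whereas the paper writes the two inequality chains directly.
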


\begin{proof}
By the definitions of $\theta$ and the local oscillation, we find that
\begin{align*}
\operatorname{osc}_q(\mathbf{1}_E;Q)^q
&=\frac{1}{|Q|}
\left[
\int_{E\cap Q}(1-\theta)^q\,dx
+\int_{Q\setminus E}\theta^q\,dx
\right]\\
&=\theta(1-\theta)^q+(1-\theta)\theta^q.
\end{align*}
This proves
\eqref{eq-new-characteristic-formula}, which, with
$\theta=\frac{1}{2}$, also implies the asserted special case.

To show \eqref{eq-new-volume-fraction-ranges},
we may assume that
$\theta\in[0,\frac{1}{2}]$. Thus $m_Q=\theta$ and
$\frac{1}{2}\leq1-\theta\leq1$. According as $q\geq1$ or $q\leq1$, we
have
\begin{align*}
\begin{cases}
\displaystyle
2^{-q}\theta
\leq\theta(1-\theta)^q+(1-\theta)\theta^q
\leq\theta+\theta^q\leq2\theta,
& q\in[1,\infty),\\
\displaystyle
\frac{1}{2}\theta^q
\leq\theta(1-\theta)^q+(1-\theta)\theta^q
\leq\theta+\theta^q\leq2\theta^q,
& q\in(0,1].
\end{cases}
\end{align*}
This proves
\eqref{eq-new-volume-fraction-ranges}
and hence completes the proof of Lemma
\ref{lem-new-characteristic-formula}.
\end{proof}

The second lemma is the scaled relative isoperimetric inequality. Its
short proof records the invariance needed for arbitrarily oriented
cubes.

\begin{lemma}\label{lem-new-relative-isoperimetric}
Let $n\in\mathbb{N}\cap[2,\infty)$.
There exists a positive constant
$C_{(n)}$, depending only on $n$, such that,
for every cube $Q\subset\mathbb{R}^n$ with arbitrary
orientation and every
measurable set $E$ of
finite perimeter in $Q$,
\begin{align}\label{eq-new-relative-isoperimetric}
\min\{|E\cap Q|,|Q\setminus E|\}^{\frac{n-1}{n}}
\leq C_{(n)}P(E,Q).
\end{align}
\end{lemma}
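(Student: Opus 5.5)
The plan is to reduce to the unit axis-parallel cube by a rigid motion followed by a dilation, and then to invoke the standard relative isoperimetric inequality on a bounded Lipschitz domain.

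\textbf{Step 1: the reference cube.} For $Q_0=(0,1)^n$, which is a connected bounded extension domain, the relative isoperimetric inequality (see, for instance, \cite{AFP2000}) gives a constant $C_{(n)}$ with
\begin{align*}
\min\{|E\cap Q_0|,|Q_0\setminus E|\}^{\frac{n-1}{n}}\leq C_{(n)}P(E,Q_0)
\end{align*}
for every set $E$ of finite perimeter in $Q_0$. If I want to justify this rather than cite it, I would derive it from the Poincar\'e--Sobolev inequality
\begin{align*}
\|u-u_{Q_0}\|_{L^{\frac{n}{n-1}}(Q_0)}\leq C|Du|(Q_0)
\end{align*}
for $u\in BV(Q_0)$, applied to $u=\mathbf{1}_E$. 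With $\theta=|E\cap Q_0|$, the left-hand side equals
\begin{align*}
\left[\theta(1-\theta)^{\frac{n}{n-1}}+(1-\theta)\theta^{\frac{n}{n-1}}\right]^{\frac{n-1}{n}},
\end{align*}
and this is at least a dimensional multiple of $m^{\frac{n-1}{n}}$ with $m=\min\{\theta,1-\theta\}$, by the same elementary bounds used in Lemma \ref{lem-new-characteristic-formula} with $q=\frac{n}{n-1}\geq1$.

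\textbf{Step 2: rigid motions.} Let $Q$ be an arbitrary cube, and write $Q=x_0+\ell\, O(Q_0)$ with $O$ orthogonal and $\ell=\ell(Q)$. Set $T(x):=x_0+\ell Ox$ and $F:=T^{-1}(E)$. I would then check how each quantity transforms under $T$. For volumes, $|E\cap Q|=\ell^n|F\cap Q_0|$ and $|Q\setminus E|=\ell^n|Q_0\setminus F|$. For perimeter, $P(E,Q)=\ell^{n-1}P(F,Q_0)$. The perimeter identity follows from Definition \ref{def-new-perimeter}. Given a test field $\varphi$ for $F$ on $Q_0$, the field
\begin{align*}
\psi(y):=O\varphi(T^{-1}y)
\end{align*}
is an admissible test field on $Q$, with the same sup norm because $O$ is orthogonal, and it satisfies $\operatorname{div}\psi(y)=\ell^{-1}(\operatorname{div}\varphi)(T^{-1}y)$. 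The change of variables $y=Tx$ contributes the Jacobian $\ell^n$. This gives one inequality; the reverse inequality follows by applying the same argument to $T^{-1}$.

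\textbf{Step 3: combining.} Raising the volume identity to the power $\frac{n-1}{n}$ produces the factor $\ell^{n-1}$ on the left, matching the factor on the right. Hence Step 1 applied to $F$ transfers to $E$ with the same constant. In particular, the finite-perimeter hypothesis is preserved under $T$.

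The only substantive input is Step 1, and the main obstacle there is the Poincar\'e inequality in $BV$ on the cube with the critical exponent $\frac{n}{n-1}$. I would cite it rather than reprove it, since it follows from the Sobolev inequality for $BV$ on extension domains together with strict approximation by smooth functions. The rest of the argument is bookkeeping of the scaling exponents.
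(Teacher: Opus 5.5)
Your proposal is correct and is essentially the paper's argument: the paper also applies the endpoint $\mathrm{BV}$ Poincar\'e inequality on $(0,1)^n$ with exponent $\frac{n}{n-1}$ to $u=\mathbf{1}_E$, using that this inequality is invariant under translations, rotations, and dilations. The only difference is the order of bookkeeping. The paper first transfers the Poincar\'e inequality to $Q$, and then, after assuming $|E\cap Q|\leq|Q\setminus E|$, uses $|\mathbf{1}_E-(\mathbf{1}_E)_Q|\geq\frac12$ on $E\cap Q$; you instead prove the inequality on $Q_0$ and transfer it through the explicit change of variables $P(E,Q)=\ell^{n-1}P(F,Q_0)$, a scaling identity the paper leaves implicit.
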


\begin{proof}
The endpoint $\mathrm{BV}$ Poincar\'e inequality on $(0,1)^n$
\cite[Remark 3.50, (3.41)]{AFP2000} is invariant under translations,
rotations, and dilations. Hence, for every
$u\in\mathrm{BV}(Q^\circ)$,
\begin{align}\label{eq-BVP}
\left\|u-\fint_Q u(z)\,dz
\right\|_{L^{\frac{n}{n-1}}(Q)}
\leq C_{(n)}|Du|(Q^\circ).
\end{align}

Since relative perimeter is unchanged by taking complements, we may
assume that $|E\cap Q|\leq|Q\setminus E|$. Thus
$(\mathbf{1}_E)_Q\leq\frac{1}{2}$. Applying \eqref{eq-BVP} to
$u=\mathbf{1}_E$ and using Definition \ref{def-new-perimeter}, we find
\begin{align*}
&\min\{|E\cap Q|,|Q\setminus E|\}^{\frac{n-1}{n}}\\
&\quad=|E\cap Q|^{\frac{n-1}{n}}
\leq2\left\|\mathbf{1}_E-(\mathbf{1}_E)_Q
\right\|_{L^{\frac{n}{n-1}}(Q)}\\
&\quad\leq2C_{(n)}|D\mathbf{1}_E|(Q^\circ)
=2C_{(n)}P(E,Q).
\end{align*}
Absorbing the factor $2$ into $C_{(n)}$ proves
\eqref{eq-new-relative-isoperimetric}.
This completes the proof of Lemma
\ref{lem-new-relative-isoperimetric}.
\end{proof}

\begin{proof}[Proof of Theorem \ref{the-new-fixed-perimeter}]
Fix $\Lambda\in(0,1]$.

\medskip
\textbf{Step 1.} \noindent\textit{In this step, we prove the upper
estimate in part {\rm(i)}.}
Let $E\subset Q_0$ be measurable and satisfy $P(E,Q_0)\leq \Lambda$.
We claim that
\begin{align}\label{eq-new-fixed-upper}
\|\mathbf{1}_E\|_{\mathcal{B}_q(Q_0)}
\leq C_{(n,q)}
\begin{cases}
\Lambda,
& q\in\left(0,\frac{n}{n-1}\right],\\[2mm]
\Lambda^{\frac{n}{(n-1)q}},
& q\in\left(\frac{n}{n-1},\infty\right).
\end{cases}
\end{align}
Fix $\varepsilon\in(0,1]$ and
$\{Q_j\}_{j\in J}\in\mathscr{F}_{\varepsilon}(Q_0)$.
We first prove the endpoint estimate.
Using Lemmas
\ref{lem-new-characteristic-formula} and
\ref{lem-new-relative-isoperimetric}, the identity
$|Q_j|=\varepsilon^n$, and the pairwise disjointness of the interiors
of the cubes, we have
\begin{align*}
&\varepsilon^{n-1}\sum_{j\in J}
\operatorname{osc}_{\frac{n}{n-1}}(\mathbf{1}_E;Q_j)\\
&\quad\leq C_{(n)}\sum_{j\in J}
\min\{|E\cap Q_j|,|Q_j\setminus E|\}^{\frac{n-1}{n}}\\
&\quad\leq C_{(n)}\sum_{j\in J}P(E,Q_j)
\leq C_{(n)}P(E,Q_0)
\leq C_{(n)}\Lambda.
\end{align*}
Taking the supremum over all admissible collections and then over
$\varepsilon\in(0,1]$, we conclude that
\begin{align}\label{eq-new-fixed-endpoint}
\|\mathbf{1}_E\|_{\mathcal{B}_{\frac{n}{n-1}}(Q_0)}
\leq C_{(n)}P(E,Q_0)
\leq C_{(n)}\Lambda.
\end{align}

If $q\in(0,\frac{n}{n-1}]$, then H\"older's inequality and
\eqref{eq-new-fixed-endpoint} imply
\begin{align*}
\|\mathbf{1}_E\|_{\mathcal{B}_q(Q_0)}
\leq\|\mathbf{1}_E\|_{\mathcal{B}_{\frac{n}{n-1}}(Q_0)}
\leq C_{(n)}\Lambda.
\end{align*}

Now assume that $q\in(\frac{n}{n-1},\infty)$. Since
$|\mathbf{1}_E-(\mathbf{1}_E)_{Q_j}|\leq1$, we have
\begin{align*}
\operatorname{osc}_q(\mathbf{1}_E;Q_j)
\leq
\operatorname{osc}_{\frac{n}{n-1}}(\mathbf{1}_E;Q_j)^{
\frac{n}{(n-1)q}}.
\end{align*}
Using this inequality, H\"older's inequality for finite sums, and
$\varepsilon^{n-1}\#J\leq1$, we find that
\begin{align*}
&\varepsilon^{n-1}\sum_{j\in J}
\operatorname{osc}_q(\mathbf{1}_E;Q_j)\\
&\quad\leq
\left(\varepsilon^{n-1}\#J\right)^{1-\frac{n}{(n-1)q}}
\left[
\varepsilon^{n-1}\sum_{j\in J}
\operatorname{osc}_{\frac{n}{n-1}}(\mathbf{1}_E;Q_j)
\right]^{\frac{n}{(n-1)q}}\\
&\quad\leq
\|\mathbf{1}_E\|_{\mathcal{B}_{\frac{n}{n-1}}(Q_0)}^{
\frac{n}{(n-1)q}}
\leq C_{(n,q)}P(E,Q_0)^{\frac{n}{(n-1)q}}
\leq C_{(n,q)}\Lambda^{\frac{n}{(n-1)q}}.
\end{align*}
Taking the two suprema in
the definition in \eqref{eq-new-Bq} proves
\eqref{eq-new-fixed-upper}; taking the supremum over $E$
establishes the upper
estimate in \eqref{eq-new-fixed-main}.

\medskip
\textbf{Step 2.} \noindent\textit{In this step, we prove the lower
estimate and its sharpness in part {\rm(i)}.}
We construct examples attaining the two powers in
\eqref{eq-new-fixed-upper}. Choose $c_{(n)}\in(0,\frac{1}{4}]$ sufficiently
small that
\begin{align*}
(n+1)c_{(n)}^{n-1}\leq1,\quad
n|B(0,1)|c_{(n)}^{n-1}\leq1,\quad
|B(0,1)|c_{(n)}^n\leq\frac{1}{2},
\end{align*}
and let $r:=c_{(n)}\Lambda^{\frac{1}{n-1}}\leq\frac{1}{4}$.

\textbf{Case 1.} Assume that
$q\in(0,\frac{n}{n-1}]$.
Choose a cube
$Q=x_0+[0,r]^n\subset Q_0$ with $\overline{Q}\subset Q_0$, and
define the rectangular box
\begin{align*}
E:=x_0+\left(\left(0,\frac{r}{2}\right)\times(0,r)^{n-1}\right).
\end{align*}
Since $\overline E\subset Q_0$, its relative perimeter is
\begin{align*}
P(E,Q_0)
=2r^{n-1}+2(n-1)\frac{r}{2}r^{n-2}
=(n+1)c_{(n)}^{n-1}\Lambda
\leq\Lambda.
\end{align*}
The singleton family $\{Q\}$ is admissible at scale $r$, and
$|E\cap Q|=\frac{|Q|}{2}$. Hence
Lemma \ref{lem-new-characteristic-formula} implies
\begin{align*}
\|\mathbf{1}_E\|_{\mathcal{B}_q(Q_0)}
\geq r^{n-1}\operatorname{osc}_q(\mathbf{1}_E;Q)
=\frac{1}{2}r^{n-1}
=\frac{1}{2}c_{(n)}^{n-1}\Lambda
\geq C_{(n)}^{-1}\Lambda.
\end{align*}

\textbf{Case 2.} Assume that
$q\in(\frac{n}{n-1},\infty)$, and define
\begin{align*}
E_\Lambda:=B\left(\left(\frac{1}{2},\ldots,\frac{1}{2}\right),r\right).
\end{align*}
Since $\overline E_\Lambda\subset Q_0$, the choice of $c_{(n)}$ implies
\begin{align*}
P(E_\Lambda,Q_0)
=n|B(0,1)|r^{n-1}
=n|B(0,1)|c_{(n)}^{n-1}\Lambda
\leq\Lambda.
\end{align*}
Moreover,
\begin{align*}
\theta:=\frac{|E_\Lambda\cap Q_0|}{|Q_0|}
=|B(0,1)|r^n
=|B(0,1)|c_{(n)}^n\Lambda^{\frac{n}{n-1}}
\in\left(0,\frac{1}{2}\right].
\end{align*}
Testing with the scale-one family $\{Q_0\}$ and using Lemma
\ref{lem-new-characteristic-formula}, we obtain
\begin{align*}
\|\mathbf{1}_{E_\Lambda}\|_{\mathcal{B}_q(Q_0)}
&\geq\operatorname{osc}_q(\mathbf{1}_{E_\Lambda};Q_0)\\
&=\left[\theta(1-\theta)^q+(1-\theta)\theta^q\right]^{\frac{1}{q}}\\
&\geq(1-\theta)\theta^{\frac{1}{q}}
\geq\frac{1}{2}\left[|B(0,1)|c_{(n)}^n
\Lambda^{\frac{n}{n-1}}\right]^{\frac{1}{q}}\\
&\geq C_{(n,q)}^{-1}
\Lambda^{\frac{n}{(n-1)q}}.
\end{align*}
The two cases establish the lower estimate in
\eqref{eq-new-fixed-main}.
Together with Step 1, they also show that neither power of $\Lambda$ can
be replaced uniformly by a larger one. This proves part {\rm(i)}.

\medskip
\textbf{Step 3.} \noindent\textit{In this step, we prove part {\rm(ii)}.}
Let $E\subset Q_0$ be measurable. Assume first that
$q\in(0,\frac{n}{n-1}]$. If
$P(E,Q_0)\in(0,1]$, we use \eqref{eq-new-fixed-upper} with
$\Lambda=P(E,Q_0)$; if $P(E,Q_0)=0$, we apply that estimate for every
$\Lambda\in(0,1]$ and let $\Lambda\to0^+$. Together with
\eqref{eq-new-characteristic-universal-bound} when
$P(E,Q_0)\in(1,\infty]$, this implies
\begin{align}\label{eq-new-characteristic-self-upper}
\|\mathbf{1}_E\|_{\mathcal{B}_q(Q_0)}
\leq C_{(n,q)}\min\{1,P(E,Q_0)\}.
\end{align}

The axis-parallel families used in the infinitesimal functional of
\cite[Corollary 4.2]{abbf2016} form a subclass of the testing families
in the definition in \eqref{eq-new-Bq}.
Consequently, that result implies
\begin{align}\label{eq-new-characteristic-self-B-lower}
\min\{1,P(E,Q_0)\}
\leq C_{(n)}\|\mathbf{1}_E\|_{\mathcal{B}(Q_0)}.
\end{align}
In particular, \eqref{eq-new-characteristic-self-upper} with $q=1$ and
\eqref{eq-new-characteristic-self-B-lower} show that
\begin{align}\label{eq-new-characteristic-B-perimeter}
\|\mathbf{1}_E\|_{\mathcal{B}(Q_0)}
\sim_n\min\{1,P(E,Q_0)\}.
\end{align}

It remains to compare the two functionals. If
$q\in[1,\frac{n}{n-1}]$, H\"older's inequality on each cube implies
$\operatorname{osc}_1(\mathbf{1}_E;Q)
\leq\operatorname{osc}_q(\mathbf{1}_E;Q)$. If $q\in(0,1)$, Lemma
\ref{lem-new-characteristic-formula}, applied with $q$ and with $1$,
implies
\begin{align*}
\operatorname{osc}_1(\mathbf{1}_E;Q)
\leq C_{(q)}\operatorname{osc}_q(\mathbf{1}_E;Q)
\end{align*}
for every cube $Q$.
Taking the sums and suprema in \eqref{eq-new-Bq}
and then using \eqref{eq-new-characteristic-self-upper} and
\eqref{eq-new-characteristic-B-perimeter}, we obtain
\eqref{eq-new-characteristic-self-equivalence}.

Finally, let $q\in(\frac{n}{n-1},\infty)$. For every
$\Lambda\in(0,1]$, Case 2 of Step 2 provides a
measurable set $E_{\Lambda}\subset Q_0$
such that
\begin{align*}
P(E_{\Lambda},Q_0)\leq \Lambda
\quad\hbox{and}\quad
\|\mathbf{1}_{E_{\Lambda}}\|_{\mathcal{B}_q(Q_0)}
\geq C_{(n,q)}^{-1}
\Lambda^{\frac{n}{(n-1)q}}.
\end{align*}
Suppose, to the contrary, that there existed
$K_{(n,q)}\in[1,\infty)$ such that
$
\|\mathbf{1}_E\|_{\mathcal{B}_q(Q_0)}
\leq K_{(n,q)}\|\mathbf{1}_E\|_{\mathcal{B}(Q_0)}
$
for every measurable set $E\subset Q_0$. Applying this to $E_{\Lambda}$
and using \eqref{eq-new-fixed-upper} with $q=1$, we obtain
\begin{align*}
0&<C_{(n,q)}^{-1}
\leq\Lambda^{-\frac{n}{(n-1)q}}
\|\mathbf{1}_{E_{\Lambda}}\|_{\mathcal{B}_q(Q_0)}\\
&\leq K_{(n,q)}\Lambda^{-\frac{n}{(n-1)q}}
\|\mathbf{1}_{E_{\Lambda}}\|_{\mathcal{B}(Q_0)}
\leq K_{(n,q)}C_{(n)}
\Lambda^{1-\frac{n}{(n-1)q}}
\to0
\end{align*}
as $\Lambda\to0^+$. This leads to
a contradiction. Hence \eqref{eq-new-characteristic-self-range}
cannot hold uniformly for $q\in(\frac{n}{n-1},\infty)$, which proves
the sharpness of the range
$q\in(0,\frac{n}{n-1}]$. This completes the proof of (ii)
and hence Theorem \ref{the-new-fixed-perimeter}.
\end{proof}

\section{Exact Perimeter Formulas\label{sec-new-exact}}

The preceding section related the quantities
$\|\mathbf{1}_E\|_{\mathcal{B}_q(Q_0)}$, for measurable sets
$E\subset Q_0$, to the perimeter $P(E,Q_0)$ and showed that
the corresponding self-improvement range for characteristic functions reaches
the endpoint $q=\frac{n}{n-1}$. In this section, we pass from these bounds
to an exact perimeter formula by taking the small-scale limit of BBM-type
seminorms defined using cubes with arbitrary
orientations. For $q=1$, Ambrosio et al.
\cite[Theorem 4.1]{abbf2016} proved that this limit is
$\frac{1}{2}\min\{1,P(E,Q_0)\}$. We prove that the same formula holds
in every dimension $n\geq2$ for all $q\in(0,\frac{n}{n-1}]$.
In the case $n=2$, it
continues to hold up to some universal exponent
$q_*>\frac{19}{9}$.

Neither the formula for $q=1$ nor its proof extends directly to
$q\ne1$. Indeed, comparison of the local $L^q$ and $L^1$ oscillations implies
only the upper bound when $q<1$, and only the lower bound when $q>1$. For
$q<1$, we recover the missing lower bound from families that nearly attain
the $q=1$ limit, on which the two oscillation sums have the same limit. For
$q\in(1,\frac{n}{n-1}]$, the missing upper bound must hold uniformly over
cube families that may change with the scale. Uniform approximation by
half-spaces reduces it to a geometric inequality for a hyperplane cutting
a unit cube, which we prove using the concavity of parallel sections and a
second-moment estimate. In the case $n=2$, this pointwise inequality fails
above $q=2$, but averaging over small common translations of the square
first establishes the explicit range $q\leq\frac{19}{9}$,
and a stability argument extends it slightly beyond $\frac{19}{9}$.

\subsection{The Isotropic Functional and the Main Theorem}

We begin by defining the \emph{isotropic functional}
based on arbitrarily
oriented cubes and stating the exact perimeter formula proved in this
section. We then compare this functional with its axis-parallel version.

\begin{definition}\label{def-new-perimeter-isotropic}
Let $U\subset\mathbb{R}^n$ be open, $M\in(0,\infty)$,
and $\varepsilon\in(0,1)$. We denote by
$\mathscr{F}_{\varepsilon,M}^{*}(U)$
the set of all finite collections
$\{Q_j\}_{j\in J}$ of cubes with pairwise
disjoint interiors contained in
$U$ such that every $Q_j$ may have arbitrary orientation,
$\ell(Q_j)=\varepsilon$ for every $j\in J$,
and $\#J\leq M\varepsilon^{1-n}$.
For every $q\in(0,\infty)$ and every measurable $E\subset U$, define
\begin{align}\label{eq-new-I-general}
I_{\varepsilon,q}^{*}(E;U,M)
:=\varepsilon^{n-1}
\sup_{\{Q_j\}_{j\in J}\in\mathscr{F}_{\varepsilon,M}^{*}(U)}
\sum_{j\in J}\operatorname{osc}_q(\mathbf{1}_E;Q_j).
\end{align}
When $U=Q_0$ and $M=1$, write
\begin{align}\label{eq-new-I-unit}
I_{\varepsilon,q}^{*}(E):=I_{\varepsilon,q}^{*}(E;Q_0,1).
\end{align}
\end{definition}

\begin{remark}
Here and thereafter,
$M$ is fixed independently of $\varepsilon$. For every
$\varepsilon\in(0,1)$, the functional in \eqref{eq-new-I-general} is
finite; indeed,
$
I_{\varepsilon,q}^{*}(E;U,M)\leq M.
$
Moreover, when $U$ is the interior of a cube, the number of cubes of edge
length $\varepsilon$ with pairwise disjoint interiors that can be placed
in $U$ is of order $\varepsilon^{-n}$, whereas an admissible family
contains at most $M\varepsilon^{1-n}$ cubes. Since
$
\frac{M\varepsilon^{1-n}}{\varepsilon^{-n}}
=M\varepsilon\to0
$
as $\varepsilon\to0^+$, the cardinality restriction remains relevant,
even when the fixed constant $M$ is large.
\end{remark}

We now state the exact perimeter formulas, which are the main results
of this section.

\begin{theorem}\label{the-new-exact}
Let $n\in\mathbb{N}\cap[2,\infty)$ and let $E\subset Q_0$ be measurable.
Then
\begin{align}\label{eq-new-exact-all-dim}
\lim_{\varepsilon\to0^+}I_{\varepsilon,q}^{*}(E)
=\frac{1}{2}\min\{1,P(E,Q_0)\}
\end{align}
for every $q\in(0,\frac{n}{n-1}]$.
Moreover, there exists a universal constant
$q_*\in(\frac{19}{9},3)$ such that, when $n=2$,
\eqref{eq-new-exact-all-dim} also holds for every $q\in(2,q_*]$.
\end{theorem}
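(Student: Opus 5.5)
The plan is to split \eqref{eq-new-exact-all-dim} into a lower bound for $\liminf_{\varepsilon\to0^+}I_{\varepsilon,q}^{*}(E)$ and an upper bound for $\limsup_{\varepsilon\to0^+}I_{\varepsilon,q}^{*}(E)$. Each bound is handled separately on the two sides of $q=1$. The case $q=1$ is exactly \cite[Theorem 4.1]{abbf2016}, and it will serve as the anchor.

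\textbf{Easy directions.} By H\"older's inequality, $\operatorname{osc}_1\le\operatorname{osc}_q$ for $q\ge1$ and $\operatorname{osc}_q\le\operatorname{osc}_1$ for $q\le1$. Hence the lower bound is free for $q\geq1$ and the upper bound is free for $q\le1$.

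\textbf{Lower bound for $q<1$.} Take families that nearly attain the $q=1$ limit $\frac12\min\{1,P(E,Q_0)\}$. By Lemma \ref{lem-new-characteristic-formula}, $\operatorname{osc}_1(\mathbf 1_E;Q)=2\theta(1-\theta)\le\frac12$, with equality only at $\theta=\frac12$. Near-optimality forces most of the weighted mass of the family onto cubes with $\theta$ close to $\frac12$. For these cubes $\operatorname{osc}_q$ is also close to $\frac12$, again by Lemma \ref{lem-new-characteristic-formula}. So the $q$-sums of these families have the same limit.

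Making ``most cubes'' precise requires care. If $P(E,Q_0)\ge1$, the cardinality cap $\#J\le\varepsilon^{1-n}$ makes the sum $\varepsilon^{n-1}\sum_j\operatorname{osc}_1$ close to $\frac12$ only if almost all $\varepsilon^{1-n}$ cubes are near-balanced. If $P(E,Q_0)<1$, I would instead use the structure of the optimal families from \cite{abbf2016}: cubes centred on the reduced boundary, tilted to the normal, each bisected almost exactly. If the black-box statement does not give this structure, I would re-derive it by the blow-up described below.

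\textbf{Upper bound for $q\in(1,\frac n{n-1}]$.} This is the main obstacle, since the families may change with $\varepsilon$ and the bound must hold uniformly over them.

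First split cubes by perimeter content. Cubes $Q_j$ with $P(E,Q_j)\ll\varepsilon^{n-1}$ have small $m_{Q_j}$ by Lemma \ref{lem-new-relative-isoperimetric}. At $q=\frac n{n-1}$ their oscillation is at most $C\,P(E,Q_j)\varepsilon^{1-n}$, so their total contribution is controlled linearly by perimeter; for $q<\frac n{n-1}$ the same bound follows by H\"older's inequality. Cubes far from $\partial^*E$ contribute negligibly.

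For the remaining cubes, use De Giorgi's theorem in a uniform form: for $|D\mathbf 1_E|$-most of $\partial^*E$, at all small scales, $E$ is $L^1$-close to a half-space in every cube of size $\varepsilon$ (of any orientation) meeting a neighbourhood of the point. This reduces the problem to a cell inequality. For the unit cube $Q$ cut by any hyperplane $H$ with volume fraction $\theta$,
\begin{align*}
\left[\theta(1-\theta)^q+(1-\theta)\theta^q\right]^{\frac1q}\le\frac12\,\mathcal H^{n-1}(H\cap Q).
\end{align*}
Summing this over disjoint cubes gives $\frac12 P(E,Q_0)+o(1)$, and the cardinality cap together with $\operatorname{osc}_q\le\frac12$ (Lemma \ref{lem-new-characteristic-formula}, valid for $q\ge1$) gives the truncation at $1$.

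To prove the cell inequality, I would combine Brunn--Minkowski concavity of the section function $t\mapsto\mathcal H^{n-1}(\{x\cdot\nu=t\}\cap Q)^{\frac1{n-1}}$, which gives $\theta\lesssim$ section$^{\frac n{n-1}}$ with the sharp constant, with a second-moment bound $\int t^2\,\mathrm{vol}(dt)=\frac1{12}$ controlling the central section from below. These should yield $2^{q}\theta\le A^{q}$ for the small fraction $\theta$ and section area $A$, up to the exponent $q=\frac n{n-1}$. The hardest point is matching the exact factor $\frac12$ uniformly in the normal direction; I would try to reduce to Peri-type central-section estimates.

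\textbf{Planar extension $n=2$, $q\in(2,q_*]$.} Here the pointwise cell inequality fails, since corner cuts violate it. Instead, average over small common translations of the whole family: each cube then sees the interface line at a random offset, and it suffices to prove the averaged cell inequality. The range up to $\frac{19}{9}$ would be verified through a computer-certified one-variable check of this averaged inequality (Lemma \ref{lem-new-averaged-planar-cell}). Compactness and strictness of the averaged inequality then extend it to some $q_*>\frac{19}{9}$, and $q_*<3$ is consistent with the obstruction at $q_0$.
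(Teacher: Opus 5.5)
Your overall architecture matches the paper's: H\"older for the two free directions, saturation of near-optimal families for the $q<1$ lower bound, a uniform De Giorgi blow-up reducing the $q>1$ upper bound to a half-space cell inequality, and translation averaging in the plane. The gap is the step that carries the whole upper bound for $q\in(1,\frac{n}{n-1}]$: the sharp cell inequality $\Phi_q(\theta)\le\frac12\mathcal H^{n-1}(\partial H\cap S)$, uniform over all cuts of a unit cube. You do not establish it, and the intermediate target you propose is wrong.

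\begin{itemize}
\item $2^q\theta\le A^q$ fails at the extremal configuration. For the central cut parallel to a face, $\theta=\frac12$ and $A=1$, so it would assert $2^{q-1}\le1$.
\item Even restricted to small $\theta$, it bounds the wrong quantity. For $q\in(1,2)$, which contains the whole range $(1,\frac{n}{n-1}]$ once $n\ge3$, one has $\Phi_q(\theta)^q/\theta=(1-\theta)^q+(1-\theta)\theta^{q-1}>1$ for small $\theta$. So a bound on $\theta^{1/q}$ does not control the oscillation; at $q=1$ you would need $4\theta(1-\theta)\le A$, not $2\theta\le A$.
\item Your Peri fallback does not close this. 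Peri's section inequality combined with the sharp $L^1$--$L^2$ Khinchin inequality gives only the linear bound $A\ge4\theta(1-\theta)$, which is exactly the case $q=1$.
\end{itemize}

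What is needed is the nonlinear profile $A\ge[4\theta(1-\theta)]^{\frac{n-1}{n}}$; for $n=2$ and $q\le2$ an elementary case analysis suffices, but not for $n\ge3$. From this profile, $q=\frac{n}{n-1}$ and hence every smaller $q$ follow via $(1-\theta)^{\frac1{n-1}}+\theta^{\frac1{n-1}}\le2^{\frac{n-2}{n-1}}$.

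The idea that lets Brunn--Minkowski and the second moment $\frac1{12}$ combine sharply, uniformly in the normal, is to pass to the volume-fraction variable. Let $f$ be the parallel-section function, $h=f^{\frac1{n-1}}$ and $\mathcal I(v)=f(\mathcal Q(v))$. Then $J=\mathcal I^{\frac{n}{n-1}}$ has right derivative $n\,h'_+(\mathcal Q(v))$, so $J$ is concave and symmetric on $(0,1)$. This gives $J(r)\le J(m)$ on $(0,m]$ and $J(r)\le rJ(m)/m$ on $[m,\frac12]$, hence lower bounds for $\mathcal Q'=J^{-\frac{n-1}{n}}$. Integrating these into $\frac1{12}=\int_0^1\mathcal Q(s)^2\,ds$ yields $\mathcal I(m)^2\ge24K_n(m)$, which reduces to an explicit one-variable inequality. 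Concavity of $h$ in the position variable, as in your sketch, does not by itself produce a sharp scale-free relation between $\theta$ and $A$.

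Three smaller points.
\begin{itemize}
\item For the $q<1$ lower bound when $P(E,Q_0)<1$, you need no structural information about the extremal families of \cite{abbf2016}. Dilation turns the $q=1$ formula into $\lim_{\varepsilon\to0^+}I^{*}_{\varepsilon,1}(E;Q_0,M)=\frac12\min\{M,P(E,Q_0)\}$ for any budget $M\varepsilon^{1-n}$. With $M=\min\{1,P(E,Q_0)\}$, near-optimal families are saturated, so your $P\ge1$ argument applies verbatim, cleanly via $\operatorname{osc}_1-\operatorname{osc}_q\le C_q(\frac12-\operatorname{osc}_1)$. Your explicit packing of tilted bisected cubes would also work, but it amounts to redoing the covering argument of \cite{abbf2016}.
\item In the planar range $q\in(2,q_*]$, cubes missing the good set $G$ cannot be controlled linearly by perimeter, since $\frac1q<\frac12$. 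You need H\"older across the family plus the cardinality cap, $\varepsilon\sum_j m_j^{1/q}\le\varepsilon(\#J)^{1-\frac2q}\bigl(\sum_j m_j^{1/2}\bigr)^{\frac2q}\lesssim\sigma_E(Q_0\setminus G)^{\frac2q}$, where $m_j$ is the smaller volume fraction.
\item The bound $\operatorname{osc}_q(\mathbf 1_E;Q)\le\frac12$ holds only for $q\le3$; it already fails at $q=4$. That covers the stated ranges, but it is not valid for all $q\ge1$, nor does it follow from Lemma \ref{lem-new-characteristic-formula}.
\end{itemize}
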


\begin{remark}\label{rem-new-exact-known}
When $q=1$, \eqref{eq-new-exact-all-dim} is precisely the isotropic
perimeter formula of Ambrosio et al. \cite[Theorem 4.1]{abbf2016}.
That result is therefore recovered here, whereas the new contribution is
the persistence of the same formula for the stated ranges of $q\ne1$.
The proof for general $q$ requires different arguments because the direct
comparison between the local $L^q$ and $L^1$ oscillations supplies only one
of the two required bounds on either side of $q=1$. Theorem
\ref{the-new-exact} shows that its right-hand side
$\frac{1}{2}\min\{1,P(E,Q_0)\}$ remains unchanged as $q$ varies throughout
the stated ranges. In dimension two, this remains true
up to the universal exponent $q_*>\frac{19}{9}$ in Theorem
\ref{the-new-exact}, so the exact small-scale formula persists beyond the endpoint
of the uniform characteristic-function comparison in Theorem
\ref{the-new-fixed-perimeter}. The remaining gap and the failure above an
explicit exponent less than $2.1541$ are discussed in Section
\ref{sec-new-open-problems}.
\end{remark}

To relate the isotropic functional to the
\emph{axis-parallel functional} used in
the preceding section, we record the following comparison. For
$q\in(0,\infty)$, $\varepsilon\in(0,1)$, and a measurable set
$E\subset Q_0$, define
\begin{align*}
I_{\varepsilon,q}(E)
:=\varepsilon^{n-1}
\sup_{\{Q_j\}_{j\in J}\in\mathscr{F}_{\varepsilon}(Q_0)}
\sum_{j\in J}\operatorname{osc}_q(\mathbf{1}_E;Q_j).
\end{align*}

\begin{proposition}\label{prop-new-axis-isotropic}
Let $q\in(0,\infty)$ and let $E\subset Q_0$ be measurable. Then
\begin{align}\label{eq-new-axis-isotropic-comparison}
I_{\varepsilon,q}(E)
\leq I_{\varepsilon,q}^{*}(E)
\leq C_{(n,q)}I_{\sqrt n\varepsilon,q}(E)
\end{align}
for every $\varepsilon\in(0,\frac{1}{\sqrt n})$.
\end{proposition}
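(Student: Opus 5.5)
The first inequality is immediate: every admissible axis-parallel family at scale $\varepsilon$ is also an admissible family in $\mathscr{F}_{\varepsilon,1}^{*}(Q_0)$, since axis-parallel cubes are a special case of arbitrarily oriented cubes and the cardinality bound is the same. Taking the supremum therefore gives $I_{\varepsilon,q}(E)\leq I_{\varepsilon,q}^{*}(E)$.

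For the second inequality, I fix $\{Q_j\}_{j\in J}\in\mathscr{F}_{\varepsilon,1}^{*}(Q_0)$ and, for each $j$, enclose $Q_j$ in an axis-parallel cube $\widetilde Q_j$ with the same center and edge length $\sqrt n\varepsilon$. This works because the diameter of $Q_j$ is $\sqrt n\varepsilon$. There are three difficulties.

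\emph{(a) Local comparison of oscillations.} For $q\geq1$, I use the standard fact that replacing the mean by an arbitrary constant costs at most a factor $2$:
\begin{align*}
\operatorname{osc}_q(\mathbf{1}_E;Q_j)\leq2\left[\fint_{Q_j}|\mathbf{1}_E-c|^q\right]^{\frac1q}\leq 2\left(\frac{|\widetilde Q_j|}{|Q_j|}\right)^{\frac1q}\operatorname{osc}_q(\mathbf{1}_E;\widetilde Q_j)
\end{align*}
with $c=(\mathbf{1}_E)_{\widetilde Q_j}$. For $q<1$, I instead use Lemma \ref{lem-new-characteristic-formula}, which gives $\operatorname{osc}_q\sim m_Q$, and the inequality $m_{Q_j}\leq n^{n/2}m_{\widetilde Q_j}$. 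This inequality holds because $|E\cap Q_j|\leq|E\cap\widetilde Q_j|$ and likewise for the complement. In both cases $\operatorname{osc}_q(\mathbf{1}_E;Q_j)\leq C_{(n,q)}\operatorname{osc}_q(\mathbf{1}_E;\widetilde Q_j)$.

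\emph{(b) Containment in $Q_0$ and disjointness.} The enlarged cubes $\widetilde Q_j$ may overlap each other and may exit $Q_0$. Overlap is bounded: each point lies in at most $C_{(n)}$ of the $\widetilde Q_j$, because the $Q_j$ are disjoint with volume $\varepsilon^n$ and any two cubes whose enlargements share a point have centers within distance $\sqrt n\cdot\sqrt n\varepsilon$. A greedy coloring (or a bounded-degree graph argument) then splits $J$ into $C_{(n)}$ subfamilies, each with pairwise disjoint $\widetilde Q_j$. For containment, I translate each $\widetilde Q_j$ so that it lies in $Q_0$ but still contains $Q_j$. This is possible because $\sqrt n\varepsilon<1$ and $Q_j\subset Q_0$: project the interval of each coordinate into $[0,1]$. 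The shifted cube still has diameter small, so the overlap bound still holds with a slightly larger constant.

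\emph{(c) The cardinality budget at scale $\sqrt n\varepsilon$.} Each subfamily has at most $\varepsilon^{1-n}$ cubes, but the budget at scale $\sqrt n\varepsilon$ is only $(\sqrt n\varepsilon)^{1-n}$. I therefore split each subfamily further into at most $\lceil n^{(n-1)/2}\rceil$ groups, each respecting $\#\leq(\sqrt n\varepsilon)^{1-n}$.

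Summing, each group contributes at most $(\sqrt n\varepsilon)^{1-n}I_{\sqrt n\varepsilon,q}(E)$, and the factor $\varepsilon^{n-1}$ in front then yields
\begin{align*}
\varepsilon^{n-1}\sum_{j\in J}\operatorname{osc}_q(\mathbf{1}_E;Q_j)\leq C_{(n,q)}I_{\sqrt n\varepsilon,q}(E).
\end{align*}

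The main obstacle is the bookkeeping in (b): making the boundary translation compatible with the bounded-overlap coloring. I expect this to be routine once the coloring is done after translation.
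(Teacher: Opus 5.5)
Your proposal is correct and follows essentially the same route as the paper. You enclose each $Q_j$ in an axis-parallel cube of edge $\sqrt n\varepsilon$ inside $Q_0$; the paper cites Lemma 4.3 of Ambrosio et al.\ for this, while you do it by coordinatewise clamping. You then greedily colour the bounded-degree intersection graph, using that the centres lie in the enlarged cubes of diameter $n\varepsilon$, split the colour classes to meet the budget, and compare oscillations via volume fractions. The paper uses volume fractions for all $q$; your mean-versus-constant bound for $q\geq1$ is an equally valid alternative.

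There is one small slip in (c). The budget at scale $\delta=\sqrt n\varepsilon$ is the integer $\lfloor\delta^{1-n}\rfloor$, so the number of groups can exceed $\lceil n^{\frac{n-1}{2}}\rceil$. For example, with $n=3$ and $\delta^{-2}=16.9$, a single colour class can contain $50$ cubes, which requires $4$ groups rather than $3$. Using $\lfloor a\rfloor\geq\frac a2$ for $a\geq1$ gives at most $2n^{\frac{n-1}{2}}+1$ groups, and any bounded number of groups is all you need.
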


\begin{proof}
The first inequality is immediate because every axis-parallel
admissible family is also admissible in \eqref{eq-new-I-unit}. For the
second, fix
$\{Q_j\}_{j\in J}\in\mathscr{F}_{\varepsilon,1}^{*}(Q_0)$.
As proved in \cite[Lemma 4.3]{abbf2016}, each $Q_j$ is contained in an
axis-parallel cube $Q_j'\subset Q_0$ of edge length
$\sqrt n\varepsilon$.

Let $x_j$ be the center of $Q_j$. The balls
$B(x_j,\frac{\varepsilon}{2})$ have pairwise disjoint interiors. If
$Q_j'\cap Q_k'\ne\emptyset$, then
$\operatorname{diam}(Q_j')=n\varepsilon$ and
$x_j\in Q_j'$, $x_k\in Q_k'$ imply
$
|x_j-x_k|\leq2n\varepsilon.
$
Hence all the balls $B(x_k,\frac{\varepsilon}{2})$ corresponding to cubes
$Q_k'$ that meet a fixed $Q_j'$ are contained in
$B(x_j,[2n+\frac12]\varepsilon)$. Comparing volumes, we obtain
\begin{align*}
\#\{k\in J:\ Q_j'\cap Q_k'\ne\emptyset\}
\leq(4n+1)^n=:N_n.
\end{align*}
Define a finite graph $G$ on $J$ by joining distinct $j,k\in J$
whenever $Q_j'\cap Q_k'\ne\emptyset$. Since the preceding count
includes $k=j$, every vertex of $G$ has at most $N_n-1$ neighbors.
Order the vertices arbitrarily and color them successively, assigning
to each vertex a color different from those of its previously colored
neighbors. At each step, at most $N_n-1$ colors are forbidden, so one
of $N_n$ colors remains available. Thus adjacent vertices receive
different colors. Consequently, the color classes partition
$\{Q_j'\}_{j\in J}$ into at most $N_n$ subfamilies of pairwise
disjoint cubes.

Let $\delta:=\sqrt n\varepsilon$ and
$K_\delta:=\lfloor\delta^{1-n}\rfloor$. Since $\delta\in(0,1)$,
$
K_\delta\geq\frac12\delta^{1-n}.
$
Each color class contains at most
$\#J\leq\varepsilon^{1-n}$ cubes. Splitting it into subfamilies of at
most $K_\delta$ cubes therefore produces no more than
\begin{align*}
\frac{\varepsilon^{1-n}}{K_\delta}+1
\leq2\frac{\varepsilon^{1-n}}{\delta^{1-n}}+1
=2n^{\frac{n-1}{2}}+1
\end{align*}
subfamilies. Consequently, there exists a partition
$J=\bigcup_{\nu=1}^{L}J_\nu$ such that
\begin{align}\label{eq-new-axis-isotropic-partition}
L\leq N_n\left(2n^{\frac{n-1}{2}}+1\right)
\quad\text{and}\quad
\{Q_j'\}_{j\in J_\nu}
\in\mathscr{F}_{\delta}(Q_0).
\end{align}
For $t\in[0,1]$, let
\begin{align*}
\Phi_q(t):=
\left[t(1-t)^q+(1-t)t^q\right]^{\frac1q}.
\end{align*}
For every $j\in J$, let
\begin{align*}
\theta_j:=\frac{|E\cap Q_j|}{|Q_j|}
\quad\text{and}\quad
\theta_j':=\frac{|E\cap Q_j'|}{|Q_j'|}.
\end{align*}
Then the inclusion $Q_j\subset Q_j'$ and
$|Q_j'|=n^{\frac{n}{2}}|Q_j|$ imply
\begin{align}\label{theta1}
\min\{\theta_j',1-\theta_j'\}
&=\frac{1}{|Q_j'|}
\min\{|E\cap Q_j'|,|Q_j'\setminus E|\}\nonumber\\
&\geq n^{-\frac n2}\min\{\theta_j,1-\theta_j\}.
\end{align}
By Lemma \ref{lem-new-characteristic-formula},
$\Phi_q(t)\sim_q\min\{t,1-t\}^{\min\{1,\frac1q\}}$.
From this and \eqref{theta1}, we infer that,
for any $j\in J$,
\begin{align}\label{eq-new-axis-isotropic-local-comparison}
\left[
\fint_{Q_j}|\mathbf{1}_E(x)
-(\mathbf{1}_E)_{Q_j}|^q\,dx
\right]^{\frac1q}
&=\Phi_q(\theta_j)
\leq C_{(n,q)}\Phi_q(\theta_j')\nonumber\\
&\leq C_{(n,q)}
\left[
\fint_{Q_j'}|\mathbf{1}_E(x)-(\mathbf{1}_E)_{Q_j'}|^q\,dx
\right]^{\frac1q}.
\end{align}
Combining \eqref{eq-new-axis-isotropic-partition} and
\eqref{eq-new-axis-isotropic-local-comparison}, and using
$\delta=\sqrt n\varepsilon$, we obtain
\begin{align*}
&\varepsilon^{n-1}\sum_{j\in J}
\operatorname{osc}_q(\mathbf{1}_E;Q_j)\\
&\quad\leq
C_{(n,q)}
\sum_{\nu=1}^{L}
\varepsilon^{n-1}\sum_{j\in J_\nu}
\operatorname{osc}_q(\mathbf{1}_E;Q_j')\\
&\quad\leq
C_{(n,q)}L
\left(\frac{\varepsilon}{\delta}\right)^{n-1}
I_{\delta,q}(E)
\leq C_{(n,q)}I_{\sqrt n\varepsilon,q}(E).
\end{align*}
Taking the supremum over
$\mathscr{F}_{\varepsilon,1}^{*}(Q_0)$ proves the second inequality in
\eqref{eq-new-axis-isotropic-comparison}
and completes the proof of Proposition
\ref{prop-new-axis-isotropic}.
\end{proof}

\begin{remark}\label{rem-new-axis-isotropic}
Proposition \ref{prop-new-axis-isotropic} when $q=1$ coincides with
\cite[Lemma 4.3]{abbf2016}. The extension to every $q\in(0,\infty)$ uses
the same geometric construction together with the volume-fraction estimates
for characteristic functions. The comparison constant depends on $n$ and
$q$, so \eqref{eq-new-axis-isotropic-comparison} does not preserve the exact
factor $\frac12$ in the isotropic perimeter formula in
Theorem \ref{the-new-exact}.
\end{remark}

\subsection{The Universal Lower Bound}

The aim of this subsection is to prove the universal lower bound
for every $q\in(0,\infty)$
stated in Proposition \ref{prop-new-universal-lower} below. We
begin with the $q=1$ formula for arbitrary $M$. When $M=1$, it is precisely
\cite[Theorem 4.1]{abbf2016} applied to $Q_0$.

\begin{lemma}\label{lem-new-known-q1}
Let $n\in\mathbb{N}\cap[2,\infty)$.
Let $M\in(0,\infty)$ and let $E\subset\mathbb{R}^n$ be measurable. Then
\begin{align}\label{eq-new-known-general-m}
\lim_{\varepsilon\to0^+}I_{\varepsilon,1}^{*}(E;Q_0,M)
=\frac{1}{2}\min\{M,P(E,Q_0)\}.
\end{align}
\end{lemma}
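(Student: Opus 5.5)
We plan to reduce Lemma \ref{lem-new-known-q1} to the case $M=1$, which is exactly \cite[Theorem 4.1]{abbf2016} applied to $Q_0$, by rescaling the cardinality parameter. Since $E$ enters only through $E\cap Q_0$, we may assume $E\subset Q_0$. The key observation is that the constraint $\#J\leq M\varepsilon^{1-n}$ at scale $\varepsilon$ coincides with the constraint $\#J\leq \delta^{1-n}$ at a different scale $\delta$ only if we also change the scale. We therefore do not rescale space. Instead we reproduce the two halves of the ABBF argument with $M$ carried along, because the proof in \cite{abbf2016} is insensitive to the value of the cardinality constant. For $q=1$ we have $\operatorname{osc}_1(\mathbf{1}_E;Q)=2\theta(1-\theta)\leq 2\min\{\theta,1-\theta\}$, and this is the quantity that ABBF compare with perimeter.

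\textbf{Upper bound.} We will show $\limsup_{\varepsilon\to0^+}I^*_{\varepsilon,1}(E;Q_0,M)\leq\frac12\min\{M,P(E,Q_0)\}$. The bound by $\frac12 M$ is immediate, since $\operatorname{osc}_1\leq\frac12$ and $\varepsilon^{n-1}\#J\leq M$. For the bound by $\frac12P(E,Q_0)$, we may assume $P(E,Q_0)<\infty$. We then invoke the cellwise inequality underlying \cite[Theorem 4.1]{abbf2016}: for every family of disjoint cubes of side $\varepsilon$, the quantity $\varepsilon^{n-1}\sum_j\operatorname{osc}_1(\mathbf{1}_E;Q_j)$ is at most $\frac12\sum_jP(E,Q_j)+o(1)$, uniformly in the family. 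This cellwise estimate does not involve the cardinality. Summing over disjoint cubes gives at most $\frac12P(E,Q_0)+o(1)$.

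\textbf{Lower bound.} Here we construct families. If $P(E,Q_0)<\infty$, take $\mu\leq\min\{M,P(E,Q_0)\}$ with $\mu<P(E,Q_0)$ when that perimeter is positive. By De Giorgi's structure theorem, the reduced boundary carries mass $P(E,Q_0)$ and blows up to half-spaces. As in ABBF, we select, at small scale $\varepsilon$, roughly $\mu\varepsilon^{1-n}$ disjoint cubes centered on $\partial^*E$ and oriented with one face parallel to the approximate tangent plane. On each such cube $\theta\approx\frac12$, so $\operatorname{osc}_1\approx\frac12$. These cubes are selected by a Vitali/Besicovitch covering at density points where the blow-up is flat. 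Since $\mu\leq M$, the count is admissible, and the sum is $\geq\frac12\mu-o(1)$.

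If $P(E,Q_0)=\infty$, we use the same selection on a subset of the measure-theoretic boundary with large but finite $\mathcal H^{n-1}$ measure. The lower semicontinuity part of ABBF supplies $\liminf\geq\frac12 M$ in this case: the $M=1$ statement gives, for $P=\infty$, families of $\sim\varepsilon^{1-n}$ cubes with oscillation near $\frac12$. We take $\lceil M\rceil$ of these at slightly different scales to fill the budget $M\varepsilon^{1-n}$, after checking disjointness by discarding overlaps, whose number is $o(\varepsilon^{1-n})$.

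The main obstacle is this infinite-perimeter lower bound for $M>1$. The $M=1$ theorem by itself only yields $\frac12$, and concatenating families requires disjointness. To handle it, we will localize first: we split $Q_0$ into finitely many disjoint subcubes $U_i$ on each of which $P(E,U_i)=\infty$, or on which the perimeters sum beyond $M$. We then apply the $M=1$ result scaled to each $U_i$, which yields disjointness automatically, and add the contributions.
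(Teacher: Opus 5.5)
Your stated reason for not rescaling space is mistaken, and the rescaling you set aside is the paper's entire proof. Put $a:=M^{-\frac{1}{n-1}}$, so that $M\varepsilon^{1-n}=(a\varepsilon)^{1-n}$. Then $x\mapsto ax$ maps $\mathscr{F}^{*}_{\varepsilon,M}(Q_0)$ bijectively onto $\mathscr{F}^{*}_{a\varepsilon,1}(Q_a)$, where $Q_a:=aQ_0$. Since $\operatorname{osc}_1$ is dilation invariant and $P(aE,Q_a)=a^{n-1}P(E,Q_0)=M^{-1}P(E,Q_0)$, you get $I^{*}_{a\varepsilon,1}(aE;Q_a,1)=M^{-1}I^{*}_{\varepsilon,1}(E;Q_0,M)$. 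Applying \cite[Theorem 4.1]{abbf2016} to the cube $Q_a$ then yields $\frac M2\min\{1,M^{-1}P(E,Q_0)\}=\frac12\min\{M,P(E,Q_0)\}$, and the case $P(E,Q_0)=\infty$ comes for free. Your own last step, applying the $M=1$ result ``scaled to each $U_i$'', already uses exactly this dilation. Applying it to $Q_0$ itself makes every localization unnecessary.

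As written, your alternative route has a genuine gap exactly where you place the main obstacle: the lower bound when $P(E,Q_0)=\infty$.

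\emph{De Giorgi selection.} This needs $E$ to have locally finite perimeter. That can fail near an entire closed set, and there you have no reduced boundary and no half-space blow-up to select from.

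\emph{Concatenating families.} Two near-optimal $M=1$ families at nearby scales may sit on the same part of $E$. Nothing in the $M=1$ theorem makes their overlaps $o(\varepsilon^{1-n})$.

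\emph{Localization.} Transporting the $M=1$ theorem to a subcube $U_i$ of side $s_i$ spends a budget $s_i^{n-1}\varepsilon^{1-n}$ but returns only $\frac12\min\{s_i^{n-1},P(E,U_i)\}$. You would therefore need disjoint $U_i$ with $\sum_i s_i^{n-1}\le M$ and $\sum_i\min\{s_i^{n-1},P(E,U_i)\}\ge M-\eta$, that is, many subcubes with $P(E,U_i)\geq(1-\eta)s_i^{n-1}$. ``Perimeters summing beyond $M$'' does not give this, because of the cap. Producing such subcubes needs a further covering argument that you do not supply: De Giorgi blow-ups with rotated cubes where the perimeter is locally finite, plus a Hausdorff-measure count near the set where it is locally infinite. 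Once completed, this localization would have the mild merit of invoking the $M=1$ theorem only on $Q_0$.

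\emph{Upper bound.} The cardinality-free ``cellwise inequality'' is true for $q=1$ and finite perimeter. Small-oscillation cubes are handled by the relative isoperimetric inequality, or by the budget at cost $\tau M$. It must, however, be proved rather than cited, for instance via the $J_1,J_2,J_3$ splitting of Proposition \ref{prop-new-localization}. The rescaling makes all of this unnecessary.
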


\begin{proof}
The case $M=1$ follows by applying \cite[Theorem 4.1]{abbf2016}
to the
open cube $Q_0$. The choice of open or closed representatives of the
testing cubes is immaterial here, since cube boundaries have zero Lebesgue
measure and admissibility is formulated in terms of disjoint interiors.

For general $M$, let $a:=M^{-\frac{1}{n-1}}$ and define
\begin{align*}
T_a:\mathbb{R}^n\to\mathbb{R}^n,
\quad T_a(x):=ax.
\end{align*}
Let
$Q_a:=T_a(Q_0)$ and $E_a:=T_a(E)$.
For every $\varepsilon\in(0,\min\{1,a^{-1}\})$, we have
$
M\varepsilon^{1-n}=(a\varepsilon)^{1-n}.
$
Hence the map
\begin{align*}
\begin{cases}
\displaystyle
\mathscr{F}_{\varepsilon,M}^{*}(Q_0)
\to \mathscr{F}_{a\varepsilon,1}^{*}(Q_a)\\
\displaystyle
\{Q_j\}_{j\in J}\mapsto\{T_a(Q_j)\}_{j\in J}
\end{cases}
\end{align*}
is a bijection. Moreover, a change of
variables implies
$
\operatorname{osc}_1(\mathbf{1}_{E_a};T_a(Q_j))
=\operatorname{osc}_1(\mathbf{1}_E;Q_j).
$
Consequently,
\begin{align*}
I^*_{a\varepsilon,1}(E_a;Q_a,1)
&=a^{n-1}I^*_{\varepsilon,1}(E;Q_0,M)
=M^{-1}I^*_{\varepsilon,1}(E;Q_0,M).
\end{align*}
Since
\begin{align*}
P(E_a,Q_a)=a^{n-1}P(E,Q_0)=M^{-1}P(E,Q_0),
\end{align*}
letting $\varepsilon\to0^+$ and applying the $M=1$ formula
to $Q_a$
obtained in \cite[Theorem 4.1]{abbf2016},
we obtain
\begin{align*}
\lim_{\varepsilon\to0^+}I^*_{\varepsilon,1}(E;Q_0,M)
&=\frac{M}{2}\min\{1,M^{-1}P(E,Q_0)\}\\
&=\frac12\min\{M,P(E,Q_0)\}.
\end{align*}
This completes the proof of Lemma \ref{lem-new-known-q1}.
\end{proof}

The following lemma plays a key role in the
proof of the lower bound for
$q\in(0,1)$ in Theorem \ref{the-new-exact}.
For $q\in(0,1)$, H\"older's inequality implies
$I_{\varepsilon,q}^{*}(E)\leq I_{\varepsilon,1}^{*}(E)$,
but this provides no lower bound.
To overcome this, the following lemma shows that if the $q=1$ sum over
a family tends to $\frac{M}{2}$ as $\varepsilon\to0^+$, then the $q$-sum
over the same family also tends to $\frac{M}{2}$ as $\varepsilon\to0^+$.

\begin{lemma}\label{lem-new-saturation}
Let $n\in\mathbb{N}\cap[2,\infty)$.
Let $q\in(0,1)$, $M\in(0,1]$, and let $E\subset Q_0$ be measurable.
For every $\varepsilon\in(0,M^{\frac{1}{n-1}})$, choose an admissible family
$
\mathcal{Q}_\varepsilon
:=\{Q_{\varepsilon,j}\}_{j\in J_\varepsilon}
\in\mathscr{F}_{\varepsilon,M}^{*}(Q_0).
$
The choice of $\mathcal{Q}_\varepsilon$ may depend on $\varepsilon$.
If
\begin{align}\label{eq-new-saturation-assumption}
\lim_{\varepsilon\to 0^+}\varepsilon^{n-1}\sum_{j\in J_\varepsilon}
\operatorname{osc}_1(\mathbf{1}_E;Q_{\varepsilon,j})=\frac{M}{2}.
\end{align}
Then
\begin{align}\label{eq-new-saturation-q}
\lim_{\varepsilon\to 0^+}\varepsilon^{n-1}\sum_{j\in J_\varepsilon}
\operatorname{osc}_q(\mathbf{1}_E;Q_{\varepsilon,j})=\frac{M}{2}.
\end{align}
\end{lemma}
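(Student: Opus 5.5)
The idea is that each term $\operatorname{osc}_1$ is at most $\frac12$. If the $L^1$ sum is asymptotically $\frac M2$ while there are at most $M\varepsilon^{1-n}$ cubes, then most cubes must have volume fraction close to $\frac12$. On such cubes $\operatorname{osc}_q$ is also close to $\frac12$.

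Write $\theta_j:=(\mathbf{1}_E)_{Q_{\varepsilon,j}}$ and $\Phi_q(t):=[t(1-t)^q+(1-t)t^q]^{1/q}$. By Lemma \ref{lem-new-characteristic-formula}, $\operatorname{osc}_1(\mathbf{1}_E;Q_{\varepsilon,j})=\Phi_1(\theta_j)=2\theta_j(1-\theta_j)\leq\frac12$, with equality only at $\theta_j=\frac12$, and $\Phi_q(\frac12)=\frac12$.

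\textbf{Upper bound.} For $q\in(0,1)$, Jensen's (H\"older's) inequality gives $\operatorname{osc}_q\leq\operatorname{osc}_1$. Hence
\begin{align*}
\limsup_{\varepsilon\to0^+}\varepsilon^{n-1}\sum_{j\in J_\varepsilon}\operatorname{osc}_q(\mathbf{1}_E;Q_{\varepsilon,j})\leq\frac M2
\end{align*}
by \eqref{eq-new-saturation-assumption}.

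\textbf{Lower bound.} Set $d_j:=\frac12-\Phi_1(\theta_j)=2(\theta_j-\frac12)^2\geq0$. Since $\varepsilon^{n-1}\#J_\varepsilon\leq M$, we get
\begin{align*}
0\leq\varepsilon^{n-1}\sum_{j}d_j\leq\frac M2-\varepsilon^{n-1}\sum_j\Phi_1(\theta_j)\to0 .
\end{align*}
The same computation shows $\varepsilon^{n-1}\#J_\varepsilon\to M$, because $\varepsilon^{n-1}\sum_j\Phi_1\leq\frac12\varepsilon^{n-1}\#J_\varepsilon\leq\frac M2$. Since $\Phi_q$ is continuous on $[0,1]$, for every $\eta>0$ there is $\delta>0$ such that $|t-\frac12|\leq\delta$ implies $\Phi_q(t)\geq\frac12-\eta$. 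Split $J_\varepsilon$ into the good indices $G$, where $|\theta_j-\frac12|\leq\delta$, and the bad indices $B$. By a Chebyshev-type count, $\varepsilon^{n-1}\#B\leq(2\delta^2)^{-1}\varepsilon^{n-1}\sum_j d_j\to0$. Discarding the bad terms, which is allowed since $\Phi_q\geq0$, gives
\begin{align*}
\varepsilon^{n-1}\sum_j\operatorname{osc}_q\geq\Big(\frac12-\eta\Big)\varepsilon^{n-1}(\#J_\varepsilon-\#B)\to\Big(\frac12-\eta\Big)M .
\end{align*}
Letting $\eta\to0$ gives $\liminf\geq\frac M2$, which together with the upper bound proves \eqref{eq-new-saturation-q}.

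The only delicate step is the counting. It needs both the cardinality bound $\#J_\varepsilon\leq M\varepsilon^{1-n}$, which converts the deficit into smallness of the normalized bad count and forces the total count to saturate, and the fact that $\frac12$ is the strict maximum of $\Phi_1$ and a continuity point of $\Phi_q$ with value $\frac12$. The hypothesis $M\leq1$ plays no role beyond admissibility.
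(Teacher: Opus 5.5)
Your proof is correct, but it takes a different route from the paper.

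The paper proves a uniform pointwise deficit inequality for every cube $Q$:
\[
0\le\operatorname{osc}_1(\mathbf{1}_E;Q)-\operatorname{osc}_q(\mathbf{1}_E;Q)\le C_q\bigl[\tfrac12-\operatorname{osc}_1(\mathbf{1}_E;Q)\bigr].
\]
It does this by showing that the ratio of the two deficits is a bounded function of $\theta$. The ratio is continuous away from $\theta=\frac12$, and the expansion $\Phi_q(\frac12+x)=\frac12-(3-q)x^2+O(x^4)$ gives it the limit $\frac{1-q}{2}$ at $\frac12$. Summing over the family and using $\varepsilon^{n-1}\#J_\varepsilon\le M$ then bounds the total $q$-deficit by $C_q$ times the total $L^1$-deficit, in one line.

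You instead use:
\begin{itemize}
\item the exact quadratic form $\frac12-\Phi_1(\theta)=2(\theta-\frac12)^2$;
\item a Chebyshev count of the cubes whose volume fraction is far from $\frac12$;
\item the saturation $\varepsilon^{n-1}\#J_\varepsilon\to M$, which the paper never needs;
\item only continuity of $\Phi_q$ at $\frac12$.
\end{itemize}

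What each route buys:
\begin{itemize}
\item The paper's route gives a quantitative, linear rate: the gap between the $q$-sum and $\frac M2$ is at most a constant multiple of the gap for the $L^1$-sum.
\item Yours gives only the limit, but it needs no second-order expansion.
\item Your lower bound does not use $q<1$ at all. Only your upper bound does, through $\Phi_q\le\Phi_1$. That step could even be replaced by $\Phi_q\le\frac12$, which the paper shows holds for all $q\le3$.
\end{itemize}
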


\begin{proof}
We first claim that there exists $C_q\in(0,\infty)$ such that,
for every cube $Q$,
\begin{align}\label{eq-new-local-deficit-comparison}
0
\leq\operatorname{osc}_1(\mathbf{1}_E;Q)
-\operatorname{osc}_q(\mathbf{1}_E;Q)
\leq C_q\left[
\frac{1}{2}-\operatorname{osc}_1(\mathbf{1}_E;Q)
\right].
\end{align}
The first inequality follows from H\"older's inequality. To prove the
second, let $t:=\frac{|E\cap Q|}{|Q|}$. Then, by Lemma
\ref{lem-new-characteristic-formula}, the desired inequality becomes
\begin{align*}
2t(1-t)-
\left[t(1-t)^q+(1-t)t^q\right]^{\frac{1}{q}}
\leq C_q\left[\frac{1}{2}-2t(1-t)\right].
\end{align*}
It is an equality when $t=\frac12$, while
$\frac12-2t(1-t)>0$ for $t\neq\frac12$.
It therefore remains to show that
\begin{align*}
f(t):=\frac{
2t(1-t)-
\left[t(1-t)^q+(1-t)t^q\right]^{\frac{1}{q}}
}{
\frac{1}{2}-2t(1-t)
}
\end{align*}
is bounded on $[0,1]\setminus\{\frac12\}$. $f(t)$ is
continuous away from $\frac12$.
In addition, by Taylor's formula, we find that
\begin{align*}
\lim_{t\to\frac12}f(t)
&=\lim_{x\to0}\frac{
2\left(\frac12+x\right)\left(\frac12-x\right)-
\left[
\left(\frac12+x\right)\left(\frac12-x\right)^q
+\left(\frac12-x\right)\left(\frac12+x\right)^q
\right]^{\frac{1}{q}}
}{
\frac{1}{2}-2\left(\frac12+x\right)\left(\frac12-x\right)
}\\
&=\lim_{x\to0}
\frac{
\left(\frac12-2x^2\right)
-\left[\frac12-(3-q)x^2+O(x^4)\right]}{2x^2}\\
&=\lim_{x\to0}
\frac{(1-q)x^2+O(x^4)}{2x^2}
=\frac{1-q}{2}.
\end{align*}
Thus $f(t)$ is bounded
on $[0,1]\setminus\{\frac12\}$, which proves
\eqref{eq-new-local-deficit-comparison}.

Using \eqref{eq-new-local-deficit-comparison},
the cardinality bound in the
definition of $\mathscr{F}_{\varepsilon,M}^{*}(Q_0)$,
and \eqref{eq-new-saturation-assumption},
we conclude that
\begin{align*}
0
&\leq\varepsilon^{n-1}\sum_{j\in J_\varepsilon}
\left\{\operatorname{osc}_1(\mathbf{1}_E;Q_{\varepsilon,j})
-\operatorname{osc}_q(\mathbf{1}_E;Q_{\varepsilon,j})\right\}\\
&\leq C_q\left[
\frac{\varepsilon^{n-1}\#J_\varepsilon}{2}
-\varepsilon^{n-1}\sum_{j\in J_\varepsilon}
\operatorname{osc}_1(\mathbf{1}_E;Q_{\varepsilon,j})\right]\\
&\leq C_q\left[
\frac{M}{2}
-\varepsilon^{n-1}\sum_{j\in J_\varepsilon}
\operatorname{osc}_1(\mathbf{1}_E;Q_{\varepsilon,j})\right]
\to0
\end{align*}
as $\varepsilon\to0^+$,
which further implies
\eqref{eq-new-saturation-q}
and hence completes the proof of Lemma \ref{lem-new-saturation}.
\end{proof}

Combining the $q=1$ formula \eqref{eq-new-known-general-m} with Lemma
\ref{lem-new-saturation} and, when $q\geq1$, H\"older's inequality, we
obtain a lower bound valid for every $q\in(0,\infty)$.

\begin{proposition}\label{prop-new-universal-lower}
For every $n\in\mathbb{N}\cap[2,\infty)$,
every $q\in(0,\infty)$, and
every measurable
$E\subset Q_0$,
\begin{align}\label{eq-new-universal-lower}
\liminf_{\varepsilon\to0^+}I_{\varepsilon,q}^{*}(E)
\geq\frac{1}{2}\min\{1,P(E,Q_0)\}.
\end{align}
\end{proposition}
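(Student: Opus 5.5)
We split according to whether $q\geq1$ or $q\in(0,1)$; the case $q\geq1$ is immediate and the case $q<1$ rests on Lemma \ref{lem-new-saturation}.

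\textbf{Case $q\in[1,\infty)$.} For every cube $Q$, H\"older's inequality gives $\operatorname{osc}_1(\mathbf{1}_E;Q)\leq\operatorname{osc}_q(\mathbf{1}_E;Q)$. Summing over any family in $\mathscr{F}^*_{\varepsilon,1}(Q_0)$ and taking suprema yields
\begin{align*}
I^*_{\varepsilon,1}(E)\leq I^*_{\varepsilon,q}(E).
\end{align*}
Lemma \ref{lem-new-known-q1} with $M=1$ then gives
\begin{align*}
\liminf_{\varepsilon\to0^+}I^*_{\varepsilon,q}(E)\geq\lim_{\varepsilon\to0^+}I^*_{\varepsilon,1}(E)=\frac12\min\{1,P(E,Q_0)\}.
\end{align*}

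\textbf{Case $q\in(0,1)$.} Set $M:=\min\{1,P(E,Q_0)\}$; if $M=0$ there is nothing to prove, so assume $M\in(0,1]$. Note that $\mathscr{F}^*_{\varepsilon,M}(Q_0)\subset\mathscr{F}^*_{\varepsilon,1}(Q_0)$. By Lemma \ref{lem-new-known-q1}, since $P(E,Q_0)\geq M$,
\begin{align*}
\lim_{\varepsilon\to0^+}I^*_{\varepsilon,1}(E;Q_0,M)=\frac12\min\{M,P(E,Q_0)\}=\frac M2.
\end{align*}
For each small $\varepsilon$, we choose a family $\mathcal{Q}_\varepsilon\in\mathscr{F}^*_{\varepsilon,M}(Q_0)$ whose $L^1$-oscillation sum, multiplied by $\varepsilon^{n-1}$, is within $\varepsilon$ of $I^*_{\varepsilon,1}(E;Q_0,M)$. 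This is possible because the supremum is finite. Along these families the hypothesis \eqref{eq-new-saturation-assumption} holds, with value $\frac M2$: the sums are bounded above by $\frac12\varepsilon^{n-1}\#J_\varepsilon\leq\frac M2$ and converge from below to $\frac M2$. Lemma \ref{lem-new-saturation} then gives
\begin{align*}
\varepsilon^{n-1}\sum_{j\in J_\varepsilon}\operatorname{osc}_q(\mathbf{1}_E;Q_{\varepsilon,j})\to\frac M2.
\end{align*}
Since each $\mathcal{Q}_\varepsilon$ is admissible for $I^*_{\varepsilon,q}(E)$, we conclude $\liminf_{\varepsilon\to0^+}I^*_{\varepsilon,q}(E)\geq\frac M2$.

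\textbf{Main point.} The only delicate step is choosing $M=\min\{1,P(E,Q_0)\}$ rather than $M=1$. Lemma \ref{lem-new-saturation} needs the $L^1$ sums to saturate the maximal value $\frac M2$, which is exactly $\frac12\varepsilon^{n-1}$ times the cardinality bound. When $P(E,Q_0)<1$, families in $\mathscr{F}^*_{\varepsilon,1}(Q_0)$ cannot saturate $\frac12$. Shrinking the budget to $M=P(E,Q_0)$ restores saturation while keeping the families admissible for $M=1$. The condition $\varepsilon<M^{1/(n-1)}$ in Lemma \ref{lem-new-saturation} is harmless because we let $\varepsilon\to0^+$.
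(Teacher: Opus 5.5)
Your proposal is correct and follows essentially the same route as the paper's proof. For $q\geq1$ you use H\"older's inequality with Lemma \ref{lem-new-known-q1} at $M=1$. For $q\in(0,1)$ you take $\varepsilon$-near-maximizing families at the reduced budget $M=\min\{1,P(E,Q_0)\}$, apply Lemma \ref{lem-new-saturation}, and then note that these families are also admissible for $I^*_{\varepsilon,q}(E)$ because $M\leq1$.
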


\begin{proof}
If $q\geq1$, then H\"older's inequality implies
$I_{\varepsilon,q}^{*}(E)\geq I_{\varepsilon,1}^{*}(E)$, and the
claim follows from Lemma \ref{lem-new-known-q1} with $M=1$. Suppose
$q\in(0,1)$. The assertion is immediate when $P(E,Q_0)=0$; otherwise
let $M_E:=\min\{1,P(E,Q_0)\}\in(0,1]$.

By the definition of the supremum, for every
$\varepsilon\in(0,M_E^{\frac{1}{n-1}})$, we can choose an admissible family
$\{Q_{\varepsilon,j}\}_{j\in J_\varepsilon}\in
\mathscr F^{*}_{\varepsilon,M_E}(Q_0)$ such that
\begin{align}\label{1=q2}
I^{*}_{\varepsilon,1}(E;Q_0,M_E)-\varepsilon
\leq\varepsilon^{n-1}\sum_{j\in J_\varepsilon}
\operatorname{osc}_1(\mathbf{1}_E;Q_{\varepsilon,j})
\leq I^{*}_{\varepsilon,1}(E;Q_0,M_E).
\end{align}
Since $M_E\leq1$, the same family is also admissible in the
definition of $I_{\varepsilon,q}^{*}(E)$. Using Lemmas
\ref{lem-new-known-q1} and \ref{lem-new-saturation}
and \eqref{1=q2}, we obtain
\begin{align*}
\liminf_{\varepsilon\to0^+}I_{\varepsilon,q}^{*}(E)
&\geq\lim_{\varepsilon\to0^+}\varepsilon^{n-1}
\sum_{j\in J_\varepsilon}
\operatorname{osc}_q(\mathbf{1}_E;Q_{\varepsilon,j})\\
&=\lim_{\varepsilon\to0^+}\varepsilon^{n-1}
\sum_{j\in J_\varepsilon}
\operatorname{osc}_1(\mathbf{1}_E;Q_{\varepsilon,j})\\
&=\lim_{\varepsilon\to0^+}
I^{*}_{\varepsilon,1}(E;Q_0,M_E)
=\frac{M_E}{2}
=\frac12\min\{1,P(E,Q_0)\}.
\end{align*}
This completes the proof of \eqref{eq-new-universal-lower}
and hence Proposition \ref{prop-new-universal-lower}.
\end{proof}

\subsection{Uniform Boundary Blow-Up for Cubes
with Varying Centers and Orientations\label{5.3}}

We next establish the blow-up statement needed to localize the upper-bound
argument. At each scale, the almost maximizing family may be different, so
the cubes selected in the argument may have scale-dependent centers and
orientations. De Giorgi's theorem states that, at almost every point of the
reduced boundary, the rescaled set converges to its tangent half-space as
$r\to0^+$; the corresponding perimeter-measure convergence as $r\to0^+$
follows from the blow-up theorem
for rectifiable measures; see \cite[Theorem 3.59, p.\,157]{AFP2000} and
\cite[Theorem 10.2]{Maggi2012}. This result is
pointwise in the base point and therefore does not directly control cubes
whose centers vary with the scale. The next lemma supplies the needed
uniformity on a compact set whose complement has arbitrarily small perimeter
measure and records the resulting stability when the
cubes move and rotate.

To this end, we first recall some notation.
For a measurable set $E\subset Q_0$ of finite perimeter, let $\sigma_E$
denote the Radon measure on $\mathbb{R}^n$ obtained by restricting
$|D\mathbf{1}_E|$ to $Q_0$ and extending this restriction by zero:
for every Borel set $A\subset\mathbb{R}^n$,
\begin{align}\label{eq-new-relative-variation-extension}
\sigma_E(A):=|D\mathbf{1}_E|(A\cap Q_0).
\end{align}
Since $U\cap Q_0$ is open whenever $U\subset\mathbb{R}^n$ is open, the
definition of relative perimeter implies
$\sigma_E(U)=P(E,U\cap Q_0)$. In particular, if $Q$ is a cube with
$Q^\circ\subset Q_0$, then $\sigma_E(Q^\circ)=P(E,Q)$. Moreover,
$\sigma_E(Q_0)=P(E,Q_0)$ and $\sigma_E$
does not charge
$\partial Q_0$. In general, it is not the variation measure of the zero
extension of $\mathbf{1}_E$, which may charge $\partial Q_0$.
In what follows, $\partial^*E$ denotes the
\emph{reduced boundary} (denoted by
$\mathcal{F}E$ in \cite[Definition 3.54]{AFP2000}), namely
\begin{align*}
\partial^*E:=\left\{x\in\operatorname{supp}|D\mathbf{1}_E|:
\lim_{\rho\to0^+}
\frac{D\mathbf{1}_E(B(x,\rho))}
{|D\mathbf{1}_E|(B(x,\rho))}\ \text{exists in }\mathbb{S}^{n-1}\right\}.
\end{align*}
At such a point, $\nu_E(x)$ is minus the limit above; thus our
measure-theoretic outer-normal convention is
$D\mathbf{1}_E=-\nu_E|D\mathbf{1}_E|$.
For a Borel map $T:\mathbb{R}^n\to\mathbb{R}^n$ and a Radon measure $\mu$,
$T_\#\mu$ denotes the \emph{push-forward measure},
defined by
$
(T_\#\mu)(A):=\mu(T^{-1}(A))
$
for every Borel set $A$; equivalently,
$\int_{\mathbb{R}^n} f(z)\,d(T_\#\mu)(z)=\int_{\mathbb{R}^n} f(T(y))\,d\mu(y)$ whenever the integrals
are defined.
For $x\in\partial^*E\cap Q_0$ and $r\in(0,\infty)$, let
\begin{align*}
E_{x,r}:=\frac{E-x}{r}
\quad\text{and}\quad
\mu_{x,r}:=r^{1-n}
\left[\left(y\mapsto\frac{y-x}{r}\right)_\#\sigma_E\right],
\end{align*}
and define $H_x$ and the Radon measure $\mu_x$ by
\begin{align*}
H_x:=\{z\in\mathbb{R}^n:z\cdot\nu_E(x)<0\}
\quad\text{and}\quad
\mu_x(A):=\mathcal{H}^{n-1}(A\cap\partial H_x)
\end{align*}
for every Borel subset $A\subset \mathbb{R}^n$.
Thus $\mu_x$ is the $(n-1)$-dimensional surface measure on the hyperplane
$\partial H_x$.
For $R\in(0,\infty)$ and
$\phi\in C_{\mathrm{c}}(B(0,R))$, viewed as zero outside $B(0,R)$, let
\begin{align*}
\operatorname{Lip}(\phi)
:=\sup_{\{z,w\in\mathbb{R}^n:\ z\neq w\}}
\frac{|\phi(z)-\phi(w)|}{|z-w|}.
\end{align*}
For locally finite Radon measures $\mu,\lambda$ on $\mathbb{R}^n$, define
their \emph{localized bounded-Lipschitz (Rubinstein)
distance} on $B(0,R)$ by
\begin{align*}
d_{\mathrm{R}}(\mu,\lambda)
:=\sup_{\{\phi\in C_{\mathrm{c}}(B(0,R)),\
\|\phi\|_{L^\infty(\mathbb{R}^n)}
+\operatorname{Lip}(\phi)\leq1\}}
\left|\int_{B(0,R)}\phi(x)\,d\mu(x)
-\int_{B(0,R)}\phi(x)\,d\lambda(x)\right|.
\end{align*}
For nonempty compact sets $A,B\subset\mathbb{R}^n$, define their
\emph{Hausdorff distance} by
\begin{align*}
d_{\mathrm{H}}(A,B)
:=\max\left\{
\sup_{a\in A}\operatorname{dist}(a,B),
\sup_{b\in B}\operatorname{dist}(b,A)
\right\}.
\end{align*}

We now state the required uniform blow-up lemma.

\begin{lemma}
\label{lem-new-uniform-blowup}
Let $E\subset Q_0$ be measurable with finite perimeter, and let
$\delta\in(0,1)$. There
exists a compact set $G\subset\partial^*E\cap Q_0$ such that
$
\sigma_E(Q_0\setminus G)<\delta,
$
$\nu_E|_G$ is continuous, and, for every $R\in(0,\infty)$,
\begin{align}\label{eq-new-uniform-set-blowup}
\lim_{r_0\to0^+}\sup_{x\in G,\ r\in(0,r_0)}
\int_{B(0,R)}
|\mathbf{1}_{E_{x,r}}(z)-\mathbf{1}_{H_x}(z)|\,dz
=0
\end{align}
and
\begin{align}\label{eq-new-uniform-measure-blowup}
\lim_{r_0\to0^+}\sup_{x\in G,\ r\in(0,r_0)}
d_{\mathrm{R}}(\mu_{x,r},\mu_x)
=0.
\end{align}
If $G=\emptyset$, each supremum in
\eqref{eq-new-uniform-set-blowup} and
\eqref{eq-new-uniform-measure-blowup} is understood to be zero.
Equivalently, once $G$ has been chosen, for every fixed
$R>0$ and every $\eta>0$ there exists $r_{R,\eta}>0$, independent of
$x\in G$, such that both errors in
\eqref{eq-new-uniform-set-blowup} and
\eqref{eq-new-uniform-measure-blowup} are smaller than $\eta$ whenever
$x\in G$ and $0<r<r_{R,\eta}$.
\end{lemma}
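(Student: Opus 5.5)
The plan is to combine De Giorgi's pointwise blow-up theorem with Egorov's and Lusin's theorems. Then we upgrade pointwise convergence in $r$ to convergence that is uniform in $x\in G$ and in $r\in(0,r_0)$, using continuity in the base point.

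\textbf{Step 1.} By De Giorgi's theorem \cite[Theorem 3.59]{AFP2000}, for $|D\mathbf{1}_E|$-a.e.\ $x\in\partial^*E\cap Q_0$ and every $R>0$, we have $\mathbf{1}_{E_{x,r}}\to\mathbf{1}_{H_x}$ in $L^1(B(0,R))$. Also, $\mu_{x,r}\rightharpoonup\mu_x$ locally weakly-$*$; since $x\in Q_0$ is interior, $\sigma_E$ agrees with $|D\mathbf{1}_E|$ near $x$. Weak-$*$ convergence together with convergence of masses on balls (no mass on $\partial B(0,R)$ in the limit) upgrades to $d_{\mathrm R}(\mu_{x,r},\mu_x)\to0$. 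The reason is that the test class $\{\|\phi\|_\infty+\operatorname{Lip}\phi\le1\}$ on $B(0,R)$ is precompact in $C_c$ of a slightly larger ball, by Arzel\`a--Ascoli, and the masses $\mu_{x,r}(B(0,R+1))$ are uniformly bounded for small $r$.

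\textbf{Step 2.} Fix $R=k\in\mathbb N$. For dyadic radii $r_m=2^{-m}$, define
\[
g_{k,m}(x):=\sup_{r\in(0,r_m)}\Big[\|\mathbf{1}_{E_{x,r}}-\mathbf{1}_{H_x}\|_{L^1(B(0,k))}+d_{\mathrm R}(\mu_{x,r},\mu_x)\Big].
\]
After checking measurability (the supremum can be restricted to rational $r$, using the continuity of $r\mapsto\mathbf{1}_{E_{x,r}}$ in $L^1$ and of $r\mapsto\mu_{x,r}$ in $d_{\mathrm R}$ up to a countable set of jump radii, handled through one-sided limits), Step 1 gives $g_{k,m}\to0$ pointwise a.e.\ as $m\to\infty$.

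\textbf{Step 3.} Apply Egorov's theorem with respect to the finite measure $\sigma_E$ for each $k$, with exceptional mass $\delta 2^{-k-2}$. Apply Lusin's theorem to the Borel map $\nu_E$, with exceptional mass $\delta/4$. Then use inner regularity to take a compact $G\subset\partial^*E\cap Q_0$ inside the intersection of the good sets, with $\sigma_E(Q_0\setminus G)<\delta$; here $\sigma_E$ is concentrated on $\partial^*E\cap Q_0$. On $G$, $\nu_E$ is continuous, and for each $k$, $\sup_{x\in G}g_{k,m}(x)\to0$. A general $R$ is dominated by $k=\lceil R\rceil$, because both errors are monotone in $R$. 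This gives \eqref{eq-new-uniform-set-blowup} and \eqref{eq-new-uniform-measure-blowup}. The empty case is trivial.

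The main obstacle is the measurability and supremum-over-$r$ issue in Step 2, together with the passage from weak-$*$ convergence to the uniform Rubinstein metric. For the former, I would first try to show $x\mapsto g_{k,m}(x)$ is upper semicontinuous along rational radii, or else simply work with $\limsup$ over rational $r$ and prove that it controls all $r$ via monotonicity of $\sigma_E(B(x,\rho))$ in $\rho$. For the latter, I would note that $\mu_x(\partial B(0,R))=0$, so total-mass convergence on $B(0,R)$ holds, and then invoke precompactness of the Lipschitz test class.
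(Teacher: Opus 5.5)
Your proposal follows essentially the same route as the paper: the pointwise De Giorgi blow-up is upgraded to $d_{\mathrm R}$-convergence via Arzel\`a--Ascoli and a mass bound, the suprema over scales are reduced to rational radii, Egorov is applied for each integer $R$ with exceptional masses $\delta2^{-R-2}$, Lusin is applied to $\nu_E$, inner regularity yields a compact $G$, and monotonicity in $R$ handles non-integer radii. The step you leave open, measurability, is simpler than you expect. There are no jump radii: the Lipschitz test functions give $d_{\mathrm R}(\mu_{x,r_i},\mu_{x,r})\leq C_{(n,r,R)}\sigma_E(K)|r_i-r|$, so both errors are continuous in $r$. For fixed $r$, the paper proves both errors are Borel in $x$ after taking a Borel representative of $E$: it uses Tonelli for the set error, and for the measure error a countable uniformly dense family $\{\phi_j\}$ of test functions together with the continuity of $\nu\mapsto\int_{\nu^\perp}\phi_j\,d\mathcal{H}^{n-1}$, composed with the Borel map $\nu_E$.
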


\begin{proof}
\textbf{Step 1.} \noindent\textit{In this step, we recall the pointwise
set and perimeter-measure blow-ups and prove the latter convergence in
$d_{\mathrm{R}}$.}
By De Giorgi's structure and set blow-up theorem
\cite[Theorem 3.59, p.\,157]{AFP2000},
together with the blow-up theorem for
rectifiable measures \cite[Theorem 10.2]{Maggi2012},
we find that, for $\sigma_E$-almost every
$x\in\partial^*E\cap Q_0$,
\begin{align}\label{eq-new-pointwise-set-blowup}
\mathbf{1}_{E_{x,r}}\to\mathbf{1}_{H_x}
\quad\text{in }L^1(B(0,R))
\end{align}
as $r\to0^+$ for every $R>0$, and
\begin{align}\label{eq-new-pointwise-measure-blowup}
\mu_{x,r}\stackrel{*}{\rightharpoonup}\mu_x
\quad\text{locally in }\mathbb{R}^n
\end{align}
as $r\to0^+$. For fixed $x\in Q_0$ and $R>0$, the
restriction defining $\sigma_E$ does not affect the restriction of
$\mu_{x,r}$ to $B(0,R)$ for all sufficiently small $r$.
Since every ball centered at the origin
is a $\mu_x$-continuity set, the portmanteau theorem implies
\begin{align*}
\mu_{x,r}(B(0,R))\to\mu_x(B(0,R))
\end{align*}
as $r\to0^+$. In particular, there exist constants
$r_{x,R}>0$ and $C_{x,R}<\infty$ such that
\begin{align*}
\mu_{x,r}(B(0,R))+\mu_x(B(0,R))\leq C_{x,R}
\end{align*}
for $0<r<r_{x,R}$.
Let $\mathcal{F}_R$ denote the test-function class in the definition of
$d_{\mathrm{R}}$. After extension by zero, $\mathcal{F}_R$ is uniformly
bounded and equi-Lipschitz on $\overline{B(0,R)}$; hence, by the
Arzel\`a--Ascoli theorem (see,
for instance, \cite[Theorem 7.25, p.\,158]{Rudin1976}), it is
totally bounded in the uniform norm. Given
$\eta>0$, choose a finite $\eta$-net
$\{\phi_1,\ldots,\phi_N\}\subset\mathcal{F}_R$. It follows that
\begin{align*}
d_{\mathrm{R}}(\mu_{x,r},\mu_x)
&\leq\max_{1\leq j\leq N}
\left|\int_{B(0,R)}\phi_j\,d(\mu_{x,r}-\mu_x)\right|\\
&\quad+\eta\bigl[\mu_{x,r}(B(0,R))+\mu_x(B(0,R))\bigr].
\end{align*}
The first term tends to zero as $r\to0^+$ by
\eqref{eq-new-pointwise-measure-blowup}, whereas the second is bounded by
$\eta C_{x,R}$. Taking the limit superior as $r\to0^+$ and then letting
$\eta\to0^+$, we conclude that
\begin{align}\label{eq-new-pointwise-dR-blowup}
d_{\mathrm{R}}(\mu_{x,r},\mu_x)\to0
\end{align}
as $r\to0^+$.

\textbf{Step 2.} \noindent\textit{In this step, we introduce the
blow-up errors and prove their measurability.}
Changing $E$ on a Lebesgue-null set if necessary, we assume that $E$ is
Borel; this does not change its $L^1$ class or its perimeter measure.
Since, for every $r>0$, the maps
$x\mapsto D\mathbf{1}_E(B(x,r))$ and
$x\mapsto |D\mathbf{1}_E|(B(x,r))$ are Borel,
the definition of $\partial^*E$ shows that
$\nu_E$ is Borel on $\partial^*E$.
Fix $R\in(0,\infty)$. For $x\in\partial^*E\cap Q_0$ and
$r\in(0,\infty)$, let
\begin{align*}
\alpha_R(x,r)
:=\int_{B(0,R)}
|\mathbf{1}_{E_{x,r}}(z)-\mathbf{1}_{H_x}(z)|\,dz
\quad\text{and}\quad
\beta_R(x,r)
:=d_{\mathrm{R}}(\mu_{x,r},\mu_x).
\end{align*}
For $k\in\mathbb{N}$, define
\begin{align*}
a_{k,R}(x)
:=\sup_{0<r<\frac{1}{k}}\alpha_R(x,r)
\quad\text{and}\quad
b_{k,R}(x)
:=\sup_{0<r<\frac{1}{k}}\beta_R(x,r).
\end{align*}

We first prove continuity in the scale variable. Fix
$x\in\partial^*E\cap Q_0$ and let $r_i\to r$ as $i\to\infty$, where
$r\in(0,\infty)$. For all
sufficiently large $i$,
$
\frac{r}{2}\leq r_i\leq2r.
$
Let $K:=\overline{B(x,2rR)}$. Given $\eta>0$, choose
$g\in C_{\mathrm{c}}(\mathbb{R}^n)$ such that
$
\int_K|\mathbf{1}_E(y)-g(y)|\,dy<\eta,
$
and let
\begin{align*}
\omega_g(t):=\sup_{\{u,v\in\mathbb{R}^n:\ |u-v|\leq t\}}|g(u)-g(v)|.
\end{align*}
Since $g$ is uniformly continuous, $\omega_g(t)\to0$ as $t\to0^+$.
Using $\mathbf{1}_{E_{x,s}}(z)=\mathbf{1}_E(x+sz)$, adding and
subtracting $g$, and changing variables, we obtain
\begin{align*}
\int_{B(0,R)}
|\mathbf{1}_{E_{x,r_i}}(z)-\mathbf{1}_{E_{x,r}}(z)|\,dz
\leq
\left[\left(\frac{2}{r}\right)^n+r^{-n}\right]\eta
+|B(0,R)|\,\omega_g(R|r_i-r|).
\end{align*}
Letting first $i\to\infty$ and then $\eta\to0^+$ implies
\begin{align}\label{alpha-1}
|\alpha_R(x,r_i)-\alpha_R(x,r)|
&\leq\int_{B(0,R)}
|\mathbf{1}_{E_{x,r_i}}-\mathbf{1}_{E_{x,r}}|\,dz
\to0
\end{align}
as $i\to\infty$.

We next treat the measure error. For every
$\phi\in\mathcal{F}_R$, viewed as zero outside $B(0,R)$,
by the definition of the push-forward, we obtain, for every
$s\in(0,\infty)$,
\begin{align*}
\int_{\mathbb{R}^n}\phi(z)\,d\mu_{x,s}(z)
=s^{1-n}\int_{\mathbb{R}^n}
\phi\left(\frac{y-x}{s}\right)\,d\sigma_E(y).
\end{align*}
For all sufficiently large $i$, both integrands corresponding to
$s=r_i$ and $s=r$ vanish outside $K$. Using
$\|\phi\|_{L^\infty(\mathbb{R}^n)}\leq1$ and
$\operatorname{Lip}(\phi)\leq1$, we obtain
\begin{align*}
&\left|\int_{\mathbb{R}^n}\phi\,d\mu_{x,r_i}
-\int_{\mathbb{R}^n}\phi\,d\mu_{x,r}\right|\\
&\quad\leq
|r_i^{1-n}-r^{1-n}|\,\sigma_E(K)
+r^{1-n}
\left|\frac{1}{r_i}-\frac{1}{r}\right|
\int_K|y-x|\,d\sigma_E(y)\\
&\quad\leq C_{(n,r,R)}\sigma_E(K)|r_i-r|.
\end{align*}
Taking the supremum over $\phi\in\mathcal{F}_R$, we obtain
\begin{align*}
|\beta_R(x,r_i)-\beta_R(x,r)|
\leq d_{\mathrm{R}}(\mu_{x,r_i},\mu_{x,r})
\leq C_{(n,r,R)}\sigma_E(K)|r_i-r|\to0
\end{align*}
as $i\to\infty$.
By this and \eqref{alpha-1}, we find that
both $r\mapsto\alpha_R(x,r)$ and
$r\mapsto\beta_R(x,r)$ are continuous on $(0,\infty)$. Consequently,
\begin{align*}
a_{k,R}(x)
=\sup_{r\in\mathbb{Q}\cap(0,\frac{1}{k})}\alpha_R(x,r)
\quad\text{and}\quad
b_{k,R}(x)
=\sup_{r\in\mathbb{Q}\cap(0,\frac{1}{k})}\beta_R(x,r).
\end{align*}

It remains to prove that, for every fixed $r>0$, the maps
$x\mapsto\alpha_R(x,r)$ and $x\mapsto\beta_R(x,r)$ are Borel
measurable on $\partial^*E\cap Q_0$.
Since $E$ is Borel and
$\nu_E$ is Borel measurable, the map
\begin{align*}
(x,z)\longmapsto
\left|\mathbf{1}_E(x+rz)
-\mathbf{1}_{\{z\cdot\nu_E(x)<0\}}\right|
\end{align*}
is Borel on $(\partial^*E\cap Q_0)\times B(0,R)$.
Therefore, by parameter-dependent integration,
$x\mapsto\alpha_R(x,r)$ is Borel.

For the measure error, recall from Step 1 that $\mathcal{F}_R$ is totally
bounded in the uniform norm. Choose a countable uniformly dense subset
$\{\phi_j\}_{j\in\mathbb{N}}\subset\mathcal{F}_R$. Since
\begin{align*}
\left|\int_{B(0,R)}(\phi-\psi)\,d(\mu_{x,r}-\mu_x)\right|
\leq\|\phi-\psi\|_{L^\infty(\mathbb{R}^n)}
\bigl[\mu_{x,r}(B(0,R))+\mu_x(B(0,R))\bigr],
\end{align*}
uniform density implies
\begin{align*}
\beta_R(x,r)
&=\sup_{j\in\mathbb{N}}\left|r^{1-n}\int_{\mathbb{R}^n}
\phi_j\left(\frac{y-x}{r}\right)\,d\sigma_E(y)
-\int_{\nu_E(x)^\perp}
\phi_j(z)\,d\mathcal{H}^{n-1}(z)\right|\\
&=:\sup_{j\in\mathbb{N}}|I_{j,r}(x)-J_j(x)|.
\end{align*}
Here and thereafter,
$\nu^\perp:=\{z\in\mathbb{R}^n:z\cdot\nu=0\}$ for
$\nu\in\mathbb{S}^{n-1}$; in particular,
$\nu_E(x)^\perp=\partial H_x$.
For fixed $j$ and $r$, the map $x\mapsto I_{j,r}(x)$ is continuous by
dominated convergence and the finiteness of $\sigma_E$. To treat $J_j$,
define, for every $\nu\in\mathbb{S}^{n-1}$,
\begin{align*}
\Gamma_j(\nu):=
\int_{\nu^\perp}\phi_j(z)\,d\mathcal{H}^{n-1}(z).
\end{align*}
Here $I$ denotes the identity map on $\mathbb{R}^n$, and, for
$A\in\mathbb{R}^{n\times n}$,
$$
\|A\|_{\mathrm{op}}:=\sup_{\{z\in\mathbb{R}^n:\ |z|=1\}}|Az|
$$
denotes the operator norm induced by the Euclidean norm.
Let $\{\nu_m\}_{m\in\mathbb{N}}\subset\mathbb{S}^{n-1}$ satisfy
$\nu_m\to\nu$ as $m\to\infty$, and choose
$\{V_m\}_{m\in\mathbb{N}}\subset O(n)$ such that
$V_m\nu=\nu_m$ and $V_m\to I$ as $m\to\infty$. Then
\begin{align*}
|\Gamma_j(\nu_m)-\Gamma_j(\nu)|
&\leq\int_{\nu^\perp\cap B(0,R)}
|\phi_j(V_mz)-\phi_j(z)|\,d\mathcal{H}^{n-1}(z)\\
&\leq C_{(n)}R^n\operatorname{Lip}(\phi_j)
\|V_m-I\|_{\mathrm{op}}\to0
\end{align*}
as $m\to\infty$.
Thus $\Gamma_j$ is continuous and
$J_j=\Gamma_j\circ\nu_E$ is Borel. It follows that
$x\mapsto\beta_R(x,r)$ is Borel as a countable supremum of Borel
functions. The preceding rational-scale formulas now show that
$a_{k,R}$ and $b_{k,R}$ are Borel.

Finally, \eqref{eq-new-pointwise-set-blowup} and
\eqref{eq-new-pointwise-dR-blowup} imply
$
a_{k,R}(x)\to0$ and
$
b_{k,R}(x)\to0
$
as $k\to\infty$ for $\sigma_E$-almost every
$x\in\partial^*E\cap Q_0$.

\textbf{Step 3.} \noindent\textit{In this step, we make the pointwise
convergence uniform on a large compact set.}
By Step 2 and the countability of $\mathbb{N}$, after discarding a
$\sigma_E$-null set we may assume that
$a_{k,R}(x)\to0$ and $b_{k,R}(x)\to0$ as $k\to\infty$ for every integer
$R\geq1$ and every remaining point $x$. By Lusin's theorem, we can
choose a measurable set
$L\subset\partial^*E\cap Q_0$ such that
$
\sigma_E(Q_0\setminus L)<\frac{\delta}{4},
$
and the restriction $\nu_E|_L$ is continuous. The preceding convergences
hold at every point of $L$.

For each $R\in\mathbb{N}$, apply Egorov's theorem on $L$ simultaneously
to the sequences $\{a_{k,R}\}_{k\in\mathbb{N}}$ and
$\{b_{k,R}\}_{k\in\mathbb{N}}$. We obtain a measurable set
$C_R\subset L$ such that both convergences are uniform on $C_R$ and
$
\sigma_E(L\setminus C_R)<\delta2^{-R-2}.
$
Let
$
L_0:=L\cap\bigcap_{R=1}^{\infty}C_R.
$
Then
\begin{align*}
\sigma_E(Q_0\setminus L_0)
&\leq\sigma_E(Q_0\setminus L)
+\sum_{R=1}^{\infty}\sigma_E(L\setminus C_R)\\
&<\frac{\delta}{4}
+\sum_{R=1}^{\infty}\delta2^{-R-2}
=\frac{\delta}{2}.
\end{align*}
By inner regularity, choose a compact set $G\subset L_0$ such that
$
\sigma_E(L_0\setminus G)<\frac{\delta}{2}.
$
It follows that
$\sigma_E(Q_0\setminus G)<\delta$, and $\nu_E|_G$ is continuous.

Moreover, for every $R\in\mathbb{N}$,
\begin{align*}
\sup_{x\in G}a_{k,R}(x)
+\sup_{x\in G}b_{k,R}(x)\to0
\end{align*}
as $k\to\infty$.
Here, as in the statement, both suprema are understood to be zero when
$G=\emptyset$.
By the definitions of $a_{k,R}$ and $b_{k,R}$, these are precisely
\eqref{eq-new-uniform-set-blowup} and
\eqref{eq-new-uniform-measure-blowup} along
$r_0=\frac{1}{k}$. Monotonicity in
$r_0$ implies the full limit as $r_0\to0^+$,
and monotonicity in $R$
extends the conclusions from integer radii
to every $R>0$. This completes the proof of Lemma
\ref{lem-new-uniform-blowup}.
\end{proof}

The localization arguments require these convergences on cubes that may
move and rotate with the scale. We record this consequence separately.

\begin{lemma}
\label{lem-new-moving-cube-blowup}
Let $E$, $\delta$, and $G$ be as in Lemma
\ref{lem-new-uniform-blowup}. Let
$\{x_m\}_{m\in\mathbb{N}}\subset G$ and $x\in G$ satisfy
$x_m\to x$ as $m\to\infty$, and let
$\{r_m\}_{m\in\mathbb{N}}\subset(0,\infty)$ satisfy
$r_m\to0^+$ as $m\to\infty$. Let
$\{S_m\}_{m\in\mathbb{N}}$ be a sequence of arbitrarily oriented unit
cubes in $\mathbb{R}^n$, and let $S$ be an arbitrarily oriented unit cube
in $\mathbb{R}^n$ such that
$d_{\mathrm{H}}(S_m,S)\to0$ as $m\to\infty$. Then
\begin{align}\label{eq-new-moving-cube-set}
\int_{S_m}
|\mathbf{1}_{E_{x_m,r_m}}(z)-\mathbf{1}_{H_x}(z)|\,dz
\to0
\end{align}
as $m\to\infty$.
If, in addition, $\mu_x(\partial S)=0$, then
\begin{align}\label{eq-new-moving-cube-measure}
\mu_{x_m,r_m}(S_m^\circ)\to\mu_x(S)
\end{align}
as $m\to\infty$.
\end{lemma}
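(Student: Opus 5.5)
The plan is to reduce both convergences to the uniform blow-up estimates of Lemma \ref{lem-new-uniform-blowup}, applied at the moving base points $x_m\in G$, and to absorb the change of base point and of cube into two separate errors: $d_{\mathrm{H}}(S_m,S)\to0$, and $\nu_E(x_m)\to\nu_E(x)$, which holds because $\nu_E|_G$ is continuous. First fix $R$ so large that $S$ and all $S_m$ lie in $B(0,R)$; this is possible because $d_{\mathrm{H}}(S_m,S)\to0$ and the cubes are bounded.

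For \eqref{eq-new-moving-cube-set}, bound the integrand pointwise by
\[
|\mathbf{1}_{E_{x_m,r_m}}-\mathbf{1}_{H_x}|\leq|\mathbf{1}_{E_{x_m,r_m}}-\mathbf{1}_{H_{x_m}}|+|\mathbf{1}_{H_{x_m}}-\mathbf{1}_{H_x}|.
\]
Integrate both terms over $B(0,R)\supset S_m$.
\begin{itemize}
\item The first term tends to zero by \eqref{eq-new-uniform-set-blowup}, since $x_m\in G$ and $r_m\to0^+$; the uniformity in $x$ is exactly what is needed here.
\item The second term is the measure of the symmetric difference of two half-spaces inside $B(0,R)$. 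That measure is at most $C_{(n)}R^n|\nu_E(x_m)-\nu_E(x)|$, so it tends to zero by continuity of $\nu_E$ on $G$.
\end{itemize}

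For \eqref{eq-new-moving-cube-measure}, I will first show $d_{\mathrm{R}}(\mu_{x_m,r_m},\mu_x)\to0$. By \eqref{eq-new-uniform-measure-blowup}, $d_{\mathrm{R}}(\mu_{x_m,r_m},\mu_{x_m})\to0$. Next, $d_{\mathrm{R}}(\mu_{x_m},\mu_x)\to0$ follows from the rotation argument already used for $\Gamma_j$ in Step~2 of Lemma \ref{lem-new-uniform-blowup}: pick rotations $V_m\to I$ with $V_m\nu_E(x)=\nu_E(x_m)$, and use the $1$-Lipschitz bound on the test functions to get
\[
d_{\mathrm{R}}(\mu_{x_m},\mu_x)\leq C_{(n)}R^n\|V_m-I\|_{\mathrm{op}}.
\]
Since the $d_{\mathrm{R}}$-class is dense in $C_{\mathrm c}(B(0,R))$ up to scaling, and the masses $\mu_{x_m,r_m}(B(0,R))$ stay bounded (test against a Lipschitz cutoff equal to $1$ on $B(0,R/2)$, enlarging $R$ beforehand), this gives $\mu_{x_m,r_m}\stackrel{*}{\rightharpoonup}\mu_x$ on $B(0,R)$.

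The main obstacle is passing from this weak convergence to convergence on the moving sets $S_m^\circ$, and this is where $\mu_x(\partial S)=0$ is used. For $\eta>0$, let $S^{-\eta}$ and $S^{+\eta}$ denote the inner and outer $\eta$-parallel sets of $S$. For $m$ large, $S^{-\eta}\subset S_m^\circ\subset S^{+\eta}$, because $d_{\mathrm{H}}(S_m,S)\to0$ and the sets are convex bodies of unit size; I expect the Hausdorff condition on convex sets to control these inclusions directly. Then choose Lipschitz functions $\phi_\eta^-\leq\mathbf{1}_{S^{-\eta}}$ and $\phi_\eta^+\geq\mathbf{1}_{S^{+\eta}}$ that are supported in $S^{+2\eta}$ and equal $1$ on $S^{-2\eta}$. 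Weak convergence yields
\[
\mu_x(S^{-2\eta})\leq\liminf_m\mu_{x_m,r_m}(S_m^\circ)\leq\limsup_m\mu_{x_m,r_m}(S_m^\circ)\leq\mu_x(S^{+2\eta}).
\]
Let $\eta\to0^+$. The hypothesis $\mu_x(\partial S)=0$ makes both bounds converge to $\mu_x(S)$, which gives \eqref{eq-new-moving-cube-measure}.
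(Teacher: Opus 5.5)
Your proposal is correct and follows the paper's proof essentially step by step: the triangle inequality through $\mathbf{1}_{H_{x_m}}$ for the set convergence, then the rotation estimate combined with the uniform blow-up to get $d_{\mathrm{R}}(\mu_{x_m,r_m},\mu_x)\to0$, and finally the inner/outer parallel-set sandwich closed by $\mu_x(\partial S)=0$. The only cosmetic difference is that you sandwich with Lipschitz cutoffs instead of invoking the portmanteau theorem on open and compact sets; since those cutoffs already lie, after scaling, in the $d_{\mathrm{R}}$ test class, your intermediate upgrade to full weak-$*$ convergence (with its mass bound and density step) is not actually needed.
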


\begin{proof}
Choose $R_0\in(0,\infty)$ such that
$S_m\cup S\subset B(0,R_0)$ for all sufficiently
large $m$. The continuity of $\nu_E$ on $G$ implies
$\mathbf{1}_{H_{x_m}}\to\mathbf{1}_{H_x}$ in
$L^1(B(0,R_0))$ as $m\to\infty$. Hence, by
\eqref{eq-new-uniform-set-blowup},
\begin{align*}
&\int_{S_m}|\mathbf{1}_{E_{x_m,r_m}}-
\mathbf{1}_{H_x}|\,dz\\
&\quad\leq
\int_{B(0,R_0)}|\mathbf{1}_{E_{x_m,r_m}}-
\mathbf{1}_{H_{x_m}}|\,dz
+\int_{B(0,R_0)}|\mathbf{1}_{H_{x_m}}-
\mathbf{1}_{H_x}|\,dz
\to0
\end{align*}
as $m\to\infty$,
which is \eqref{eq-new-moving-cube-set}.

Now assume that $\mu_x(\partial S)=0$.
For the measures, choose rotations $V_m\to I$ as $m\to\infty$ such that
\begin{align*}
V_m\nu_E(x)=\nu_E(x_m).
\end{align*}
Such rotations exist because $\nu_E(x_m)\to\nu_E(x)$. For all
large $m$ for which the two normals differ, take the rotation that acts in
the plane they generate, sends $\nu_E(x)$ to $\nu_E(x_m)$, and is the
identity on its orthogonal complement; take $V_m=I$ when the normals
agree, and choose the remaining finitely many rotations arbitrarily. The
rotation angles tend to zero, and hence $V_m\to I$.
Then $\mu_{x_m}=(V_m)_\#\mu_x$. For every test function $\phi$ in the
unit ball defining $d_{\mathrm{R}}$, extended by zero outside $B(0,R)$, we have
\begin{align*}
\left|\int_{\mathbb{R}^n}\phi(z)\,d\mu_{x_m}(z)
-\int_{\mathbb{R}^n}\phi(z)\,d\mu_x(z)\right|
&=\left|\int_{\partial H_x}
[\phi(V_mz)-\phi(z)]\,d\mathcal{H}^{n-1}(z)\right|\\
&\leq\|V_m-I\|_{\mathrm{op}}
\int_{\partial H_x\cap B(0,R)}
|z|\,d\mathcal{H}^{n-1}(z)\\
&\leq C_{(n)}R^n\|V_m-I\|_{\mathrm{op}}.
\end{align*}
Taking the supremum over $\phi$ implies
$
d_{\mathrm{R}}(\mu_{x_m},\mu_x)\to0
$
as $m\to\infty$ for every $R>0$. Since $x_m\in G$ and
$r_m\to0$ as $m\to\infty$,
\eqref{eq-new-uniform-measure-blowup} also implies
$
d_{\mathrm{R}}(\mu_{x_m,r_m},\mu_{x_m})\to0
$
as $m\to\infty$.
Consequently,
\begin{align*}
d_{\mathrm{R}}(\mu_{x_m,r_m},\mu_x)
&\leq d_{\mathrm{R}}(\mu_{x_m,r_m},\mu_{x_m})
+d_{\mathrm{R}}(\mu_{x_m},\mu_x)
\to0
\end{align*}
as $m\to\infty$ for every $R>0$. To obtain local weak-$*$ convergence, fix
$\psi\in C_{\mathrm c}(B(0,R))$ and choose
$\eta\in C_{\mathrm c}(B(0,R+1))$ such that
$0\leq\eta\leq1$ and $\eta=1$ on $B(0,R)$. The convergence in
$d_{\mathrm R}$, applied to a normalization of $\eta$, implies
$M_R:=\sup_m\mu_{x_m,r_m}(B(0,R))+\mu_x(B(0,R))<\infty$. If
$\{\psi_j\}_{j\in\mathbb{N}}
\subset C_{\mathrm c}(B(0,R))$
consists of Lipschitz functions and
$\|\psi_j-\psi\|_{L^\infty(\mathbb{R}^n)}\to0$
as $j\to\infty$, then
\begin{align*}
\limsup_{m\to\infty}
\left|\int_{\mathbb{R}^n}\psi\,d(\mu_{x_m,r_m}-\mu_x)\right|
&\leq\limsup_{m\to\infty}
\left|\int_{\mathbb{R}^n}\psi_j\,d(\mu_{x_m,r_m}-\mu_x)\right|
+M_R\|\psi_j-\psi\|_{L^\infty(\mathbb{R}^n)}\\
&=M_R\|\psi_j-\psi\|_{L^\infty(\mathbb{R}^n)}
\to0
\end{align*}
as $j\to\infty$.
Hence
$
\mu_{x_m,r_m}\stackrel{*}{\rightharpoonup}\mu_x$
locally in $\mathbb{R}^n$
as $m\to\infty$.

For $\rho>0$, let
\begin{align*}
S_{-\rho}
:=\{z\in S^\circ:
\operatorname{dist}(z,\partial S)>\rho\}
\quad\text{and}\quad
S^{+\rho}
:=\{z\in\mathbb{R}^n:
\operatorname{dist}(z,S)\leq\rho\}.
\end{align*}
Since $S_m$ and $S$ are congruent convex cubes, their Hausdorff
convergence implies that, for every fixed $\rho>0$ and all sufficiently
large $m$,
$
S_{-\rho}\subset S_m^\circ\subset S^{+\rho}.
$
The first set is open and relatively compact, while the second is
compact. The portmanteau theorem and the preceding inclusions therefore
imply
\begin{align*}
\mu_x(S_{-\rho})
&\leq\liminf_{m\to\infty}\mu_{x_m,r_m}(S_{-\rho})\\
&\leq\liminf_{m\to\infty}\mu_{x_m,r_m}(S_m^\circ)
\leq\limsup_{m\to\infty}\mu_{x_m,r_m}(S_m^\circ)\\
&\leq\limsup_{m\to\infty}\mu_{x_m,r_m}(S^{+\rho})
\leq\mu_x(S^{+\rho}).
\end{align*}
As $\rho\to0^+$, the sets $S_{-\rho}$ increase to $S^\circ$ and the
sets $S^{+\rho}$ decrease to $S$. Since $\mu_x(\partial S)=0$, both
outer terms converge to $\mu_x(S)$. Hence
\begin{align*}
\lim_{m\to\infty}\mu_{x_m,r_m}(S_m^\circ)=\mu_x(S),
\end{align*}
which proves \eqref{eq-new-moving-cube-measure} and completes the proof
of Lemma \ref{lem-new-moving-cube-blowup}.
\end{proof}

\subsection{A Sharp Cube-Section Inequality}

In this subsection, we consider
the global upper bound in Theorem \ref{the-new-exact},
which reduces to a sharp
cell estimate below for a half-space in a unit cube.
The difficulty is to make this bound uniform in both the position
and orientation of the cut while retaining the sharp factor $\frac12$ in
the cell inequality.
We first state
the result and then reduce its geometric estimate to a one-dimensional
density bound.

\begin{proposition}
\label{prop-new-cube-cell}
Let $n\in\mathbb{N}\cap[2,\infty)$, let
$S\subset\mathbb{R}^n$ be a unit cube with arbitrary orientation, and let
$H$ be a half-space. Then
\begin{align}\label{eq-new-cube-section-strong}
\mathcal H^{n-1}(\partial H\cap S)
\geq
(4|H\cap S|\,|S\setminus H|)^{\frac{n-1}{n}}.
\end{align}
Moreover, for every $q\in[1,\frac{n}{n-1}]$,
\begin{align}\label{eq-new-cube-cell}
\left[
\fint_S|\mathbf{1}_H(x)-(\mathbf{1}_H)_S|^q\,dx
\right]^{\frac1q}
\leq\frac12\mathcal H^{n-1}(\partial H\cap S),
\end{align}
and the constant $\frac12$ in \eqref{eq-new-cube-cell} is optimal.
\end{proposition}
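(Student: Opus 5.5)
The plan is to prove \eqref{eq-new-cube-section-strong} first by reducing it to a one-dimensional statement about the section profile. Then \eqref{eq-new-cube-cell} will follow from the explicit formula of Lemma \ref{lem-new-characteristic-formula} and an elementary inequality.

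Let $u$ be the unit normal of $\partial H$ and place the center of $S$ at the origin. Set $A(t):=\mathcal H^{n-1}(S\cap\{x\cdot u=t\})$, supported on an interval $[-b,b]$, and $v(t):=\int_{-b}^tA(s)\,ds$. After possibly replacing $H$ by its complement, the cut is $\partial H=\{x\cdot u=t\}$ with $|H\cap S|=v(t)$. Thus \eqref{eq-new-cube-section-strong} reads $A(t)^{\frac n{n-1}}\geq4v(t)(1-v(t))$ for all $t\in(-b,b)$. The input facts are the following:
\begin{itemize}
\item[(a)] Brunn's principle: $A^{\frac1{n-1}}$ is concave on $[-b,b]$.
\item[(b)] Central symmetry of $S$: $A(t)=A(-t)$.
\item[(c)] The second-moment identity $\int_{-b}^bt^2A(t)\,dt=\int_S(x\cdot u)^2\,dx=\frac1{12}$, which holds for every direction $u$.
\end{itemize}
The Hensley-type lower bound $A(0)\geq1$ for central sections would also help, but I would try to avoid citing it and derive what is needed from (a)--(c).

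I would reparametrize by volume. Let $I(v):=A(t(v))$ for $v\in[0,1]$ be the isoperimetric profile for hyperplane cuts in direction $u$. The standard consequence of (a) is that $J:=I^{\frac n{n-1}}$ is concave on $[0,1]$; by (b) it is also symmetric about $v=\frac12$. The target becomes $J(v)\geq4v(1-v)$. Concavity and symmetry alone give only a tent-shaped lower bound, which is too weak near the ends. The extra information must therefore come from (c), expressed through $dt/dv=1/I(v)$:
\[
\frac1{12}=\int_0^1t(v)^2\,dv,\qquad t(v)=\int_{1/2}^v\frac{dw}{I(w)}.
\]
The argument would be by contradiction. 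Suppose $J(v_0)<4v_0(1-v_0)$ for some $v_0\in(0,\frac12)$. Concavity of $J$ then forces $J$ to lie below a suitable chord or tangent-type comparison function on a whole subinterval, roughly $[0,v_0]$ and its symmetric image. There $I$ is small, so $t(v)$ grows fast, and the second moment $\int_0^1t^2\,dv$ should exceed $\frac1{12}$. The comparison I would aim for is the extremal profile $I_*=(4v(1-v))^{\frac{n-1}n}$ modified by the concave-below constraint. The main obstacle is exactly this one-dimensional extremal problem: showing that every concave symmetric $J$ with second moment $\frac1{12}$ dominates $4v(1-v)$. If the direct variational comparison is messy, I would first try a rearrangement or monotonicity argument on $\int t^2\,dv$ as $J$ decreases, and check the constants against the cube profile in the axis-parallel direction, where equality should occur at $v=\frac12$.

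Given \eqref{eq-new-cube-section-strong}, part \eqref{eq-new-cube-cell} is short. Let $\theta:=|H\cap S|$, so that $|S|=1$, and set $p:=\frac n{n-1}$. For $q\in[1,p]$, H\"older's inequality gives $\operatorname{osc}_q(\mathbf 1_H;S)\leq\operatorname{osc}_p(\mathbf 1_H;S)$, so it suffices to treat $q=p$. By Lemma \ref{lem-new-characteristic-formula},
\[
\operatorname{osc}_p(\mathbf 1_H;S)^p=\theta(1-\theta)\bigl[(1-\theta)^{p-1}+\theta^{p-1}\bigr].
\]
Since $p-1=\frac1{n-1}\in(0,1]$, concavity of $s\mapsto s^{p-1}$ gives $(1-\theta)^{p-1}+\theta^{p-1}\leq2^{2-p}$. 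Hence $\operatorname{osc}_p^p\leq2^{-p}\,4\theta(1-\theta)$, and by \eqref{eq-new-cube-section-strong} this is at most $2^{-p}\mathcal H^{n-1}(\partial H\cap S)^p$. Taking $p$-th roots gives \eqref{eq-new-cube-cell}. For optimality of $\frac12$, take an axis-parallel $S$ and the hyperplane through its center parallel to a face. Then $\theta=\frac12$, so the oscillation equals $\frac12$ by Lemma \ref{lem-new-characteristic-formula}, while the section area is $1$.
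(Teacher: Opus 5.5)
\textbf{There is a genuine gap.} Your reduction matches the paper's: the section profile $A$, Brunn concavity of $A^{1/(n-1)}$, evenness, the direction-independent second moment $\frac1{12}$, and the passage to the volume parametrization with $J=I^{n/(n-1)}$ concave and symmetric. Your derivation of \eqref{eq-new-cube-cell} and of the optimality of $\frac12$ from \eqref{eq-new-cube-section-strong} is also exactly the paper's. The problem is the step you yourself call the main obstacle. The one-dimensional inequality $J(v)\geq4v(1-v)$ is never proved, so \eqref{eq-new-cube-section-strong}, and with it \eqref{eq-new-cube-cell}, remain unestablished.

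\textbf{The comparison you sketch would not close.} Restricting attention to $[0,v_0]$ and its mirror image is insufficient. Suppose you use $J\leq J(v_0)$ there, which follows from monotonicity since $J$ is concave and symmetric, and use only $dt/dv>0$ on the middle range. Then $|t(s)|\geq(v_0-s)J(v_0)^{-\frac{n-1}{n}}$, and the identity $\frac1{12}=2\int_0^{1/2}t^2\,dv$ yields only $J(v_0)^{\frac{2(n-1)}{n}}\geq8v_0^3$. For small $v_0$ this is far weaker than the target $[4v_0(1-v_0)]^{\frac{2(n-1)}{n}}$, because $\frac{2(n-1)}{n}<3$. Moreover, on $[0,v_0]$ concavity places chords \emph{below} $J$, so a chord comparison cannot supply the upper bound you need on that interval.

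\textbf{The missing idea is a two-piece majorant of $J$ for each fixed $m\in(0,\frac12]$.}
\begin{itemize}
\item Monotonicity gives $J\leq J(m)$ on $(0,m]$.
\item Concavity on $[0,r]$, together with $J(0^+)\geq0$, gives $J(r)\leq\frac{r}{m}J(m)$ for $r\in[m,\frac12]$. This is the extension beyond $m$ of the chord from the origin.
\end{itemize}
Since $dt/dv=J^{-\frac{n-1}{n}}$ and $t(\frac12)=0$, integrating from $s$ to $\frac12$ gives an explicit lower bound for $|t(s)|$ on all of $(0,\frac12]$. The second-moment identity then gives $J(m)^{\frac{2(n-1)}{n}}\geq24K_n(m)$ for an explicit integral $K_n(m)$.

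\textbf{A nontrivial calculus inequality still remains after that.} You must show $24K_n(m)\geq[4m(1-m)]^{\frac{2(n-1)}{n}}$. With $y=(2m)^{1/n}$ this becomes $A_n(y)\geq(2-y^n)^{\frac{2(n-1)}{n}}$ on $[0,1]$, for an explicit function $A_n$. Equality holds at $y=1$, that is at $m=\frac12$, so the inequality is tight there. The paper handles this by a sign analysis of $D_n'$ for $n\geq3$; for $n=2$ it reduces to $y^2(1-y)^2\geq0$. Your ``rearrangement or monotonicity'' fallback does not identify either of these two steps.
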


To obtain the geometric estimate, we project the uniform measure on the
cube onto a unit normal to the cutting hyperplane. The resulting density
function is the area of the corresponding parallel section; its
$\frac{1}{n-1}$ power is concave, while its second moment is
$\frac{1}{12}$ independently of the normal. The required lower bound is
therefore a consequence of the following one-dimensional lemma.

\begin{lemma}
\label{lem-new-quantile-density}
Let $n\in\mathbb{N}\cap[2,\infty)$ and $L\in(0,\infty)$. Let
$f:(-L,L)\to(0,\infty)$ be even and satisfy
$\int_{-L}^{L}f(t)\,dt=1$.
Assume that $f^{\frac{1}{n-1}}$ is concave
and that
\begin{align}\label{eq-new-quantile-second-moment-assumption}
\int_{-L}^{L}t^2f(t)\,dt=\frac{1}{12}.
\end{align}
Define $F(t):=\int_{-L}^{t}f(s)\,ds$, let
$\mathcal Q:=F^{-1}$ be the inverse function of $F$, and let
$\mathcal I(v):=f(\mathcal Q(v))$.
Then, for every $v\in(0,1)$,
\begin{align}\label{eq-new-quantile-density-lower}
\mathcal I(v)
\geq[4v(1-v)]^{\frac{n-1}{n}}.
\end{align}
\end{lemma}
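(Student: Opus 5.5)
By the evenness of $f$ we have $\mathcal I(v)=\mathcal I(1-v)$, so it suffices to treat $v\in(0,\frac12]$. The plan has three parts: (1) rewrite everything in the quantile variable $v$; (2) prove that $J:=\mathcal I^{\frac{n}{n-1}}$ is concave on $(0,1)$; (3) use the second-moment normalization \eqref{eq-new-quantile-second-moment-assumption} to force $J(v)\geq4v(1-v)$. Concavity alone is not enough for (3), so the second moment must enter there.

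\textbf{Step 1: concavity of $J$.} Set $g:=f^{\frac1{n-1}}$, which is concave, positive and even on $(-L,L)$. Since $F'=f>0$, the map $\mathcal Q$ is a $C^1$ increasing bijection $(0,1)\to(-L,L)$ with $\mathcal Q'=1/\mathcal I$. Then $J(v)=g(\mathcal Q(v))^n$, and (working a.e., or with one-sided derivatives) the chain rule gives
\begin{align*}
J'(v)=n\,g(\mathcal Q(v))^{n-1}g'(\mathcal Q(v))\,\mathcal Q'(v)=n\,g'(\mathcal Q(v)).
\end{align*}
Because $g'$ is nonincreasing and $\mathcal Q$ is increasing, $J'$ is nonincreasing, so $J$ is concave. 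Evenness makes $J$ symmetric about $\frac12$, and $J(0^+)\geq0$.

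\textbf{Step 2: the midpoint value.} Next I would prove $f(0)\geq1$, that is, $J(\frac12)\geq1$. Since $g$ is concave and even, $f$ is symmetric and unimodal with maximum $f(0)$. For such densities the classical bound $\int t^2f\,dt\leq\frac{1}{12f(0)^2}$ holds: write $f$ as a mixture of uniform densities on $(-a,a)$ and note that $a^2/3\leq\frac{1}{12f(0)^2}$ for every $a$ in the support of the mixing measure. Together with \eqref{eq-new-quantile-second-moment-assumption} this yields $f(0)\geq1$. This settles $v=\frac12$ (the bound there is $\mathcal I(\frac12)\geq1$). However, the chord bound from concavity, $J(v)\geq2vJ(\frac12)\geq2v$, is weaker than $4v(1-v)$ for every $v<\frac12$.

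\textbf{Step 3: the main obstacle.} The hard step is to upgrade the midpoint bound to the full parabola. I would recast the constraint in the $v$-variable:
\begin{align*}
\frac1{12}=\int_0^1\mathcal Q(v)^2\,dv=\int_0^1\left(\int_{1/2}^{v}J(w)^{-\frac{n-1}{n}}\,dw\right)^2dv .
\end{align*}
This is a decreasing functional of $J$. Now argue by contradiction: suppose $J(v_0)<4v_0(1-v_0)$ for some $v_0\in(0,\frac12)$. Among concave, symmetric $J\geq0$ with this value at $v_0$ (and at $1-v_0$), the right-hand side is minimized by the pointwise largest admissible profile. That profile is the concave majorant fixed by the two values $J(v_0)$ and $J(1-v_0)$: linear on $[0,v_0]$ extended by the supporting lines, and tent-shaped between. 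In the $t$-variable it corresponds to $g$ piecewise linear, i.e.\ $f$ a symmetric double truncated cone. For this explicit two-parameter family I would compute the second moment directly and show it is strictly larger than $\frac1{12}$ whenever $J(v_0)<4v_0(1-v_0)$. This contradicts the normalization.

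I expect this extremal computation to be the delicate point. It has to be checked that the inequality closes with exactly the exponent $\frac{n-1}{n}$ and the factor $4$, presumably with equality in the limiting cylinder/uniform case $f=\mathbf 1_{(-1/2,1/2)}$, where $J\equiv1$ is not attained at the endpoints. If the direct extremal computation is unwieldy, I would first try a comparison argument instead. Let $J_0(v):=4v(1-v)$ and note that $J_0^{\frac{n-1}{n}}$ is the profile of the density whose $g$ is linear on each side, namely the even density with $g(t)\propto(L-|t|)$ (a symmetric double cone) scaled to second moment $\frac1{12}$. Then a two-crossing (Karlin-type) lemma between $\mathcal Q$ and the corresponding quantile function would transfer the equality of second moments into the pointwise ordering $\mathcal I\geq J_0^{\frac{n-1}{n}}$.
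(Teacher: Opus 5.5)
There is a genuine gap, and it sits exactly where the lemma's content lies. Your Step 1 (concavity of $J=\mathcal I^{\frac{n}{n-1}}$ via $J'_+=n\,g'_+(\mathcal Q)$) is correct, and so is your remark that $\frac1{12}=\int_0^1\mathcal Q^2$ is a decreasing functional of $J$. Step 3, however, is mis-specified. Among concave, symmetric $J\ge0$ with $J(v_0)=a$ and $v_0<\frac12$, there is \emph{no} pointwise largest member. The pointwise supremum comes from two facts: $J$ is nondecreasing on $(0,\frac12]$, and concavity together with $J(0^+)\ge0$ bounds $J$ above. This supremum is $\bar J(r)=a$ on $(0,v_0]$ and $\bar J(r)=ra/v_0$ on $[v_0,\frac12]$. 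It is not concave: in the $t$-variable it is a cylinder glued to a cone, not a double truncated cone. Nothing you say shows that the infimum of the functional over admissible $J$ is attained in a family of truncated cones; since the functional is convex in $J$, there is no reason for it to be. What does work is to use $\bar J$ only as a comparison profile. One has $\mathcal Q'\ge\bar J^{-\frac{n-1}{n}}$; integrating from $s$ to $\frac12$, squaring and integrating gives $\mathcal I(m)^2\ge24K_n(m)$, where $K_n(m)$ is the explicit integral of the squared piecewise lower bound for $|\mathcal Q|$. Everything then hinges on $24K_n(m)\ge[4m(1-m)]^{\frac{2(n-1)}{n}}$. With $y=(2m)^{\frac1n}$ this becomes $A_n(y)\ge(2-y^n)^{\frac{2(n-1)}{n}}$ on $[0,1]$, where $A_n(y)=\frac{6n^2}{(n+1)(n+2)}-\frac{3n(n-1)}{n+1}y^{n+1}+\frac{(3n-2)(n-1)}{n+2}y^{n+2}$. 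For $n=2$ the difference is $y^2(1-y)^2$. For $n\ge3$ it needs a real sign analysis of $D_n'=(n-1)y^{n-1}g_n$ plus a check of $D_n(0)>0=D_n(1)$. You explicitly defer this computation, so the lemma is not proved.

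Your fallback does not repair this, because its premise is false. If $g\propto L-|t|$, then $J'=n\,g'(\mathcal Q)$ is constant on $(0,\frac12)$ and $J(0^+)=0$, so $J$ is the tent $J(v)=ncv$, not $4v(1-v)$. The profile $J_0=4v(1-v)$ belongs, for $n=2$, to $f(t)=\cos 2t$ on $(-\frac{\pi}{4},\frac{\pi}{4})$, since $\mathcal Q_0(v)=\frac12\arcsin(2v-1)$. Its second moment is $\frac{\pi^2-8}{16}\neq\frac1{12}$, so there is no equal-second-moment pair for a crossing lemma to act on. Finally, in Step 2 the inequality is reversed. For symmetric unimodal densities one has $\int t^2f\,dt\ge\frac1{12f(0)^2}$: write $f$ as a mixture of uniforms on $(-a,a)$ with mixing measure $\mu$ and apply Jensen, $\int a^{-1}\,d\mu\ge(\int a^2\,d\mu)^{-\frac12}$. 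This direction is what gives $f(0)\ge1$; your stated $\le$ would give $f(0)\le1$. Your justification that $a^2/3\le\frac1{12f(0)^2}$ for every $a$ in the support is also false. The case $v=\frac12$ is in any event just the instance $m=\frac12$ of the envelope argument, since $24K_n(\frac12)=1$.
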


\begin{proof}
The evenness of $f$ implies $\mathcal Q(1-v)=-\mathcal Q(v)$ and hence
$\mathcal I(1-v)=\mathcal I(v)$ for $v\in(0,1)$. Since the right-hand
side of \eqref{eq-new-quantile-density-lower} has the same symmetry, it
suffices to fix $m\in(0,\frac12]$.
We divide the proof into two steps.

\textbf{Step 1.} \noindent\textit{In this step, we derive a lower bound
for $\mathcal I(m)^2$.}
Define
$
J(v):=\mathcal I(v)^{\frac{n}{n-1}}.
$
If $h:=f^{\frac{1}{n-1}}$, then
$J(v)=h(\mathcal Q(v))^n$. Since $h$ is concave, its right derivative
$h'_+$ exists at every point of $(-L,L)$ and is non-increasing. Moreover,
$h$ is continuous, so $F'=f=h^{n-1}>0$ is continuous. The inverse
function theorem therefore implies
$\mathcal Q=F^{-1}\in C^1((0,1))$ and, for every $v\in(0,1)$,
\begin{align*}
\mathcal Q'(v)
=\frac{1}{F'(\mathcal Q(v))}
=\frac{1}{f(\mathcal Q(v))}
=\frac{1}{h(\mathcal Q(v))^{n-1}}.
\end{align*}
By the right-hand chain rule, we obtain,
for every $v\in(0,1)$,
\begin{align*}
J'_+(v)
=nh(\mathcal Q(v))^{n-1}h'_+(\mathcal Q(v))
\mathcal Q'(v)
=n h'_+(\mathcal Q(v)).
\end{align*}
Since $h'_+$ is non-increasing and $\mathcal Q$ is increasing,
$J'_+$ is non-increasing. Since $J$ is continuous,
by the one-dimensional
right-derivative criterion for convexity, applied to $-J$,
\cite[Theorem 5.3.1, p.\,34]{HiriartUrrutyLemarechal1993},
we find that $J$ is concave.
Its finite endpoint limit exists, and we extend it to zero by
$J(0):=J(0+)\geq0$; the extension remains concave. Moreover,
by the evenness of $f$, we have,
for every $v\in(0,1)$,
$
J(v)=J(1-v).
$
Thus $J$ is nondecreasing on $(0,\frac12]$. Using also the concavity of
$J$ and $J(0+)\geq0$, we obtain
\begin{align}\label{eq-new-J-upper-bounds}
J(r)\leq
\begin{cases}
J(m),&r\in(0,m],\\
\dfrac{rJ(m)}{m},&r\in[m,\tfrac12].
\end{cases}
\end{align}
Indeed, if $m\leq r\leq\frac12$, then the concavity of $J$ on
$[0,r]$ and $J(0)\geq0$ imply
\begin{align*}
J(m)&\geq\frac{m}{r}J(r)
+\left(1-\frac{m}{r}\right)J(0)
\geq\frac{m}{r}J(r),
\end{align*}
which is the second inequality in \eqref{eq-new-J-upper-bounds}.
The inverse-function identity and \eqref{eq-new-J-upper-bounds} imply,
for almost every
$r\in(0,\frac12)$,
\begin{align}\label{eq-new-quantile-derivative-lower}
\mathcal Q'(r)=\frac{1}{\mathcal I(r)}=J(r)^{-\frac{n-1}{n}}
\geq
\begin{cases}
J(m)^{-\frac{n-1}{n}},&r\in(0,m],\\
J(m)^{-\frac{n-1}{n}}
\left(\frac{m}{r}\right)^{\frac{n-1}{n}},&r\in[m,\frac12].
\end{cases}
\end{align}
The symmetry of $f$ implies $\mathcal Q(\frac{1}{2})=0$. Integrating
\eqref{eq-new-quantile-derivative-lower} from $s$ to $\frac{1}{2}$, we find that
\begin{align}\label{eq-new-quantile-integrated-lower}
|\mathcal Q(s)|
\geq J(m)^{-\frac{n-1}{n}}
\begin{cases}
n\,2^{-\frac{1}{n}}m^{\frac{n-1}{n}}-(n-1)m-s,
&s\in(0,m],\\[1mm]
n\,m^{\frac{n-1}{n}}
\left(2^{-\frac{1}{n}}-s^{\frac{1}{n}}\right),
&s\in[m,\frac12].
\end{cases}
\end{align}
Denote the integral of the square of the piecewise factor on the
right-hand side of \eqref{eq-new-quantile-integrated-lower} by
\begin{align}\label{eq-new-Knm}
K_n(m)
:=\int_0^m
\left[
n\,2^{-\frac{1}{n}}m^{\frac{n-1}{n}}-(n-1)m-s
\right]^2\,ds
+n^2m^{\frac{2(n-1)}{n}}
\int_m^{\frac12}
\left(2^{-\frac{1}{n}}-s^{\frac{1}{n}}\right)^2\,ds.
\end{align}
Squaring \eqref{eq-new-quantile-integrated-lower}, integrating, and using
\eqref{eq-new-Knm}, the identities $\mathcal Q(F(t))=t$ and $F'=f$,
symmetry, and \eqref{eq-new-quantile-second-moment-assumption}, we obtain
\begin{align*}
\frac{1}{12}
&=\int_{-L}^{L}t^2f(t)\,dt
=\int_{-L}^{L}\mathcal Q(F(t))^2F'(t)\,dt
=\int_0^1\mathcal Q(s)^2\,ds\\
&=2\int_0^{\frac{1}{2}}\mathcal Q(s)^2\,ds
\geq2J(m)^{-\frac{2(n-1)}{n}}K_n(m)
=2\mathcal{I}(m)^{-2}K_n(m).
\end{align*}
Thus
\begin{align}\label{eq-new-quantile-Knm-lower}
\mathcal I(m)^2\geq24K_n(m).
\end{align}
This is the desired estimate of this step.

\textbf{Step 2.} \noindent\textit{In this step, we prove the lower bound
\eqref{eq-new-barrier-energy-lower} for $K_n(m)$ and complete the proof.}
We claim that
\begin{align}\label{eq-new-barrier-energy-lower}
24K_n(m)\geq
[4m(1-m)]^{\frac{2(n-1)}{n}}.
\end{align}
For the algebraic estimate, let
$y:=(2m)^{\frac{1}{n}}\in(0,1]$; this converts the fractional powers of
$2m$ below into integer powers of $y$. Direct integration in
\eqref{eq-new-Knm} implies
\begin{align*}
24K_n(m)
=y^{2n-2}A_n(y),
\end{align*}
where
\begin{align*}
A_n(y)
:=\frac{6n^2}{(n+1)(n+2)}
-\frac{3n(n-1)}{n+1}y^{n+1}
+\frac{(3n-2)(n-1)}{n+2}y^{n+2}.
\end{align*}
Since
$
[4m(1-m)]^{\frac{2(n-1)}{n}}
=y^{2n-2}(2-y^n)^{\frac{2(n-1)}{n}},
$
to show \eqref{eq-new-barrier-energy-lower},
it remains to prove that, for every $y\in[0,1]$,
\begin{align}\label{eq-new-An-lower}
A_n(y)\geq(2-y^n)^{\frac{2(n-1)}{n}}.
\end{align}
For $n=2$, for every $y\in[0,1]$,
this follows immediately from
$
A_2(y)-(2-y^2)=y^2(1-y)^2\geq0.
$
Assume now that $n\geq3$ and define
$
D_n(y):=A_n(y)-(2-y^n)^{\frac{2(n-1)}{n}}.
$
A direct differentiation implies
$
D_n'(y)=(n-1)y^{n-1}g_n(y),
$
where
\begin{align*}
g_n(y):=2(2-y^n)^{\frac{n-2}{n}}-3ny+(3n-2)y^2.
\end{align*}
Moreover,
\begin{align*}
g_n'(y)
&=2(3n-2)y-3n
-2(n-2)y^{n-1}(2-y^n)^{-\frac{2}{n}},
\end{align*}
and
\begin{align*}
g_n''(y)
&=2(3n-2)-2(n-2)\left[
(n-1)y^{n-2}(2-y^n)^{-\frac{2}{n}}
+2y^{2n-2}(2-y^n)^{-\frac{n+2}{n}}
\right].
\end{align*}
Since both
$y^{n-2}(2-y^n)^{-\frac{2}{n}}$ and
$y^{2n-2}(2-y^n)^{-\frac{n+2}{n}}$
are strictly increasing on $[0,1]$,
the formula above shows that
$g_n''$ is strictly decreasing,
and hence $g_n'$ is
strictly concave. Since
$$
g_n'(0)=-3n<0<n=g_n'(1),
$$
$g_n'$ has a unique zero in $(0,1)$.
Thus $g_n$ first strictly decreases
and then strictly increases. Furthermore,
$
g_n(0)=2^{\frac{2(n-1)}{n}}>0$ and
$g_n(1)=0$.
Let $a\in(0,1)$ be the unique zero of $g_n'$. Since $g_n$ is
strictly increasing on $[a,1]$ and $g_n(1)=0$, one has $g_n(a)<0$ and
$g_n<0$ on $[a,1)$. On the other hand, $g_n$ is strictly decreasing on
$[0,a]$ and $g_n(0)>0>g_n(a)$. It therefore has a unique zero
$b\in(0,a)$, is positive on $[0,b)$, and is negative on $(b,1)$.
Therefore, $D_n$ first increases and then decreases, so its minimum on
$[0,1]$ is attained at an endpoint. Clearly, $D_n(1)=0$. On the other
hand, the weighted arithmetic--geometric mean inequality implies
$$
2^{\frac{n-2}{n}}
=1^{\frac{2}{n}}2^{\frac{n-2}{n}}
\leq\frac{2}{n}+\frac{2(n-2)}{n}
=\frac{2(n-1)}{n}.
$$
Hence
\begin{align*}
D_n(0)
=\frac{6n^2}{(n+1)(n+2)}
-2^{\frac{2(n-1)}{n}}
\geq\frac{2(n-2)(n^2-2n-2)}
{n(n+1)(n+2)}>0.
\end{align*}
We conclude that $D_n\geq0$ on $[0,1]$. This proves
\eqref{eq-new-An-lower} and hence \eqref{eq-new-barrier-energy-lower}.
Combining \eqref{eq-new-quantile-Knm-lower} and
\eqref{eq-new-barrier-energy-lower}, and taking square roots, proves
\eqref{eq-new-quantile-density-lower}.
This completes the proof of Lemma \ref{lem-new-quantile-density}.
\end{proof}

We now prove the cube-section estimate,
that is, Proposition \ref{prop-new-cube-cell}.

\begin{proof}[Proof of Proposition \ref{prop-new-cube-cell}]
Let $m:=\min\{|H\cap S|,|S\setminus H|\}$.
Since $|S|=1$, the two volumes sum to one, and hence
$m\in[0,\frac12]$.
The case $m=0$ is immediate, so assume that $m\in(0,\frac12]$.
After a rigid motion, we may take
$S=[-\frac12,\frac12]^n$. Choose a unit normal $u$ and
$t_0\in\mathbb{R}$ such that
\begin{align*}
H=\{x\in\mathbb{R}^n:x\cdot u<t_0\}.
\end{align*}
Let $X$ be uniformly distributed on $S$ and let $Y:=X\cdot u$.
By the coarea formula, the density of $Y$ is the
parallel-section function
\begin{align*}
f(t):=\mathcal H^{n-1}
\left(S\cap\{x\in\mathbb{R}^n:x\cdot u=t\}\right).
\end{align*}
Its support is $[-L,L]$, where
$L=\frac12\sum_{i=1}^{n}|u_i|$, and $f$ is strictly positive on
$(-L,L)$. Since $f$ is the density of $Y$,
$\int_{-L}^{L}f(t)\,dt=1$. It is even, and the Brunn--Minkowski theorem
(see, for instance, \cite[p.\,361]{Gardner2002}) shows that
$f^{\frac{1}{n-1}}$ is concave on $(-L,L)$. Moreover,
\begin{align*}
\int_{-L}^{L}t^2f(t)\,dt
=\mathbb E[(X\cdot u)^2]
=\frac{1}{12}\sum_{i=1}^{n}u_i^2
=\frac{1}{12}.
\end{align*}
Let $F$, $\mathcal Q$, and $\mathcal I$ be as in Lemma
\ref{lem-new-quantile-density}. Then $|H\cap S|=F(t_0)$ and
$
\mathcal H^{n-1}(\partial H\cap S)=f(t_0).
$
If $|H\cap S|\leq\frac12$, then $m=|H\cap S|=F(t_0)$ and
$t_0=\mathcal Q(m)$. If $|H\cap S|>\frac12$, the evenness of $f$ implies
$m=|S\setminus H|=1-F(t_0)=F(-t_0)$ and hence
$-t_0=\mathcal Q(m)$. In both cases,
$f(t_0)=\mathcal I(m)$. Therefore, Lemma
\ref{lem-new-quantile-density} implies
\begin{align*}
\mathcal H^{n-1}(\partial H\cap S)
=\mathcal I(m)
\geq[4m(1-m)]^{\frac{n-1}{n}}
=\left[4|H\cap S|\,|S\setminus H|\right]^{\frac{n-1}{n}},
\end{align*}
which proves \eqref{eq-new-cube-section-strong}.

It remains to derive the cell inequality \eqref{eq-new-cube-cell}.
Fix $q\in[1,\frac{n}{n-1}]$. By H\"older's inequality,
Lemma \ref{lem-new-characteristic-formula},
Jensen's inequality for the concave function
$t\mapsto t^{\frac{1}{n-1}}$ on $[0,1]$, and
\eqref{eq-new-cube-section-strong}, we find that
\begin{align*}
\left[
\fint_S|\mathbf{1}_H(x)-(\mathbf{1}_H)_S|^q\,dx
\right]^{\frac{1}{q}}
&\leq
\left[
\fint_S|\mathbf{1}_H(x)-(\mathbf{1}_H)_S|^{\frac{n}{n-1}}\,dx
\right]^{\frac{n-1}{n}}\\
&=\left[
m(1-m)^{\frac{n}{n-1}}+(1-m)m^{\frac{n}{n-1}}
\right]^{\frac{n-1}{n}}\\
&=\left\{
m(1-m)\left[(1-m)^{\frac{1}{n-1}}+m^{\frac{1}{n-1}}\right]
\right\}^{\frac{n-1}{n}}\\
&\leq\frac12[4m(1-m)]^{\frac{n-1}{n}}
\leq\frac12\mathcal H^{n-1}(\partial H\cap S).
\end{align*}
Here the proceeding inequality follows from the concavity of
$t\mapsto t^{\frac{1}{n-1}}$:
\begin{align*}
(1-m)^{\frac{1}{n-1}}+m^{\frac{1}{n-1}}
\leq 2^{1-\frac{1}{n-1}}.
\end{align*}
Finally, take $H$ to be a half-space whose boundary passes through the
center of $S$ and is parallel to one of its faces. Then
$(\mathbf{1}_H)_S=\frac12$,
$|\mathbf{1}_H-(\mathbf{1}_H)_S|=\frac12$ almost everywhere on $S$, and
$\mathcal H^{n-1}(\partial H\cap S)=1$. Hence equality holds in
\eqref{eq-new-cube-cell} for every $q\in[1,\frac{n}{n-1}]$, which proves
that the factor $\frac12$ is optimal.
This completes the proof of Proposition \ref{prop-new-cube-cell}.
\end{proof}

\begin{remark}\label{rem-new-cube-section-earlier}
We now discuss the relation
between Proposition \ref{prop-new-cube-cell}
and the earlier hyperplane-section inequalities
in \cite[Theorem 2, p.\,190]{Peri1999}.
For a centrally symmetric convex body $K$ cut by a hyperplane $\Pi$,
let $u$ be a unit normal to $\Pi$, and let $\Pi^+$ and $\Pi^-$ denote
the two half-spaces determined by $\Pi$. Peri
\cite[Theorem 2, p.\,190]{Peri1999} proved
\begin{align*}
|K\cap\Pi^+|\,|K\cap\Pi^-|
\leq
\left(\int_K|\langle u,x-c(K)\rangle|\,dx\right)
\mathcal H^{n-1}(K\cap\Pi),
\end{align*}
where $c(K)$ is the centroid of $K$ (and hence its center of symmetry in
the present setting). Taking $K=S$ and $\Pi=\partial H$, set
\begin{align*}
v:=|H\cap S|\quad\text{and}\quad
A:=\mathcal H^{n-1}(\partial H\cap S).
\end{align*}
The preceding inequality then gives
$v(1-v)\leq M_1(S,u)A$, where
\begin{align*}
M_1(S,u):=\int_S|\langle u,x-c(S)\rangle|\,dx.
\end{align*}
The sharp $L^1$--$L^2$ Khinchin inequality for sums of independent
uniformly distributed random variables
\cite[p. 197]{LatalaOleszkiewicz1995} gives
$M_1(S,u)\leq\frac14$. Indeed, let $e_1,\ldots,e_n$ be the orthonormal
edge directions of $S$, put $u_i:=\langle u,e_i\rangle$, and write
$x-c(S)=\frac12\sum_{i=1}^nt_ie_i$, where the $t_i$ are independent and
uniformly distributed on $[-1,1]$. Since $\sum_{i=1}^nu_i^2=1$, that
inequality at exponent one gives
\begin{align*}
M_1(S,u)
&=\frac12\mathbb E\left|\sum_{i=1}^nu_it_i\right|
\leq\frac{\sqrt3}{4}
\left(\mathbb E\left|\sum_{i=1}^nu_it_i\right|^2\right)^{\frac12}
=\frac14.
\end{align*}
Consequently, combining Peri's estimate with this Khinchin inequality gives
$4v(1-v)\leq A,$
which is equivalent to the case $q=1$
of \eqref{eq-new-cube-cell}. In
comparison, \eqref{eq-new-cube-section-strong}
gives the stronger nonlinear profile
$A\geq[4v(1-v)]^{\frac{n-1}{n}},$
and yields the full range $1\leq q\leq\frac{n}{n-1}$. Peri's later
moment inequalities
\cite[Corollary 2 and Theorem 3, pp.\,406--407]{Peri2008}
apply to general
convex bodies, but do not directly yield
this nonlinear cube profile or the
$q>1$ cell estimates. Conversely, Proposition
\ref{prop-new-cube-cell} is specific to cubes.
\end{remark}

\subsection{The Upper Bound via Half-Space Localization}

In this subsection, we establish the
upper bound in Theorem \ref{the-new-exact}
for $q\in[1,\frac{n}{n-1}]$.
The local estimate \eqref{eq-new-cube-cell} has exactly the form needed for
the blow-up argument. We now combine Proposition
\ref{prop-new-cube-cell} with the uniform blow-up and moving-cube lemmas in
Subsection \ref{5.3} to transfer it to arbitrary sets of finite perimeter.
The factor $\frac12$ is unchanged in this passage.

\begin{proposition}
\label{prop-new-localization}
Let $n\in\mathbb{N}\cap[2,\infty)$.
Let $q\in[1,\frac{n}{n-1}]$. Then every measurable set
$E\subset Q_0$ with finite perimeter satisfies
\begin{align}\label{eq-new-localization-conclusion}
\limsup_{\varepsilon\to0^+}I_{\varepsilon,q}^{*}(E)
\leq\frac12P(E,Q_0).
\end{align}
\end{proposition}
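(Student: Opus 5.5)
The plan is to combine the uniform blow-up of Lemma \ref{lem-new-uniform-blowup} with the sharp cell inequality \eqref{eq-new-cube-cell}, treating cubes near the bulk of the reduced boundary by blow-up and the remaining cubes by relative isoperimetry.

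\textbf{Setup and splitting of a family.} Fix $\delta\in(0,1)$ and let $G\subset\partial^*E\cap Q_0$ be the compact set from Lemma \ref{lem-new-uniform-blowup}, so that $\sigma_E(Q_0\setminus G)<\delta$. For each $\varepsilon$, take any $\{Q_j\}_{j\in J}\in\mathscr F^*_{\varepsilon,1}(Q_0)$ and split $J$ into two parts:
\begin{itemize}
\item the good indices, for which $Q_j^\circ\cap G\neq\emptyset$;
\item the bad indices, for which $Q_j^\circ\cap G=\emptyset$.
\end{itemize}

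\textbf{Bad cubes.} Since $q\le\frac{n}{n-1}$, H\"older's inequality and Lemma \ref{lem-new-characteristic-formula} bound $\operatorname{osc}_q$ by $\operatorname{osc}_{n/(n-1)}$. Lemma \ref{lem-new-relative-isoperimetric}, which is valid for arbitrarily oriented cubes, then gives
$$
\varepsilon^{n-1}\operatorname{osc}_q(\mathbf 1_E;Q_j)\le C_{(n)}P(E,Q_j)=C_{(n)}\sigma_E(Q_j^\circ).
$$
The interiors $Q_j^\circ$ of bad cubes are disjoint and avoid $G$. Hence the total bad contribution is at most $C_{(n)}\sigma_E(Q_0\setminus G)<C_{(n)}\delta$, uniformly in $\varepsilon$ and in the family.

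\textbf{Good cubes: a uniform cell estimate.} The key claim is the following. For every $\eta>0$ there is $\varepsilon_0>0$ such that, for all $\varepsilon<\varepsilon_0$, every cube $Q$ of edge $\varepsilon$ contained in $Q_0$ and every $x\in G\cap Q^\circ$,
$$
\operatorname{osc}_q(\mathbf 1_E;Q)\le\tfrac12\varepsilon^{1-n}\sigma_E(Q^\circ)+\eta.
$$
Granting this, sum over the good cubes and use $\varepsilon^{n-1}\#J\le1$ together with disjointness. This gives
$$
\varepsilon^{n-1}\sum_{j\in J}\operatorname{osc}_q(\mathbf 1_E;Q_j)\le\tfrac12P(E,Q_0)+\eta+C_{(n)}\delta .
$$
Taking the supremum over families, then $\limsup_{\varepsilon\to0^+}$, and finally letting $\eta,\delta\to0^+$ yields \eqref{eq-new-localization-conclusion}.

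\textbf{Proof of the claim by compactness.} Suppose the claim fails. Then there are $\varepsilon_m\to0$, cubes $Q_m$, and points $x_m\in G\cap Q_m^\circ$ violating it with a fixed $\eta$. Rescale by setting $S_m:=(Q_m-x_m)/\varepsilon_m$. This is a unit cube containing $0$, and by scaling the violation reads
$$
\operatorname{osc}_q(\mathbf 1_{E_{x_m,\varepsilon_m}};S_m)>\tfrac12\mu_{x_m,\varepsilon_m}(S_m^\circ)+\eta .
$$
By compactness of $G$ and of the set of unit cubes containing the origin, we may pass to a subsequence with $x_m\to x\in G$ and $d_{\mathrm H}(S_m,S)\to0$.

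\emph{Left side.} Lemma \ref{lem-new-moving-cube-blowup} gives $\int_{S_m}|\mathbf 1_{E_{x_m,\varepsilon_m}}-\mathbf 1_{H_x}|\to0$. Moreover $|S_m\triangle S|\to0$ and $\operatorname{osc}_q$ depends continuously on the volume fraction through \eqref{eq-new-characteristic-formula}. Hence the left side converges to $\operatorname{osc}_q(\mathbf 1_{H_x};S)$.

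\emph{Right side.} For this I will not assume $\mu_x(\partial S)=0$. Instead I will only use the lower-semicontinuity half of the argument in the proof of Lemma \ref{lem-new-moving-cube-blowup}, namely
$$
\liminf_{m\to\infty}\mu_{x_m,\varepsilon_m}(S_m^\circ)\ge\mu_x(S_{-\rho})\quad\text{for every }\rho>0,
$$
which tends to $\mathcal H^{n-1}(\partial H_x\cap S^\circ)$ as $\rho\to0^+$.

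\emph{Contradiction.} Passing to the limit yields $\operatorname{osc}_q(\mathbf 1_{H_x};S)\ge\frac12\mathcal H^{n-1}(\partial H_x\cap S^\circ)+\eta$. This contradicts Proposition \ref{prop-new-cube-cell}, with the following caveat. The interior section $\mathcal H^{n-1}(\partial H_x\cap S^\circ)$ differs from $\mathcal H^{n-1}(\partial H_x\cap S)$ only when the hyperplane $\partial H_x$ contains a face of $S$. In that case $|H_x\cap S|\in\{0,1\}$, so the oscillation vanishes and the contradiction still holds.

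\textbf{Main obstacle.} The delicate point is this uniformity step. The near-optimal families change with $\varepsilon$, with moving centers and orientations, so pointwise blow-up is not enough. This is exactly what the compactness argument built on Lemmas \ref{lem-new-uniform-blowup} and \ref{lem-new-moving-cube-blowup} is meant to handle. Care is needed with boundary-charged limits and with the case where $x_m$ approaches a face of $Q_m$; the latter is harmless because $0\in S$ is permitted to lie on $\partial S$.
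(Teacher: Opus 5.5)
Your proposal is correct and follows essentially the paper's argument. It uses uniform blow-up on the compact set $G$, a compactness/contradiction argument for moving and rotating cubes that reduces to the sharp cell inequality \eqref{eq-new-cube-cell}, and relative isoperimetry for the cubes avoiding $G$. The two differences are minor simplifications: you use an additive error $\eta$ (absorbed by $\varepsilon^{n-1}\#J\le1$, as the paper itself does in Lemma \ref{lem-new-aggregate-planar-localization}) in place of the paper's threshold $\tau$ and three-way split, and you use only lower semicontinuity of the rescaled perimeter measures on open sets, which spares you from first establishing $\mu_x(\partial S)=0$.
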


\begin{proof}
Throughout the proof, for every measurable set
$E$, $\sigma_E$ denotes the measure defined in
\eqref{eq-new-relative-variation-extension}. Thus $\sigma_E$ records only
the variation inside $Q_0$. It does not include the boundary variation on
$\partial Q_0$ created by extending $\mathbf{1}_E$ by zero outside
$Q_0$. Fix $\eta,\tau,\delta\in(0,1)$,
and let $G$ be the compact set supplied
by Lemma \ref{lem-new-uniform-blowup}.
Then $\sigma_E(Q_0\setminus G)<\delta$.
We divide the proof into two steps.

\textbf{Step 1.} \noindent\textit{In this step, we show the following
uniform local estimate of the oscillation in terms of the perimeter
measure:} there exists $r_0\in(0,1)$ such that, for every
$\varepsilon\in(0,r_0)$, every oriented cube
$Q\subset Q_0$ of edge length $\varepsilon$, and
every $x\in Q^\circ\cap G$ satisfying
\begin{align}\label{eq-new-local-tau-detail}
\operatorname{osc}_q(\mathbf{1}_E;Q)\geq\tau,
\end{align}
it holds that
\begin{align}\label{eq-new-local-sharp-estimate-detail}
\varepsilon^{n-1}\operatorname{osc}_q(\mathbf{1}_E;Q)
\leq\left(\frac12+\eta\right)\sigma_E(Q^\circ).
\end{align}
Suppose, to the contrary, that this assertion fails. Then there exist
$\{\varepsilon_m\}_{m\in\mathbb{N}}\subset (0,1)$ with
$\varepsilon_m\to0^+$ as $m\to\infty$, points
$\{x_m\}_{m\in\mathbb{N}}\subset G$, and oriented cubes
$\{Q_m\}_{m\in\mathbb{N}}$
such that $x_m\in Q_m^\circ$,
$\ell(Q_m)=\varepsilon_m$, and
\eqref{eq-new-local-tau-detail} holds and
\begin{align}\label{eq-new-local-contradiction-detail}
\varepsilon_m^{n-1}\operatorname{osc}_q(\mathbf{1}_E;Q_m)
>\left(\frac12+\eta\right)\sigma_E(Q_m^\circ).
\end{align}
Let $S_0=[-\frac12,\frac12]^n$, $y_m$ be the center of $Q_m$, and
choose $R_m\in O(n)$ such that
$
Q_m=y_m+\varepsilon_mR_mS_0.
$
Let
$
a_m:=\frac{y_m-x_m}{\varepsilon_m}.
$
Then
$
S_m:=\frac{Q_m-x_m}{\varepsilon_m}
=a_m+R_mS_0.
$
Since $x_m\in Q_m$, we have
$
|a_m|\leq\frac{\sqrt n}{2}.
$
Thus $a_m$ belongs to the compact set
$\overline{B(0,\frac{\sqrt n}{2})}$. Moreover, $O(n)$ and $G$ are
compact. Then after passing to a subsequence,
not relabeled, we may therefore
assume that
\begin{align*}
a_m\to a,\quad R_m\to R\in O(n),
\quad\text{and}\quad x_m\to x\in G
\end{align*}
as $m\to\infty$.
It follows that
$d_{\mathrm H}(S_m,S)\to0$ as $m\to\infty$, where the unit cube $S$ is
given by $S:=a+RS_0$.
Since every $S_m$ contains the origin, so does $S$.
Let
$
\theta_m:=\frac{|E\cap Q_m|}{|Q_m|}
$
and
$
\theta:=|H_x\cap S|.
$
Since $|S_m|=|S|=1$, a change of variables, the triangle inequality, and
$0\leq\mathbf{1}_{H_x}\leq1$ imply
\begin{align*}
|\theta_m-\theta|
&=\left|
\int_{S_m}\mathbf{1}_{E_{x_m,\varepsilon_m}}(z)\,dz
-\int_S\mathbf{1}_{H_x}(z)\,dz
\right|\\
&\leq
\int_{S_m}
\left|\mathbf{1}_{E_{x_m,\varepsilon_m}}(z)
-\mathbf{1}_{H_x}(z)\right|\,dz
+\left|
\int_{S_m}\mathbf{1}_{H_x}(z)\,dz
-\int_S\mathbf{1}_{H_x}(z)\,dz
\right|\\
&\leq
\int_{S_m}
\left|\mathbf{1}_{E_{x_m,\varepsilon_m}}(z)
-\mathbf{1}_{H_x}(z)\right|\,dz
+\int_{S_m\mathbin{\triangle}S}\mathbf{1}_{H_x}(z)\,dz\\
&\leq
\int_{S_m}
\left|\mathbf{1}_{E_{x_m,\varepsilon_m}}(z)
-\mathbf{1}_{H_x}(z)\right|\,dz
+|S_m\mathbin{\triangle}S|.
\end{align*}
The first term tends to zero as $m\to\infty$ by
\eqref{eq-new-moving-cube-set} in
Lemma \ref{lem-new-moving-cube-blowup}, applied with
$r_m=\varepsilon_m$. Moreover, $a_m\to a$ and $R_m\to R$ imply
$|S_m\mathbin{\triangle}S|\to0$, and hence $\theta_m\to\theta$, as
$m\to\infty$.
Therefore, by this and Lemma
\ref{lem-new-characteristic-formula}, we find that
\begin{align*}
&\lim_{m\to\infty}\left[
\fint_{Q_m}|\mathbf{1}_E(y)-(\mathbf{1}_E)_{Q_m}|^q\,dy
\right]^{\frac{1}{q}}\\
&\quad=\lim_{m\to\infty}
[\theta_m(1-\theta_m)^q+(1-\theta_m)\theta_m^q]^{\frac{1}{q}}
=[\theta(1-\theta)^q+(1-\theta)\theta^q]^{\frac{1}{q}}\\
&\quad=\left[
\fint_S|\mathbf{1}_{H_x}(z)-(\mathbf{1}_{H_x})_S|^q\,dz
\right]^{\frac{1}{q}}.
\end{align*}
The lower bound in \eqref{eq-new-local-tau-detail} shows that the last
quantity is at least $\tau>0$. Since the middle expression in the
preceding display vanishes exactly when
$\frac{|H_x\cap S|}{|S|}\in\{0,1\}$, we conclude that
$\frac{|H_x\cap S|}{|S|}\in(0,1)$.
In particular,
\begin{align}\label{eq-new-limit-section-positive}
\mathcal{H}^{n-1}(\partial H_x\cap S)>0.
\end{align}
Each face of $S$ lies in a hyperplane. Two distinct hyperplanes meet in
a set of dimension at most $n-2$, so their intersection has zero
$\mathcal{H}^{n-1}$-measure. The only remaining possibility for
$\mathcal{H}^{n-1}(\partial H_x\cap\partial S)>0$ is that
$\partial H_x$ coincide with the hyperplane containing a face of $S$.
Since the half-space boundary then supports the cube from one side, this
would imply $|H_x\cap S|=0$ or $|S|$, contrary to
the result obtained above that
$\frac{|H_x\cap S|}{|S|}\in(0,1)$. Hence
$
\mathcal{H}^{n-1}(\partial H_x\cap\partial S)=0.
$
The convergences $x_m\to x$, $\varepsilon_m\to0$, and
$d_{\mathrm H}(S_m,S)\to0$ established above, together with
$\mu_x(\partial S)
=\mathcal{H}^{n-1}(\partial H_x\cap\partial S)=0$, verify the
hypotheses of \eqref{eq-new-moving-cube-measure} in Lemma
\ref{lem-new-moving-cube-blowup} with $r_m=\varepsilon_m$.
Using $Q_m^\circ=x_m+\varepsilon_mS_m^\circ$, the definition of the
push-forward measure, the definition of $\mu_{x_m,\varepsilon_m}$,
\eqref{eq-new-moving-cube-measure}, and the definition of $\mu_x$, we
obtain
\begin{align*}
\varepsilon_m^{1-n}\sigma_E(Q_m^\circ)
&=\varepsilon_m^{1-n}\sigma_E
\left(x_m+\varepsilon_mS_m^\circ\right)\\
&=\varepsilon_m^{1-n}\sigma_E\left(
\left\{y\in\mathbb R^n:
\frac{y-x_m}{\varepsilon_m}\in S_m^\circ\right\}\right)\\
&=\varepsilon_m^{1-n}
\left[\left(y\mapsto\frac{y-x_m}{\varepsilon_m}\right)_{\#}
\sigma_E\right](S_m^\circ)
=\mu_{x_m,\varepsilon_m}(S_m^\circ)\\
&\to\mu_x(S)
=\mathcal{H}^{n-1}(\partial H_x\cap S)
\end{align*}
as $m\to\infty$.
Dividing \eqref{eq-new-local-contradiction-detail} by
$\varepsilon_m^{n-1}$ and passing to the limit, we obtain
\begin{align*}
\operatorname{osc}_q(\mathbf{1}_{H_x};S)
\geq\left(\frac12+\eta\right)
\mathcal{H}^{n-1}(\partial H_x\cap S).
\end{align*}
Combining this inequality with \eqref{eq-new-cube-cell} and
\eqref{eq-new-limit-section-positive}, we obtain
\begin{align*}
0
&<\eta\mathcal{H}^{n-1}(\partial H_x\cap S)\\
&=\left(\frac12+\eta\right)
\mathcal{H}^{n-1}(\partial H_x\cap S)
-\frac12\mathcal{H}^{n-1}(\partial H_x\cap S)\\
&\leq
\operatorname{osc}_q(\mathbf{1}_{H_x};S)
-\frac12\mathcal{H}^{n-1}(\partial H_x\cap S)
\leq0,
\end{align*}
which is impossible. Therefore,
\eqref{eq-new-local-sharp-estimate-detail} holds.

\textbf{Step 2.} \noindent\textit{In this step, we prove the upper bound
\eqref{eq-new-localization-conclusion}.}
Let $\varepsilon\in(0,r_0)$ and fix an arbitrary family
$
\{Q_j\}_{j\in J}\in\mathscr{F}_{\varepsilon,1}^{*}(Q_0).
$
We divide $J$ into the following three disjoint classes:
$$
J_1
:=\left\{j\in J:\ \operatorname{osc}_q(\mathbf{1}_E;Q_j)<\tau\right\},
$$
$$
J_2
:=\left\{j\in J:\ \operatorname{osc}_q(\mathbf{1}_E;Q_j)\geq\tau,
\ Q_j^\circ\cap G\ne\emptyset\right\},
$$
and
$$
J_3
:=\left\{j\in J:\ \operatorname{osc}_q(\mathbf{1}_E;Q_j)\geq\tau,
\ Q_j^\circ\cap G=\emptyset\right\}.
$$

For $j\in J_1$, the defining inequality for $J_1$, together with the
cardinality bound
$\#J_1\leq\#J\leq\varepsilon^{1-n}$ from Definition
\ref{def-new-perimeter-isotropic} implies
\begin{align}\label{J1}
\varepsilon^{n-1}\sum_{j\in J_1}
\operatorname{osc}_q(\mathbf{1}_E;Q_j)
\leq\tau\varepsilon^{n-1}\#J_1
\leq\tau\varepsilon^{n-1}\#J
\leq\tau\varepsilon^{n-1}\varepsilon^{1-n}
=\tau.
\end{align}
For each $j\in J_2$, choose
$x_j\in Q_j^\circ\cap G$. Since $\varepsilon\in(0,r_0)$ and the
defining inequality for $J_2$ implies the lower bound
\eqref{eq-new-local-tau-detail}, we may apply
\eqref{eq-new-local-sharp-estimate-detail} to $Q_j$ and $x_j$.
Using this estimate, the pairwise disjointness of the interiors
$Q_j^\circ$, their inclusion in $Q_0$, and
$\sigma_E(Q_0)=P(E,Q_0)$ from
\eqref{eq-new-relative-variation-extension}, we obtain
\begin{align}\label{J2}
&\varepsilon^{n-1}\sum_{j\in J_2}
\operatorname{osc}_q(\mathbf{1}_E;Q_j)\nonumber\\
&\quad\leq\left(\frac12+\eta\right)
\sum_{j\in J_2}\sigma_E(Q_j^\circ)
=\left(\frac12+\eta\right)
\sigma_E\left(\bigcup_{j\in J_2}Q_j^\circ\right)\nonumber\\
&\quad\leq\left(\frac12+\eta\right)\sigma_E(Q_0)
=\left(\frac12+\eta\right)P(E,Q_0).
\end{align}

For the estimate of $J_3$, the choice of $G$ ensures
$\sigma_E(Q_0\setminus G)<\delta$.
For $j\in J_3$, let
$$
\theta_j:=\frac{|E\cap Q_j|}{|Q_j|}
\quad\hbox{and}\quad
m_j:=\min\{\theta_j,1-\theta_j\}.
$$
From the exact
formula and the two-sided estimate in Lemma
\ref{lem-new-characteristic-formula},
the fact $m_j^{\frac{1}{q}}\leq m_j^{\frac{n-1}{n}}$ because
$m_j\in[0,\frac12]$ and
$q\leq\frac{n}{n-1}$, $|Q_j|=\varepsilon^n$, Lemma
\ref{lem-new-relative-isoperimetric},
\eqref{eq-new-relative-variation-extension}, the
pairwise disjointness of the interiors $Q_j^\circ$, the inclusion
$
\bigcup_{j\in J_3}Q_j^\circ\subset Q_0\setminus G,
$
and $\sigma_E(Q_0\setminus G)<\delta$, we infer that
\begin{align}\label{J3}
&\varepsilon^{n-1}\sum_{j\in J_3}
\left[\fint_{Q_j}
|\mathbf{1}_E(x)-(\mathbf{1}_E)_{Q_j}|^q\,dx
\right]^{\frac{1}{q}}\nonumber\\
&\quad=\varepsilon^{n-1}\sum_{j\in J_3}
\left[\theta_j(1-\theta_j)^q
+(1-\theta_j)\theta_j^q\right]^{\frac{1}{q}}\nonumber\\
&\quad\leq C_{(q)}\varepsilon^{n-1}
\sum_{j\in J_3}m_j^{\frac1q}
\leq C_{(q)}\varepsilon^{n-1}
\sum_{j\in J_3}m_j^{\frac{n-1}{n}}\nonumber\\
&\quad=C_{(q)}\sum_{j\in J_3}
\min\{|E\cap Q_j|,|Q_j\setminus E|\}^{\frac{n-1}{n}}\nonumber\\
&\quad\leq C_{(n,q)}\sum_{j\in J_3}P(E,Q_j)
=C_{(n,q)}\sum_{j\in J_3}\sigma_E(Q_j^\circ)\nonumber\\
&\quad=C_{(n,q)}
\sigma_E\left(\bigcup_{j\in J_3}Q_j^\circ\right)
\leq C_{(n,q)}\sigma_E(Q_0\setminus G)
\leq C_{(n,q)}\delta.
\end{align}

Since the admissible family fixed above was arbitrary and the three
bounds are independent of that family, combining
\eqref{J1}, \eqref{J2}, \eqref{J3},
\eqref{eq-new-I-general}, and \eqref{eq-new-I-unit},
we find that, for every
$\varepsilon\in(0,r_0)$,
\begin{align*}
I_{\varepsilon,q}^{*}(E)
&=\sup_{\{Q_j\}_{j\in J}\in
\mathscr{F}_{\varepsilon,1}^{*}(Q_0)}
\varepsilon^{n-1}\sum_{j\in J}
\operatorname{osc}_q(\mathbf{1}_E;Q_j)\\
&=\sup_{\{Q_j\}_{j\in J}\in
\mathscr{F}_{\varepsilon,1}^{*}(Q_0)}
\sum_{i=1}^3\varepsilon^{n-1}\sum_{j\in J_i}
\operatorname{osc}_q(\mathbf{1}_E;Q_j)\\
&\leq\tau+\left(\frac12+\eta\right)P(E,Q_0)
+C_{(n,q)}\delta.
\end{align*}
Although $r_0$ may depend on $\eta$, $\tau$, and $\delta$, the preceding
estimate holds on a punctured neighborhood of zero for each fixed
triple $(\eta,\tau,\delta)\in(0,1)^3$. We may therefore first take the
upper limit as $\varepsilon\to0^+$ and then take the infimum over all
such triples to obtain
\begin{align*}
\limsup_{\varepsilon\to0^+}I_{\varepsilon,q}^{*}(E)
\leq\inf_{\eta,\tau,\delta\in(0,1)}
\left\{\tau+\left(\frac12+\eta\right)P(E,Q_0)
+C_{(n,q)}\delta\right\}
=\frac12P(E,Q_0),
\end{align*}
which proves \eqref{eq-new-localization-conclusion}.
This completes the proof of Proposition \ref{prop-new-localization}.
\end{proof}

\subsection{The Planar Upper Bound Beyond $q=2$}

Proposition \ref{prop-new-localization}
establishes the sharp upper bound in
every dimension $n\geq2$ for $q\in[1,\frac{n}{n-1}]$, and hence in the
case $n=2$ for
$q\in[1,2]$. We nevertheless record a direct planar proof, which makes both
the geometry and the equality case transparent and avoids the quantile
argument. We then
average the cell estimate over small common translations and transfer the
resulting bound to arbitrary planar sets, thereby extending the upper bound
beyond $q=2$.

\begin{lemma}\label{lem-new-planar-cell}
Let $S\subset\mathbb{R}^2$ be a unit square with arbitrary orientation
and let $H$ be a half-plane. Then, for every $q\in[1,2]$,
\begin{align}\label{eq-new-planar-cell-q}
\left[
\fint_S|\mathbf{1}_H(x)-(\mathbf{1}_H)_S|^q\,dx
\right]^{\frac{1}{q}}
\leq\frac{1}{2}\mathcal{H}^1(\partial H\cap S),
\end{align}
where the constant $\frac12$ is sharp.
\end{lemma}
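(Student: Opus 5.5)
The plan is to give a direct planar argument that avoids Lemma \ref{lem-new-quantile-density}. By H\"older's inequality the left-hand side of \eqref{eq-new-planar-cell-q} is nondecreasing in $q$, so it suffices to treat $q=2$. Let $v:=|H\cap S|$ and $A:=\mathcal H^1(\partial H\cap S)$. By Lemma \ref{lem-new-characteristic-formula}, at $q=2$ the oscillation equals $[v(1-v)^2+(1-v)v^2]^{1/2}=[v(1-v)]^{1/2}$. The claim is therefore equivalent to
\[
A^2\geq4v(1-v),
\]
which is exactly the planar case $n=2$ of \eqref{eq-new-cube-section-strong}. I will nevertheless prove it by elementary geometry so that the equality cases can be read off.

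Place $S=[-\frac12,\frac12]^2$. By the symmetries of the square, take the unit normal to $\partial H$ to be $u=(\cos\phi,\sin\phi)$ with $\phi\in[0,\frac\pi4]$. By complementation, assume $v\leq\frac12$, so the line $\partial H$ lies on the side of the centre where the chord cuts off the smaller piece. The section function $f(t)=\mathcal H^1(S\cap\{x\cdot u=t\})$ is an explicit even trapezoid. It equals the constant $1/\cos\phi$ for $|t|\leq\frac12(\cos\phi-\sin\phi)$, and it decreases linearly to $0$ at $|t|=\frac12(\cos\phi+\sin\phi)$.

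This gives two regimes.

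\textbf{Corner regime.} The cut-off piece is a right triangle with legs $a,b$, so $v=\frac{ab}{2}$ and $A^2=a^2+b^2\geq2ab=4v\geq4v(1-v)$, with equality only when $v=0$.

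\textbf{Trapezoid regime.} The chord crosses two opposite sides, so $A=1/\cos\phi\geq1\geq4v(1-v)$, with equality only when $\phi=0$ and $v=\frac12$.

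The two regimes together cover all positions of $\partial H$, so the $q=2$ inequality, and hence the inequality for every $q\in[1,2]$, follows. The main care point is checking that these two cases exhaust the configurations once the symmetry reductions are made; this reduces to noting that a line meets the boundary of the square in two points lying either on adjacent or on opposite sides.

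For sharpness, take $\partial H$ through the centre and parallel to a side. Then $|\mathbf 1_H-(\mathbf 1_H)_S|=\frac12$ almost everywhere on $S$ and $A=1$, so equality holds in \eqref{eq-new-planar-cell-q} for every $q$. Consequently no constant smaller than $\frac12$ works, exactly as in Proposition \ref{prop-new-cube-cell}.
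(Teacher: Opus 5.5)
Your proposal is correct and is essentially the paper's own argument. You reduce to $q=2$ via the monotonicity of normalized $L^q$ means and Lemma~\ref{lem-new-characteristic-formula}, which turns the claim into $\mathcal H^1(\partial H\cap S)^2\geq4\theta(1-\theta)$. You then split into the adjacent-sides case, a right triangle where $\ell^2\geq4\theta$ follows from $a^2+b^2\geq2ab$, and the opposite-sides case, where $\ell\geq1$. Sharpness comes from the central cut parallel to a side, as in the paper. The only differences are cosmetic: you parametrize by the triangle's legs on $[-\frac12,\frac12]^2$, whereas the paper works on $[0,1]^2$ with the offset $t$ and normal $(a,b)$, and reduces the same inequality to $2ab\leq a^2+b^2=1$.
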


\begin{proof}
Fix $q\in[1,2]$ and let $\theta:=|H\cap S|$. By the monotonicity of
the normalized $L^p$ norms and Lemma
\ref{lem-new-characteristic-formula}, applied with exponent $2$, we have
\begin{align*}
\left[\fint_S|\mathbf{1}_H(x)-(\mathbf{1}_H)_S|^q\,dx\right]^{\frac{1}{q}}
&\leq
\left[\fint_S|\mathbf{1}_H(x)-(\mathbf{1}_H)_S|^2\,dx\right]^{\frac{1}{2}}\\
&=\left[\theta(1-\theta)^2+(1-\theta)\theta^2\right]^{\frac{1}{2}}
=[\theta(1-\theta)]^{\frac{1}{2}}.
\end{align*}
It remains to prove
$[\theta(1-\theta)]^{\frac{1}{2}}\leq
\frac{1}{2}\mathcal H^1(\partial H\cap S)$. The cases
$\theta\in\{0,1\}$ are immediate, so assume $\theta\in(0,1)$. The remaining
inequality is unchanged if $H$ is replaced by the half-plane on the other
side of $\partial H$, since this sends $\theta$ to $1-\theta$ and leaves
$\partial H$ unchanged. Thus we may assume $\theta\leq\frac12$.
After a rigid motion, we may assume $S=[0,1]^2$. Let $(\alpha,\beta)$ be
the outward unit normal to $\partial H$. Then, for some $\tau\in\mathbb R$,
$H=\{(x,y):\alpha x+\beta y<\tau\}$
and $\alpha^2+\beta^2=1$.
The symmetries
\begin{align*}
(x,y)\mapsto(1-x,y),
\quad (x,y)\mapsto(x,1-y),
\quad\text{and}\quad (x,y)\mapsto(y,x)
\end{align*}
all map $S$ onto itself and preserve area and chord length. The first two
change the signs of $\alpha$ and $\beta$, respectively, and modify $\tau$
accordingly, while the last interchanges $\alpha$ and $\beta$. Hence a
suitable composition transforms the normal into
$
(a,b):=
\left(\max\{|\alpha|,|\beta|\},\min\{|\alpha|,|\beta|\}\right).
$
Let $t$ denote the transformed value of $\tau$. We may then write
\begin{align*}
H=\{(x,y):ax+by<t\}\quad\text{with}\quad
0\leq b\leq a \quad\text{and}
\quad a^2+b^2=1.
\end{align*}
By central symmetry, the line
$ax+by=\frac{a+b}{2}$ bisects $S$.
Hence $0<\theta\leq\frac12$ implies
$
0<t\leq\frac{a+b}{2}.
$
If $b=0$, the section has length
one, so the assertion follows from $\theta(1-\theta)\leq\frac{1}{4}$. Assume
$b>0$.
If $0<t\leq b$, then $H\cap S$ is a right triangle,
$\theta=\frac{t^2}{2ab}$,
and
$\mathcal H^1(\partial H\cap S)=\frac{t}{ab}$. Thus
\begin{align*}
[\theta(1-\theta)]^{\frac{1}{2}}
\leq\sqrt\theta
=\frac{t}{\sqrt{2ab}}
\leq\frac{t}{2ab}
=\frac12\mathcal H^1(\partial H\cap S).
\end{align*}
If $b\leq t\leq\frac{a+b}{2}$, the cutting line joins the lower and upper
sides of $S$, and its length is $\frac{1}{a}\geq1$. Hence
\begin{align*}
[\theta(1-\theta)]^{\frac{1}{2}}
\leq\frac12
\leq\frac12\mathcal H^1(\partial H\cap S).
\end{align*}
This proves \eqref{eq-new-planar-cell-q}. Finally, a line through the
center parallel to a pair of opposite sides
cuts off an area of $\frac{1}{2}$ and has
section length one; Lemma \ref{lem-new-characteristic-formula}
establishes
equality for every $q$, proving sharpness.
This completes the proof of Lemma
\ref{lem-new-planar-cell}.
\end{proof}

The preceding pointwise estimate
\eqref{eq-new-planar-cell-q} is sharp at $q=2$ and fails for $q>2$.
To reach and slightly exceed the explicit
exponent $\frac{19}{9}$, we combine the chord length in
$S$ with its average over small translations $S+z$. The next lemma makes
this precise.
In what follows, for $q\in(0,\infty)$ and $\theta\in[0,1]$, write
\begin{align}\label{eq-new-Phi-q}
\Phi_q(\theta)
:=\left[\theta(1-\theta)^q+(1-\theta)\theta^q\right]^{\frac{1}{q}}.
\end{align}
Let
$\rho:=\frac{41}{100}$
and let $\nu_\rho$ be the
\emph{radial probability measure} on $B(0,\rho)$
given by
\begin{align}\label{eq-new-planar-averaging-density}
d\nu_\rho(z)
:=\frac{\mathbf 1_{B(0,\rho)}(z)}
{2\pi\rho\sqrt{\rho^2-|z|^2}}\,dz.
\end{align}

\begin{lemma}\label{lem-new-averaged-planar-cell}
There exists a universal constant
$q_*\in(\frac{19}{9},3)$ such that, for every unit square $S\subset\mathbb R^2$
with arbitrary orientation, every
half-plane $H$, and every $q\in[2,q_*]$,
\begin{align}\label{eq-new-averaged-planar-cell}
\Phi_q(|H\cap S|)
\leq
\frac{13}{56}\mathcal H^1(\partial H\cap S)
+\frac{15}{56}\int_{B(0,\rho)}
\mathcal H^1\left(\partial H\cap(S+z)\right)\,d\nu_\rho(z).
\end{align}
\end{lemma}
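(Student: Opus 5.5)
The plan is to reduce \eqref{eq-new-averaged-planar-cell} to an explicit two-parameter inequality, prove it at $q=2$ with a uniform margin away from a degenerate set, and then extend to $q$ slightly beyond $\frac{19}{9}$ by continuity in $q$ and compactness. The appendix program is presumably the certificate for the middle step.

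\textbf{Normalization.} As in the proof of Lemma \ref{lem-new-planar-cell}, we take $S=[-\frac12,\frac12]^2$. The radial measure $\nu_\rho$ is invariant under rotations and reflections, so the symmetry reductions used there are still available. We may therefore write $H=\{ax+by<t\}$ with $0\le b\le a$ and $a^2+b^2=1$. By complementation, which fixes $\partial H$ and is symmetric for $\Phi_q$, we may further take $t\le 0$, so that $\theta:=|H\cap S|\le\frac12$.

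\textbf{The averaged chord.} The chord of $S+z$ along $\partial H$ depends only on $s:=t-z\cdot(a,b)$, and equals $f(s)$, the parallel-section function of the square in direction $(a,b)$. The density \eqref{eq-new-planar-averaging-density} is chosen so that the push-forward of $\nu_\rho$ under $z\mapsto z\cdot u$ is the uniform distribution on $[-\rho,\rho]$. This holds because the projection of this density is the arcsine weight integrated against the uniform measure on circles, which gives a uniform law. Hence the averaged term equals
\begin{align*}
\frac{1}{2\rho}\int_{t-\rho}^{t+\rho}f(s)\,ds
=\frac{F(t+\rho)-F(t-\rho)}{2\rho},
\end{align*}
where $F$ is the distribution function from Lemma \ref{lem-new-quantile-density}. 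In two dimensions $f$ is an explicit trapezoid: it has height $\frac1a$ and support $[-\frac{a+b}{2},\frac{a+b}{2}]$, with linear ramps of length $b$. So the right-hand side of \eqref{eq-new-averaged-planar-cell} becomes an explicit piecewise-polynomial function $R(a,t)$, and the left-hand side is $\Phi_q(F(t))$.

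\textbf{Case analysis and the certificate.} The needed inequality is $\Phi_q(F(t))\le R(a,t)$ on a compact parameter set. Near $\theta=0$, i.e. the tip of the triangle regime, both sides vanish. There I would compare asymptotics: $\Phi_q(\theta)\sim\theta^{1/q}$ against the chord $\sim\sqrt{\theta}$ and the averaged term, which is bounded below by a positive constant once $\rho>0$ and $t$ is within $\rho$ of the support. This region should be handled analytically with room to spare. The other delicate point is $\theta=\frac12$ with an axis-parallel cut, $b=0$ and $t=0$. There $\Phi_q=\frac12$ for every $q$, the chord is $1$, and the average is also $1$, so the right-hand side is $\frac{13}{56}+\frac{15}{56}=\frac12$ and equality holds. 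I would check the second-order expansion in $(b,t)$ to confirm that the inequality is not violated there. This needs $\partial_q\Phi_q=0$ at $\theta=\frac12$ and a quadratic deficit in $\theta$ that grows like $(q-1)$, controlled for $q$ near $2$. On the remaining compact set, the rigorous verification at $q=\frac{19}{9}$ would be done by the interval-arithmetic grid program of the Appendix, which is why these specific constants $\rho$, $\frac{13}{56}$, $\frac{15}{56}$ appear. At $q=2$ the inequality also follows from Lemma \ref{lem-new-planar-cell} together with the averaged term being at least $\Phi_2$ near the center.

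\textbf{Extension beyond $\frac{19}{9}$ (main obstacle).} The difficulty is that the inequality is tight at the equality point, so a plain continuity argument in $q$ fails. My approach would be to split the parameter set into a small neighbourhood $U$ of the equality configuration and its complement $K$. On $U$, I would show that the second-order coefficient of the deficit stays positive for $q$ in an interval containing $\frac{19}{9}$, using the explicit Taylor expansions. On $K$, the verified inequality at $q=\frac{19}{9}$ is strict, so by compactness it holds with a margin $\kappa>0$. Since $\Phi_q$ is uniformly continuous in $q$ on $[0,1]$, and indeed $\Phi_q$ is nonincreasing in $q$ only in the reverse sense needed, this margin persists for $q\le q_*:=\frac{19}{9}+\delta'$. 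Finally I would check that the inequality at $q=q_*$ implies it for every $q\in[2,q_*]$, using the monotonicity $\Phi_q\le\Phi_{q_*}$, which follows from Lyapunov's inequality.
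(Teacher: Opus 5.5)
Your outline is the paper's: normalize the cut, use that every one-dimensional projection of $\nu_\rho$ is uniform on $[-\rho,\rho]$ to write the averaged chord as $\frac{F(t+\rho)-F(t-\rho)}{2\rho}$, certify $q=\frac{19}{9}$ by an exact-arithmetic dyadic subdivision, descend to $[2,\frac{19}{9}]$ by monotonicity of $\Phi_q$ in $q$, and go slightly past $\frac{19}{9}$ with a strict margin plus uniform continuity of $(q,\theta)\mapsto\Phi_q(\theta)$. You diverge at the step you call the main obstacle, and there your diagnosis is off.

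Near the axis-parallel central cut, every translated level $t-z\cdot u$ with $|z|<\rho$ stays on the plateau $|s|\le\frac{a-b}{2}$ of the section function. So $A=\ell=\frac1a$ exactly, and the right-hand side of \eqref{eq-new-averaged-planar-cell} equals $\frac{1}{2a}\ge\frac12\ge\Phi_3(\theta)\ge\Phi_q(\theta)$ for every $q\le 3$, by \eqref{eq-new-Phi-below-half}. The paper encodes this as the box test \eqref{eq-new-plateau-box-test}. On those boxes the inequality holds for all $q\in[1,3]$ at once, so the margin $\gamma$ is extracted only on the remaining boxes and no expansion is needed.

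Your Taylor route can be made to work, but as written it is wrong. By the expansion in the proof of Lemma \ref{lem-new-saturation}, $\Phi_q(\frac12+x)=\frac12-(3-q)x^2+O(x^4)$. Hence the deficit is $\frac{b^2}{4}+(3-q)(\theta-\frac12)^2+\cdots$. The coefficient is $3-q$, not something growing like $q-1$, and there is an extra $b^2$ term coming from the chord length $\frac1a$.

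There are also three smaller problems.

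First, your claim that $q=2$ follows from Lemma \ref{lem-new-planar-cell} and $A\ge\Phi_2$ does not close. After using $\Phi_2\le\frac{\ell}{2}$ you would need $A\ge 2\Phi_2(\theta)$. This fails for the axis-parallel cut with $\theta=\frac15$, where $A=\frac{61}{82}<\frac45$. In any case, $q=2$ follows from $q=\frac{19}{9}$ by monotonicity.

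Second, near $\theta=0$ the right-hand side does not vanish, because the averaged chord stays bounded below. That corner is harmless. The genuinely tight configurations are small vertex cuts away from $\theta=0$: for a $45^\circ$ vertex cut with $\theta=0.06$, the two sides at $q=\frac{19}{9}$ differ by less than $5\cdot 10^{-4}$. This is where the certificate must be sharp.

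Third, the certificate does not test $\Phi_{19/9}\le R$ directly. The paper first applies H\"older, $\Phi_{19/9}\le\Phi_2^{10/19}\Phi_{9/4}^{9/19}=G(\theta)$ as in \eqref{eq-new-Phi-log-convex-19-9}, and uses that $G$ is increasing on $[0,\frac12]$. Each box then needs only the rational bound $2^{19}\widehat W_+$ with outward-rounded fourth roots. Your plan cites the appendix program without this reduction, and describing it as certifying $\Phi_{19/9}\le R$ on your set $K$ misstates what it checks. Its actual output (strictly accepted ordinary boxes plus plateau boxes) does still suffice to complete your argument.
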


\begin{proof}
We first verify two properties of $\nu_\rho$. First, the formula in
\eqref{eq-new-planar-averaging-density} defines a probability measure.
Indeed, by polar coordinates,
\begin{align*}
\int_{\mathbb R^2}d\nu_\rho(z)
&=\int_{B(0,\rho)}
\frac{dz}{2\pi\rho\sqrt{\rho^2-|z|^2}}
=\int_0^{2\pi}\int_0^\rho
\frac{r}{2\pi\rho\sqrt{\rho^2-r^2}}\,dr\,d\vartheta\\
&=\frac1\rho\int_0^\rho
\frac{r}{\sqrt{\rho^2-r^2}}\,dr
=\frac1\rho
\left[-\sqrt{\rho^2-r^2}\right]_{r=0}^{r=\rho}
=1.
\end{align*}
For the second property, let $e$ be any unit vector and choose a unit
vector $e^\perp$ orthogonal to $e$. Writing $z=ue+ve^\perp$ and applying
Fubini's theorem, we obtain, for every integrable function
$g:[-\rho,\rho]\to\mathbb R$,
\begin{align*}
\int_{B(0,\rho)}g(e\cdot z)\,d\nu_\rho(z)
=\int_{-\rho}^{\rho}g(u)
\left[
\int_{-\sqrt{\rho^2-u^2}}^{\sqrt{\rho^2-u^2}}
\frac{dv}{2\pi\rho\sqrt{\rho^2-u^2-v^2}}
\right]du
=\frac1{2\rho}\int_{-\rho}^{\rho}g(u)\,du.
\end{align*}
The last identity is the property of $\nu_\rho$ used below to derive
\eqref{eq-new-uniform-projection-chord-average}.

We next reduce the geometric inequality to two explicit
parameterizations of normalized cuts.
The cases $|H\cap S|\in\{0,1\}$ are immediate. In other cases, the
symmetries used in the proof of Lemma \ref{lem-new-planar-cell}, together
with replacement of $H$ by the half-plane on the other side of
$\partial H$, reduce to
\begin{align*}
S=[0,1]^2,\
H=\{(x,y):ax+by<t\},\
0\leq b\leq a,\  a^2+b^2=1,\text{ and }
0\leq t\leq\frac{a+b}{2}.
\end{align*}
Put
\begin{align*}
p:=\frac ba\in[0,1],\quad
c:=\frac1a=\sqrt{1+p^2},\quad
\tau:=\frac ta,
\end{align*}
and, for $s\in\mathbb R$, define
\begin{align*}
F_p(s):=
\left|\{(x,y)\in[0,1]^2:x+py<s\}\right|.
\end{align*}
For every $p>0$, one has
\begin{align}\label{eq-new-Fp-profile}
F_p(s)=
\begin{cases}
0,&s\leq0,\\
\dfrac{s^2}{2p},&0\leq s\leq p,\\
s-\dfrac p2,&p\leq s\leq1,\\
1-\dfrac{(1+p-s)^2}{2p},&1\leq s\leq1+p,\\
1,&s\geq1+p.
\end{cases}
\end{align}
The limiting profile is $F_0(s)=\min\{1,\max\{0,s\}\}$. Thus
$F_p(s)$ is the area of the portion of the unit square lying below the
line $x+py=s$. Moreover, at every point where $F_p$ is differentiable,
\begin{align*}
\mathcal H^1\left(\{(x,y)\in[0,1]^2:x+py=s\}\right)
=cF_p'(s).
\end{align*}
By the definition of $F_p(s)$, we easily find that
$F_p(s)$ is nondecreasing in $s$ and non-increasing in $p$.

Let $\theta:=|H\cap S|=F_p(\tau)$ and
$\ell:=\mathcal H^1(\partial H\cap S)$. The uniform projection property
and the fundamental theorem of calculus imply
\begin{align}\label{eq-new-uniform-projection-chord-average}
A:=\int_{B(0,\rho)}
\mathcal H^1\left(\partial H\cap(S+z)\right)\,d\nu_\rho(z)
=\frac{F_p(\tau+\rho c)-F_p(\tau-\rho c)}{2\rho}.
\end{align}
We distinguish two geometric regimes.
If $b>0$ and $0\leq t\leq b$, then
$\partial H$ joins the adjacent sides $x=0$ and $y=0$, and $H\cap S$
is a possibly degenerate triangle. To obtain a parametrization
on $[0,1]^2$, let $r:=\frac{t}{b}\in[0,1]$. Then
\begin{align}\label{eq-new-quantitative-triangle-parameters}
\tau=pr,\quad
\theta=\frac{pr^2}{2},\quad
\ell=cr,\quad\text{and}\quad (p,r)\in[0,1]^2.
\end{align}
For this parameterization we include the edge $p=0$ by continuity. The
remaining case $b=t=0$ has $\theta=0$ and is immediate.
If $b\leq t\leq\frac{a+b}{2}$, then $\partial H$ joins the opposite sides
$y=0$ and $y=1$, and $H\cap S$ is a possibly degenerate trapezoid.
To obtain a parametrization on $[0,1]^2$, let
$r:=\frac{2(t-b)}{a-b}\in[0,1]$, with the endpoint $p=1$ adjoined by
continuity. Then
\begin{align}\label{eq-new-quantitative-trapezoid-parameters}
\tau=p+\frac{1-p}{2}r,\quad
\theta=\frac{p+(1-p)r}{2},\quad
\ell=c,\quad\text{and}\quad (p,r)\in[0,1]^2.
\end{align}
The interface $t=b$ is represented by $r=1$ in
\eqref{eq-new-quantitative-triangle-parameters} and by $r=0$ in
\eqref{eq-new-quantitative-trapezoid-parameters}; there the trapezoid
degenerates to the same triangle.
Thus the two copies of $[0,1]^2$ in
\eqref{eq-new-quantitative-triangle-parameters} and
\eqref{eq-new-quantitative-trapezoid-parameters} contain all normalized
cuts, including their limiting cases.

We next establish the elementary one-variable estimates needed below. First,
using the monotonicity of normalized $L^q$ means, we find that, for
$\theta\in[0,1]$ and $q\in[1,3]$,
\begin{align}\label{eq-new-Phi-below-half}
\Phi_q(\theta)
&\leq\Phi_3(\theta)
=\left[
\theta(1-\theta)^3+(1-\theta)\theta^3
\right]^{\frac13}\nonumber\\
&=\left[
\theta(1-\theta)\left((1-\theta)^2+\theta^2\right)
\right]^{\frac13}\nonumber\\
&=\left[
\frac18-2\left(\theta(1-\theta)-\frac14\right)^2
\right]^{\frac13}
\leq\frac12.
\end{align}
We now reduce the endpoint estimate to a scalar sufficient condition.
Since
$
\frac{19}{9}=2\cdot\frac59+\frac94\cdot\frac49,
$
the weighted H\"older inequality with conjugate exponents
$\frac95$ and $\frac94$ implies
\begin{align}\label{eq-new-Phi-log-convex-19-9}
\Phi_{\frac{19}{9}}(\theta)
&\leq\Phi_2(\theta)^{\frac{10}{19}}
\Phi_{\frac{9}{4}}(\theta)^{\frac{9}{19}}\notag\\
&=\left\{\left[\theta(1-\theta)\right]^9
\left[(1-\theta)^{\frac{5}{4}}
+\theta^{\frac{5}{4}}\right]^4
\right\}^{\frac{1}{19}}
=:G(\theta).
\end{align}
Here
\begin{align*}
\Phi_2(\theta)^2
=\theta(1-\theta)
\quad\text{and}\quad
\Phi_{\frac{9}{4}}(\theta)^{\frac{9}{4}}
=\theta(1-\theta)
\left[(1-\theta)^{\frac{5}{4}}
+\theta^{\frac{5}{4}}\right].
\end{align*}
Let
\begin{align*}
R:=\frac{13}{28}\ell+\frac{15}{28}A.
\end{align*}
Thus $\frac{R}{2}$ is exactly the right-hand side of
\eqref{eq-new-averaged-planar-cell}. In view of
\eqref{eq-new-Phi-log-convex-19-9}, its endpoint case follows from
$R\geq2G(\theta)$. Since both sides are nonnegative, this is equivalent
to
\begin{align}\label{eq-new-endpoint-sufficient-condition}
R^{19}\geq
2^{19}\left[\theta(1-\theta)\right]^9
\left[(1-\theta)^{\frac{5}{4}}
+\theta^{\frac{5}{4}}\right]^4.
\end{align}

It remains to prove \eqref{eq-new-endpoint-sufficient-condition}.
We first claim that $G$, the right-hand side of
\eqref{eq-new-Phi-log-convex-19-9}, is increasing on
$[0,\frac12]$. Indeed, $\Phi_2$ is increasing there by the first
identity above. For $\Phi_{\frac{9}{4}}$, let
$y:=[\frac{\theta}{1-\theta}]
^{\frac{1}{4}}\in[0,1]$. Then
\begin{align*}
\frac{d}{d\theta}\Phi_{\frac{9}{4}}(\theta)
^{\frac{9}{4}}
=(1-\theta)^{\frac{9}{4}}(1-y)
\left(\sum_{j=0}^{8}y^j-\frac94y^4\right)
\geq0,
\end{align*}
where the inequality follows from $\sum_{j=0}^{8}y^j\geq9y^4$.

We now prove \eqref{eq-new-endpoint-sufficient-condition} uniformly
on the two parameter squares
in \eqref{eq-new-quantitative-triangle-parameters} and
\eqref{eq-new-quantitative-trapezoid-parameters}. Since these squares
are closed, the boundary cases $p=0$ and $p=1$ are included, with
$F_0$ interpreted as above. We cover each parameter square by finitely
many dyadic rectangles. A computer is used only to carry out this finite
subdivision and check the explicit rational inequalities stated below.

To bound the square and fourth roots by rational numbers, fix $N:=90$.
For a nonnegative rational number $x$ and an integer $d\geq2$, define
\begin{align*}
k_d(x):=\max\left\{k\in\mathbb{Z}_+:
k^d\leq 2^{Nd}x\right\}
\end{align*}
and let
\begin{align*}
L_d(x):=2^{-N}k_d(x)
\quad\text{and}\quad
U_d(x):=
\begin{cases}
L_d(x),&L_d(x)^d=x,\\
L_d(x)+2^{-N},&L_d(x)^d<x.
\end{cases}
\end{align*}
By construction,
$
L_d(x)\leq x^{\frac{1}{d}}
\leq U_d(x),
$
and both bounds are dyadic rational numbers. Consequently, after
clearing denominators, every comparison involving these bounds is an
inequality between integers.

For each of the two parameterizations in
\eqref{eq-new-quantitative-triangle-parameters} and
\eqref{eq-new-quantitative-trapezoid-parameters}, let
\begin{align*}
\mathcal R=[p_-,p_+]\times[r_-,r_+]\subset[0,1]^2
\end{align*}
be a dyadic rectangle in the corresponding $(p,r)$-parameter square,
and put
\begin{align*}
c_-:=L_2(1+p_-^2)\quad\text{and}\quad c_+:=U_2(1+p_+^2).
\end{align*}
In the \emph{triangular regime} define
\begin{align*}
\tau_-:=p_-r_-,\quad
\tau_+:=p_+r_+,\quad
\theta_+:=\frac{p_+r_+^2}{2},\quad\text{and}\quad
\ell_-:=c_-r_-,
\end{align*}
and in the \emph{opposite-side regime} define
\begin{align*}
\tau_-:=p_-+\frac{1-p_-}{2}r_-,\quad
\tau_+:=p_++\frac{1-p_+}{2}r_+,
\end{align*}
\begin{align*}
\theta_+:=\frac{p_++(1-p_+)r_+}{2},\quad\text{and}\quad
&\ell_-:=c_-.
\end{align*}
The monotonicity of $F_p$ implies the following lower bound for the averaged
chord in \eqref{eq-new-uniform-projection-chord-average} throughout
$\mathcal R$:
\begin{align*}
A_-:=\max\left\{0,
\frac{F_{p_+}(\tau_-+\rho c_-)
-F_{p_-}(\tau_+-\rho c_-)}{2\rho}\right\}.
\end{align*}
Let
\begin{align*}
R_-:=\frac{13}{28}\ell_-+\frac{15}{28}A_-.
\end{align*}
Then $R\geq R_-$ throughout $\mathcal R$.
In both parameterizations, $0\leq\theta\leq\theta_+\leq\frac12$
throughout $\mathcal R$.
Define the rational upper bounds
\begin{align*}
\widehat S_+
:=(1-\theta_+)U_4(1-\theta_+)
+\theta_+U_4(\theta_+)
\quad\text{and}\quad
\widehat W_+
:=\left[\theta_+(1-\theta_+)\right]^9
\widehat S_+^4.
\end{align*}
Since $G$, the right-hand side of
\eqref{eq-new-Phi-log-convex-19-9}, is increasing on
$[0,\frac12]$, and since
$x^{\frac{1}{4}}\leq U_4(x)$, we obtain, throughout
$\mathcal R$,
\begin{align}\label{eq-new-box-weight-upper-bound}
G(\theta)^{19}
&\leq G(\theta_+)^{19}\notag
=\left[\theta_+(1-\theta_+)\right]^9
\left[(1-\theta_+)^{\frac{5}{4}}
+\theta_+^{\frac{5}{4}}\right]^4\notag\\
&\leq\left[\theta_+(1-\theta_+)\right]^9
\left[(1-\theta_+)U_4(1-\theta_+)
+\theta_+U_4(\theta_+)\right]^4\notag\\
&=\left[\theta_+(1-\theta_+)\right]^9\widehat S_+^4
=\widehat W_+.
\end{align}

We now claim that if $\mathcal R$ satisfies
\begin{align}\label{eq-new-exact-box-test}
R_-^{19}
>2^{19}\widehat W_+,
\end{align}
then \eqref{eq-new-averaged-planar-cell} with
$q=\frac{19}{9}$ holds for every
normalized cut whose parameters $(p,r)$ lie in $\mathcal R$.
Indeed, by their definitions, $R_-\geq0$ and $\widehat W_+\geq0$. Taking
nineteenth roots and using
\eqref{eq-new-Phi-log-convex-19-9},
\eqref{eq-new-box-weight-upper-bound}, and $R\geq R_-$, we obtain,
for every $(p,r)\in\mathcal R$,
\begin{align*}
\Phi_{\frac{19}{9}}(\theta)
&\leq G(\theta)
\leq\widehat W_+^{\frac{1}{19}}
<\frac{R_-}{2}
\leq\frac R2
=\frac{13}{56}\ell+\frac{15}{56}A.
\end{align*}
This proves the claim. Moreover, every quantity in
\eqref{eq-new-exact-box-test} is rational; hence, after clearing
denominators, \eqref{eq-new-exact-box-test} is an inequality between
integers.

It remains to show that every point of the two parameter squares lies
in a rectangle to which either \eqref{eq-new-exact-box-test} or a direct
argument applies, and then to extend the endpoint estimate
to the full stated range. We do this in three steps.

We first identify the rectangles for which the desired inequality can
be proved without \eqref{eq-new-exact-box-test}. Let $\mathcal R$ be a
dyadic rectangle in the opposite-side parameter square and suppose that
\begin{align}\label{eq-new-plateau-box-test}
\tau_--\rho c_+\geq p_+
\quad\text{and}\quad
\tau_++\rho c_+\leq1.
\end{align}
Since $p\leq p_+$, $\tau_-\leq\tau\leq\tau_+$, and $c\leq c_+$
for every $(p,r)\in\mathcal R$, we have
\begin{align*}
p\leq p_+
\leq\tau_--\rho c_+
\leq\tau-\rho c
\leq\tau+\rho c
\leq\tau_++\rho c_+
\leq1.
\end{align*}
Thus $[\tau-\rho c,\tau+\rho c]\subset[p,1]$, on which
\eqref{eq-new-Fp-profile} implies
$F_p(s)=s-\frac{p}{2}$. By
\eqref{eq-new-uniform-projection-chord-average}
and \eqref{eq-new-quantitative-trapezoid-parameters},
we find that
\begin{align*}
A=\frac{F_p(\tau+\rho c)-F_p(\tau-\rho c)}{2\rho}
=\frac{(\tau+\rho c-\frac{p}{2})
-(\tau-\rho c-\frac{p}{2})}{2\rho}
=c=\ell.
\end{align*}
Since $c=\sqrt{1+p^2}\geq1$, \eqref{eq-new-Phi-below-half} now implies
\begin{align*}
\Phi_{\frac{19}{9}}(\theta)
\leq\frac12
\leq\frac c2
=\frac{13}{56}\ell+
\frac{15}{56}A.
\end{align*}
Hence the desired inequality holds for every normalized cut whose
parameters lie in a rectangle satisfying
\eqref{eq-new-plateau-box-test}.

Second, we construct finite dyadic partitions of the two parameter squares.
In the triangular parameter square, a rectangle is retained once it
satisfies \eqref{eq-new-exact-box-test}; otherwise its longer side is
bisected. In the opposite-side parameter square, a rectangle is first
retained if it satisfies \eqref{eq-new-plateau-box-test}; if not, it is
retained if it satisfies \eqref{eq-new-exact-box-test}; only when
neither condition holds is its longer side bisected. When the side
lengths are equal, the $p$-interval is bisected.

The accompanying exact-arithmetic verification carries out this
subdivision using the rational bounds defined above. It leaves no
unresolved rectangle and produces the following finite dyadic
partitions, where the initial parameter square is regarded as generation
zero:
\begin{align*}
\begin{array}{c@{\qquad}r@{\qquad}r@{\qquad}r@{\qquad}r}
\text{regime}
&\eqref{eq-new-exact-box-test}
&\eqref{eq-new-plateau-box-test}
&\text{total}
&\text{maximal generation}\\ \hline
\text{triangle}&97\,061&0&97\,061&26\\
\text{opposite-side}&5\,281&64&5\,345&21.
\end{array}
\end{align*}
The second and third columns record the numbers of retained rectangles
treated by \eqref{eq-new-exact-box-test} and
\eqref{eq-new-plateau-box-test}, respectively, and the last column
records the maximal generation reached. Thus this verification
constructs a finite collection of retained rectangles. Since the two
children produced at any bisection have union equal to their parent,
these rectangles cover both parameter squares. Moreover, the rational
bounds ensure that the estimates above hold uniformly on each retained
rectangle, not merely at finitely many selected parameter values. The
claim proved
before Step 1 applies to the rectangles in the second column, and the
direct argument in Step 1 applies to those in the third column. Hence the
desired inequality holds for every admissible parameter pair. By
\eqref{eq-new-quantitative-triangle-parameters} and
\eqref{eq-new-quantitative-trapezoid-parameters}, it therefore covers
every normalized cut.

All comparisons in this finite construction are inequalities between
rational numbers and hence, after clearing denominators, between
integers. The complete subdivision and the corresponding comparisons
are carried out by the exact-arithmetic program reproduced in
Appendix \ref{app-new-planar-cell-certificate}.
We have now proved \eqref{eq-new-averaged-planar-cell} for
$q=\frac{19}{9}$.

We next pass from
$q=\frac{19}{9}$ to
$2\leq q\leq\frac{19}{9}$.
For fixed $\theta$, $\Phi_q(\theta)$ is a normalized $L^q$ norm and
therefore increases with $q$. Hence, for every
$q\in[2,\frac{19}{9}]$,
\begin{align*}
\Phi_q(\theta)
\leq\Phi_{\frac{19}{9}}(\theta)
\leq\frac{13}{56}\ell+\frac{15}{56}A.
\end{align*}

It remains to extend the estimate slightly beyond $\frac{19}{9}$.
Let $\mathscr R_0$ be the finite family of retained rectangles
accepted by \eqref{eq-new-exact-box-test}, and set
\begin{align*}
\gamma
:=\min_{\mathcal R\in\mathscr R_0}
\left\{\frac{R_-(\mathcal R)}{2}
-\widehat W_+(\mathcal R)^{\frac{1}{19}}\right\}>0,
\end{align*}
where the arguments indicate the rectangle from which the bounds are
computed. The estimates above then give
$
\Phi_{\frac{19}{9}}(\theta)\leq\frac{R}{2}-\gamma
$
for every cut whose parameters lie in one of these rectangles. On each
plateau rectangle, the preceding computation gives $A=\ell=c\geq1$;
hence \eqref{eq-new-Phi-below-half} proves
\eqref{eq-new-averaged-planar-cell} for every $q\in[1,3]$.

The map $(q,\theta)\mapsto\Phi_q(\theta)$ is uniformly continuous on
$[\frac{19}{9},3]\times[0,1]$. Therefore, there exists
$q_*\in(\frac{19}{9},3)$ such that
\begin{align*}
\sup_{\theta\in[0,1]}
\left|\Phi_q(\theta)-\Phi_{\frac{19}{9}}(\theta)\right|<\gamma
\end{align*}
for every $q\in[\frac{19}{9},q_*]$. The desired estimate now follows on the
rectangles in $\mathscr R_0$ from the strict margin and on the plateau rectangles
from the preceding direct bound. Together with the monotonicity argument,
this proves the stated range $q\in[2,q_*]$.

This completes the proof of Lemma \ref{lem-new-averaged-planar-cell}.
\end{proof}

\begin{remark}\label{rem-new-planar-constants-not-optimized}
The proof gives a universal exponent $q_*>\frac{19}{9}$, but it
does not extract a useful numerical value from the smallest margin in the
finite subdivision. Thus $\frac{19}{9}$ remains the explicit exponent certified
directly by the rational inequalities.
The averaging radius and the two coefficients in Lemma
\ref{lem-new-averaged-planar-cell} were chosen for convenience.
Optimizing these parameters may enlarge the exponent range obtained from
this particular averaged cell inequality. This observation concerns only
the present proof and does not determine the optimal planar exponent.
\end{remark}

The pointwise planar bound
$\Phi_q(\theta)\leq\frac{\ell}{2}$ used in Step 1 of
Proposition \ref{prop-new-localization} holds for $1\leq q\leq2$.
Figure \ref{fig-new-planar-cell-degeneracies} illustrates why it fails for
$q>2$ and how Lemma \ref{lem-new-averaged-planar-cell} repairs it by
translation averaging; Lemma \ref{lem-new-aggregate-planar-localization}
below then transfers and sums the averaged estimate.
We next globalize the averaged cell estimate.
Indeed, for a $45^\circ$ vertex cut with shaded leg length $r$,
$\theta=\frac{r^2}{2}$, $\ell=\sqrt2r$, and
$\frac{\Phi_q(\theta)}{\ell}
\sim_q r^{\frac{2}{q}-1}\to\infty$
as $r\to0^+$ for $q>2$.
Thus $\ell$ cannot control $\Phi_q(\theta)$ pointwise, whereas
$A:=\int_{B(0,\rho)}\ell(z)\,d\nu_\rho(z)$, with
$\ell(z):=\mathcal H^1(\partial H\cap(S+z))$, satisfies
$A\to\frac{\rho}{2}>0$. The central cut in panel (a) still has
$\theta=\frac{1}{2}$ and $\ell=A=1$, preserving equality.

\begin{figure}[htbp]
\centering
\begin{tikzpicture}[scale=1.05,line cap=round,line join=round,
every node/.style={font=\small}]
\fill[gray!25] (0,0.6) rectangle (2,1.6);
\draw[blue!65!black,dashed] (0.25,0.85) rectangle (2.25,2.85);
\draw[red!65,dashed,very thick] (0.25,1.6)--(2.25,1.6);
\draw[thick] (0,0.6) rectangle (2,2.6);
\draw[red,very thick] (0,1.6)--(2,1.6);
\node[anchor=east] at (-0.03,2.45) {$S$};
\node[blue!65!black,anchor=west] at (2.28,2.73) {$S+z$};
\node at (1,-0.02) {(a) central: $\theta=\frac12$, $\ell=A=1$};

\fill[blue!10] (4.42,0.22)--(5.6,0.22)--(4.42,1.40)--cycle;
\draw[blue!65!black,dashed] (4.42,0.22) rectangle (6.42,2.22);
\draw[red!65,dashed,very thick] (4.42,1.40)--(5.6,0.22);
\fill[gray!25] (4.8,0.6)--(5.22,0.6)--(4.8,1.02)--cycle;
\draw[thick] (4.8,0.6) rectangle (6.8,2.6);
\draw[red,very thick] (4.8,1.02)--(5.22,0.6);
\draw[->,blue!65!black] (4.78,0.58)--(4.44,0.24);
\node[blue!65!black] at (4.60,0.33) {$z$};
\node[red,anchor=east] at (4.38,1.42) {$\partial H$};
\node at (5.16,1.08) {$\ell$};
\node[red!65] at (5.02,1.54) {$\ell(z)>\ell$};
\node[blue!65!black,anchor=west] at (4.42,2.36) {$S+z$};
\node[anchor=west] at (6.82,2.45) {$S$};
\node at (3.4,3.25) {$A=\displaystyle\int_{B(0,\rho)}\ell(z)\,d\nu_\rho(z)$};
\node at (5.6,-0.02) {(b) vertex: $\ell\to0$,
$A\to\frac{\rho}{2}$};
\end{tikzpicture}
\caption{Translation averaging repairs the vertex-cut degeneration}
\label{fig-new-planar-cell-degeneracies}
\end{figure}

The common translation enters the summation below separately. For each fixed
$z$, every square $Q_j$ is shifted by the same vector $\varepsilon z$; hence
the interiors $(Q_j+\varepsilon z)^\circ$ remain pairwise disjoint and
\begin{align*}
\sum_j\sigma_E((Q_j+\varepsilon z)^\circ)
\leq\sigma_E(\mathbb R^2).
\end{align*}
Only after this estimate do we integrate with respect to $z$.

We now state the resulting aggregate localization
lemma.

\begin{lemma}
\label{lem-new-aggregate-planar-localization}
{Let $q_*$ be as in Lemma
\ref{lem-new-averaged-planar-cell}. If $q\in[2,q_*]$ and}
$E\subset Q_0=(0,1)^2$ is measurable with
$P(E,Q_0)<\infty$, then
\begin{align}\label{eq-new-aggregate-planar-localization}
\limsup_{\varepsilon\to0^+}I_{\varepsilon,q}^{*}(E)
\leq\frac12P(E,Q_0).
\end{align}
\end{lemma}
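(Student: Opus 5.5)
We will adapt the localization scheme of Proposition \ref{prop-new-localization}, replacing the pointwise cell inequality \eqref{eq-new-cube-cell} by the averaged inequality \eqref{eq-new-averaged-planar-cell}. Fix $\eta,\tau,\delta\in(0,1)$, and let $G$ be the compact set from Lemma \ref{lem-new-uniform-blowup} with $\sigma_E(Q_0\setminus G)<\delta$.

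\textbf{Local averaged estimate.} The first step is a planar analogue of Step 1 of Proposition \ref{prop-new-localization}. We aim to find $r_0\in(0,1)$ with the following property. For every $\varepsilon\in(0,r_0)$, every oriented square $Q\subset Q_0$ of side $\varepsilon$ containing a point of $G$ in its interior, and every $Q$ with $\operatorname{osc}_q(\mathbf 1_E;Q)\ge\tau$,
\begin{align*}
\varepsilon\operatorname{osc}_q(\mathbf 1_E;Q)
\le\frac{13}{56}\sigma_E(Q^\circ)
+\frac{15}{56}\int_{B(0,\rho)}\sigma_E\bigl((Q+\varepsilon z)^\circ\bigr)\,d\nu_\rho(z)+\eta\varepsilon .
\end{align*}
We argue by contradiction, exactly as before. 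Blow up at $x_m\in G\cap Q_m^\circ$ and extract a subsequence so that $S_m\to S$, $x_m\to x$, and the set converges to the half-plane $H_x$. The left side then converges to $\Phi_q(|H_x\cap S|)$, using \eqref{eq-new-moving-cube-set}. The first term on the right converges to $\mathcal H^1(\partial H_x\cap S)$ by \eqref{eq-new-moving-cube-measure}.

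For the averaged term we also need the translated squares $S_m+z$. If $\mu_x(\partial(S+z))=0$, Lemma \ref{lem-new-moving-cube-blowup} gives pointwise convergence in $z$. This condition fails only when the line $\partial H_x$ contains a side of $S+z$, which happens on a $\nu_\rho$-null set of $z$, since $\nu_\rho$ is absolutely continuous. The integrands are uniformly bounded, because the mass $\mu_{x_m,\varepsilon_m}(B(0,R))$ is bounded. So dominated convergence (or Fatou for the limsup) passes the limit inside the integral. In the limit we obtain the strict reverse of \eqref{eq-new-averaged-planar-cell}, which is a contradiction.

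There is a complication: the translated square $Q+\varepsilon z$ may leave $Q_0$. This is harmless, because $\sigma_E$ is the extension by zero, so the right side only becomes smaller, and the blow-up is still valid since $x\in Q_0$ is interior.

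\textbf{Summation.} Split the family into $J_1,J_2,J_3$ as in Step 2 of Proposition \ref{prop-new-localization}.

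For $J_1$, the contribution is at most $\tau$, as in \eqref{J1}.

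For $J_3$ we cannot use \eqref{J3}, since $q>2=\frac{n}{n-1}$. Instead we use Lemma \ref{lem-new-characteristic-formula} together with Lemma \ref{lem-new-relative-isoperimetric}: $\Phi_q(\theta)\lesssim m^{1/q}$ and $m^{1/2}\lesssim\varepsilon^{-1}P(E,Q)$. This gives $\varepsilon\Phi_q\lesssim\varepsilon^{1-2/q}P(E,Q)^{2/q}$, which is not linear in the perimeter.

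To handle this we use the threshold $\tau$: on $J_3$, $\Phi_q\ge\tau$ forces $m\gtrsim\tau^{q}$. Hence
\begin{align*}
\varepsilon\Phi_q\le\varepsilon\le C_\tau\,\varepsilon m^{1/2}\le C_{\tau}P(E,Q),
\end{align*}
and summing gives a $J_3$ contribution of at most $C_{\tau}\delta$. Choosing $\delta$ after $\tau$ makes this small.

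For $J_2$, apply the local averaged estimate and sum. The term $\sum_j\eta\varepsilon\le\eta\varepsilon^{0}\cdot\varepsilon\#J\cdot\varepsilon^{-1}$ is at most $\eta$, using $\#J\le\varepsilon^{-1}$. For each fixed $z$, the shifted interiors $(Q_j+\varepsilon z)^\circ$ are pairwise disjoint, so $\sum_j\sigma_E\bigl((Q_j+\varepsilon z)^\circ\bigr)\le P(E,Q_0)$. Integrating in $z$ (Tonelli) and using $\frac{13}{56}+\frac{15}{56}=\frac12$ gives a $J_2$ bound of $\frac12P(E,Q_0)+\eta$.

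Letting $\varepsilon\to0^+$, then $\delta\to0$, then $\tau,\eta\to0$, proves \eqref{eq-new-aggregate-planar-localization}.

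\textbf{Main obstacle.} The delicate point is the first step: justifying convergence of the averaged blow-up term uniformly under moving and rotating squares, including the measure-zero exceptional translations and squares partly outside $Q_0$. The growth issue on $J_3$ is secondary but requires choosing the order of parameters correctly, with $\delta$ chosen after $\tau$.
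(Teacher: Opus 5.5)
Your proposal is correct and follows the paper's architecture. The first part matches: the same blow-up contradiction transfers Lemma \ref{lem-new-averaged-planar-cell} to small squares meeting $G$, with an additive error $\eta\varepsilon$, the $\nu_\rho$-null set of exceptional translations, and dominated convergence justified by the local mass bound. The second part also matches: the same Tonelli argument over common translations handles the good squares.

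The one genuine difference is how you treat squares avoiding $G$.

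\textbf{Paper's route.} The paper uses no oscillation threshold at all. The strict reverse of the local estimate, with its $+\zeta\varepsilon$ term, already forces $\operatorname{osc}_q>\zeta/2$, so your $\tau$ is redundant in the local step. The paper then splits only into squares meeting or avoiding $G$. It controls the latter by H\"older's inequality on the finite sum, with exponent $\frac{2}{q}\le1$ and the cardinality bound $\#J_2\le\varepsilon^{-1}$, obtaining $C_q\kappa^{2/q}$. The nonlinearity of the isoperimetric bound is thus absorbed by the counting budget.

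\textbf{Your route.} You instead linearize the isoperimetric bound with the threshold. The condition $\Phi_q\ge\tau$ forces $m\gtrsim\tau^q$, so $\varepsilon\Phi_q\le C_\tau P(E,Q)$ and the bad squares contribute at most $C_\tau\delta$. The cost is an extra low-oscillation class and the parameter ordering, with $\delta$ chosen after $\tau$.

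\textbf{Comparison.} Your version of this step does not use $q\ge2$ and works for any exponent. The paper's version is shorter and gives an explicit rate in $\kappa$. Either closes the argument.
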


\begin{proof}
Let $\rho=\frac{41}{100}$ and let $\nu_\rho$ be the measure in
\eqref{eq-new-planar-averaging-density}.
Recall that the relative variation measure $\sigma_E$, defined in
\eqref{eq-new-relative-variation-extension}, is extended by zero outside
$Q_0$ and satisfies
$\sigma_E(\mathbb R^2)=P(E,Q_0)$.

Fix $\kappa,\zeta\in(0,1)$. Lemma
\ref{lem-new-uniform-blowup}, applied with $\kappa$, provides a compact set
$G\subset\partial^*E\cap Q_0$ such that
\begin{align}\label{eq-new-aggregate-good-set}
\sigma_E(Q_0\setminus G)<\kappa,
\end{align}
the normal is continuous on $G$, and the set and perimeter blow-ups are
uniform there.

The proof now has two parts. We first transfer Lemma
\ref{lem-new-averaged-planar-cell} to any sufficiently small square
meeting $G$. We then sum the resulting estimate over an admissible
family and control the squares that avoid $G$ by their total perimeter.

We first prove that there exists $r_0\in(0,1)$ such that any oriented
square $Q\subset Q_0$ of edge length
$\varepsilon\in(0,r_0)$ with $Q^\circ\cap G\ne\emptyset$ satisfies
\begin{align}\label{eq-new-uniform-averaged-local-estimate}
2\varepsilon\operatorname{osc}_q(\mathbf{1}_E;Q)
\leq\frac{13}{28}\sigma_E(Q^\circ)
+\frac{15}{28}\int_{B(0,\rho)}
\sigma_E\left((Q+\varepsilon z)^\circ\right)\,d\nu_\rho(z)
+\zeta\varepsilon.
\end{align}
Suppose otherwise. {Then there exist
$\{\varepsilon_m\}_{m\in\mathbb N}\subset(0,1)$ with
$\varepsilon_m\to0^+$ as $m\to\infty$, points
$\{x_m\}_{m\in\mathbb N}\subset G$, and oriented squares
$\{Q_m\}_{m\in\mathbb N}$ such that, for every $m\in\mathbb N$,
$Q_m\subset Q_0$ has edge length $\varepsilon_m$,
$x_m\in Q_m^\circ\cap G$, and
\eqref{eq-new-uniform-averaged-local-estimate} fails with
$Q=Q_m$ and $\varepsilon=\varepsilon_m$.}
{Let}
$
S_m:=\frac{Q_m-x_m}{\varepsilon_m}.
$
Repeating the compactness and blow-up argument in Step 1 of the proof of
Proposition \ref{prop-new-localization}, after passing to a subsequence we
may assume that $x_m\to x\in G$ and $S_m\to S$ in Hausdorff distance
as $m\to\infty$, where $S$ is a unit square. Let
\begin{align*}
\theta_m:=\frac{|E\cap Q_m|}{|Q_m|}
=|E_{x_m,\varepsilon_m}\cap S_m|
\quad\text{and}\quad
\theta:=|H_x\cap S|.
\end{align*}
Then the moving-cube set convergence used there {implies}
$\theta_m\to\theta$ as $m\to\infty$.
Consequently, Lemma \ref{lem-new-characteristic-formula} {implies}
\begin{align}\label{eq-new-good-cube-oscillation-limit}
\operatorname{osc}_q(\mathbf{1}_E;Q_m)\to\Phi_q(\theta)
\end{align}
as $m\to\infty$.
The strict reverse of
\eqref{eq-new-uniform-averaged-local-estimate}, together with the
nonnegativity of the two perimeter terms, implies
$\operatorname{osc}_q(\mathbf{1}_E;Q_m)>\frac{\zeta}{2}$. Passing to the
limit as $m\to\infty$ and using
\eqref{eq-new-good-cube-oscillation-limit}, we obtain
$\Phi_q(\theta)\geq\frac{\zeta}{2}$, and therefore $\theta\in(0,1)$. Thus
$\partial H_x$ cuts the interior of $S$. The same geometric observation
used in Step 1 of the proof of Proposition \ref{prop-new-localization}
then {implies}
\begin{align}\label{eq-new-aggregate-boundary-null}
\mathcal H^1(\partial H_x\cap\partial S)=0.
\end{align}
Let $\mu_m:=\mu_{x_m,\varepsilon_m}$. The moving-cube measure convergence
used there, together with
\eqref{eq-new-aggregate-boundary-null} {implies}
\begin{align}\label{eq-new-untranslated-perimeter-limit}
\varepsilon_m^{-1}\sigma_E(Q_m^\circ)
=\mu_m(S_m^\circ)
\to\mu_x(S)
=\mathcal H^1(\partial H_x\cap S)
\end{align}
as $m\to\infty$.
For the translated terms, observe that
$\mu_x(\partial(S+z))=0$ for $\nu_\rho$-almost every $z$. Indeed, the
exceptional translations are those for which $\partial H_x$ contains a
face of $S+z$; they lie in a finite union of affine lines and hence have
zero $\nu_\rho$-measure because $\nu_\rho$ is absolutely continuous with
respect to Lebesgue measure. {For every} other $z$,
\eqref{eq-new-moving-cube-measure}, applied to
$S_m+z\to S+z$ as $m\to\infty$, implies
\begin{align*}
\mu_m((S_m+z)^\circ)
\to
\mathcal H^1\left(\partial H_x\cap(S+z)\right)
\end{align*}
as $m\to\infty$.
All the squares $S_m+z$, with $z\in B(0,\rho)$, lie in a fixed compact
set $K$, and the local weak-$*$ convergence
$\mu_m\stackrel{*}{\rightharpoonup}\mu_x$ as $m\to\infty$, established in
the proof of Lemma \ref{lem-new-moving-cube-blowup}, implies
$\sup_{m\in\mathbb{N}}\mu_m(K)<\infty$. Thus, by the dominated convergence theorem,
we obtain
\begin{align}\label{eq-new-averaged-perimeter-limit}
\varepsilon_m^{-1}\int_{B(0,\rho)}
\sigma_E\left((Q_m+\varepsilon_m z)^\circ\right)\,d\nu_\rho(z)
&=\int_{B(0,\rho)}\mu_m((S_m+z)^\circ)\,d\nu_\rho(z)\notag\\
&\to
\int_{B(0,\rho)}
\mathcal H^1\left(\partial H_x\cap(S+z)\right)\,d\nu_\rho(z)
\end{align}
as $m\to\infty$. Divide the strict reverse of
\eqref{eq-new-uniform-averaged-local-estimate} by $\varepsilon_m$ and pass
to the limit as $m\to\infty$ using
\eqref{eq-new-good-cube-oscillation-limit},
\eqref{eq-new-untranslated-perimeter-limit}, and
\eqref{eq-new-averaged-perimeter-limit}. We obtain
\begin{align*}
2\Phi_q(|H_x\cap S|)
\geq\frac{13}{28}\mathcal H^1(\partial H_x\cap S)
+\frac{15}{28}\int_{B(0,\rho)}
\mathcal H^1\left(\partial H_x\cap(S+z)\right)\,d\nu_\rho(z)
+\zeta,
\end{align*}
contrary to Lemma \ref{lem-new-averaged-planar-cell}. This proves
\eqref{eq-new-uniform-averaged-local-estimate}.

Fix an admissible family
$\{Q_j\}_{j\in J}\in\mathscr F_{\varepsilon,1}^{*}(Q_0)$ with
$\varepsilon\in(0,r_0)$, and split its index set into
\begin{align*}
J_1:=\{j:Q_j^\circ\cap G\ne\emptyset\}
\quad\text{and}\quad
J_2:=\{j:Q_j^\circ\cap G=\emptyset\}.
\end{align*}
{For every} fixed $z$, the common translation preserves pairwise
disjointness of the square interiors. Since $\sigma_E$ is zero outside
$Q_0$, summing \eqref{eq-new-uniform-averaged-local-estimate} over
$J_1$, using $\#J_1\leq\varepsilon^{-1}$ and then
$\nu_\rho(B(0,\rho))=1$ and
$\frac{13}{56}+\frac{15}{56}=\frac{1}{2}$, we obtain
\begin{align}\label{eq-new-aggregate-good-squares}
\varepsilon\sum_{j\in J_1}\operatorname{osc}_q(\mathbf{1}_E;Q_j)
&\leq\frac{13}{56}\sum_{j\in J_1}\sigma_E(Q_j^\circ)
+\frac{15}{56}\int_{B(0,\rho)}\sum_{j\in J_1}
\sigma_E((Q_j+\varepsilon z)^\circ)\,d\nu_\rho(z)
+\frac{\zeta}{2}\varepsilon\#J_1\notag\\
&\leq\left(\frac{13}{56}+\frac{15}{56}\right)
\sigma_E(\mathbb R^2)+\frac{\zeta}{2}
=\frac{1}{2}P(E,Q_0)+\frac{\zeta}{2}.
\end{align}

It remains to estimate $J_2$. For $j\in J_2$, let
\begin{align*}
\theta_j:=\frac{|E\cap Q_j|}{|Q_j|}
\quad\text{and}\quad
\quad m_j:=\min\{\theta_j,1-\theta_j\}.
\end{align*}
If $J_2=\emptyset$, the estimate below is immediate. Otherwise, since
$\frac{2}{q}\in(0,1]$, {using}
Lemma \ref{lem-new-characteristic-formula}, H\"older's
inequality, the relative isoperimetric inequality on each square, the bound
$\#J_2\leq\varepsilon^{-1}$, the pairwise disjointness of the square
interiors, and \eqref{eq-new-aggregate-good-set}, we obtain
\begin{align}\label{eq-new-aggregate-bad-squares}
\varepsilon\sum_{j\in J_2}\operatorname{osc}_q(\mathbf{1}_E;Q_j)
&\leq C_q\varepsilon\sum_{j\in J_2}m_j^{\frac{1}{q}}
=C_q\varepsilon\sum_{j\in J_2}
(m_j^{\frac{1}{2}})^{\frac{2}{q}}\notag\\
&\leq C_q\varepsilon(\#J_2)^{1-\frac{2}{q}}
\left(\sum_{j\in J_2}m_j^{\frac{1}{2}}\right)^{\frac{2}{q}}\notag\\
&\leq C_q\varepsilon(\varepsilon^{-1})^{1-\frac{2}{q}}
\left(C\varepsilon^{-1}
\sum_{j\in J_2}\sigma_E(Q_j^\circ)\right)^{\frac{2}{q}}\notag\\
&\leq C_q\varepsilon(\varepsilon^{-1})^{1-\frac{2}{q}}
(C\varepsilon^{-1}\kappa)^{\frac{2}{q}}
=C_q\kappa^{\frac{2}{q}}.
\end{align}
Here the last inequality uses
$\bigcup_{j\in J_2}Q_j^\circ\subset Q_0\setminus G$.

The estimates \eqref{eq-new-aggregate-good-squares} and
\eqref{eq-new-aggregate-bad-squares} are uniform over the admissible
family. Taking the supremum, we obtain
\begin{align*}
I_{\varepsilon,q}^{*}(E)
\leq\frac{1}{2}P(E,Q_0)+\frac{\zeta}{2}
+C_q\kappa^{\frac{2}{q}}
\end{align*}
for all sufficiently small $\varepsilon$. We first take the upper limit
as $\varepsilon\to0^+$ and then let $\zeta,\kappa\to0^+$. We obtain
\eqref{eq-new-aggregate-planar-localization}. This completes the proof of
Lemma \ref{lem-new-aggregate-planar-localization}.
\end{proof}

\subsection{Proof of the Exact Perimeter Formulas}

We now prove the exact formulas in Theorem \ref{the-new-exact}.

\begin{proof}[Proof of Theorem \ref{the-new-exact}]
For $q=1$, the assertion is Lemma \ref{lem-new-known-q1} with $M=1$.
If $q\in(0,1)$, H\"older's inequality on each cube {implies}
$I_{\varepsilon,q}^{*}(E)\leq I_{\varepsilon,1}^{*}(E)$. Lemma
\ref{lem-new-known-q1} supplies the upper limit and Proposition
\ref{prop-new-universal-lower} the matching lower limit, proving
\eqref{eq-new-exact-all-dim} in this range.

{Let $q_*$ be as in Lemma
\ref{lem-new-averaged-planar-cell}. Now let} $q>1$ belong to one of the ranges in the
statement. If
$q\leq \frac{n}{n-1}$ and $P(E,Q_0)<\infty$, Proposition
\ref{prop-new-localization} {implies}
\begin{align}\label{eq-new-exact-perimeter-upper-general}
\limsup_{\varepsilon\to0^+}I_{\varepsilon,q}^{*}(E)
\leq\frac12P(E,Q_0).
\end{align}
If $n=2$ and {$q\in(2,q_*]$}, the same estimate follows instead
from Lemma \ref{lem-new-aggregate-planar-localization}. Thus
\eqref{eq-new-exact-perimeter-upper-general} holds throughout the stated
range whenever the perimeter is finite.

Moreover, every exponent under consideration is at most $3$. Lemma
\ref{lem-new-characteristic-formula} and
\eqref{eq-new-Phi-below-half} therefore
{imply}, {for every} cube $Q$,
\begin{align*}
\left[\fint_Q|\mathbf1_E(x)-(\mathbf1_E)_Q|^q\,dx\right]^{\frac{1}{q}}
=\Phi_q\left(\frac{|E\cap Q|}{|Q|}\right)\leq\frac12,
\end{align*}
where $\Phi_q$ is the same as in \eqref{eq-new-Phi-q}.
Since $\#J\leq\varepsilon^{1-n}$, it follows directly from
\eqref{eq-new-I-unit} that
\begin{align}\label{eq-new-exact-truncation-upper}
I_{\varepsilon,q}^{*}(E)\leq\frac12
\end{align}
{for every} measurable $E\subset Q_0$. If $P(E,Q_0)<\infty$, combining
\eqref{eq-new-exact-perimeter-upper-general} and
\eqref{eq-new-exact-truncation-upper} {implies}
\begin{align*}
\limsup_{\varepsilon\to0^+}I_{\varepsilon,q}^{*}(E)
\leq\frac12\min\{1,P(E,Q_0)\}.
\end{align*}
If $P(E,Q_0)=\infty$, the same conclusion follows from
\eqref{eq-new-exact-truncation-upper}. Proposition
\ref{prop-new-universal-lower} supplies the reverse inequality in every
case {and completes the proof of Theorem
\ref{the-new-exact}}.
\end{proof}

\section{A Counterexample and Bounds
for the Optimal Exponent\label{sec-new-open-problems}}

Theorem \ref{the-new-exact} shows that, in dimension two, the proved
validity range of the exact formula
extends strictly beyond the natural
exponent $2$. We next identify a dimension-independent range of exponents
for which the exact perimeter formula fails in every dimension $n\geq2$.

{For every} $t\in(0,1]$ and $q\in[2,\infty)$, {let}
\begin{align*}
C_t(q):=\frac{4}{\sqrt{1+t^2}}\Phi_q\left(\frac{t}{8}\right).
\end{align*}
The map $q\mapsto C_t(q)$ is strictly increasing.
Moreover, {for every} $t\in(0,1]$,
\begin{align}\label{eq-new-packing-end-values}
C_t(2)^2
=\frac{8t-t^2}{4(1+t^2)}<1
\quad\text{and}\quad
\lim_{q\to\infty}C_t(q)
=\frac{8-t}{2\sqrt{1+t^2}}>1.
\end{align}
Thus {there exists} a unique $r(t)>2$ such that
$C_t(r(t))=1$. Define
\begin{align}\label{eq-new-q-packing}
q_0:=\inf_{0<t\leq1}r(t).
\end{align}
If we {let} $C_0(q):=0$,
then $(t,q)\mapsto C_t(q)$ extends continuously to
$[0,1]\times[2,3]$. The first inequality in
\eqref{eq-new-packing-end-values} and compactness
{imply}
$\max_{t\in[0,1]}C_t(2)<1$.
The same strict inequality therefore
holds {for every} $q$
in a fixed right neighborhood of $2$, and hence
$q_0>2$.

{We now turn the packing profile into a counterexample.}

\begin{proposition}
\label{prop-new-packing-obstruction}
Let $n\in{\mathbb{N}}\cap[2,\infty)$
and $q\in(q_0,\infty)$.
The fixed set
\begin{align}\label{eq-new-fixed-packing-set}
E:=\{x\in Q_0:x_1+x_2<\tfrac14\}
\end{align}
satisfies
\begin{align}\label{eq-new-packing-failure}
\liminf_{\varepsilon\to0^+}I_{\varepsilon,q}^{*}(E)
>\frac12P(E,Q_0)
=\frac12\min\{1,P(E,Q_0)\}.
\end{align}
Consequently, the exact formula fails in {all dimensions}
{for every}
$q>q_0$.
\end{proposition}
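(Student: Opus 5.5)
The plan is to exhibit, for each small $\varepsilon$, an explicit admissible family of rotated cubes along the flat interface $\partial E\cap Q_0=\{x_1+x_2=\frac14\}\cap Q_0$ whose normalized $q$-oscillation sum exceeds $\frac12P(E,Q_0)$ by a fixed factor. The upper bound is already known, so only the lower bound matters. First record the right-hand side. In the $(x_1,x_2)$-plane the interface is the segment from $(0,\frac14)$ to $(\frac14,0)$, of length $\frac{\sqrt2}{4}$. Hence
\[
P(E,Q_0)=\frac{\sqrt2}{4}\cdot1^{\,n-2}<1,
\]
and the equality $\frac12P(E,Q_0)=\frac12\min\{1,P(E,Q_0)\}$ in \eqref{eq-new-packing-failure} is immediate. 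It therefore suffices to show that $\liminf_{\varepsilon\to0^+}I_{\varepsilon,q}^{*}(E)\ge\frac{C_t(q)}{2}P(E,Q_0)$ for some $t\in(0,1]$ with $C_t(q)>1$.

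Such a $t$ exists by the definition \eqref{eq-new-q-packing} of $q_0$. Since $q>q_0=\inf_t r(t)$, choose $t\in(0,1]$ with $r(t)<q$. Strict monotonicity of $q\mapsto C_t(q)$ then gives $C_t(q)>C_t(r(t))=1$.

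The heart of the argument is the planar two-row packing at parameter $t$. Tile the interface by consecutive periods of length $\frac{\varepsilon\sqrt{1+t^2}}{2}$. In each period, place two rotated $\varepsilon$-squares, one on each side of the line. Each square is tilted so that its side has slope $t$ relative to the line, and a single corner pokes across the interface. The target geometry is that the line cuts off a triangle of area exactly $\frac{t}{8}\varepsilon^2$ from every square, so Lemma~\ref{lem-new-characteristic-formula} gives $\operatorname{osc}_q(\mathbf 1_E;Q)=\Phi_q(\frac t8)$ for every square in the family. The two rows must be staggered so that the protruding corners interlock and all interiors stay disjoint. This produces $\frac{4}{\varepsilon\sqrt{1+t^2}}$ squares per unit interface length, and therefore a contribution of $C_t(q)$ per unit length after multiplying by $\varepsilon$.

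Two conditions must then be checked. First, the squares must stay inside $Q_0$, which only costs $O(\varepsilon)$ length near the endpoints of the segment. Second, the cardinality bound $\#J\le\varepsilon^{1-n}$ must hold, using the fact that the interface length $\frac{\sqrt2}{4}$ is small. If the natural count exceeds the budget, I would simply discard squares, since every retained square contributes equally. The accounting must then be arranged so that the sum is at least $\frac12C_t(q)P(E,Q_0)$; this is where the exact normalization built into $C_t$ has to be matched carefully.

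To pass to $n\ge3$, take products. Each planar square $Q$ becomes $Q\times P_k$, where $\{P_k\}$ is a tiling of $[\delta,1-\delta]^{n-2}$ by axis-parallel cubes of side $\varepsilon$. The oscillation is unchanged, because $E$ depends only on $(x_1,x_2)$. The count scales by $\varepsilon^{2-n}$, so both the cardinality condition and the normalized sum $\varepsilon^{n-1}\sum_j\operatorname{osc}_q$ reduce to their planar versions up to factors $(1-2\delta)^{n-2}\to1$. Letting $\varepsilon\to0^+$ and then $\delta\to0^+$ gives $\liminf_{\varepsilon\to0^+}I_{\varepsilon,q}^{*}(E)\ge\frac{C_t(q)}{2}P(E,Q_0)>\frac12P(E,Q_0)$.

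I expect the main obstacle to be the explicit planar geometry. One must verify that a two-row staggered arrangement with tilt $t$ really cuts area fraction exactly $\frac t8$ from each square, keeps interiors pairwise disjoint, and realizes the density $\frac{4}{\sqrt{1+t^2}}$ per unit length; this is also where the restriction $t\le1$ enters. I would first write down coordinates for one period — one square above and one below the line — and check disjointness of neighbours by a direct inequality on the corner positions, treating the translated copies by periodicity.
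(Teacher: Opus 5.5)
Your architecture matches the paper's: choose $t$ with $C_t(q)>1$, build two staggered rows of congruent squares with sides of slope $t$ along $\{x_1+x_2=\frac14\}$, cut each with fraction $\frac t8$, then take products with an $\varepsilon$-grid in the other $n-2$ directions. However, the quantitative core you set up is false, and it is exactly the part you leave unverified.

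You place one square per row every $\frac{\varepsilon\sqrt{1+t^2}}{2}$, that is, $\frac{4}{\varepsilon\sqrt{1+t^2}}$ squares per unit length. This cannot be realized. The squares of one row are translates of one another with the same corner-triangle cut, so their centres lie at the same distance from the line. A displacement $d$ along the line has coordinates $\frac{d}{\sqrt{1+t^2}}(1,-t)$ in the edge frame $(e_1,e_2)$. Since $t\le1$, disjoint interiors force $|d|\ge\varepsilon\sqrt{1+t^2}$, so two rows carry at most $\frac{2}{\varepsilon\sqrt{1+t^2}}$ squares per unit length.

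The geometry does not depend on $q$, so you can also see the contradiction at $q=1$, $t=1$. There your count just fills the budget $\varepsilon^{-1}$ and would give $\liminf_{\varepsilon\to0^+}I^*_{\varepsilon,1}(E)\ge\Phi_1(\frac18)=\frac{7}{32}>\frac{\sqrt2}{8}=\frac12P(E,Q_0)$, contradicting Lemma~\ref{lem-new-known-q1}. So the ``main obstacle'' as you formulated it would fail. Your contingency of discarding squares to fit the budget, and the unexplained factor $2$ between ``$C_t(q)$ per unit length'' and the target $\frac12C_t(q)P(E,Q_0)$, are symptoms of this miscount.

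The repair is that the attainable density is precisely what the statement needs: $\varepsilon\cdot\frac{2}{\varepsilon\sqrt{1+t^2}}\Phi_q(\frac t8)=\frac12C_t(q)$ per unit length. The factor $4$ in $C_t$ is $2$ rows times the $2$ from comparing with $\frac12P$, not four squares per period. Concretely, with $u$ along and $v$ across the line:
\begin{itemize}
\item Put the upper centres at $\varepsilon\bigl(k\sqrt{1+t^2},\frac{1}{2\sqrt{1+t^2}}\bigr)$ and the lower ones at $\varepsilon\bigl(k\sqrt{1+t^2}+\frac{t}{\sqrt{1+t^2}},-\frac{1}{2\sqrt{1+t^2}}\bigr)$, $k\in\mathbb Z$.
\item The line cuts from each upper square a right triangle with legs $\frac\varepsilon2$ and $\frac{\varepsilon t}2$, giving fraction $\frac t8$; symmetrically it cuts $1-\frac t8$ below.
\item The centre displacements in the frame $(e_1,e_2)$, in units of $\varepsilon$, are $(m,-mt)$ within a row and $(m,-mt-1)$ across rows. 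One coordinate always has modulus at least $1$, which gives disjointness.
\item With $L=\frac{\sqrt2}{4}$ the planar count is $\frac{2L}{\sqrt{1+t^2}}\varepsilon^{-1}+O(1)=\frac{\varepsilon^{-1}}{\sqrt{2(1+t^2)}}+O(1)$. This is strictly inside the budget, so only the $O(1)$ squares near the endpoints are dropped.
\end{itemize}
The limit is then $\frac12C_t(q)L>\frac12L$.

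Your choice of $t$, the computation of $P(E,Q_0)$, and the product step are fine. The remark that ``the upper bound is already known'' plays no role, since only a lower bound on the $\liminf$ is required.
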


\begin{proof}
We first describe the planar core of the construction. The full
$n$-dimensional family is obtained below by taking products with intervals
of length $\varepsilon$ in the remaining coordinates.
Let
\begin{align*}
\Gamma:=\{(x_1,x_2)\in\mathbb R^2:x_1+x_2=\tfrac14\}.
\end{align*}
In the $(x_1,x_2)$-plane, introduce the orthonormal affine coordinates
$u:=\frac{x_1-x_2}{\sqrt2}$ and
$v:=\frac{x_1+x_2-\frac14}{\sqrt2}$.
Thus $\Gamma=\{v=0\}$, the $u$-direction is tangent to $\Gamma$, and
$\{x_1+x_2<\frac14\}=\{v<0\}$.

Choose $t\in(0,1]$ such that $C_t(q)>1$, which is possible by
\eqref{eq-new-q-packing}. Put
\begin{align*}
e_1:=\frac{(1,t)}{\sqrt{1+t^2}}
\quad\text{and}\quad
e_2:=\frac{(-t,1)}{\sqrt{1+t^2}}.
\end{align*}
In the $(u,v)$-coordinate plane, let
\begin{align*}
K_t:=\{\xi e_1+\zeta e_2:|\xi|\leq\tfrac12,
\ |\zeta|\leq\tfrac12\}
\end{align*}
be the unit square centered at the origin with edge directions $e_1$ and
$e_2$. For each $k\in\mathbb Z$, let
\begin{align*}
z_k^+
:=\left(k\sqrt{1+t^2},\frac{1}{2\sqrt{1+t^2}}\right)
\quad\text{and}\quad
z_k^-:=\left(k\sqrt{1+t^2}+\frac{t}{\sqrt{1+t^2}},
-\frac{1}{2\sqrt{1+t^2}}\right).
\end{align*}
{Let} $Q_k^\pm:=z_k^\pm+K_t$. The resulting two rows of squares are
illustrated in Figure \ref{fig-new-two-row-packing}.

\begin{figure}[htbp]
\centering
\begin{tikzpicture}[scale=1.15,line cap=round,line join=round]
\foreach \x in {0,1.2806,2.5612,3.8418}
{
\begin{scope}[shift={(\x,0.3904)},rotate=38.66]
\fill[gray!25] (-0.5,-0.5)--(-0.5,-0.1)--(0,-0.5)--cycle;
\draw[thick] (-0.5,-0.5) rectangle (0.5,0.5);
\end{scope}
}

\foreach \x in {0.6247,1.9053,3.1859,4.4665}
{
\begin{scope}[shift={(\x,-0.3904)},rotate=38.66]
\fill[gray!25] (-0.5,-0.5) rectangle (0.5,0.5);
\fill[white] (0,0.5)--(0.5,0.5)--(0.5,0.1)--cycle;
\draw[thick] (-0.5,-0.5) rectangle (0.5,0.5);
\end{scope}
}

\draw[red,very thick] (-0.85,0)--(5.25,0);
\node[red] at (5.38,0.22) {$\Gamma$};
\node[anchor=south] at (1.92,1.18) {$\theta=\frac{t}{8}$};
\node[anchor=north] at (2.55,-1.18) {$\theta=1-\frac{t}{8}$};
\end{tikzpicture}
\caption{The planar two-row configuration with $t=\frac45$.
{The shaded regions are
$Q_k^\pm\cap\{(u,v)\in\mathbb R^2:\ v<0\}$,
$k\in\mathbb Z$.}}
\label{fig-new-two-row-packing}
\end{figure}

The squares $Q_k^+$ and $Q_k^-$,
$k\in\mathbb Z$, form the upper and lower rows, respectively, and have
pairwise disjoint interiors. Indeed, the displacement
between two centers in the same row has $(e_1,e_2)$-coordinates
$(m,-mt)$ with $m\in\mathbb Z\setminus\{0\}$,
whose first coordinate has absolute value at least one. The displacement
$z_j^--z_k^+$, with $m:=j-k$, has $(e_1,e_2)$-coordinates
$(m,-mt-1)$.
If $m\ne0$, its first coordinate has absolute value at least one, while
if $m=0$, its second coordinate equals $-1$. This proves the claim.

The interface line $\Gamma=\{v=0\}$
{cuts off a volume fraction $\frac{t}{8}$ from every
$Q_k^+$ and the complementary fraction from every $Q_k^-$}. To see
the first assertion, write a point of $Q_k^+$ as
$z_k^++\xi e_1+\zeta e_2$ and translate
$(\xi,\zeta)\in[-\frac{1}{2},\frac{1}{2}]^2$ to $[0,1]^2$. The inequality $v<0$ cuts off
a right triangle with legs $\frac{1}{2}$ and $\frac{t}{2}$, hence area $\frac{t}{8}$. The second
assertion follows by symmetry. In both rows the local oscillation is
therefore $\Phi_q(\frac{t}{8})$.

The relative reduced boundary of the set in
\eqref{eq-new-fixed-packing-set} is
$
\bigl(\Gamma\cap(0,1)^2\bigr)\times(0,1)^{n-2}.
$
The planar segment $\Gamma\cap(0,1)^2$ has length
$L:=\frac{\sqrt2}{4}$
and hence $P(E,Q_0)=L<1$. Scale the two-row family
$\{Q_k^\pm\}_{k\in\mathbb Z}$ by $\varepsilon$ in the $(u,v)$-plane.
Retain only the planar squares
whose closures lie in $(0,1)^2$.
In the remaining $n-2$ directions, take the products with the usual
interior grid of intervals of length $\varepsilon$. The disjointness just
proved shows that these products form a family of oriented
$n$-dimensional cubes with pairwise disjoint interiors.
Indeed, the planar factor is a square of edge length
$\varepsilon$, every remaining factor is an interval of the same length,
and the planar rotation is extended by the identity in the last
$n-2$ coordinate directions.

In each planar row, consecutive centers are separated by
$\sqrt{1+t^2}$ in the $u$-direction. After scaling by $\varepsilon$,
this separation becomes $\varepsilon\sqrt{1+t^2}$. Hence, after
discarding {a bounded} number of squares near the two endpoints, the two rows contain
$\frac{2L}{\sqrt{1+t^2}}\varepsilon^{-1}+O(1)$ squares. In each of the remaining directions,
the interior grid contains $\varepsilon^{-1}+O(1)$ intervals. Hence the
resulting family has cardinality
\begin{align}\label{eq-new-packing-count}
N_\varepsilon
=\left(\frac{2L}{\sqrt{1+t^2}}\varepsilon^{-1}+O(1)\right)
\left(\varepsilon^{-1}+O(1)\right)^{n-2}
=\left(\frac{2L}{\sqrt{1+t^2}}+O(\varepsilon)\right)
\varepsilon^{1-n}
\end{align}
as $\varepsilon\to0^+$.
All constants implicit in the $O(1)$ terms are independent of
$\varepsilon$ (with $t$ and $n$ fixed). They account only for the bounded
number of planar squares discarded near the two endpoints of the segment
and for the boundary intervals omitted by the interior grids in the
remaining coordinate directions.
Since
$
\frac{2L}{\sqrt{1+t^2}}
=\frac1{\sqrt{2(1+t^2)}}<1,
$
this family satisfies the cardinality budget
$N_\varepsilon\leq\varepsilon^{1-n}$ {for every} sufficiently small
$\varepsilon$. Every retained cube has volume fraction $\frac{t}{8}$ or
$1-\frac{t}{8}$. Lemma \ref{lem-new-characteristic-formula},
\eqref{eq-new-packing-count}, and the definition of $C_t(q)$ now {imply}
\begin{align*}
\liminf_{\varepsilon\to0^+}I_{\varepsilon,q}^{*}(E)
\geq\frac{2L}{\sqrt{1+t^2}}\Phi_q(\frac{t}{8})
=\frac12C_t(q)L
>\frac12L
=\frac12P(E,Q_0).
\end{align*}
This proves \eqref{eq-new-packing-failure}.
{This completes the proof of Proposition
\ref{prop-new-packing-obstruction}.}
\end{proof}

\begin{remark}\label{rem-new-packing-numerics}
A numerical estimate can be reproduced as follows. For each fixed
$t\in(0,1]$, solve the monotone equation $C_t(q)=1$ for $q=r(t)$ by
bisection, and then minimize $r(t)$ by a one-dimensional search.
Equivalently, if $F(t,q):=\log C_t(q)$, an interior critical point
satisfies
\begin{align*}
F(t,q)=0\quad\text{and}\quad \partial_tF(t,q)=0.
\end{align*}
This computation finds a single critical point and suggests that it is
the unique global minimizer, with
\begin{align*}
t_*\approx0.80978475\quad\text{and}\quad q_0\approx2.1539098432.
\end{align*}
These decimal approximations are included only for orientation and are
not used in Proposition \ref{prop-new-packing-obstruction}. For a
completely explicit upper bound, take $t=\frac{4}{5}$ and {let}
$\widehat q:=r(\frac45)$. Then $\frac{t}{8}=\frac{1}{10}$, and
$\widehat q>2$ is the unique root of
\begin{align*}
9^{\widehat q}+9
=10\left(\frac{\sqrt{41}}2\right)^{\widehat q}.
\end{align*}
The strict upper bound $\widehat q<2.1541$ admits a
fully rational certificate. Indeed, put
\begin{align*}
q_1:=\frac{21541}{10000}=2+s,\quad
s:=\frac{1541}{10000},\quad
L:=\log 3,\quad\text{and}\quad M:=\frac12\log\frac{41}{36}.
\end{align*}
For $q\geq2$, let
$
D(q):=9^q+9-10\left(\frac{\sqrt{41}}2\right)^q.
$
For $z>1$ and $u=\frac{z-1}{z+1}$, the identity
\begin{align*}
\log z=2\sum_{k=0}^{N}\frac{u^{2k+1}}{2k+1}+R_N
\quad\text{and}\quad
0<R_N<\frac{2u^{2N+3}}{(2N+3)(1-u^2)},
\end{align*}
applied with $(z,N)=(3,7)$ and $(z,N)=(\frac{41}{36},1)$,
respectively, gives
\begin{align*}
1.09861<L<1.09862
\quad\text{and}\quad
0.06502<M<0.06503.
\end{align*}
Hence $x:=s(L-M)>\frac{15927}{100000}$ and
$y:=s(L+M)<\frac{17932}{100000}$. Let
$P(t):=1+t+\frac{t^2}{2}+\frac{t^3}{6}$ and
$Q(t):=1-t+\frac{t^2}{2}-\frac{t^3}{6}$. Taylor's theorem gives
$e^x>P(x)$ and $e^{-y}>Q(y)$; moreover, $P$ is increasing and $Q$ is
decreasing on the intervals under consideration. Since
$\log(\frac{\sqrt{41}}2)=L+M$ and
$\log(\frac9{\sqrt{41}/2})=L-M$, we obtain
\begin{align*}
\frac{D(q_1)}
{(\frac{\sqrt{41}}2)^s}
&=81e^x+9e^{-y}-\frac{205}{2}\notag\\
&>81P\left(\frac{15927}{100000}\right)
+9Q\left(\frac{17932}{100000}\right)-\frac{205}{2}
=\frac{29667702707511}{6000000000000000}>0.
\end{align*}
On the other hand, $D(2)=-\frac{25}{2}$. The uniqueness of the root
therefore proves
$q_0\leq\widehat q<2.1541$
(numerically, $\widehat q\approx2.1540092537$). Thus the explicit choice
$t=\frac45$ alone proves failure for every $q>\widehat q$.
\end{remark}

{For each $n\in\mathbb{N}\cap[2,\infty)$, define the
\emph{optimal exponent}}
\begin{align*}
q(n)
:=\sup\left\{p>0:
\lim_{\varepsilon\to0^+}I_{\varepsilon,p}^{*}(E)
=\frac12\min\{1,P(E,Q_0)\}
\ \text{{for every} measurable }E\subset Q_0\right\}.
\end{align*}
The validity range is downward closed: if the formula holds for an
exponent $q$, then monotonicity of local $L^q$ means {implies} the upper bound
{for every} smaller positive exponent, while Proposition
\ref{prop-new-universal-lower} supplies the reverse bound. Therefore,
Theorem \ref{the-new-exact} and Proposition
\ref{prop-new-packing-obstruction} {establish}
$
\frac{n}{n-1}
\leq q(n)
\leq q_0,
$
and the planar result improves the lower bound to
$
{\frac{19}{9}<q(2)}
\leq q_0.
$
In particular, $2$ is not the optimal planar exponent, while the fixed
half-space section \eqref{eq-new-fixed-packing-set} rules out every
exponent above approximately $2.15391$.

{At $q=q_0$, one has
$C_t(q_0)\leq1$ for any $t\in(0,1]$.
Thus Proposition \ref{prop-new-packing-obstruction} rules out any
$q>q_0$, but does not establish failure at the endpoint.
Consequently, the exact value of $q(2)$ remains undetermined in
$(\frac{19}{9},q_0]$.
For $n\geq3$, the corresponding unresolved interval is
$(\frac{n}{n-1},q_0]$.
Even after $q(n)$ is determined, one must still decide
whether the exact formula holds at $q=q(n)$.}

\begin{openproblem}\label{prob-new-optimal-exact-range}
{What is the value of $q(n)$ for $n\geq2$,
and does the exact formula hold at $q=q(n)$?}
\end{openproblem}

\appendix

\section{Exact-Arithmetic Verification Program for
Lemma \ref{lem-new-averaged-planar-cell}}
\label{app-new-planar-cell-certificate}

The program below is the exact-arithmetic verification program used in
the finite subdivision in Step 2 of the proof of Lemma
\ref{lem-new-averaged-planar-cell}. An identical standalone copy is supplied
with the manuscript as the supplementary file
\texttt{planar\_cell\_q19\_9.py}. The program uses only the Python 3
standard library. Every acceptance decision is made using exact rational
arithmetic and outward-rounded dyadic root bounds; floating-point values
play no role. For every ordinary retained rectangle, the acceptance
inequality is strict.

Running the program with Python 3 produces the following output:
\begin{lstlisting}[basicstyle=\ttfamily\small,numbers=none]
triangle: accepted=97061, face=0, max_depth=26, failures=0
trapezoid: accepted=5345, face=64, max_depth=21, failures=0
CERTIFIED q=19/9: True
\end{lstlisting}
Here \texttt{trapezoid} denotes the opposite-side regime in the proof.
The output above was reproduced with Python 3.12.14 by running
\texttt{python planar\_cell\_q19\_9.py}. For completeness and direct
inspection, the identical source code is retained below.

\begingroup
\raggedbottom
\begin{lstlisting}[style=certificate,
literate={19>(2}{{{19 > (2}}}7]
"""Rigorous certificate for the quantitative planar cell inequality.

This standard-library-only program proves the cell estimate at q=19/9
for rho=41/100 and eta=15/28, using the radial probability measure

    d nu_rho(z) = 1_{|z|<rho} dz /
                   (2*pi*rho*sqrt(rho^2-|z|^2)).

Every one-dimensional projection of nu_rho is uniform on [-rho,rho].
After the normalization explained in the paper, the two geometric
regimes are parameterized by (p,r) in [0,1]^2.  Log-convexity between
L^2 and L^(9/4) reduces the endpoint to an algebraic sufficient
condition.  The program covers both parameter squares by finitely many
dyadic boxes and verifies that condition with exact rational interval
arithmetic.  Square and fourth roots are enclosed by outward-rounded
dyadic rationals; no floating-point result is used in an acceptance
decision.  Boxes on which every shifted cut stays in the central
plateau are handled directly.
"""

from collections import deque
from fractions import Fraction as F


RHO = F(41, 100)
ETA = F(15, 28)
ROOT_BITS = 90
MAX_DEPTH = 32


def integer_nth_root_floor(n, degree):
    """Return floor(n**(1/degree)) exactly for nonnegative integers."""
    if n < 2:
        return n
    x = 1 << ((n.bit_length() + degree - 1) // degree)
    while True:
        y = ((degree - 1) * x + n // (x ** (degree - 1))) // degree
        if y >= x:
            break
        x = y
    while (x + 1) ** degree <= n:
        x += 1
    while x ** degree > n:
        x -= 1
    return x


def root_bounds(x, degree):
    """Outward dyadic bounds for x**(1/degree), with x rational >= 0."""
    assert x >= 0
    scale = 1 << ROOT_BITS
    floor_arg = (x.numerator * (scale ** degree)) // x.denominator
    k = integer_nth_root_floor(floor_arg, degree)
    lo = F(k, scale)
    if lo ** degree == x:
        return lo, lo
    return lo, F(k + 1, scale)


def cdf(p, y):
    """F_p(y)=P(X+pY<y), where X,Y are uniform on [0,1]."""
    if y <= 0:
        return F(0)
    if y >= 1 + p:
        return F(1)
    if p == 0:
        return y
    if y <= p:
        return y * y / (2 * p)
    if y <= 1:
        return y - p / 2
    return 1 - (1 + p - y) ** 2 / (2 * p)


def cdf_interval(p0, p1, y0, y1):
    """Use that F_p(y) increases in y and decreases in p."""
    return cdf(p1, y0), cdf(p0, y1)


def interpolation_bound_ok(rhs_lo, theta_hi):
    """Certify the L^2--L^(9/4) upper bound at q=19/9.

    Log-convexity gives

      Phi_(19/9) <= Phi_2^(10/19) Phi_(9/4)^(9/19).

    If u=theta(1-theta) and
    s=(1-theta)^(5/4)+theta^(5/4), the nineteenth power of
    twice the right-hand side is 2^19 u^9 s^4.  This upper
    bound is increasing for theta in [0,1/2].  The fourth roots
    below are rounded upward at theta_hi.
    """
    if rhs_lo < 0:
        return False
    assert F(0) <= theta_hi <= F(1, 2)
    one_minus = 1 - theta_hi
    u_hi = theta_hi * one_minus
    s_hi = (
        one_minus * root_bounds(one_minus, 4)[1]
        + theta_hi * root_bounds(theta_hi, 4)[1]
    )
    algebraic_hi = (u_hi ** 9) * (s_hi ** 4)
    return rhs_lo ** 19>(2 ** 19) * algebraic_hi


def certify_box(kind, p0, p1, r0, r1):
    """Try to certify one dyadic (p,r)-box."""
    c_lo = root_bounds(1 + p0 * p0, 2)[0]
    c_hi = root_bounds(1 + p1 * p1, 2)[1]

    if kind == "triangle":
        # p=b/a, c=1/a, r=t/b:
        # y=t/a=pr, theta=pr^2/2, ell=cr.
        y_lo, y_hi = p0 * r0, p1 * r1
        theta_hi = p1 * r1 * r1 / 2
        chord_lo = c_lo * r0
    else:
        # r parameterizes b<=t<=(a+b)/2:
        # y=t/a=p+(1-p)r/2,
        # theta=[p+(1-p)r]/2, ell=c.
        y_lo = p0 + (1 - p0) * r0 / 2
        y_hi = p1 + (1 - p1) * r1 / 2
        theta_hi = (p1 + (1 - p1) * r1) / 2
        chord_lo = c_lo

        # Every translated level stays on the plateau.  Here the
        # averaged and untranslated chords both equal c>=1, whereas
        # 2 Phi_q(theta)<=1 for q<=3.
        if y_lo - RHO * c_hi >= p1 and y_hi + RHO * c_hi <= 1:
            return True, "face"

    y_minus_lo = y_lo - RHO * c_hi
    y_minus_hi = y_hi - RHO * c_lo
    y_plus_lo = y_lo + RHO * c_lo
    y_plus_hi = y_hi + RHO * c_hi

    cdf_plus_lo, _ = cdf_interval(p0, p1, y_plus_lo, y_plus_hi)
    _, cdf_minus_hi = cdf_interval(p0, p1, y_minus_lo, y_minus_hi)
    average_lo = (cdf_plus_lo - cdf_minus_hi) / (2 * RHO)
    average_lo = max(F(0), average_lo)

    rhs_lo = (1 - ETA) * chord_lo + ETA * average_lo
    return (
        interpolation_bound_ok(rhs_lo, theta_hi),
        "ordinary",
    )


def certify_regime(kind):
    """Adaptively cover one copy of [0,1]^2 by certified boxes."""
    pending = deque([(F(0), F(1), F(0), F(1), 0)])
    accepted = face_boxes = deepest = 0
    failures = []

    while pending:
        p0, p1, r0, r1, depth = pending.pop()
        ok, box_type = certify_box(kind, p0, p1, r0, r1)
        if ok:
            accepted += 1
            face_boxes += box_type == "face"
            deepest = max(deepest, depth)
            continue
        if depth >= MAX_DEPTH:
            failures.append((p0, p1, r0, r1))
            continue

        if p1 - p0 >= r1 - r0:
            midpoint = (p0 + p1) / 2
            pending.append((midpoint, p1, r0, r1, depth + 1))
            pending.append((p0, midpoint, r0, r1, depth + 1))
        else:
            midpoint = (r0 + r1) / 2
            pending.append((p0, p1, midpoint, r1, depth + 1))
            pending.append((p0, p1, r0, midpoint, depth + 1))

    print(
        f"{kind}: accepted={accepted}, face={face_boxes}, "
        f"max_depth={deepest}, failures={len(failures)}"
    )
    return not failures


if __name__ == "__main__":
    certified = certify_regime("triangle") and certify_regime("trapezoid")
    print(f"CERTIFIED q=19/9: {certified}")
    if not certified:
        raise SystemExit(1)
\end{lstlisting}
\endgroup

\bigskip

\noindent
Hongchao Jia and Xianjie Yan (Corresponding author)

\smallskip

\noindent  Institute of Contemporary Mathematics,
School of Mathematics and Statistics, Henan University,
Kaifeng 475004, The People's Republic of China

\smallskip

\noindent {\it E-mails}: \texttt{hongchaojia@henu.edu.cn} (H. Jia)

\noindent\phantom{{\it E-mails:}} \texttt{xianjieyan@henu.edu.cn} (X. Yan)

\end{document}